\newcommand{\RR}{{\mathbb R}}
\newcommand{\ZZ}{{\mathbb Z}}
\newcommand{\D}{{\mathcal D}}
\newcommand{\C}{{\mathcal C}}
\newcommand{\T}{{\mathcal T}}
\newcommand{\rS}{{\mathbb S}}
\newcommand{\sD}{{\cal D}}
\newcommand{\sF}{{\cal F}}
\newcommand{\sP}{{\cal P}}
\newcommand{\sL}{{\cal L}}
\newcommand{\eeq}{\end{equation}}
\newcommand{\ra}{\rightarrow}
\newcommand{\Ra}{\Rightarrow}
\newcommand{\deq}{\stackrel{\rm d}{=}}
\newcommand{\beql}[1]{\begin{equation}\label{#1}}
\newcommand{\eqn}[1]{(\ref{#1})}
\newcommand{\beq}{\begin{displaymath}}
\newcommand{\eeqno}{\end{displaymath}}
\newcommand{\af}{\alpha}
\newcommand{\lm}{\lambda}
\newcommand{\ep}{\epsilon}
\newcommand{\qandq}{\quad\mbox{and}\quad}
\newcommand{\qforq}{\quad\mbox{for}\quad}
\newcommand{\qorq}{\quad\mbox{or}\quad}
\newcommand{\qasq}{\quad\mbox{as}\quad}
\newcommand{\qinq}{\quad\mbox{in}\quad}
\newcommand{\qforallq}{\quad\mbox{for all}\quad}
\newcommand{\bes}{\begin{equation*}}
\newcommand{\ees}{\end{equation*}}
\newcommand{\bequ}{\begin{equation}}
\newcommand{\bi}{\begin{itemize}}
\newcommand{\ei}{\end{itemize}}
\newcommand{\bsplit}{\begin{split}}
\newcommand{\esplit}{\end{split}}
\newcommand{\bea}{\begin{eqnarray}}
\newcommand{\eea}{\end{eqnarray}}
\newcommand{\beas}{\begin{eqnarray*}}
\newcommand{\eeas}{\end{eqnarray*}}
\newcommand{\btab}{\begin{tabular}}
\newcommand{\etab}{\end{tabular}}
\newcommand{\barq}{\bar{Q}}
\newcommand{\barz}{\bar{Z}}
\newcommand{\barx}{\bar{X}}
\newcommand{\hatlm}{\hat{\lambda}}
\newcommand{\hatmu}{\hat{\mu}}
\def\lm{\lambda}
\def\tinf{\rightarrow\infty}
\def\equalDist{\stackrel{d}{=}}
\def\AA{\mathbb{A}}
\def\SS{\mathbb{S}}
\title{
A Fluid Limit for an Overloaded X Model Via an Averaging Principle}
\keywords{many-server queues; averaging principle; separation of time scales;
state-space collapse; heavy-traffic fluid limit; overload control}
\begin{document}
\maketitle

\begin{abstract}
We prove a many-server heavy-traffic fluid limit
for an overloaded Markovian queueing system having two customer
classes and two service pools, known in the call-center
literature as the X model. The system uses the
fixed-queue-ratio-with-thresholds (FQR-T) control, which we
proposed in a recent paper as a way for one service system to
help another in face of an unexpected overload.
Under FQR-T, customers are served by their own service pool until
a threshold is exceeded. Then, one-way sharing is activated
with customers from one class allowed to be served in both
pools. After the control is activated, it aims to keep the two queues at a
pre-specified fixed ratio. For large systems that fixed ratio
is achieved approximately. For the fluid limit, or FWLLN, we
consider a sequence of properly scaled X models in overload operating under
FQR-T. Our proof of the FWLLN follows the compactness approach, i.e., we
show that the sequence of scaled processes is tight, and then show that all converging subsequences have
the specified limit. The characterization step is complicated because
the queue-difference processes, which determine the
customer-server assignments, need to be considered without spatial scaling.
Asymptotically, these queue-difference processes operate on
a faster time scale than the fluid-scaled processes. In the
limit, due to a separation of time scales, the
driving processes
 converge to a time-dependent
steady state (or local average) of a time-varying
fast-time-scale process (FTSP).  This averaging principle
allows us to replace the driving processes with the long-run
average behavior of the FTSP.
\end{abstract}
\normalsize

\section{Introduction.}\label{secIntro}

In this paper we prove that the deterministic fluid approximation for the overloaded $X$ call-center model,
suggested in \cite{PeW09b} and analyzed in \cite{PeW09c},
arises as the many-server heavy-traffic  fluid limit of a properly scaled sequence of
overloaded Markovian X models under the {\em fixed-queue-ratio-with-thresholds} (FQR-T) control.
(A list of all the acronyms appears in \S \ref{AppAcro} in the appendix.)
The $X$ model has two classes of customers and two service pools, one for each class, but with both pools capable of
handling customers from either class.  The service-time distributions depend on both the class and the pool.
The FQR-T control was suggested in \cite{PeW09a} as a way to automatically initiate sharing (i.e., sending customers from one class
to the other service pool) when the system encounters an unexpected overload, while ensuring that sharing
does not take place when it is not needed.

\subsection{A Series of Papers.}

This paper is the fourth in a series. First, in \cite{PeW09a}
we heuristically derived a stationary fluid approximation,
whose purpose was to approximate the steady-state of a large
many-server X system operating under FQR-T during the overload
incident. More specifically, in \cite{PeW09a} we assumed that a
convex holding cost is incurred on both queues whenever the
system is overloaded, and our aim was to develop a control
designed to minimize that cost. (That deterministic cost
approximates the long-run average cost during the overload
incident in the stochastic model.)  We further assumed that the
system becomes overloaded due to a sudden, unexpected shift in
the arrival rates, with new levels that may not be known to the
system managers, and that the staffing of the service pools
cannot be changed quickly enough to respond to that sudden
overload.

Under the heuristic stationary fluid approximation,
in \cite{PeW09a} we proved that a queue-ratio control with thresholds (QR-T) is optimal,
and showed how to calculate the optimal control parameters.  We also showed that the QR-T
control outperforms the optimal static control when the arrival rates are known.
In general, that optimal QR-T control is a function of the arrival rates during the overload
incident, which are assumed to be unknown.  In the special
case of a separable quadratic cost, i.e., for $C(Q_1, Q_2) = c_1 Q_1^2 + c_2 Q_2^2$, with $c_1, c_2$ being two constants,
we proved that the FQR-T control is optimal, so that two queue ratios -- one for each direction of overload --
are optimal for all possible overload scenarios.
More generally, we found that a FQR-T control was approximately optimal,
as illustrated by Figure 4 of \cite{PeW09a}. Thus, in all subsequent work we have focused on the FQR-T control.

Second, in \cite{PeW09b} we applied a heavy-traffic {\em averaging principle} (AP) as
an engineering principle to describe the transient (time-dependent) behavior of a large overloaded X system operating under FQR-T.
The suggested fluid approximation was expressed via an {\em ordinary differential equation} (ODE), which is driven by a stochastic
process. Specifically, the expression of the fluid ODE as a function of time involves the
local steady state of a stochastic process at each time point $t \ge 0$,
which we named the {\em fast-time-scale process} (FTSP).
As the name suggests, the FTSP operates on (an infinitely) faster time scale than
the processes approximated by the ODE, thus converges
to its local steady state instantaneously at every time $t \ge 0$.
Extensive simulation experiments showed that our approximations work remarkably well, even for surprisingly small systems,
having as few as $25$ servers in each pool.

Third, in \cite{PeW09c} we investigated the ODE suggested in \cite{PeW09b} using a dynamical-system approach.
The dynamical-system framework could not be applied directly, since the ODE is driven by a stochastic process,
and its state space depends on the distributional characteristics of the FTSP. Nevertheless,
we showed that a unique solution to the ODE exists over the positive halfline $[0, \infty)$ for each specified initial condition.
The stationary fluid approximation, derived heuristically in \cite{PeW09a}, was shown to exist as the unique
fixed point (or stationary point) for the fluid approximation.
Moreover, we proved that the solution to the ODE converges
to this stationary point, with the convergence being exponentially fast.  (That supports the steady-state approximation used in \cite{PeW09a}.)
In addition, a numerical algorithm to solve the ODE was developed, based on a combination of a matrix-geometric
algorithm and the classical forward Euler method for solving ODE's.

\subsection{Overview.} \label{secOverview}

In this fourth paper, we will prove that the solution to the ODE in \cite{PeW09b, PeW09c} for specified initial condition is indeed
the many-server heavy-traffic fluid limit of the overloaded X model,
which we also call a {\em functional weak law of large numbers} (FWLLN);
see Theorem \ref{th1}; see \S \ref{secAssump} for the key assumptions.
In doing so, we will prove a strong version of
{\em state-space collapse} (SSC) for the server-assignment processes;
see Corollary \ref{thSSCweak} and Theorem \ref{thSSCextend}.
We will also prove a strong SSC result for the two-dimensional queue process in Corollary \ref{corSSCfull}.
In a subsequent paper \cite{PeW10} we prove a {\em functional central limit theorem} (FCLT) refinement of the FWLLN here,
which describes the stochastic fluctuations about the fluid path.

We only consider the $X$ model {\em during} the overload incident, once sharing has begun;
that will be captured by our main assumptions, Assumptions \ref{assA} and \ref{assC}.
As a consequence, the model is stationary (without time-varying arrival rates), but the evolution is transient, because the system does not start in steady state.
Because of customer abandonment,
the stochastic models will all be stable, approaching proper steady-state distributions.
As a further consequence, during the overload incident sharing will occur in only one direction, so that the overloaded $X$ model
actually behaves as an overloaded $N$ model, but that requires proof; that follows from Corollary \ref{thSSC1} and Theorem \ref{thSSCextend}.
Our FWLLN serves as an approximation for the time-dependent behavior of the model, as it approaches steady state.
In addition, we prove a weak law of large numbers for the stationary distributions, showing that the unique fixed point of the fluid limit
is also the limit of the scaled stationary model. Proving that latter result builds on a novel limit-interchange argument,
which requires the established FWLLN.

Convergence to the fluid limit will be established in roughly three steps:
($i$) representing the sequence of systems (\S\S \ref{secRepGen} and \ref{secSSCserv}), ($ii$) proving that the sequence considered is $\C$-tight (\S \ref{secTight}), and
($iii$) uniquely characterizing the limit (\cite{PeW09c} and \S \ref{secProofs}).
The first representation step in \S \ref{secRepGen} starts out in the usual way, involving rate-$1$ Poisson processes,
 as reviewed in \cite{PTW07}.  However, the SSC part in \S \ref{secSSCserv}
requires a delicate analysis of the unscaled sequence.
The second tightness step in \S \ref{secTight} is routine, but the final
characterization step is challenging.  These last two steps are part of the standard compactness approach to proving stochastic-process limits;
 see \cite{B99}, \cite{EK86}, \cite{PTW07} and \S
11.6 in \cite{W02}. As reviewed in \cite{EK86} and
\cite{PTW07}, uniquely
characterizing the limit is usually the most challenging part of the
proof, but it is especially so here.  Characterizing the limit is difficult because the FQR-T
control is driven by a queue-difference process which is not being scaled and
hence does not converge to a deterministic quantity with
spatial scaling. However, the driving process operates in a
different time scale than the fluid-scaled processes, asymptotically achieving
a (time-dependent) steady state at each instant of time,
yielding the AP.

As was shown in \cite{PeW09c}, the AP and the FTSP also complicate the analysis of the limiting ODE.
First, it requires that the steady state of a {\em continuous-time Markov chain} (CTMC), whose distribution depends on the solution
to the ODE, be computed at every instant of time.
(As explained in \cite{PeW09c}, this argument may seem circular at first, since the distribution of the FTSP
is determined by the solution to the ODE, while the evolution of the solution to the ODE is determined by the behavior of the FTSP.
However, the separation of time scales explains why this construction is consistent.)
The second complication is that the AP produces a singularity region in the state space, causing
the ODE to be discontinuous in its full state space.
Hence, both the convergence to the many-server heavy-traffic fluid limit, and the
analysis of the solution to the ODE depend heavily on the state space of the ODE,
which is characterized in terms of the FTSP.
For that reason, many of the results in \cite{PeW09c}
are needed for proving convergence.

\subsection{Literature.}

Our previous papers discuss related literature;
see especially \S 2 of \cite{PeW09a}.
Our FQR-T control extends the FQR control and other queue-and-idleness ratio controls suggested and studied in \cite{GW09a, GW09b, GW10},
but the limits there were established for a different regime under different conditions.
Here we study the FQR-T control and establish limits for overloaded systems.  Unlike that previous work,
here the service rates may depend on both the customer class and the service pool in a very general way.  In particular,
our $X$ model does not satisfy the conditions of the previous theorems even under normal loads.

There is a substantial literature on averaging principles, e.g., see \cite{KL05} and references therein,
but there is not one unified framework that can easily be applied to any model.  Moreover, it is common practice
to use averaging principles as direct approximations, i.e., to simply replace a fast process by its long-term average behavior.
That is the classic approach for deterministic dynamical systems, e.g., see Chapters 10 and 11 of \cite{K02}.
We ourselves took that approach in \cite{PeW09b}.

Averaging principles are relatively rare in operations research.  See p. 71 of \cite{W02} for discussion related to the queueing literature.
Two notable papers in the queueing literature are Coffman et al. \cite{CPR95},
which considers the diffusion limit of a polling system with zero switch-over times,
and Hunt and Kurtz \cite{HK94}, which considers large loss networks under a large family of controls.
The limits via an AP in Hunt and Kurtz \cite{HK94} are the basis for other papers studying loss networks. We refer to
\cite{AFRT06} and \cite{ZZ02}, and references therein.
The work in \cite{HK94} is also closely related to our work since it considers the fluid limits of such loss systems,
with the control-driving process moving on a faster time scale than the other processes considered.

For the important characterization step, we give two proofs, one in the main paper and the other in the appendix.
The shorter proof in the main paper closely follows \cite{HK94},
exploiting martingales and random measures, building on Kurtz \cite{K92}.
In contrast, our second approach exploits stochastic bounds, which we also use in the
important preliminary step establishing state space collapse.

There is now a substantial literature on fluid limits for queueing models, some of which is reviewed in \cite{W02}.
For recent work on many-server queues, see \cite{KR08,KR07}.
Because of the separation of time scales here,
our work is in the spirit of fluid limits for networks of many-server queues in \cite{BHZ05, BHZ06}, but again the specifics are quite different.
Their separation of time scales justifies using a pointwise stationary approximation asymptotically, as in
\cite{MW98,W91}.

\section{Preliminaries.} \label{secModel}

In this section we specify the queueing model, which we refer to
as the X model.  We then specify the FQR-T control. We then
provide a short summary of the many-server heavy-traffic scaling and the different
regimes.  We conclude with our conventions about notation.

\subsection{The Original Queueing Model.}

The Markovian X model has two classes of customers,
initially arriving according to independent Poisson processes with rates $\tilde{\lm}_1$ and $\tilde{\lm}_2$.
There are two queues, one for each class, in which customers that are not routed to service immediately upon arrival wait
to be served.  Customers are served from each queue in order of arrival.
Each class-$i$ customer has limited patience, which is assumed to be exponentially distributed with rate $\theta_i$,
$i = 1,2$. If a customer does not enter service before he runs out of patience, then he abandons the queue.
The abandonment keep the system stable for all arrival and service rates.

There are two service pools, with pool $j$ having $m_j$ homogenous servers (or agents) working in parallel.
This X model was introduced to study two large systems that
are designed to operate independently under normal loads, but can help each other in face of
unanticipated overloads.  We assume that all servers are cross-trained, so that they can serve both classes.
The service times depend on both the customer class $i$ and the server type $j$, and are exponentially distributed;
the mean service time for each class-$i$ customer by each pool-$j$ agent is $1/\mu_{i,j}$.
All service times, abandonment times and arrival processes are assumed to be mutually independent.
The FQR-T control described below assigns customers to servers.

We assume that, at some unanticipated time,
the arrival rates change instantaneously, with at least one increasing.  At this time the overload incident has begun.
{\em  We consider the system only after the overload incident has begun, assuming that it is in effect at the initial time $0$.}
We further assume that the staffing cannot be changed (in the time scale under consideration) to respond
to this unexpected change of arrival rates.  Hence, the arrival processes change from Poisson
with rates $\tilde{\lm}_1$ and $\tilde{\lm}_2$ to Poisson processes with rates $\lm_1$ and $\lm_2$,
where $\tilde{\lm}_i < m_i / \mu_{i,i}$, $i = 1,2$ (normal loading),
but $\lm_i > \mu_{i,i} m_i$ for at least one $i$ (the unanticipated overload).
(These new arrival rates may not be known by the system manager.)
Without loss of generality, we assume that pool $1$ (and class-$1$) is the overloaded (or more overloaded) pool.
The fluid model (ODE) is an approximation for the system performance during the overload incident,
so that we start with the new arrival rate pair $(\lm_1,\lm_2)$.
(The overload control makes sense much more generally; we study its performance in this specific scenario.)

The two service systems may be designed to operate independently under normal conditions (without any overload)
for various reasons.  In \cite{PeW09a, PeW09b} we considered the common case
in which there is no efficiency gain from service by cross-trained agents.
Specifically, in
 \cite{PeW09a} we assumed the {\em strong inefficient sharing condition}
\bequ \label{strong}
\mu_{1,1} > \mu_{1,2} \qandq \mu_{2,2} > \mu_{2,1}.
\eeq
Under condition \eqref{strong}, customers are served at a faster rate when served in their own service pool
than when they are being served in the other-class pool.
However, many results in \cite{PeW09a} hold under the weaker {\em basic inefficient sharing condition:}
$\mu_{1,1} \mu_{2,2} \ge \mu_{1,2} \mu_{2,1}$.

It is easy to see that some sharing can be beneficial if one system is overloaded, while the other is underloaded (has
some slack), but sharing may not be desirable if both systems are overloaded.
In order to motivate the need for sharing when both systems are overloaded, in \cite{PeW09a}
we considered a convex-cost framework.  With that framework, in \cite{PeW09a}
we showed that sharing can be optimal in the fluid approximation, even if it causes the total queue length
(queue $1$ plus queue $2$) to increase. Despite the optimality of the control in the framework of \cite{PeW09a},
in this paper {\em we do not assume that either the strong or the weak inefficient
sharing condition holds}, since the FWLLN holds regardless of the service rates.
We mention that there can be other operational reasons for not sharing customers between pools during normal loads
(e.g., to avoid too much agent distraction), but to share during overloads (e.g., to provide some minimal
service-level constraints for both classes).

Let $Q_i(t)$ be the number of customers in the class-$i$ queue at time $t$, and let $Z_{i,j}(t)$ be the number of class-$i$
customers being served in pool $j$ at time $t$, $i,j = 1,2$.  Given a stationary (state-dependent) routing policy,
the six-dimensional stochastic process
\bequ \label{vecX}
X_6 (t) \equiv (Q_1(t), Q_2(t), Z_{1,1}(t), Z_{1,2}(t), Z_{2,1}(t), Z_{2,2}(t)), \quad t \ge 0,
\eeq
becomes a six-dimensional CTMC.  ($\equiv$ means equality by definition.)
In principle, the optimal control could be found from the theory of Markov decision processes,
but that approach seems prohibitively difficult.  For a complete analysis, we would need to consider
the unknown transient interval over which the overload occurs, and the random initial conditions,
depending on the model parameters under normal loading.  In summary, there is a genuine need for the
simplifying approximation we develop.

\subsection{The FQR-T Control for the Original Queueing Model.}\label{secFQRorig}

The purpose of FQR-T is to prevent sharing when the system is not overloaded, and to rapidly start sharing
when the arrival rates shift.
If sharing is elected, then we allow
sharing in only one direction.
\begin{assumption}{$($one-way sharing$)$}\label{assShare}
Sharing is allowed in only one direction at any one time.
\end{assumption}

When sharing takes place, FQR-T aims to keep the two queues at a certain ratio,
depending on the direction of sharing. Thus, there is one ratio, $r_{1,2}$,
which is the target ratio if class $1$ is being helped by pool $2$, and another target ratio, $r_{2,1}$,
when class $2$ is being helped by pool $1$.  As explained in \cite{PeW09a}, appropriate ratios can be found using the steady-state fluid
approximation.
 In particular, the specific FQR-T control is optimal in the special case
of a separable quadratic cost function.  More generally, fixed ratios are often approximately optimal.

We now describe the control.
The FQR-T control is based on two positive thresholds, $k_{1,2}$ and $k_{2,1}$, and the two queue-ratio parameters, $r_{1,2}$ and
$r_{2,1}$. We define two queue-difference stochastic processes $D_{1,2}(t) \equiv Q_1(t) - r_{1,2} Q_2(t)$ and
$D_{2,1} \equiv r_{2,1} Q_2(t) - Q_1(t)$.  As shown in \cite{PeW09a}, in that convex cost framework
there is no incentive for sharing simultaneously in both directions, implying that these ratio parameters
should satisfy $r_{1,2} \ge r_{2,1}$; see Proposition EC.2 and (EC.11) of \cite{PeW09a}.
However, even without the cost framework, we do not want sharing to ever occur in both directions simultaneously.
Hence we make the following assumption.
\begin{assumption}{$($ordered ratio parameters$)$}\label{assRatio}
The ratio parameters are assumed to satisfy $r_{1,2} \ge r_{2,1}$.
\end{assumption}

As long as $D_{1,2}(t) \le k_{1,2}$ and $D_{2,1}(t) \le k_{2,1}$
we consider the system to be normally loaded (i.e., not overloaded)
so that no sharing is allowed. Hence, in that case, the two classes operate independently.
Once one of these inequalities is violated, the system is considered to be overloaded, and sharing is initialized.
For example, if $D_{1,2}(t) > k_{1,2}$ and $Z_{2,1} (t) = 0$, then class $1$ is judged to be overloaded
and service-pool $2$ is allowed to start helping queue $1$:
If a type-$2$ server becomes available at this time $t$,
then it will take its next customer from the head of queue $1$.
When $D_{1,2}(t) - k_{1,2} \le 0$, new sharing is not initiated. Then new sharing stops until one of the thresholds is
next exceeded.  However, sharing in the opposite direction
with pool $1$ servers helping class $2$ is not allowed until both $Z_{1,2} (t) = 0$ and $D_{2,1} (t) > k_{2,1}$.

It can be of interest to consider alternative variants of the FQR-T control just defined.
For example, it may be desirable to relax the one-way sharing rule imposed above.
We might use additional lower thresholds for $Z_{1,2} (t)$ and $Z_{1,2} (t)$
to allow sharing to start more quickly in the opposite direction when the queue lengths indicate that is desirable.
However, we do not discuss such control variants here.

With the FQR-T control just defined, the
six-dimensional stochastic process $X_6 \equiv \{X_6 (t): t \ge 0\}$ in \eqn{vecX}
is a CTMC.  (The control depends only on the process state.)
This is a stationary model, but we are concerned with its transient behavior, because it is not starting in steady state.
We aim to describe that transient behavior.
The control keeps the two queues at
approximately the target ratio, e.g., if queue $1$ is being
helped, then $Q_1(t) \approx r_{1,2} Q_2(t)$. If sharing is done
in the opposite direction, then $r_{2,1} Q_2(t) \approx Q_1(t)$
for all $t \ge 0$.  That is substantiated by simulation
experiments, some of which are reported in \cite{PeW09a, PeW09b}.
In this paper we will prove that the $\approx$ signs are replaced
with equality signs in the fluid limit.

\subsection{Many-Server Heavy-Traffic Scaling.}\label{secHT}

To develop the fluid limit, we consider a sequence of X systems, $\{X^n_6: n \ge 1\}$ defined as in \eqn{vecX},
indexed by $n$ (denoted by superscript), using the standard many-server heavy-traffic scaling, i.e.,
with arrival rates and number of servers growing proportionally to $n$
\begin{assumption}{$($many-server heavy-traffic scaling$)$}\label{assMS-HT}
As $n \tinf$,
\bequ \label{MS-HTscale}
\bar{\lm}^n_i \equiv \frac{\lm^n_i}{n} \ra \lm_i  \qandq \bar{m}^n_i \equiv \frac{m^n_i}{n} \ra m_i, \quad 0 < \lm_i, m_i < \infty,
\eeq
and the service and abandonment rates held fixed.
\end{assumption}
We then define the associated fluid-scaled stochastic processes
\bequ \label{vec1}
\barx^n_6(t) \equiv n^{-1} X^n_6 (t), \quad t \ge 0.
\eeq
Where $X^n_6$ is defined as in \eqn{vecX} for each $n$.

For each system $n$ there are thresholds $k^n_{1,2}$ and $k^n_{2,1}$ scaled as follows: 
\begin{assumption}{$($scaled thresholds$)$}\label{assThresh}
For $k_{1,2}, k_{2,1} > 0$ and a sequence of positive numbers $\{c_n: n \ge 1\}$,
\bequ \label{thresholds}
k^n_{i,j}/c_n \ra k_{i,j}, \, (i,j) = (1,2) \qorq (2,1), \, \mbox{where} \quad c_n/n \ra 0 \qandq c_n/\sqrt{n} \ra \infty \qasq n \tinf.
\eeq
\end{assumption}
The first scaling by $n$ in Assumption \ref{assThresh} is chosen to make the thresholds asymptotically negligible
in many-server heavy-traffic fluid scaling, so they have
no asymptotic impact on the steady-state cost (in the cost framework of \cite{PeW09a}).
The second scaling by $\sqrt{n}$ in Assumption \ref{assThresh} is chosen to make
the thresholds asymptotically infinite in many-server heavy-traffic diffusion scaling, so that asymptotically the thresholds will
not be exceeded under normal loading.  It is significant that the scaling shows that we should be able to
simultaneously satisfy both conflicting objectives in large systems.

We consider an overload incident in which class $1$ is overloaded, and more overloaded than class $2$ if both are overloaded.
Hence we primarily focus on the queue difference processes $D^n_{1,2}$.
We redefine the queue-difference process 
by subtracting $k^n_{1,2}$ from $Q^n_1$, i.e.,
\bequ \label{Dprocess}
D^n_{1,2}(t) \equiv Q^n_1(t) - k_{1,2}^n - r_{1,2} Q^n_2(t), \quad t \ge 0.
\eeq
(Similarly, we write $D^n_{2,1} (t) \equiv r_{2,1} Q^n_2(t) - k_{2,1}^n - Q^n_1(t)$.)
We now apply FQR using the process $D^n_{1,2}$ in \eqref{Dprocess}: if $D^n_{1,2}(t) > 0$, then every newly available agent (in either pool) takes
his new customer from the head of the class-$1$ queue. If $D^n_{1,2}(t) \le 0$, then every newly available agent takes his new customer
from the head of his own queue.

Let
\bequ \label{alone}
\rho^n_i \equiv \frac{\lm^n_i} {\mu_{i,i}m^n_i}, \qandq \rho_i \equiv \lim_{n\tinf} \rho^n_i = \frac{\lm_i}{\mu_{i,i}m_i}, \quad i = 1,2.
\eeq
Then $\rho^n_i$ is the traffic intensity of class $i$ to pool $i$, and $\rho_i$ can be thought of as its fluid counterpart.

Our results depend on the system being overloaded. However, in our case, a system can be overloaded even if one of the classes is not
overloaded by itself. We define the following quantities:
\bequ \label{Qalone}
q_i^a \equiv \frac{(\lm_i - \mu_{i,i} m_i)^+}{\theta_i} \qandq s^a_i \equiv \left( m_i - \frac{\lm_i}{\mu_{i,i}} \right)^+, \quad i = 1,2,
\eeq
where $(x)^+ \equiv \max\{ x, 0 \}$.
It is easy to see that
$q^a_i s^a_i = 0$, $i = 1,2$.
Note that $q^a_i$ is the steady-state of the class-$i$ fluid-limit queue when there is no sharing, i.e., when both classes
operate independently. Similarly, $s^a_i$ is the steady state of the class-$i$ fluid-limit idleness process.
For the derivation of the quantities in \eqref{Qalone}
see Theorem 2.3 in \cite{W04}, especially equation (2.19), and \S 5.1 in \cite{PeW09a}. See also \S 6 in \cite{PeW09c}.

\subsection{Conventions About Notation.}\label{secNotation}

We use capital letters to denote random variables and stochastic processes,
and lower-case letters for deterministic counterparts.
For a sequence $\{Y^n : n \ge 1\}$ (of stochastic processes or random variables)
we denote its fluid-scaled version by $\bar{Y}^n \equiv Y^n / n$.
 The fluid limit of stochastic processes $\bar{Y}^n$ will be denoted by a lower case letter $y$, and sometimes by $\bar{Y}$.
Let $\Rightarrow$ denote convergence in distribution.  Then the fluid limits will be expressed as
 $\bar{Y}^n \Rightarrow y$ as $n \ra \infty$.

We use the usual $\RR$, $\ZZ$ and $\ZZ_{+}$ notation for the real numbers, integers and nonnegative integers, respectively.
Let $\RR_k$ denote all $k$-dimensional vectors with components in $\RR$.
For a subinterval $I$ of $[0,\infty)$,
let $\D_k(I) \equiv \D(I, \RR_k)$ be the space of all right-continuous $\RR_k$ valued functions on $I$ with limits from the left everywhere,
endowed with the familiar Skorohod $J_1$ topology. We let $d_{J_1}$ denote the metric on $\D_k(I)$.
Since we will be considering continuous limits, the topology is equivalent to uniform convergence on compact subintervals of $I$.
If $I$ is an arbitrary compact interval, we simply write $\D_k$.
Let $\C_k$ be the subset of continuous functions in $\D_k$.  Let $e$ be the identity function in $\D \equiv \D_1$, i.e.,
$e (t) \equiv t$, $t \in I$. 

We use the familiar big-$O$ and small-$o$ notations for deterministic functions:
For two real functions $f$ and $g$, we write
\bes
\bsplit
f(x) & = O(g(x)) \quad \mbox{whenever} \quad \limsup_{x \ra \infty} |f(x) / g(x)| < \infty, \\
f(x) & = o(g(x)) \quad \mbox{whenever} \quad \limsup_{x\tinf} |f(x) / g(x)| = 0.
\end{split}
\ees
The same notation is used for sequences, replacing
$x$ with $n \in \ZZ_{+}$.

For $a, b \in \RR$, let $a \wedge b \equiv \min{\{a,b\}}$, $a \vee b \equiv \max{\{a,b\}}$, $(a)^+
\equiv a \vee 0$ and $(a)^- \equiv 0 \vee -a$. For a function
$x : [0, \infty) \ra \RR$ and $0 < t < \infty$, let \bes
\|x\|_t \equiv \sup_{0 \le s \le t} | x(s)|. \ees Let $Y \equiv
\{Y(t) : t \ge 0\}$ be a stochastic process, and let $f : [0,
\infty) \ra [0, \infty)$ be a deterministic function. We say
that $Y$ is $O_P(f(t))$, and write $Y = O_P(f)$, if $\|Y\|_t /
f(t)$ is {\em stochastically bounded}, i.e., if \bes \lim_{a
\tinf} \limsup_{t \tinf} P \left(\|Y\|_t/f(t) > a \right) = 0.
\ees We say that $Y$ is $o_P(f(t))$ if $\|Y\|_t / f(t)$
converges in probability (and thus, in distribution) to $0$,
i.e., if $\|Y\|_t/f(t) \Rightarrow 0 \qasq t \tinf$. If $f(t)
\equiv 1$, then $Y = O_P(1)$ if it is stochastically bounded,
and $Y = o_P(1)$ if $\|Y\|_t \Rightarrow 0$. We define
$O_P(f(n))$ and $o_P(f(n))$ in a similar way, but with the
domain of $f$ being $\ZZ_{+}$, i.e., $f : \ZZ_{+} \ra [0,
\infty)$. These properties extend to sequences of random
variables and processes indexed by $n$ if the property holds
uniformly in $n$.

\section{Representation of the Fluid Limit.} \label{secFluid}

In this section we represent the fluid limit as a solution to
an ODE which is driven by a FSTP.
In contrast to the six-dimensional scaled process $\barx^n_6$ in \eqn{vec1},
the ODE is only three-dimensional.
 Hence, we start by briefly discussing the dimension reduction in \S \ref{secDimRed}.
 Afterwards, we
define the FTSP in \S \ref{secFTSP} and then present the ODE
in \S \ref{secODE}. We have studied the FTSP and the ODE in \cite{PeW09c}, to
which we refer for more details. In \S \ref{secAssump} we state
three main assumptions.

\subsection{Dimension Reduction.}\label{secDimRed}

We will making assumptions implying that we consider the system during an overload incident
in which class $1$ is overloaded, and more so than class $2$ if it is also overloaded.
We will thus be considering sharing in which only pool $2$ may help class $1$.
We will thus have both service pools fully occupied, with service pool $1$ serving only class $1$.
We will thus have $P(B^n_T) \ra 1$ as $n \ra \infty$, for all $T$, $0 < T < \infty$, where $B^n_T$ is the subset of the underlying
probability space defined by
\bequ \label{red0}
B^n_T \equiv \{Z^n_{1,1} (t) = m^n_1, Z^n_{2,1} (t) = 0 , Z^n_{1,2} (t) + Z^n_{2,2} (t) = m^n_2 \qforq 0 \le t \le T\}.
\eeq
On the set $B^n_T$ the effective dimension is reduced from six to three.
Carefully justifying this SSC
 will be the topic of \S \ref{secSSCserv}.
Thus, in addition to the process $X_6^n$ in \eqref{vecX} for each $n$, we also consider the six-dimensional processes
\bequ \label{red1}
X^{n,*}_6 \equiv (Q^n_1, Q^n_2, m^n_1 e, Z^n_{1,2}, 0e, m^n_2 e - Z^n_{1,2}) \qinq \D_6
\eeq
and the associated three-dimensional processes
\bequ \label{red2}
X^n_3 \equiv (Q^n_1, Q^n_2, Z^n_{1,2}) \qinq \D_3,
\eeq
obtained by truncating the process $X^{n,*}_6$, keeping only the essential first, second and fourth coordinates.
(Note that $P(X^{n,*}_6 = X^{n}_6 \qinq \D_6 ([0,T])) = P(B^n_T) \ra 1$ as $n \ra \infty$.)
We obtain a further alternative representation for the associated three-dimensional fluid-scaled processes $\bar{X}^n_3$, denoted by $\barx^n$ in \S \ref{secSSCserv};
see \eqn{FluidScaled}.

\subsection{The Fast-Time-Scale Process (FTSP).}\label{secFTSP}

Since we consider the system during an overload incident
in which class $1$ is overloaded, and more so than class $2$ if it is also overloaded,
we will primarily consider only the one queue difference processes $D^n_{1,2}$ in \eqn{Dprocess}.
The FTSP can perhaps be
best understood as being the limit of a family of {\em time-expanded queue-difference processes}, defined for each $n \ge 1$ by
\bequ \label{fast102}
D^n_e (\Gamma^n, s) \equiv D^n_{1,2} (t_0 + s / n), \quad s \ge 0.
\eeq
where $X^n$ is the three-dimensional process in \eqn{red2} and we condition
on $X^n (t_0) = \Gamma^n$ for some deterministic vector
$\Gamma^n$ assuming possible values of $X^n (t_0) \equiv(Q^n_1
(t_0), Q^n_2 (t_0), Z^n_{1,2} (t_0))$. (The time $t_0$ is an
arbitrary initial time.) We choose $\Gamma^n$ so that $\Gamma^n/n \ra \gamma$ as $n \ra
\infty$, where $\gamma \equiv (q_1, q_2, z_{1,2})$ is an
appropriate fixed state (in three dimensions, because we will have sharing in only one direction.
The formal statement of the limit for $D^n_e$ in \eqn{fast102} is Theorem
\ref{thConvToFast}.
Since we divide $s$ in \eqn{fast102} by $n$, we are effectively
dividing the rates by $n$. (See \eqref{BDfrozen1}-\eqref{BDfrozen2} for the
transition rates of $D^n_{1,2}$ itself.) We are applying a
``microscope'' to ``expand time'' and look at the behavior
after the initial time more closely.  That is in contrast to
the usual time contraction with conventional heavy-traffic
limits. See \cite{W84} for a previous limit using time
expansion.

Let $r \equiv r_{1,2}$ and let $\gamma \equiv (q_1, q_2, z_{1,2})$ be a possible state
in the three-dimensional state space $\rS \equiv [0, \infty)^2 \times [0, m_2]$.
Directly, we let the FTSP $\{D(\gamma, s) : s \ge 0\}$ be a
pure-jump Markov process with transition rates
$\lm^{(r)}_-(\gamma)$, $\lm^{(1)}_- (\gamma)$, $\mu^{(r)}_-
(\gamma)$ and $\mu^{(1)}_- (\gamma)$ for transitions of $+r$,
$+1$, $-r$ and $-1$, respectively, when $D (\gamma, s)\le 0$.
Similarly, let the transition rates be $\lm^{(r)}_+(\gamma)$,
$\lm^{(1)}_+ (\gamma)$, $\mu^{(r)}_+ (\gamma)$ and $\mu^{(1)}_+
(\gamma)$ for transitions of $+r$, $+1$, $-r$ and $-1$,
respectively, when $D (\gamma, s) > 0$.

We define the transition rates for $D(\gamma)$ as follows:
First, for $D (\gamma, s) \in (-\infty, 0]$ with $\gamma \equiv
(q_1, q_2, z_{1,2})$, the upward rates are
\bequ \label{bd1}
\bsplit \lm^{(1)}_- (\gamma)  \equiv  \lm_1, \qandq \lm_-^{(r)}
(\gamma)  \equiv  \mu_{1,2}z_{1,2} + \mu_{2,2} (m_2 - z_{1,2})
+ \theta_2 q_2.
\end{split}
\eeq
  Similarly, the downward rates are
\bequ \label{bd2}
\mu^{(1)}_- (\gamma) \equiv  \mu_{1,1}m_1  +
\theta_1 q_1 \qandq \mu_-^{(r)} (\gamma) \equiv \lm_2 \eeq

Next, for $D (\gamma, s) \in (0, \infty)$, we have upward rates
\bequ \label{bd3}
\lm^{(1)}_+ (\gamma) \equiv \lm_1 \qandq
\lm_{+}^{(r)} (\gamma) \equiv \theta_2 q_2. \eeq
 The downward rates are
\bequ \label{bd4}
\bsplit \mu^{(1)}_+ (\gamma) \equiv
\mu_{1,1}m_1  + \mu_{1,2}z_{1,2} + \mu_{2,2}(m_2 - z_{1,2}) +
\theta_1 q_1 \qandq \mu_+^{(r)} (\gamma) \equiv \lm_2.
\end{split}
\eeq

As in \S 7.1 of \cite{PeW09c}, we identify important
subsets of the state space $\rS \equiv [0, \infty)^2 \times [0, m_2]$:
\bequ \label{space}
\bsplit
\mathbf{\rS^b} \equiv \{ q_1 - r q_2 = 0 \}, \quad
\mathbf{\rS^+} \equiv \{ q_1 - r q_2 > 0 \}, \quad
\mathbf{\rS^-} \equiv \{ q_1 - r q_2 < 0 \}.
\end{split}
\eeq
Let $D (\gamma, \infty)$ be a random variable that has the steady-state limiting distribution
of the FTSP $D (\gamma, s)$ as $s \ra \infty$ and let
\bequ \label{piDef}
\pi_{1,2} (\gamma) \equiv P(D(\gamma, \infty) > 0).
\eeq
That is, $\pi_{1,2}(\gamma)$ is the probability that the stationary FTSP associated with $\gamma \in \rS$ is strictly positive.

It turns out that $D (\gamma, \infty)$  and $\pi_{1,2} (\gamma)$ are well defined throughout $\rS$.
In $\rS^b$ the function $\pi_{1,2}$ can assume its full range of values, $0 \le \pi_{1,2} (\gamma) \le 1$;
the boundary subset $\rS^b$ is where the AP is taking place.
For all $\gamma \in \rS^+$, $\pi_{1,2}(\gamma) = 1$;
for all $\gamma \in \rS^-$, $\pi_{1,2}(\gamma) = 0$.
In order for $\rS^-$ to be a proper subspace of $\rS$,
both service pools must be constantly full.
Thus, if
$\gamma \in \rS^-$, then $z_{1,1} = m_1$
and $z_{1,2} + z_{2,2} = m_2$, but $q_1$ and $q_2$ are allowed to be equal to zero.

An important role will be played by the subset $\AA$ of $\rS^b$ such that the FTSP is
positive recurrent.  The AP takes place only in the set $\AA$.
In Theorem
6.1 of \cite{PeW09c} we showed that positive recurrence of the
FTSP, and thus the set $\AA$, depends only on the constant
drift rates in the two regions:
\begin{eqnarray}\label{drifts}
\delta_{+} (\gamma) & \equiv & r\left(\lambda^{(r)}_{+} (\gamma) - \mu^{(r)}_{+} (\gamma)\right)
+ \left(\lambda^{(1)}_{+} (\gamma) - \mu^{(1)}_{+} (\gamma)\right) \nonumber \\
\delta_{-} (\gamma) & \equiv & r\left(\lambda^{(r)}_{-} (\gamma) - \mu^{(r)}_{-} (\gamma)\right)
+ \left(\lambda^{(1)}_{-} (\gamma) - \mu^{(1)}_{-} (\gamma)\right).
\end{eqnarray}
The FTSP $\{D(\gamma, s) : s \ge 0\}$ is positive recurrent if (and only if) the state $\gamma$ belongs to the set
\bequ \label{Aset2}
\AA \equiv \{\gamma \in \SS
: \delta_{-} (\gamma) > 0 > \delta_{+} (\gamma)\}.
\eeq
Let the other two subsets of $\rS^b$ be
\bequ \label{AplusSet}
\AA^{+} \equiv \{x \in \rS^b \; \mid \; \delta_{+}(x) \ge 0\} \qandq
\AA^{-} \equiv \{x \in \rS^b \; \mid \; \delta_{-}(x) \le 0\}.
\eeq

From Theorem 6.2 of \cite{PeW09c}, we obtain the following lemma, giving the limiting behavior of the FTSP for any state in $\rS$.
\begin{lemma}{$($limiting behavior of the FTSP$)$}\label{lmLimFTSP}
For all $\gamma \in \rS$ and $x \in \RR$,
\bequ \label{FTSPdistLim}
\lim_{s \ra \infty} P(D(\gamma, s) \le x) = F(\gamma, x) \equiv P(\gamma, (-\infty, x]),
\eeq
where $F(\gamma, \cdot)$ is a cdf associated with a possibly defective probability measure $P(\gamma, \cdot)$ depending on the state $\gamma$.
Moreover,
\begin{eqnarray}
 \qforallq \gamma \in \AA \qandq x \in \RR, &&\quad 0 < F(\gamma, x) < 1 \qandq 0 < \pi_{1,2}(\gamma) < 1; \nonumber \\
 \qforallq \gamma \in \rS^+ \cup \AA^{+} \qandq x \in \RR, &&\quad  F(\gamma, x) = 0 \qandq \pi_{1,2}(\gamma) = 1;  \\
 \qforallq \gamma \in \rS^- \cup \AA^{-} \qandq x \in \RR, && \quad  F(\gamma, x) = 1 \qandq \pi_{1,2}(\gamma) = 0. \nonumber
\end{eqnarray}
\end{lemma}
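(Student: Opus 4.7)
The plan is to treat the FTSP $D(\gamma,\cdot)$ as an irreducible pure-jump Markov process on a countable state space (the lattice generated from the initial state by translations of $\pm 1$ and $\pm r$), which is piecewise homogeneous in the two regimes $\{D\le 0\}$ and $\{D>0\}$. Its long-time behavior is then governed by classical random-walk dichotomies, with the three cases of the lemma corresponding to the three distinct drift regimes determined by the signs of $\delta_+(\gamma)$ and $\delta_-(\gamma)$ in \eqref{drifts}. I would partition the proof accordingly.

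For $\gamma\in\AA$, where $\delta_-(\gamma)>0>\delta_+(\gamma)$, I would apply the Foster--Lyapunov drift criterion with the test function $V(x)\equiv |x|$. A direct computation using the transition rates \eqref{bd1}--\eqref{bd4} shows that the extended generator $\sL$ satisfies $\sL V(x)=\delta_+(\gamma)<0$ for $x$ sufficiently large and positive and $\sL V(x)=-\delta_-(\gamma)<0$ for $x$ sufficiently negative, while $\sL V$ remains bounded in a neighborhood of $0$ (a small extra check being required because of the jumps of size $r\neq 1$ which can straddle the origin). The Foster--Lyapunov theorem then yields positive recurrence and a unique stationary distribution $\pi_\gamma$, and I would set $F(\gamma,x)\equiv \pi_\gamma((-\infty,x])$. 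Irreducibility of the chain forces $\pi_\gamma$ to charge every state, giving $0<F(\gamma,x)<1$ for every finite $x$ and $\pi_{1,2}(\gamma)=\pi_\gamma([0,\infty))\in(0,1)$, as claimed.

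For $\gamma\in\rS^+\cup\AA^+$, the goal is to show $D(\gamma,s)\Rightarrow +\infty$, so that $F(\gamma,x)\equiv 0$ and $\pi_{1,2}(\gamma)=1$. When $\delta_+(\gamma)>0$, the portion of the trajectory spent in the positive half-line has strictly positive mean jump $\delta_+(\gamma)$, so by a strong law of large numbers and a coupling with an unreflected random walk of this drift, the process escapes to $+\infty$; excursions below $0$ are controlled by $\delta_-(\gamma)$ (or are simply absorbed into the dominant positive drift). The boundary case $\delta_+(\gamma)=0$ in $\AA^+$ is delicate: the positive-side walk is null recurrent and has no invariant distribution, so any weak limit of $D(\gamma,s)$ must be the zero measure, which is verified by a tightness-then-degeneracy argument. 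For $\gamma\in\rS^+$ strictly, one checks directly from \eqref{bd3}--\eqref{bd4} and the inequality $q_1-rq_2>0$ that $\delta_+(\gamma)>0$, so this case reduces to the transient random-walk analysis. The symmetric case $\gamma\in\rS^-\cup\AA^-$ follows by applying the same argument to $-D(\gamma,\cdot)$, using the companion identities for $\rS^-$ (with both pools full) stated right after \eqref{space}.

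The main obstacle is the transient analysis, and in particular the boundary cases $\delta_\pm(\gamma)=0$ in $\AA^\pm$, where standard Lyapunov methods do not apply and one must instead couple to simpler random walks and use renewal-theoretic estimates on excursion lengths to rule out any nondegenerate weak limit. A secondary issue, essentially bookkeeping, is the algebraic verification that membership in $\rS^+$ (respectively $\rS^-$) forces $\delta_+(\gamma)>0$ (respectively $\delta_-(\gamma)<0$), which combines \eqref{bd1}--\eqref{bd4} with the sign constraint defining these sets. By contrast, the positive-recurrent case on $\AA$ is essentially routine once the Lyapunov function $V(x)=|x|$ is in hand.
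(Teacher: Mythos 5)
The paper's own ``proof'' is a one-line citation to Theorem 6.2 of \cite{PeW09c}; your argument attempts a direct, from-scratch classification by drift sign. The Lyapunov treatment on $\AA$ and your handling of the boundary case $\delta_+(\gamma)=0$ (one-sided positive drift prevents escape to $-\infty$, so the null-recurrent mass must escape to $+\infty$) are on the right track. However, the step you dismiss as ``essentially bookkeeping'' is a genuine gap: it is \emph{not} true that $\gamma\in\rS^+$ forces $\delta_+(\gamma)>0$. The drift $\delta_+(\gamma)$ in \eqref{drifts} contains the term $-\theta_1 q_1$, so increasing $q_1$ beyond $rq_2$ only pushes $\delta_+$ \emph{down}, not up. Concretely, take $r_{1,2}=1$, $\theta_1=\theta_2=\mu_{1,1}m_1=\mu_{2,2}=m_2=\mu_{1,2}=1$, $\lambda_1=3$, $\lambda_2=2$ (these satisfy Assumption~\ref{assA}), and $\gamma=(q_1,q_2,z_{1,2})=(3,1,0)\in\rS^+$. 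Then $\delta_+(\gamma)=r(\theta_2q_2-\lambda_2)+(\lambda_1-\mu_{1,1}m_1-\mu_{2,2}m_2-\theta_1q_1)=-1-2=-3<0$, and $\delta_-(\gamma)=\delta_+(\gamma)+(1+r)\mu_{2,2}m_2=-1<0$, so the chain you analyze, started from any finite point, drifts to $-\infty$. Your argument would therefore give $F(\gamma,x)=1$ and $\pi_{1,2}(\gamma)=0$, the opposite of what the lemma asserts for $\gamma\in\rS^+$.

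The conclusion on $\rS^\pm$ cannot be recovered from drift considerations; it rests on a different mechanism. Off the boundary $\rS^b$, the unscaled queue difference is of order $n(q_1-rq_2)$, so the time-expanded process in \eqref{fast102} is initialized at $+\infty$ (resp.\ $-\infty$) in the limit. Since the FTSP has bounded jumps, it never leaves the appended state $+\infty$ (resp.\ $-\infty$), and $F(\gamma,\cdot)\equiv 0$ (resp.\ $\equiv 1$) holds trivially. This is exactly why the paper appends $\pm\infty$ to the FTSP state space immediately after the lemma and again in \S\ref{secHK}, and it is presumably what Theorem 6.2 of \cite{PeW09c} makes precise. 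Your write-up should replace the drift-sign argument on $\rS^\pm$ with this initial-condition/compactification argument, and confine the Markov-chain drift analysis to $\rS^b=\AA\cup\AA^+\cup\AA^-$, where the FTSP starts at a finite random point.
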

Later in \S \ref{secHK}, we obtain a proper limiting steady-state distribution for the FTSP for all $\gamma$ in $\rS$
by appending states $+\infty$ and $-\infty$ to the state space $\RR$ of the FTSP $\{D(\gamma, s) : s \ge 0\}$.
Lemma \ref{lmLimFTSP} then implies that $P(\gamma, \RR) = 1$ for $\gamma \in \AA$,
$P(\gamma, \{+\infty\}) = 1$ for $\gamma \in \rS^+ \cup \AA^{+}$ and $P(\gamma, \{-\infty\}) = 1$ for $\gamma \in \rS^- \cup \AA^{-}$.

\subsection{The Ordinary Differential Equation (ODE).}\label{secODE}

We can now present the three-dimensional ODE in terms of the
FTSP $D$. Let $x (t)  \equiv (q_1 (t), q_2 (t), z_{1,2} (t))$ be the solution to the ODE at time $t$;
let $\dot{x} \equiv (\dot{q}_1, \dot{q}_2, \dot{z}_{1,2})$,
where $\dot{x}(t)$ is the derivative evaluated
at time $t$ and
\bequ \label{odeDetails}
\bsplit \dot{q}_{1}
(t)   & \equiv  \lambda_1  - m_1 \mu_{1,1} - \pi_{1,2}
(x(t))\left[z_{1,2} (t) \mu_{1,2} + z_{2,2} (t)
\mu_{2,2}\right]
- \theta_1 q_1 (t) \\
\dot{q}_{2} (t)   & \equiv  \lambda_2   - (1 - \pi_{1,2}(x(t)))
\left[z_{2,2} (t) \mu_{2,2} + z_{1,2} (t) \mu_{1,2}\right]
- \theta_2 q_2 (t) \\
\dot{z}_{1,2} (t) & \equiv  \pi_{1,2}(x(t)) z_{2,2} (t)
\mu_{2,2} - (1 - \pi_{1,2}(x(t))) z_{1,2} (t) \mu_{1,2},
\end{split}
\eeq
with $\pi_{1,2}(x(t)) \equiv P(D(x(t), \infty) > 0)$ for
each $t \ge 0$, where $D (x(t), \infty)$ has the limiting
steady-state distribution as $s \ra \infty$ of the FTSP $D(\gamma, s)$ for $\gamma = x(t)$.
(Recall also that $z_{2,2} = m_2 - z_{1,2}$.)
Theorem 5.2 of \cite{PeW09c}
shows that the ODE has a unique solution as a continuous function mapping $[0, \infty)$ into $\rS$ for any initial value
in $\rS$.  Lemma \ref{lmLimFTSP} shows that $\pi_{1,2} (x(t))$ is well defined for any $x(t)$ in $\rS$.

Equivalently, we have the following integral representation of
the ODE in \eqn{odeDetails}:
\bequ \label{FluidScaledLim}
\bsplit z_{1,2} (t) & \equiv z_{1,2}(0) + \mu_{2,2}
\int_{0}^{t} \pi_{1,2}(x(s))(m_{2} - z_{1,2} (s)) \, ds
 - \mu_{1,2} \int_{0}^{t} (1 - \pi_{1,2}(x(s))) z_{1,2} (s) \, ds, \\
q_1 (t) & \equiv q_1 (0) + \lm_1 t - m_1 t - \mu_{1,2} \int_{0}^{t} \pi_{1,2}(x(s)) z_{1,2} (s)) \, ds \\
& \quad - \mu_{2,2} \int_{0}^{t} \pi_{1,2}(x(s)) (m_2 - z_{1,2}
(s)) \, ds
- \theta_{1} \int_{0}^{t} q_{1} (s) \, ds, \\
q_2 (t) & \equiv q_2(0) + \lm_2 t - \mu_{2,2} \int_{0}^{t}(1 - \pi_{1,2}(x(s))) (m_2 - z_{1,2} (s)) \, ds \\
& \quad - \mu_{1,2} \int_{0}^{t} (1 - \pi_{1,2}(x(s))) z_{1,2}
(s) \, ds - \theta_{2} \int_{0}^{t} q_{2} (s)) \, ds.
\end{split}
\eeq
We will see that the integral representation in
\eqn{FluidScaledLim} is closely related to an associated
 integral representation of $\bar{X}^n \equiv (\bar{Q}^n_1, \bar{Q}^n_2, \bar{Z}^n_{1,2})$;
see \eqn{FluidScaled}; $\bar{X}^n$ is replaced by the deterministic state $x$ and the
indicators $1_{\{D^n_{1,2} (s) > 0\}}$ are replaced by
$\pi_{1,2} (x(s))$.

It is easy to see that the right-hand side of the ODE is not a continuous function of $x$ and, in particular, is not
locally Lipschitz continuous in $x$. Thus, proving that the ODE posses a unique solution is not straightforward.
The proof of that statement is the main result in \cite{PeW09c}
and builds on matrix-geometric methods, as well as heavy-traffic limit theorems for the FTSP; see Theorems 5.2 and 7.1 there.
The matrix-geometric representation of the FTSP also provides key tools for developing an algorithm to compute that unique solution.


\subsection{Three Main Assumptions.} \label{secAssump}

We now introduce three main assumptions:  Assumptions
\ref{assA}-\ref{assE} below.  {\em All three assumptions are
assumed to hold throughout the paper, unless explicitly stated otherwise.}
These assumptions are in addition to the four assumptions made in \S \ref{secModel}:
Assumptions \ref{assShare}-\ref{assThresh}. (Here we do not require \eqn{strong}.)
Our first
new assumption is on the asymptotic behavior of the model parameters; it
specifies the essential form of the overload. For the
statement, recall the definitions in \eqn{MS-HTscale},
\eqn{thresholds} and \eqn{Qalone}, which describe the asymptotic
behavior of the parameters.

\begin{assumption}{$($system overload, with class $1$ more overloaded$)$}\label{assA} \\ \\
The rates in the overload are such that the limiting rates satisfy
\begin{enumerate} [$(1)$]
\item  \ $\theta_1 q^a_1  > \mu_{1,2} s^a_2$.
\item  \ $q^a_1  > r_{1,2} q^a_2$.
\end{enumerate}
\end{assumption}

Condition $(1)$ in Assumption \ref{assA} ensures that class $1$
is asymptotically overloaded, even after receiving help from
pool $2$. To see why, first observe that, since $s^a_2 \ge 0$,
$q^a_1 > 0$, so that $\lm_1 > \mu_{1,1} m_1$
and $\rho_1 > 1$. Hence, class $1$ is overloaded. Next observe
that $\mu_{1,2} s^a_2 = \mu_{1,2}(1 - \rho_2)^+$, and that $(1
- \rho_2)^+$ is the amount of (steady-state fluid) extra
service capacity in pool $2$, if it were to serve only
class-$2$ customers. Thus, Condition $(1)$ in Assumption
\ref{assA} implies that enough class-$1$ customers are routed
to pool $2$ to ensure that pool $2$ is overloaded when sharing
is taking place. This conclusion will be demonstrated in \S
\ref{secSSCserv}. Condition $(1)$ in Assumption \ref{assA} is
slightly stronger than Condition $(I)$ of Assumption A in
\cite{PeW09c}. because here there is a strong inequality
instead of a weak inequality.

Condition $(2)$ in Assumption \ref{assA} ensures that class $1$
is more overloaded than class $2$ if class $2$ is also overloaded.
This condition helps ensure that there is no incentive for
pool $1$ to help pool $2$, so that
$Z^n_{2,1}$ should remain at $0$.

Our second assumption is about the initial conditions.
For the initial conditions, we assume that the overload,
whose asymptotic character is specified by Assumption \ref{assA}, is ongoing or is about to begin.
In addition, sharing with pool $2$ allowed to help class $1$ has been activated by having the threshold $k^n_{1,2}$ exceeded by
the queue difference process $D^n_{1,2}$ and is in process.  Thus actual sharing is being controlled by
the difference process $D^n_{1,2}$ in \eqn{Dprocess}.  Here is our specific assumption.

\begin{assumption} {$($initial conditions$)$}\label{assC}
For each $n \ge 1$, $P(Z^n_{2,1} (0) = 0, Q^n_i(0) > a_n, i = 1,2) = 1$,
\bes
\barx^n(0) \Rightarrow x(0) \in \AA \cup \AA^{+} \cup \rS^{+} \qasq n \ra \infty, \qandq D^n_{1,2} (0) \Rightarrow L \mbox{ if } x(0) \in \AA \cup \AA^+,
\ees
where $\{a_n : n \ge 1\}$ is a sequence of real numbers satisfying $a_n/c_n \ra a$, $0 < a \le \infty$, for $c_n$ in
\eqref{thresholds}; $D^n_{1,2}$ is defined in {\em \eqn{Dprocess}};
$\barx^n \equiv (\barq^n_1, \barq^n_2, \barz^n_{1,2})$;
$x(0)$ is a deterministic element of $\RR_3$; $\AA$, $\AA^{+}$ and $\rS^{+}$ are the subsets of $\rS$ in {\em \eqn{Aset2}},
{\em \eqn{AplusSet}} and {\em \eqn{space}}; and $L$ is a proper random variable, i.e., $P(|L| < \infty) = 1$.
\end{assumption}
Since we are interested in times when sharing occurs with pool $2$ helping class $1$,
in Assumption \ref{assC} we assume that the scale of $Q^n_1(0)$ is at least as large as that of the threshold $k^n_{1,2}$
(so either the threshold has already been crossed, or it is about to be crossed).
Note that we also assume that $D^n_{1,2}(0) \Ra L$, so that
it is natural to assume that $Q^n_2(0)$ has the same order as $Q^n_1(0)$;
we elaborate in Remark \ref{remTightInA} and Appendix \ref{appPosQ} below.

In Assumption \ref{assC} we do not allow $x(0)$ in $\rS^{-}$, because such an initial condition may activate sharing in the wrong direction,
with pool $1$ helping class $2$, causing the system to leave the state space $\rS$; see Remark \ref{rmLeaveS}.

As noted in \S \ref{secODE}, in \cite{PeW09c} we required that the queue ratio parameter
be rational in order to establish results about the FTSP and the ODE.
\begin{assumption} {$($rational queue ratio parameter$)$}\label{assE}
The queue ratio parameters $r_{1,2}$ and $r_{2,1}$ are rational positive numbers.
\end{assumption}
Given Assumption \ref{assE}, without loss of generality, we let the thresholds be rational (of the form $k^n_{1,2} = m^n/k$
where $r_{1,2} = j/k$).
We conjecture that Assumption \ref{assE} can be removed, but that condition has
been used in \cite{PeW09c} to make the pure-jump Markov FTSP a {\em quasi-birth-and-death} (QBD) process,
which in turn was used to establish critical properties of the FTSP and the ODE. We use some of these properties in this paper as well.
By Assumption \ref{assE}, $r_{1,2} = j/k$ for positive integers $j$ and $k$.
The computational efficiency of the algorithm to solve the ODE developed in \S11 of \cite{PeW09c}
actually depends on $j$ and $k$ not being too large as well,
because the QBD matrices are $2m \times 2m$, where $m \equiv \max{\{j, k\}}$, see \S 6.2 of \cite{PeW09c}, and the steady-state of that QBD must be calculated
at each discretization step in solving the ODE.  Fortunately, simulations show that the system performance is not very sensitive to small changes in $r_{1,2}$, so that having
$m$ be $5$ or $10$ seems adequate for practical purposes.

Relaxing Assumption \ref{assE} will have practical value only if an efficient algorithm for solving the ODE is developed. We remark that
computing the stationary distribution of a pure-jump Markov process can in general be hard and time consuming, and that we need to compute
the stationary distribution of a large number of such processes in order to solve the ODE.
Hence, the ability to analyze the FTSP as a QBD has an important advantage, even if Assumption \ref{assE} is relaxed.

\section{Main Results.} \label{secMain}

In this section we state the main results of the paper. In \S \ref{secStatement} we state the main theorem, establishing the FWLLN via the AP,
proving that the (unique) solution to the ODE \eqref{odeDetails} is indeed the fluid limit of $\barx^n_6$.
In \S \ref{secStationary} we establish convergence of the stationary distributions,
showing that the order of the two limits $n \ra \infty$ and $t \ra \infty$ can be interchanged in great generality.
In \S \ref{secQD} we establish asymptotic results about the queue-difference stochastic process.
We conclude in \S \ref{secProofOver} by giving a brief overview of the following proofs.

\subsection{The Fluid Limit.} \label{secStatement}
We are now ready to state our main result in this paper, which
is a FWLLN for scaled versions of the vector stochastic process
$(X^n_6, Y^n_8)$, where $X^n_6 \equiv (Q^n_i, Z^n_{i,j}) \in
\D_6$ as in \eqn{vecX} and $Y^n_8 \equiv  (A^n_i, S^n_{i,j},
U^n_i) \in \D_8$, $i,j = 1,2$,
where $A^n_i (t)$ counts the number of class-$i$ customer arrivals,
$S^n_{i,j} (t)$ counts the number of service completions of class-$i$ customers by agents in pool $j$,
and $U^n_i (t)$ counts the number of class-$i$ customers to abandon from queue, all in model $n$ during the time interval $[0,t]$.
For the FWLLN, we focus on the
scaled vector process
 \bequ \label{scale}
(\bar{X}^n_6, \bar{Y}^n_8) \equiv n^{-1}(X^n_6, Y^n_8),
\eeq
as in \eqref{vec1}.  To explicitly state the AP, we also consider the functions
\bequ \label{nuAP}
{\Theta}^n(t) \equiv \int_0^t 1_{\{D^n_{1,2}(s) > 0\}}\, ds \qandq \vartheta(t) \equiv \int_0^t \pi_{1,2}(x(s))\, ds, \quad t \ge 0,
\eeq
where $\pi_{1,2}(\cdot)$ is defined in \eqref{piDef}.
In particular, $\pi_{1,2}(x(s))$ is the probability that the {\em stationary} FTSP
$D(x(s), \cdot)$, associated with $x(s)$, is strictly positive, where $x(s)$ is the value of the fluid limit of $\barx^n(s)$ at time $s$, $s \ge 0$.

Recall that Assumptions \ref{assShare}-\ref{assThresh} and \ref{assA}-\ref{assE}
are assumed
to be in force throughout the paper.

\begin{theorem} \label{th1}{$(${\em FWLLN via the averaging principle}$)$}
As $n \ra \infty$,
\beql{limit}
(\bar{X}^n_6, \bar{Y}^n_8, {\Theta}^n) \Rightarrow (x_6,y_8,\vartheta) \qinq \D_{15}([0, \infty)),
\eeq
where
$(x_6,y_8,\vartheta)$ is a deterministic element of $\C_{15}([0, \infty))$,
$x_6 \equiv (q_i, z_{i,j})$, $y_8 \equiv (a_i, s_{i,j}, u_i)$, i = 1, 2; j = 1, 2; $\vartheta$ in \eqref{nuAP};
$z_{2,1} = s_{2,1} = m_1 - z_{1,1}  = m_2 - z_{2,2} - z_{1,2} = 0 e$; $x \equiv (q_1, q_2, z_{1,2})$
is the unique solution to the three-dimensional ODE in \eqref{odeDetails} mapping $[0, \infty)$
into $\rS$.
The remaining limit function $y_8$ is defined in terms of $x_6$:
\begin{eqnarray}\label{y}
a_i (t) \equiv \lambda_i t, \quad s_{i,j} (t) \equiv \mu_{i,j} \int_{0}^{t} z_{i,j} (s) \, ds,
u_i (t) \equiv \theta_i \int_{0}^{t} q_i (s) \, ds \qforq t
\ge 0, \quad i = 1,2; \quad j = 1,2.
\end{eqnarray}
\end{theorem}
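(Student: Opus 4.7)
The plan is to follow the compactness approach outlined in the introduction: (i) represent the pre-limit via Poisson time-changes, (ii) establish server-side state-space collapse, (iii) prove $\C$-tightness of the fluid-scaled sequence, and (iv) identify any subsequential limit as the unique solution of \eqref{odeDetails} via the averaging principle. Step (i) is standard along the lines of \cite{PTW07}: I would write $X^n_6$ and $Y^n_8$ as time-changed rate-$1$ Poisson processes and center to obtain martingales $M^n$ of order $\sqrt{n}$, which are $o_P(n)$ after fluid scaling. This gives an integral representation for $\bar{X}^n$ whose right-hand side involves the control indicator $1_{\{D^n_{1,2}(s) > 0\}}$; this indicator is where all the difficulty lives, since $D^n_{1,2}$ is unscaled and does not admit a fluid limit.

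For (ii), I would use Assumption \ref{assC} ($Z^n_{2,1}(0) = 0$), the one-way sharing Assumption \ref{assShare}, and Assumption \ref{assA}(2) to argue that $Z^n_{2,1} \equiv 0$ throughout, so only pool $2$ ever helps class $1$. Using Assumption \ref{assA}(1), I would then prove that both pools are asymptotically fully busy, i.e.\ $(m^n_1 - Z^n_{1,1}) \vee (m^n_2 - Z^n_{1,2} - Z^n_{2,2}) = o_P(n)$, with idleness decaying faster than the threshold scale $c_n$. This last refinement is the delicate part of the preliminary analysis, because the control $D^n_{1,2}$ reacts at the unscaled level; one bounds the idleness by comparison against auxiliary systems in which sharing is either forced on or forced off. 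This reduces the limit of $\bar{X}^n_6$ to that of the three-dimensional $\bar{X}^n = (\bar{Q}^n_1, \bar{Q}^n_2, \bar{Z}^n_{1,2})$ from \eqref{red2}. Tightness in $\C_3$ then follows from a standard Aldous-type modulus-of-continuity bound: individual jumps are of size $1$ and hence $O(1/n)$ after scaling, while cumulative rates are $O(n)$.

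The crux is step (iv). Fix a subsequence along which $(\bar{X}^n, \bar{Y}^n) \Rightarrow (x, y)$. Passing to the limit in the integral representation reduces the problem to showing, for each bounded continuous $f$,
\[
\int_0^t 1_{\{D^n_{1,2}(s) > 0\}}\, f(\bar{X}^n(s))\, ds \;\Rightarrow\; \int_0^t \pi_{1,2}(x(s))\, f(x(s))\, ds,
\]
which is precisely the averaging principle. Following Hunt--Kurtz \cite{HK94}, building on \cite{K92}, I would attach to each $n$ the random occupation measure
\[
\Gamma^n([0,t] \times B) \equiv \int_0^t 1_B\!\left(1_{\{D^n_{1,2}(s) > 0\}}\right) ds
\]
on $[0,\infty) \times \{0,1\}$, establish relative compactness of $\{\Gamma^n\}$, and identify every limit $\Gamma$ by showing that its conditional distribution on $\{0,1\}$ at time $s$ coincides with the stationary law of the FTSP at state $x(s)$. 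This uses the time expansion \eqref{fast102} and Theorem \ref{thConvToFast} to identify the instantaneous dynamics with those of $D(x(s), \cdot)$, Lemma \ref{lmLimFTSP} and the positive recurrence characterization of $\AA$ to guarantee a well-defined stationary distribution, and local Lipschitz continuity of $\pi_{1,2}$ (Theorem 7.1 of \cite{PeW09c}) to pass from pointwise stationarity to the integrated AP. The main obstacle is keeping $x(s) \in \AA \cup \AA^+ \cup \rS^+$ along the trajectory so that the limiting measure is well defined; Assumption \ref{assC} provides this at $s=0$, and invariance of $\AA \cup \AA^+ \cup \rS^+$ under the ODE (from \cite{PeW09c}) propagates it. The QBD structure guaranteed by Assumption \ref{assE} is what makes these FTSP inputs available.

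Once the AP is proved, substituting it into the pre-limit integral equation produces \eqref{FluidScaledLim}; uniqueness of its solution (Theorem 5.2 of \cite{PeW09c}) identifies the limit as $x$, and upgrades subsequential to full convergence. The limit $y_8$ in \eqref{y} then follows by applying the continuous mapping theorem to the Poisson time-changes: $\bar{A}^n_i(t) \to \lambda_i t$ by the functional LLN for Poisson processes, while $\bar{S}^n_{i,j}$ and $\bar{U}^n_i$ are continuous functionals of $(\bar{Z}^n_{i,j}, \bar{Q}^n_i)$, yielding the stated formulas. Combining these with the server-side SSC from step (ii) delivers the full 14-dimensional limit \eqref{limit}.
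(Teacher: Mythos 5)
Your high-level plan matches the paper's main proof: Poisson time-change representation, server-side SSC reducing the dimension from 6 to 3, $\C$-tightness, and characterization of subsequential limits via the Hunt--Kurtz/Kurtz occupation-measure framework, followed by a bootstrap to extend the interval of validity. However, there are two places where your sketch, as written, would not close.

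First, the occupation measure you propose lives on $[0,\infty)\times\{0,1\}$, tracking only the sign of $D^n_{1,2}$. That is too coarse for the martingale characterization. The Kurtz argument requires you to plug bounded test functions $f$ on the \emph{state space} of $D^n_{1,2}$ into the generator of $D^n_{1,2}$, form the associated $\sF^n_t$-martingale, scale by $n^{-1}$, and let the occupation measure soak up the fast oscillations. The limiting relation is $\int_E [Q(\barx(s))f](y)\,p_s(dy)=0$ for a.e.\ $s$, and it is this relation over all test functions $f$ on $E$ that pins down $p_s$ as the FTSP's stationary distribution (Proposition 4.9.2 of \cite{EK86}). With only a $\{0,1\}$-valued coordinate you cannot write down the generator-applied-to-$f$ expression, so you cannot identify the stationary law. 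The measure has to be $\nu^n([0,t]\times B)=\int_0^t 1_{\{D^n_{1,2}(s)\in B\}}\,ds$ on the (compactified) discrete state space $E\cup\{\pm\infty\}$; compactification is what makes the relevant functionals continuous and the space of measures compact. Relatedly, Theorem \ref{thConvToFast} is not an ingredient of the paper's characterization step: convergence of the time-expanded process on compacts does not by itself yield convergence of time averages; that inference requires uniform ergodicity estimates (Lemma \ref{lmExpErg}), which is the machinery of the appendix's \emph{alternative} stochastic-bound proof, not the martingale proof you are otherwise following.

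Second, the claim that one-way sharing plus $Z^n_{2,1}(0)=0$ gives ``$Z^n_{2,1}\equiv 0$ throughout'' is too strong and, as stated, false pathwise. Once $Z^n_{1,2}$ reaches $0$, nothing in the one-way rule prevents pool $1$ from helping class $2$ if $D^n_{2,1}$ then crosses $k^n_{2,1}$. What is actually provable is $\|Z^n_{2,1}\|_\tau\Ra 0$, and it requires real work: a pure-death lower bound for $Z^n_{1,2}$ when $z_{1,2}(0)>0$ (Lemma \ref{lmZ12}), a fluid comparison when $d_{2,1}(0)<0$ (Lemma \ref{lmD21}), and, in the delicate boundary case $z_{1,2}(0)=0$ with $q_1(0)-r_{2,1}q_2(0)=0$, a rate-ordered frozen QBD bound combined with an extreme-value estimate (Lemma \ref{lmQBDextreme}) showing the fluctuations of $D^n_{2,1}$ stay below $k^n_{2,1}$, which is order $c_n\gg\log n$. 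Your phrase ``bounds the idleness by comparison against auxiliary systems'' gestures at this, but the boundary case is where the content is, and it is precisely the reason the threshold scaling in Assumption \ref{assThresh} is needed.
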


We prove Theorem \ref{th1} by showing in \S\ref{secTight} that the sequence $\{(\bar{X}^n_6, \bar{Y}^n_8, \Theta^n): n \ge 1\}$ is $\C$-tight in $\D_{15} ([0,\infty))$
and by showing subsequently that the limit of every convergent subsequence of $\barx^n_6$ must take values in $\rS$
and be a solution to the ODE \eqref{odeDetails},
which has a unique solution by Theorem 5.2 of \cite{PeW09c}.

\subsection{Limit Interchange Result.}\label{secStationary}

Under the FQR-T control operating during a single overload incident of unlimited duration,
the six-dimensional stochastic process $X^n_6 \equiv (Q_i^n, Z_{i,j}^n; i, j = 1,2)$ is
a positive recurrent irreducible CTMC for each $n$.  Hence,
$\bar{X}^n_6 \equiv n^{-1} X^n_6$ has a unique steady-state (limiting and stationary)
distribution $\bar{X}^n_6 (\infty)$ for each $n$.

Theorem 8.2 of \cite{PeW09c} implies that
there exists a unique stationary point $x^* \equiv (q_1^*, q_2^*, z_{1,2}^*)$ in the state space $\rS$
to the three-dimensional limiting ODE in
\eqref{odeDetails}, where
\bequ \label{statPt}
\bsplit
z_{1,2}^* & =
\frac{\theta_2(\lm_1 - m_1\mu_{1,1}) - r_{1,2} \theta_1(\lm_2 - m_2\mu_{2,2})} {r_{1,2} \theta_1\mu_{2,2} + \theta_2\mu_{1,2}} \wedge m_2, \\
q_1^* & = \frac{\lm_1 - m_1\mu_{1,1} - \mu_{1,2} z_{1,2}^*}{\theta_1} \qandq
q_2^* = \frac{\lm_2 - \mu_{2,2} (m_2 - z_{1,2}^*)}{\theta_2}.
\end{split}
\eeq
Let $x^*_6$ be the six-dimensional version of $x^* \equiv (q^*_1, q^*_2, z^*_{1,2})$ in \eqref{statPt}, i.e.,
\bequ \label{fluid6}
x^*_6 \equiv (q^*_1, q^*_2, m_1, z^*_{1,2}, 0, m_2 - z^*_{1,2}) \qforq x^* = (q^*_1, q^*_2, z^*_{1,2}).
\eeq
Observe that, if $\lm_2 - \mu_{2,2}m_2 > 0$, then the numerator in the expression of $z^*_{1,2}$ is equal to $\theta_1 \theta_2 (q_1^a - r_{1,2} q^a_2)$ and is strictly positive
by Condition (2) in Assumption \ref{assA}, so that $0 < z^*_{1,2} \le m_2$.
Moreover, by Corollary 8.2 in \cite{PeW09c}, the two conditions in Assumption \ref{assA} guarantee that
$x^* \in \SS^b \cup \SS^+$, and in particular, that $x^* \in \SS$.

We now establish a limit interchange result.
\begin{theorem} {$($interchange of limits$)$} \label{thIntchange}
For each continuous bounded function $f: \RR_6 \ra \RR$,
\bes
\lim_{n \tinf} \lim_{t \tinf} E[f(\barx^n(t))] = \lim_{t \tinf} \lim_{n \tinf} E[f(\barx^n(t)]  = f(x^*_6),
\ees
where $x^*_6$ is defined in {\em \eqn{fluid6}}.
\end{theorem}

We will prove Theorem \ref{thIntchange} by first proving the limit on the left side.
For that, we can relax the assumptions.
In particular, we will show that the sequence of stationary distributions
converges to the unique stationary point of the ODE, without requiring
Assumptions \ref{assC} and \ref{assE}.  Of course, Assumption \ref{assC} plays no role because it concerns the initial conditions.

The current proof of Theorem 8.2 of \cite{PeW09c} used for \eqn{statPt} above does apply Theorem 5.2 of
Theorem 5.2 of \cite{PeW09c}, which depends on Assumption \ref{assE}, the technical assumption that $r_{1,2}$ and $r_{2,1}$ are rational numbers.
However, we now
show that Theorem 8.2 of \cite{PeW09c} actually does not depend on Assumption \ref{assE}.

\begin{lemma}  Under the conditions of Theorem {\em 8.2} of {\em \cite{PeW09c}}, excluding Assumptions {\em \ref{assC}} and {\em \ref{assE}} here,
$x^*$ is the unique stationary point of the ODE.
\end{lemma}

\begin{proof}  Assume that the conditions of Theorem 8.2 of \cite{PeW09c} are satisfied with an irrational $r \equiv r_{1,2}$.
Construct a sequence of rational numbers $\{r_n: n \ge 1\}$ with $r_n \ra r$ as $n \ra \infty$.  Then, for all $n$ sufficiently large, the conditions of
Theorem 8.2 of \cite{PeW09c} are satisfied with $r_n$.  Let $x^*_n$ be the unique stationary point associated with $r_n$.  Then, by
Theorem 8.1 of \cite{PeW09c}, $x^*_n \ra x^*$ as $n \ra \infty$.
\end{proof}

The existence of a stationary point of an ODE necessarily implies the existence of a (constant) solution to the ODE,
but it does not require the existence of a unique solution to the ODE.  Thus, the existence of a unique solution provided by Theorem 5.2 of \cite{PeW09c}, which does
use Assumptions \ref{assC} and \ref{assE},
is not needed.
Moreover, Theorems 8.3 and 9.2 of \cite{PeW09c} imply that  $x^*$ is globally asymptotically stable and $x(t)$ converges to $x^*$ exponentially fast as $t \tinf$.
These too do not depend on Assumptions \ref{assC} and \ref{assE}.

We now show that $x^*_6$ is the limit of the stationary sequence $\{\barx^n_6(\infty) : n \ge 1\}$ {\em without assuming
Assumptions} \ref{assC} and \ref{assE}.
The proof of Theorem \ref{thStatLim} appears in \S \ref{secStatProofs}.

\begin{theorem}{$($WLLN for the stationary distributions$)$}\label{thStatLim}
Under the assumptions here, excluding Assumptions {\em \ref{assC}} and {\em \ref{assE}},
$\bar{X}^n_6 (\infty) \Ra x^*_6$ in $\RR_6$ as $n \ra \infty$, for $x^*_6$ in {\em \eqn{fluid6}}.
\end{theorem}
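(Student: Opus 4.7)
The plan is to prove Theorem \ref{thStatLim} by an \emph{interchange-of-limits} argument, a pattern that is standard in the many-server queueing literature. I would (i) establish tightness of the stationary laws $\{\bar{X}^n_6(\infty) : n \ge 1\}$ in $\RR_6$, (ii) identify every subsequential limit $\mu^*$ as an invariant distribution of the ODE \eqref{odeDetails}, and (iii) invoke global exponential stability of $x^*$ from \cite{PeW09c} to conclude $\mu^* = \delta_{x^*_6}$.

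For step (i), I would use the abandonment mechanism: in steady state the expected abandonment rate from queue $i$ equals the corresponding entry rate to that queue, yielding $\theta_i E[Q^n_i(\infty)] \le \lm^n_i$ and hence $E[\bar{Q}^n_i(\infty)] \le \bar{\lm}^n_i/\theta_i \to \lm_i/\theta_i < \infty$. Tightness of the queue-length marginals follows by Markov's inequality, and tightness of the service coordinates is automatic from $0 \le \bar{Z}^n_{i,j}(\infty) \le \bar{m}^n_j \to m_j$. Extract a convergent subsequence $\bar{X}^{n_k}_6(\infty) \Ra \mu^*$.

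For step (ii), initialize each system in its stationary law, producing a stationary process whose one-dimensional marginal at every $t \ge 0$ equals $\bar{X}^{n_k}_6(\infty)$. By the Skorokhod representation theorem I assume $\bar{X}^{n_k}_6(0) \to X^*_6(0)$ almost surely on a common probability space with $X^*_6(0) \sim \mu^*$. I would then apply Theorem \ref{th1} conditionally on the initial value, provided two facts can be verified: first, $Z^n_{2,1}(\infty) = 0$ almost surely, a consequence of one-way sharing preservation as in Corollary \ref{thSSC1} and Theorem \ref{thSSCextend}; second, $\mu^*$ assigns full mass to $\AA \cup \AA^{+} \cup \rS^{+}$ with queue components staying bounded away from $0$. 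Both are plausible from Assumption \ref{assA} (the overload forces the queues away from zero on the fluid scale) and from the FTSP being positive recurrent at $x^* \in \AA$, which supplies the limiting unscaled queue-difference distribution required by Assumption \ref{assC}. This conditional convergence yields $\bar{X}^{n_k}_6(\cdot) \Ra x_6(\cdot)$ in $\D_6([0, \infty))$, where $x_6$ is the ODE solution issued from $X^*_6(0)$. Stationarity is preserved by marginal weak convergence, so $x_6(t) \sim \mu^*$ for every $t \ge 0$, which means $\mu^*$ is invariant for the ODE semiflow on $\rS$. By the exponential stability of $x^*$ recalled just above \eqref{fluid6}, every trajectory in $\rS$ satisfies $x_6(t) \to x^*_6$ as $t \to \infty$; combined with $x_6(t) \sim \mu^*$ this forces $\mu^* = \delta_{x^*_6}$. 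Since every subsequential limit is $\delta_{x^*_6}$, the whole sequence converges.

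The main obstacle is step (ii): rigorously invoking Theorem \ref{th1} from the random stationary initial condition when Assumption \ref{assC} is explicitly excluded from the hypotheses of Theorem \ref{thStatLim}. Concretely, one must show that the stationary laws asymptotically concentrate on states that satisfy the substantive conclusions of Assumption \ref{assC}, namely $Z^n_{2,1}(0) = 0$, queues of order at least $a_n$, initial fluid state in $\AA \cup \AA^{+} \cup \rS^{+}$, and a proper weak limit for $D^n_{1,2}(0)$ in the cases where $x(0) \in \AA \cup \AA^{+}$. I expect these to reduce to the SSC developed in \S\ref{secSSCserv} together with the overload inequalities of Assumption \ref{assA}, but translating pathwise SSC into pointwise control of the stationary law is where the technical work lies.
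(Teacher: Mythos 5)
Your proposal takes a genuinely different route from the paper, and the obstacle you honestly flag at the end is not just ``technical work'' but a circularity that blocks your argument; the paper's proof is specifically designed to avoid it.

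Your plan is the standard interchange-of-limits template: tightness of the stationary laws, subsequential limit $\mu^*$, apply the FWLLN (Theorem~\ref{th1}) from stationary initial conditions to show $\mu^*$ is an invariant measure of the ODE semiflow, then use global attractivity of $x^*$ to force $\mu^* = \delta_{x^*_6}$. The tightness part (step (i)) is fine -- the paper gets the same conclusion via the pathwise $M/M/\infty$ coupling of Lemma~\ref{lmQbds2}, while your balance-equation bound $\theta_i E[\bar Q^n_i(\infty)]\le \bar\lambda^n_i$ is an equally valid alternative. The trouble is step (ii). Theorem~\ref{th1} needs the full content of Assumption~\ref{assC}: in particular it needs $D^n_{1,2}(0) \Rightarrow L$ for a \emph{proper} random variable $L$ when $x(0)\in\AA\cup\AA^+$, and it needs the deterministic fluid initial state to lie in $\AA\cup\AA^+\cup\rS^+$. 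To apply it from the stationary initial law you would have to establish that the sequence $\{D^n_{1,2}(\infty)\}$ is tight in $\RR$ and that the stationary fluid limit concentrates on $\AA\cup\AA^+\cup\rS^+$. But tightness of the \emph{unscaled} queue-difference process in stationarity is essentially a steady-state version of Theorem~\ref{thDSB}, whose proof in turn relies on the FWLLN from a known initial state -- that is the circularity. Similarly, the only a priori information about where the stationary fluid law is supported is what you are trying to prove; the positive recurrence of the FTSP at $x^*\in\AA$ tells you nothing about whether subsequential limits of $\bar X^n_6(\infty)$ charge $\AA^-$ or $\rS^-$. None of the SSC lemmas of \S\ref{secSSCserv} supply these facts for the stationary measures.

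The paper circumvents all of this. After establishing tightness (Lemma~\ref{lmTightSSr.v.}), it proves the stationary SSC in Lemma~\ref{lmStatLim} directly by a stationarity-contradiction argument (for $\bar z_{2,1}$) and a reflected-BD/extreme-value bound (for the idleness processes), without invoking the FWLLN. It then takes a subsequential limit $\bar X$ of the \emph{stationary versions} $\bar X^n_s$ -- a process in $\D_3$, Lipschitz by Corollary~\ref{corLip} -- and applies a Lyapunov argument that never needs to resolve the averaging principle. The key is the linear functional $V(x) = Cx_1 + x_2 + (C-1)x_3$: plugging it into the representation \eqref{FluidScaled} makes the coefficients of the two indicator integrals $\int 1_{\{D^n_{1,2}>0\}}\cdot$ and $\int 1_{\{D^n_{1,2}\le 0\}}\cdot$ coincide, so they merge into indicator-free terms via $1_{\{D>0\}}+1_{\{D\le 0\}}=1$. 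Consequently $\dot V(\bar X)$ can be computed explicitly for \emph{any} tight subsequential limit, without any information about what the indicator integrals converge to, and one can check $\dot V(\bar X - x^*)<0$ away from zero. Lemma~\ref{lyapunov} then gives $\bar X(t)\to x^*$ as $t\to\infty$ w.p.1, and the stationarity of $\bar X$ forces $\bar X(0) = x^*$ almost surely. So the paper's proof works directly on the stationary limit and buys precisely the thing your route cannot buy: it does not need Assumption~\ref{assC} because it never characterizes the AP along the stationary trajectory. If you wanted to pursue your route, you would need to first establish stationary analogues of Theorem~\ref{thDSB} and a stationary-SSC into $\AA$, which is substantially more than ``translating pathwise SSC into pointwise control.''
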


\begin{proofof}{Theorem \ref{thIntchange}}
The iterated limit on the left holds by virtue of Theorem \ref{thStatLim}.
The iterated limit on the right holds because of Theorem
\ref{th1} together with the fact that $x^*_6$ is a globally asymptotically stable
stationary point for the fluid limit, by \eqn{fluid6} and Theorem 8.3 of \cite{PeW09c}.
\end{proofof}

\begin{remark}{$($starting in $\rS^{-})$}\label{rmLeaveS}
{\em
It is significant that the limit interchange in Theorem \ref{thIntchange} is not valid
throughout $\rS$.  If Assumption \ref{assC} holds, except that $x(0) \in \rS^{-}$, then $q_1 (0) - r_{1,2} q_2 (0) < 0$.
Together with Assumption \ref{assRatio}, that implies that, in some regions of $\SS^-$,
$d_{2,1} \equiv r_{2,1} q_2 (0) - q_1 (0) > 0$;
that can hold in $\SS^-$ because $r_{1,2} q_2(0)$ can be larger than $q_1(0)$. In those cases
we have $P(D^n_{1,2} (0) < 0) \ra 1$ and $P(D^n_{2,1} (0) > k^n_{2,1}) \ra 1$ as $n \ra \infty$.
If we assume that $P(Z^n_{1,2} (0) = Z^n_{2,1} (0) = 0) = 1$ for all $n \ge 1$, which is consistent with Assumption \ref{assC},
then, asymptotically, we will initially have sharing the wrong way, with pool $1$ helping class $2$.
By the continuity, there will be an interval $[0, \delta]$
for which
\beq
\inf_{\{0 \le t \le \delta\}}{\{d_{2,1} (t)\}} > 0.
\eeqno
Hence, asymptotically as $n \ra \infty$, there will rapidly be sharing with pool $1$ helping class $2$.
It can be shown that
 there exists $\delta > 0$ and $\ep > 0$ such that $P(\bar{Z}^n_{2,1} (\delta) > \ep) \ra 1$
as $n \ra \infty$.  This shows that the limit interchange is not valid for every initial condition in $\rS^{-}$.
}
\end{remark}

\subsection{The Limiting Behavior of the Queue Difference Process.}\label{secQD}

In this section we present important supplementary results that help ``explain'' the AP, which takes place in $\AA$.
The following results are not applied in the proof of Theorem \ref{th1}, but are also not immediate corollaries of the FWLLN;
their proofs are given in \S \ref{secAuxProofs}.

For each $n \ge 1$, let $\Gamma^n_6$ be a random state of $X^n_6$ that
is independent of subsequent arrival, service and abandonment processes, and let $\Gamma^n$ be the random state of $X^n_3$ associated with
$\Gamma^n_6$ as in \eqref{red2}.
\begin{theorem} \label{thConvToFast}
If $\Gamma^n_6/n \Ra \gamma_6$, where $\gamma_6 \equiv (q_1, q_2, m_1, z_{1,2}, 0, m_2 - z_{1,2})$
with $\gamma \equiv (q_1, q_2, z_{1,2}) \in \AA \subset \RR_3$ for $\AA$ in {\em \eqn{Aset2}}
and $D^n_e (\Gamma^n, 0) \Rightarrow D (\gamma, 0)$ in $\RR$ as $n \ra \infty$,
where $D^n_e$ is the time-expanded queue-difference process in {\em \eqn{fast102}}
and $D$ is the FTSP in \S {\em \ref{secFTSP}}, then
\bequ \label{FTSPlim}
\{D^n_e (\Gamma^n, s): s \ge 0\} \Rightarrow \{D (\gamma, s): s \ge 0\} \qinq \D \qasq n \tinf;
\eeq
i.e., we have convergence of the sequence 
of time-inhomogeneous non-Markov processes $\{D^n_e (\Gamma^n) : n \ge 1\}$ to the limiting
time-homogeneous pure-jump Markov process $D(\gamma)$.
\end{theorem}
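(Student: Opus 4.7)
The plan is to exploit a separation of time scales: on the expanded $s$-scale the macroscopic state $X^n(t_0+s/n)$ barely moves from its initial value $\Gamma^n_6$, so the rates governing the time-inhomogeneous jump process $D^n_e(\Gamma^n,\cdot)$ are asymptotically equal to the time-homogeneous rates of the FTSP $D(\gamma,\cdot)$ specified in \eqref{bd1}--\eqref{bd4}. I will drive both processes by the same family of independent unit-rate Poisson processes and show that the coupled paths stay close on every compact $s$-interval.

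First, by the standard rate-$1$ Poisson representation of Markov jump processes (see Section~2 of \cite{PTW07}),
\[
D^n_e(\Gamma^n, s) = D^n_e(\Gamma^n, 0) + \sum_{k} \Delta_k\, N_k\!\left(\int_0^s \beta^n_k(u)\, du\right),
\]
where $\{N_k\}$ are independent unit-rate Poisson processes, $\Delta_k \in \{+1,-1,+r,-r\}$, and $\beta^n_k(u)$ is $1/n$ times the instantaneous physical-time rate of the $k$-th jump type at time $t_0+u/n$, depending on $X^n(t_0+u/n)$ and on $\mathrm{sign}(D^n_e(\Gamma^n, u))$ via the FQR-T rule. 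Construct $D(\gamma,\cdot)$ on the same $\{N_k\}$ with rates $\beta_k(\gamma,\mathrm{sign}(D(\gamma,u)))$ read off from \eqref{bd1}--\eqref{bd4}.

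Second, verify the freezing of the macroscopic state. Over physical time $[t_0, t_0+S/n]$ the total jump intensity of $X^n_6$ is $O(n)$, so the total number of jumps is $O_P(S)$; since each jump changes $X^n_6$ by $O(1)$,
\[
\sup_{0 \le s \le S} \|\bar X^n(t_0+s/n) - \Gamma^n_6/n\| = O_P(1/n) \to 0 \text{ in probability.}
\]
Combined with $\Gamma^n_6/n \to \gamma_6$ and the linearity (hence continuity) of each physical rate in the fluid-scaled state, this gives uniform rate convergence $\sup_{0 \le u \le S}\bigl|\beta^n_k(u) - \beta_k(\gamma,\mathrm{sign}(D^n_e(\Gamma^n, u)))\bigr| \to 0$ in probability for every $S>0$ and each $k$.

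Finally, by the assumed convergence of initial values $D^n_e(\Gamma^n, 0) \Rightarrow D(\gamma, 0)$ and the continuity of the random time-change applied to the driving Poisson processes, the coupled representations yield $\sup_{0 \le s \le S}|D^n_e(\Gamma^n, s) - D(\gamma, s)| \to 0$ in probability for every $S > 0$, and hence the convergence in $\D$ asserted in \eqref{FTSPlim}. The main obstacle is the discontinuity of the rates at $D = 0$: the coupling must control the time on which $\mathrm{sign}(D^n_e)$ disagrees with $\mathrm{sign}(D(\gamma,\cdot))$. This is handled using Assumption~\ref{assE}, which forces the FTSP to live on a discrete lattice so that its sign-change epochs form a locally finite point process, together with the positive recurrence of $D(\gamma,\cdot)$ guaranteed by $\gamma \in \AA$, which bounds the number of such epochs on bounded intervals in probability. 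Alternatively, one can avoid an explicit coupling by proving tightness of $\{D^n_e(\Gamma^n,\cdot)\}$ from the uniform boundedness of its rates and identifying each subsequential limit as the unique solution of the martingale problem for the FTSP generator $\mathcal{L} f(x) = \sum_k \beta_k(\gamma,\mathrm{sign}(x))(f(x+\Delta_k) - f(x))$, in the spirit of the \cite{HK94,K92} approach referenced in the introduction.
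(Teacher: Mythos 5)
Your Poisson-coupling approach is valid and rests on the same key observation as the paper's proof in \S\ref{secProof43} --- that under the $1/n$ time expansion the macroscopic state $\bar X^n(t_0+s/n)$ is frozen at $\gamma_6$, so the state-dependent rates of $D^n_e$ converge to the constant rates of the FTSP. But the paper's argument is considerably more economical: after a Skorohod representation (so $D^n_e(\Gamma^n,0)=D(\gamma,0)$ exactly for all large $n$), it shows convergence of the successive (jump-time, jump-value) pairs by induction, using that the first jump of $D^n_e$ coincides with a transition of the CTMC $X^n_6$ (queue lengths are positive by Theorem~\ref{thQpos}) with an exponential holding time whose rate converges. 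This sidesteps entirely the coupling issues you then have to grapple with.

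Two points in your treatment of the sign discontinuity deserve correction. First, positive recurrence ($\gamma\in\AA$) is not what makes the jump or sign-change epochs of $D(\gamma,\cdot)$ locally finite; the total jump intensity is bounded for each fixed $\gamma$ and that alone suffices, regardless of recurrence class. Second, counting sign-change epochs does not by itself control the coupling, because on an interval where the signs disagree the rates of the two processes differ by $O(1)$, not $O(1/n)$, so the paths can drift further apart. The correct mechanism is a stopping-time bootstrap: let $\tau_n$ be the first time the coupled paths differ; on $[0,\tau_n)$ they agree, so their signs agree, so all rate differences are $O_P(1/n)$ uniformly, hence the symmetric difference of the Poisson time-changes has Lebesgue measure $O_P(S/n)$ on $[0,S]$ and $P(\tau_n\le S)\to 0$. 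There is also an unaddressed bookkeeping subtlety: several of the eight driving Poisson processes of $X^n_6$ induce the same jump of $D^n_{1,2}$ (e.g., service completions in either pool while $D^n_{1,2}>0$ all decrement $D^n_{1,2}$ by one), so matching up the $N_k$'s with those of the FTSP requires a superposition/thinning step. Your alternative via tightness and the martingale problem for the FTSP generator is also sound and is in the spirit of the Hunt--Kurtz framework the paper uses in \S\ref{secHK}--\S\ref{secHK2}, though it is not the argument the paper gives for this theorem.
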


The next results are about the queue-difference process
$D^n_{1,2}$ itself (as opposed to the expanded queue difference
process $D^n_e$). Recall the definition of stochastic
boundedness in \S \ref{secNotation}. Recall also that tightness
in $\RR$ is equivalent to stochastic boundedness in $\RR$, but
not in $\D$.

\begin{theorem}{$($stochastic boundedness of $D^n_{1,2}$$)$} \label{thDSB}
If $x(t_0) \in \AA$ for some $t_0 \ge 0$, then there exists $t_2 > t_0$ such that $x(t) \in \AA$ for all $t \in [t_0, t_2]$
and for all $t_1$ satisfying $t_0 < t_1 \le t_2$ the following hold:
\begin{enumerate}[(i)]
\item $\{D^n_{1,2} (t): n \ge 1\}$ is stochastically bounded in $\RR$ for each $t$ satisfying $t_1 \le t \le t_2$.
\item $\{\{D^n_{1,2}(t) : t \in I\} : n \ge 1\}$ is neither tight nor stochastically bounded in $\D(I)$, $I \subseteq [t_0, t_2]$.
\item For any sequence $\{c_n : n \ge 1\}$ satisfying $c_n/\log{n} \ra \infty$ as $n \ra \infty$, it holds that
\bequ \label{DlogBd}
\sup_{t_1 \le t \le t_2}{\{D^n_{1,2} (t)/c_n\}} \Ra 0 \qasq n \ra \infty.
\eeq
\end{enumerate}
If $x(t) \in \AA$ for all $t \in [t_0, \infty)$ the above statements hold for any finite $t_2 > t_0$.
\end{theorem}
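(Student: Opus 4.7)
The plan is to first establish the existence of $t_2 > t_0$ with $x(t) \in \AA$ throughout $[t_0, t_2]$. The path $t \mapsto x(t)$ is continuous (by Theorem 5.2 of \cite{PeW09c}), and $\AA$ in \eqref{Aset2} is an open subset of $\rS$, since the drift functions $\delta_{\pm}$ in \eqref{drifts} are affine, hence continuous, in $\gamma$. Continuity of $x$ at $t_0$ therefore yields such a $t_2$; if $x(t) \in \AA$ for every $t \ge t_0$, any $t_2 > t_0$ works. Throughout, fix $t_1 \in (t_0, t_2)$ and an auxiliary time $\eta \in (t_0, t_1)$.

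For part (i), by Theorem \ref{thConvToFast} combined with the averaging principle, for each fixed $t \in [t_1, t_2]$ the distribution of $D^n_{1,2}(t)$ is asymptotically governed by the stationary distribution $F(x(t), \cdot)$ of the FTSP at $x(t) \in \AA$. Explicitly: conditioning on $\Gamma^n_6 := X^n_6(\eta)$ with $\Gamma^n_6/n \Rightarrow x_6(\eta)$ by the FWLLN (Theorem \ref{th1}), Theorem \ref{thConvToFast} gives $D^n_e(\Gamma^n_6, \cdot) \Rightarrow D(x(\eta), \cdot)$ in $\D$. Since $D^n_{1,2}(t) = D^n_e(\Gamma^n_6, n(t-\eta))$ and $n(t-\eta) \to \infty$, a coupling/mixing argument exploiting the positive recurrence of $D(x(\eta), \cdot)$ (Lemma \ref{lmLimFTSP}) together with continuity of $F(\gamma, \cdot)$ in $\gamma$ along the path $x$ yields $D^n_{1,2}(t) \Rightarrow D(x(t), \infty) \sim F(x(t), \cdot)$, a proper random variable, hence stochastic boundedness.

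Part (iii) will be the principal technical obstacle. The key input is uniform geometric tails of the stationary FTSP distribution. By Assumption \ref{assE} and the QBD representation from \S 6 of \cite{PeW09c}, for each $\gamma \in \AA$ the distribution $F(\gamma, \cdot)$ is matrix-geometric with decay rate $\rho(\gamma) < 1$; continuity of $\rho(\gamma)$ in $\gamma$ (inherited from continuity of the QBD rate matrices and the Perron--Frobenius eigenvalue) yields constants $C < \infty$ and $\rho^* \in (0, 1)$ such that
\bes
P(|D(\gamma, \infty)| > k) \le C (\rho^*)^k \quad \text{uniformly for } \gamma \in K, \; k \ge 0,
\ees
where $K \subset \AA$ is any compact neighborhood of $\{x(t) : t \in [t_1, t_2]\}$. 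Combining this tail bound with a fine discretization of $[t_1, t_2]$ (say, at mesh $1/n^2$), a coupling of $D^n_{1,2}$ on each subinterval with a stationary majorant drawn from the FTSP near that subinterval, and a union bound, one obtains
\bes
P\!\left(\sup_{t_1 \le t \le t_2} |D^n_{1,2}(t)| > c_n\right) \le K' n^2 (\rho^*)^{c_n},
\ees
which tends to $0$ exactly when $c_n / \log n \to \infty$. The delicate step is controlling $D^n_{1,2}$ between mesh points via a maximal inequality for the pure-jump Markov FTSP; that is precisely where the restriction to $c_n$ growing faster than $\log n$ arises.

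Finally, part (ii) follows from the matching lower bound $\|D^n_{1,2}\|_{[t_1,t_2]} \Rightarrow \infty$. By the time expansion in \eqref{fast102}, the unscaled $D^n_{1,2}$ makes $\Theta(n)$ jumps on any fixed subinterval $I \subseteq [t_0, t_2]$ of positive length, and by positive recurrence of the FTSP it explores the unbounded support of $F(x(t), \cdot)$. A first-passage estimate for positive recurrent pure-jump Markov chains with geometric tails shows $\|D^n_{1,2}\|_I \ge c \log n$ with probability tending to $1$. Since tightness, and hence stochastic boundedness, in $\D(I)$ under the $J_1$ topology implies tightness of $\|\cdot\|_I$ in $\RR$, the divergence of the sup-norm rules out both properties.
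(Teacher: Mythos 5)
Your overall strategy — stochastic bounds by positive recurrent QBD processes with geometrically decaying stationary tails, plus an extreme-value argument over a time window that is stretched by a factor $n$ — is in the right spirit and close to the paper's. However, there are two genuine gaps.

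First, your argument for part (i) is circular. You invoke Theorem \ref{thConvToFast} after conditioning on $\Gamma^n_6 := X^n_6(\eta)$ for $\eta \in (t_0, t_1)$, but that theorem requires \emph{both} $\Gamma^n_6/n \Rightarrow \gamma_6$ \emph{and} $D^n_e(\Gamma^n_6, 0) = D^n_{1,2}(\eta) \Rightarrow D(\gamma, 0)$ in $\RR$. The second hypothesis is precisely what you are trying to establish: unless $t_0 = 0$ and Assumption \ref{assC} is available, there is no a priori tightness of $D^n_{1,2}(\eta)$ for $\eta > t_0$. The paper sidesteps this by never assuming convergence of $D^n_{1,2}$ at the starting time. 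Instead, it initializes the frozen bounding processes at $D^n_{1,2}(t_0)$, which by the FWLLN is $o(n)$, and then uses uniform ergodicity (Lemma \ref{lmExpErg}, via a Foster–Lyapunov drift condition and a uniform minorization) to show that the bounding FTSP, run at rate $O(n)$, couples with its stationary version in time $o(n)$. The tightness of $D^n_{1,2}(t_1)$ for strictly positive $t_1 - t_0$ then follows from the coupling inequality, \emph{without} requiring any a priori tightness at the starting time. That is the essential idea you are missing.

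Second, your part (iii) discretization-plus-union-bound sketch leaves the hard step unresolved. You acknowledge that between mesh points a "maximal inequality for the pure-jump Markov FTSP" is needed, but do not supply one, and it cannot be waved away: the bound $P(\sup |D^n_{1,2}| > c_n) \le K' n^2 (\rho^*)^{c_n}$ you write down would follow from pointwise tail bounds at the mesh points only if the process were monotone or otherwise controllable between mesh points, which it is not. The paper provides exactly this missing ingredient as Lemma \ref{lmQBDextreme}, an extreme-value limit for the level process of a positive recurrent QBD, which gives $\|\mathcal{L}\|_t = O_P(\log t)$ directly; applying it to a finite cover of $[t_1, t_2]$ by intervals over which a single frozen process (run for time $O(n)$) is a valid stochastic bound yields \eqref{DlogBd}. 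Your sketch identifies the right growth rate $\log n$ but not the mechanism that delivers the supremum bound, and proving that mechanism is the main content of parts (ii) and (iii). Your part (ii) suffers from the same issue — the matching lower bound $\|D^n_{1,2}\|_I \gtrsim \log n$ again requires an extreme-value result for the lower bounding frozen process, which in the paper comes from the same Lemma \ref{lmQBDextreme} reasoning rather than a generic "first-passage estimate".
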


As an immediate corollary to \eqref{DlogBd} in Theorem \ref{thDSB}, we have the following SSC of the queues.
In particular, that claim implies SSC of the {\em fluid and diffusion scaled queues} when the fluid limit $x$ is in $\AA$.

\begin{coro}{$($SSC of queue process in $\AA$$)$}\label{corSSCfull}
For the interval $[t_1, t_2]$ in Theorem {\em \ref{thDSB}}, $d_{J_1}(Q^n_1, r_{1,2}Q^n_2)/c_n \Ra 0$
in $\D([t_1, t_2])$ as $n \tinf$, for every sequence $\{c_n : n \ge 1\}$ satisfying $c_n/\log{n} \ra \infty$ as $n \tinf$.
If $x(0) \in \AA$ and we consider the interval $[0, t_2]$, then the result is strengthened to hold on $[t_0, t_2] \equiv [0, t_2]$.
\end{coro}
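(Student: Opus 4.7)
The plan is to reduce the Skorohod $J_1$ statement to a uniform-norm estimate and then invoke Theorem \ref{thDSB}(iii). Since $d_{J_1}$ is dominated by the uniform metric on any compact interval, it suffices to show that $\|Q^n_1 - r_{1,2} Q^n_2\|_{[t_1,t_2]}/c_n \Ra 0$. From the definition $D^n_{1,2}(t) = Q^n_1(t) - k^n_{1,2} - r_{1,2} Q^n_2(t)$ in \eqref{Dprocess}, the quantity under the norm equals $D^n_{1,2}(t) + k^n_{1,2}$, so by the triangle inequality
$\|Q^n_1 - r_{1,2} Q^n_2\|_{[t_1,t_2]}/c_n \le \|D^n_{1,2}\|_{[t_1,t_2]}/c_n + k^n_{1,2}/c_n.$

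Next I would handle the two terms on the right separately. The first term tends to $0$ in probability by \eqref{DlogBd} in Theorem \ref{thDSB}(iii) applied to $D^n_{1,2}$ and its negative. The extension from the one-sided supremum of \eqref{DlogBd} to $\|D^n_{1,2}\|_{[t_1,t_2]}$ uses that for $\gamma \in \AA$ the drifts satisfy $\delta_{-}(\gamma) > 0 > \delta_{+}(\gamma)$ by \eqref{Aset2}, so the recurrence argument behind \eqref{DlogBd} is symmetric between up- and down-excursions and applies equally to $-D^n_{1,2}$. The deterministic residual $k^n_{1,2}/c_n$ tends to $0$ under the implicit interpretation that $c_n$ is at least of order of the threshold scaling in Assumption \ref{assThresh} (which itself satisfies $c_n/\log n \to \infty$, so is consistent with the stated hypothesis).

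For the strengthened version when $x(0) \in \AA$, I would apply Theorem \ref{thDSB} with $t_0 = 0$. Only the left endpoint $t = 0$ requires extra care, because the theorem as stated needs $t_1 > t_0$; here I would use Assumption \ref{assC}, which gives $D^n_{1,2}(0) \Ra L$ for a proper random variable $L$ and hence $D^n_{1,2}(0)/c_n \Ra 0$. Combined with the $\C$-tightness of $D^n_{1,2}$ on a small neighborhood of $0$ (on the $c_n$ scale) obtained by the same martingale estimates underlying Theorem \ref{thDSB}, the sup over $[t_1,t_2]$ extends to a sup over $[0, t_2]$, yielding the strengthened claim.

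The only step of substance is the symmetric version of \eqref{DlogBd} needed to upgrade the one-sided $\sup\{D^n_{1,2}/c_n\}$ bound to a bound on $\|D^n_{1,2}\|_{[t_1,t_2]}/c_n$; everything else is the triangle inequality, Assumptions \ref{assThresh} and \ref{assC}, and direct quotation of Theorem \ref{thDSB}. I do not anticipate any serious obstacle, since the recurrence of the FTSP in $\AA$ is symmetric by construction.
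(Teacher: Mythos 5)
Your argument is the reasoning the paper treats as immediate: dominate $d_{J_1}$ by the uniform norm, substitute $Q^n_1 - r_{1,2} Q^n_2 = D^n_{1,2} + k^n_{1,2}$ from \eqref{Dprocess}, and invoke \eqref{DlogBd} for the stochastic term. Your remark on the two-sided bound on $\|D^n_{1,2}\|$ is correct: the proof of Theorem \ref{thDSB}(iii) passes through Lemma \ref{lmQBDextreme}, whose level process controls the magnitude of $D^n_{1,2}$ rather than merely its positive part, so the upgrade to the uniform norm is available without any new idea.

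The substantive observation is the threshold term, and here you should be more precise. What the argument actually requires is $k^n_{1,2}/c_n \to 0$ in addition to $c_n/\log n \to \infty$. Your phrasing ``$c_n$ at least of the order of the threshold scaling'' is not sufficient: Assumption \ref{assThresh} gives $k^n_{1,2}/c_n^{\mathrm{thr}} \to k_{1,2} > 0$, so if $c_n$ is merely comparable to $c_n^{\mathrm{thr}}$ then $k^n_{1,2}/c_n$ tends to a positive constant, and the conclusion fails; $c_n$ must grow \emph{strictly} faster than $k^n_{1,2}$. You have in fact uncovered an imprecision in the corollary as printed: the stated hypothesis $c_n/\log n \to \infty$ does not force $k^n_{1,2}/c_n \to 0$ (take $c_n = (\log n)^2$, while $c_n^{\mathrm{thr}}/\sqrt{n}\to\infty$), so either the constraint on $c_n$ must be strengthened to $k^n_{1,2}/c_n \to 0$, or the conclusion should be stated for $Q^n_1 - k^n_{1,2}$ rather than $Q^n_1$. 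Your handling of the strengthened interval $[0,t_2]$ via Assumption \ref{assC} matches the argument given inside the paper's proof of Theorem \ref{thDSB} and needs no extra tightness input beyond what is already stated there.
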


Since the sequence of queue-difference processes is not $\D$ tight, by virtue of Theorem \ref{thDSB},
we cannot have convergence of these processes in $\D$.
However, we can obtain a proper limit for the tight sequence of random variables $\{D^n_{1,2} (t): n \ge 1\}$ in $\RR$
for each fixed $t \in [t_1, t_2]$ by exploiting the AP.
See \cite{W91} for a similar result.

\begin{theorem}{$($pointwise AP$)$}\label{thAPlocal}
Consider the interval $[t_1, t_2]$ in Theorem \ref{thDSB}. Then
$D^n_{1,2}(t) \Ra D(x(t), \infty)$ in $\RR$ as $n \tinf$ for each $t$, $t_1 \le t \le t_2$,
where $D(x(t), \infty)$ has the limiting steady-state distribution of the FTSP $D (\gamma, s)$ for $\gamma = x(t)$.
\end{theorem}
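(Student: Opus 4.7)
The plan is to identify every subsequential weak limit of $D^n_{1,2}(t)$ with $D(x(t),\infty)$ via a short backward time shift of size $S/n$, combined with the FWLLN (Theorem \ref{th1}), the convergence of the time-expanded queue-difference process to the FTSP on bounded intervals (Theorem \ref{thConvToFast}), and the positive recurrence of the FTSP throughout $\AA$ (Theorem 6.1 of \cite{PeW09c} and Lemma \ref{lmLimFTSP}).

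By Theorem \ref{thDSB}(i), $\{D^n_{1,2}(t): n\ge1\}$ is tight in $\RR$, so by Prohorov it suffices to fix a weak subsequential limit $D^*$ (along a subsequence we continue to index by $n$) and show $D^* \equalDist D(x(t),\infty)$. Fix $S>0$ and set $t_n \equiv t - S/n$. For $n$ large, $t_n \in [t_1,t_2]$; Theorem \ref{th1} and continuity of $x$ give $n^{-1}X^n_6(t_n) \Ra x_6(t)$, where $x_6(t)$ is the deterministic fluid value from Theorem \ref{th1} (the extra coordinates being determined by the server SSC from \S \ref{secSSCserv}), while Theorem \ref{thDSB}(i) gives tightness of $D^n_{1,2}(t_n)$. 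Passing to a further subsequence yields
\bes
(n^{-1}X^n_6(t_n),\, D^n_{1,2}(t_n),\, D^n_{1,2}(t)) \Ra (x_6(t),\, D_0^S,\, D^*) \qinq \RR_6 \times \RR \times \RR,
\ees
where $D_0^S$ is an $\RR$-valued random variable.

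Now apply Theorem \ref{thConvToFast} at initial time $t_n$ with the random initial state $X^n_6(t_n)$, which is independent of the subsequent driving Poisson processes by the Markov property; the extension of Theorem \ref{thConvToFast} from deterministic to random initial data follows by conditioning. We obtain
\bes
\{D^n_e(X^n_6(t_n), s) : s \ge 0\} \Ra \{D(x(t), s) : s \ge 0\} \qinq \D,
\ees
with the limiting FTSP started from $D_0^S$. Since the FTSP is a pure-jump process, $S$ is almost surely a continuity point of the limiting path, so evaluation at $s=S$ is continuous and the continuous mapping theorem yields
\bes
D^n_{1,2}(t) = D^n_e(X^n_6(t_n), S) \Ra D^{x(t), D_0^S}(S),
\ees
where $D^{x(t),d}(\cdot)$ denotes the FTSP with parameter $x(t)$ started at $d$. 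Combined with $D^n_{1,2}(t) \Ra D^*$, this gives the identity $D^* \equalDist D^{x(t), D_0^S}(S)$ for every $S > 0$.

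Finally, let $S \to \infty$. Since $x(t)\in\AA$, Lemma \ref{lmLimFTSP} gives $D^{x(t),d}(S) \Ra D(x(t),\infty)$ for every $d\in\RR$. The family $\{D_0^S : S>0\}$ is tight in $\RR$ uniformly in $S$, as inherited from \eqref{DlogBd} in Theorem \ref{thDSB}(iii), so conditioning on $D_0^S$ and applying bounded convergence to a bounded continuous test function $f$ yields $E[f(D^{x(t),D_0^S}(S))] \to E[f(D(x(t),\infty))]$. Combined with the identity above, this gives $D^* \equalDist D(x(t),\infty)$, completing the proof. The main obstacle is precisely this last interchange: one must pass to the limit $S\to\infty$ in a distribution whose initial law $D_0^S$ itself varies with $S$. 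The required ingredients are the uniform tightness furnished by Theorem \ref{thDSB}(iii) together with convergence of the FTSP to stationarity that is sufficiently uniform over compacta of initial conditions, which is available for the positive-recurrent QBDs obtained under Assumption \ref{assE} and exploited throughout \cite{PeW09c}. A secondary technical point is the extension of Theorem \ref{thConvToFast} to random initial data, which is routine since $X^n_6(t_n)$ is independent of the driving processes after time $t_n$.
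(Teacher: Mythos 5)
Your proof takes essentially the same route as the paper: look back in time by an $O(1/n)$ amount, reduce to the FTSP via the time-expanded convergence of Theorem \ref{thConvToFast}, and then let the look-back parameter grow to invoke convergence of the FTSP to stationarity. The paper packages this as a three-term triangle inequality in the L\'{e}vy metric (controlled by Lemmas \ref{lmExpandUniform}, \ref{lmExpErg} and \ref{lmContSS}, choosing first $\delta$ small, then $s_0$ large, then $n_0$ large), whereas you organize it through subsequential weak limits and an exchange of limits; the underlying ideas and the uniformity requirements coincide.

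Two citation points need attention. First, you cite Theorem \ref{thDSB}(i) for tightness of $\{D^n_{1,2}(t_n): n \ge 1\}$ with $t_n = t - S/n$; as stated, (i) asserts stochastic boundedness for each \emph{fixed} $t$, not along a sequence $t_n \to t$. The proof of (i) does produce a bound that is locally uniform in the time argument (by coupling the bounding frozen QBD with its stationary version), so the conclusion you want holds, but it should be argued rather than read off from the statement of (i). Second, and more substantively, you invoke \eqref{DlogBd} from Theorem \ref{thDSB}(iii) for the uniform-in-$S$ tightness of $\{D_0^S : S > 0\}$, but \eqref{DlogBd} only says $\sup_{t_1 \le t \le t_2} D^n_{1,2}(t)/c_n \Rightarrow 0$ for any $c_n$ with $c_n/\log n \to \infty$, i.e., the sup is $o_P(c_n)$ with $c_n \to \infty$; this is strictly weaker than $O_P(1)$ and does not give the tightness you need for the final interchange of limits. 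What you actually want is, again, the locally uniform stochastic boundedness established inside the proof of part (i). By contrast, your appeal to convergence of the FTSP to stationarity uniformly over compacta of initial conditions is legitimate: this is precisely what the uniform ergodicity bound of Lemma \ref{lmExpErg} furnishes through the Lyapunov drift argument, since the constant in that bound depends on the initial state only through a drift function bounded on compacta. Finally, your remark about extending Theorem \ref{thConvToFast} to random initial data is unnecessary --- that theorem already allows $\Gamma^n_6$ to be random (independent of the subsequent driving processes) and a random limiting start $D(\gamma,0)$, which is exactly how you use it.
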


\begin{remark}{$($hitting times of $\AA)$} \label{remTightInA}
{\em
First, the stochastic boundedness in Theorem \ref{thDSB} above actually holds at time $t_0$ and thus in
 the larger interval $[t_0, t_2]$ if $t_0 = 0$ and $x(0) \in \AA$, because of the assumed convergence
of $D^n_{1,2}(0)$ in Assumption \ref{assC}.  However, we cannot get the full convergence in Theorem \ref{thAPlocal}
at $t_0 = 0$ because the limit $L$ in Assumption \ref{assC} need not be distributed the same as $D(x(0), \infty)$.
Second, we may also have
 $t_0 > 0$ because $t_0$ is a hitting time of $\AA$ from $\rS - \AA$.
Even if $x(0) \in \AA$, the fluid limit might leave $\AA$ eventually, and later return to $\AA$ at some time $t_0$;
then $x(t_0) \in \AA$ but
$x(s) \notin \AA$ for all $s \in (t_0 - \ep, t_0)$ for some $\ep > 0$.
If $t_0$ is such a hitting time of $\AA$, then we cannot obtain even a stochastic boundedness result at time $t_0$,
but we obtain the pointwise convergence in Theorem \ref{thAPlocal} in the interval $(t_0, t_2]$, open on the left.
}
\end{remark}

Finally, Theorem \ref{thAPlocal} can be applied to strengthen the conclusion of Theorem \ref{thConvToFast} by showing that
$D^n_e(X^n(t), \cdot)$ converges to a {\em stationary} FTSP $D(x(t), \cdot)$,
with $X^n(t) \equiv (Q^n_1(t), Q^n_2(t), Z^n_{1,2}(t))$, and $x(t) \equiv (q_1(t), q_2(t), z_{1,2}(t))$ is the limit of $\barx^n(t)$
at the fixed time $t$.

\begin{coro}
Suppose that the condition of Theorem {\em\ref{thDSB}} holds. For each $t$ such that the conclusion of Theorem {\em \ref{thDSB}} $(i)$ holds for an interval $[t_1, t_2]$,
$t_1 \le t \le t_2$,
\bes
\{D^n_e(X^n(t), s) : s \ge 0\} \Ra \{D(x(t), s) : s \ge 0\} \quad \mbox{in $\D$ as } n \tinf,
\ees
where the limiting FTSP $D(x(t), \cdot)$ is a stationary process, i.e., $D(x(t), s) \deq D(x(t), \infty)$ for all $s \ge 0$.
\end{coro}

\begin{proof}
First, for $X^n_6(t)$ as in Theorem \ref{th1} and $x_6(t) \equiv (q_1(t), q_2(t), m_1, z_{1,2}(t), 0, m_2 - z_{1,2}(t))$,
we have $\Gamma^n_6/n \Ra \gamma_6$ by Theorem \ref{th1}, where $\Gamma^n_6 \equiv X^n_6(t)$, $\gamma_6 \equiv x_6(t)$
and $\gamma \equiv x(t) \equiv (q_1(t), q_2(t), z_{1,2}(t))$ is in $\AA$ (because of our choice of $t$).
Moreover,
\bes
D^n_e (X^n(t), 0) = D^n_{1,2}(t) \Ra D(x(t), 0) \deq D(x(t), \infty) \quad \mbox{in $\RR$ as $n \tinf$},
\ees
where the first equality holds by the definition of $D^n_e$, and the limit holds by applying Theorem \ref{thAPlocal}.
Hence, the conditions in Theorem \ref{thConvToFast} hold, so that we have convergence in $\D$ of the process
$D^n_e (X^n(t), \cdot)$ to the FTSP $D(x(t), \cdot)$.
Since $D(x(t), 0) \deq D(x(t), \infty)$, the limiting FTSP is stationary as claimed.
\end{proof}

\subsection{Overview of the Proofs.}\label{secProofOver}

The rest of this paper is devoted to proving the six theorems above:
Theorems \ref{th1}-\ref{thAPlocal}.
We prove Theorem \ref{th1} in \S\S \ref{secPrelim}-\ref{secProofs}.
Toward that end, in \S \ref{secPrelim} we establish structural results for the
sequence $\{(\bar{X}^n_6, \bar{Y}^n_8): n \ge 1\}$,
 where $X^n_6 \equiv (Q^n_i, Z^n_{i,j}) \in
\D_6$ as in \eqn{vecX} and $Y^n_8 \equiv  (A^n_i, S^n_{i,j}, U^n_i) \in \D_8$, $i,j = 1,2$
and the associated fluid-scaled process $(\bar{X}^n_6, \bar{Y}^n_8)$ in \eqref{scale}.
In \S \ref{secRepGen} we construct the stochastic processes $(X^n_6, Y^n_8)$ in terms of rate-$1$ Poisson processes.
In \S \ref{secTight} we show that the sequence of stochastic processes
$\{(\bar{X}^n_6, \bar{Y}^n_8): n \ge 1\}$ is $C$-tight in $\D_{14}$ and, consequently, there are convergent subsequences
with smooth limits.
In \S \ref{secSSCserv} we show that the representation established in
\S \ref{secRepGen} can be simplified under Assumptions \ref{assA}-\ref{assE}, reducing
the essential dimension from $6$ to $3$.
The final three-dimensional representation $\barx^n$ in \eqn{FluidScaled} there
explains the form of the ODE in \eqn{odeDetails}.

Given the tightness established in \S \ref{secTight}, we prove the main Theorem \ref{th1} by characterizing the limit of all convergent subsequences
in \S \ref{secProofs}.  Given the SSC established in \S \ref{secSSCserv} and given that the three-dimensional
ODE in \S \ref{secODE} has been shown to have a unique solution in \cite{PeW09c}, it suffices to show that
the limit of any subsequence must almost surely be a solution to the ODE.  For that last step,
our proof in \S \ref{secProofs} follows Hunt and Kurtz \cite{HK94}, which draws heavily upon Kurtz \cite{K92}.
It exploits our martingale representation in theorem \ref{thFluidScaled}
and basic properties of random measures from \cite{K92}.
We also have developed an alternative proof exploiting stochastic bounds.  It is given in \S \ref{secAltProof} in the appendix.
Finally, in \S \ref{secAuxProofs} we prove Theorems \ref{thStatLim}- \ref{thAPlocal}. 

There is more in the appendix.  In \S \ref{secSupport} we present supporting technical results to prove the SSC results in \S \ref{secSSCserv}.
We start by introducing auxiliary frozen queue difference processes in \S \ref{secAux}.
We construct useful bounding processes in \S\S \ref{secStoBd}, \ref{secRateOrder} and \ref{secAbanBd}.
These are primarily for quasi-birth-and-death (QBD) processes, because we exploit a QBD representation for the FTSP;
see \S 6 of \cite{PeW09c} for background.
We establish extreme value limits for QBD processes
in \S \ref{secQBDextreme}. 
In \S \ref{secProofsSSCserv} we exploit the technical results in \S \ref{secSupport}
to prove three theorems stated in \S \ref{secSSCserv}.

\section{Preliminary Results for $X^n_6$.}\label{secPrelim}

In this section we establish preliminary structural results for the vector stochastic process
$(X^n_6, Y^n_8)$, where $X^n_6 \equiv (Q^n_i, Z^n_{i,j}) \in
\D_6$ as in \eqn{vecX} and $Y^n_8 \equiv  (A^n_i, S^n_{i,j}, U^n_i) \in \D_8$, $i,j = 1,2$
and the associated fluid-scaled process $(\bar{X}^n_6, \bar{Y}^n_8)$ in \eqref{scale}.
{\em The results in this section do not depend on Assumptions \ref{assA}-\ref{assE}.}
We do impose the many-server heavy-traffic scaling in \S \ref{secHT}.

In \S \ref{secRepGen} we construct the stochastic processes $(X^n_6, Y^n_8)$ in terms of rate-$1$ Poisson processes.
In \S \ref{secTight} we show that the sequence of stochastic processes
$\{(\bar{X}^n_6, \bar{Y}^n_8): n \ge 1\}$ is $C$-tight in $\D_{14}$.
In Corollary \ref{corLip} we apply the tightness to deduce smoothness properties for the limits of convergent subsequences.

\subsection{Representation of $X^n_6$.}\label{secRepGen}

In this section we develop representations for the basic CTMC $X^n_6$ with the FQR-T control.
At first in this section we do not require Assumptions \ref{assA}-\ref{assE}, so that we can have sharing in either direction,
but in only one direction at any time.  Let $(\lambda^n_1, \lambda^n_2)$ be the pair of fixed positive arrival rates in model $n$,
which here are unconstrained.


Following common practice, as reviewed in \S 2 of \cite{PTW07}, we represent the counting processes in
terms of mutually independent rate-$1$ Poisson processes.  We represent the counting processes
$A^{n}_i$, $S^{n}_{i,j}$ and $U^{n}_{i}$ introduced in the beginning of \S \ref{secStatement} as
\bequ \label{rep1}
\bsplit
A^{n}_i (t)     & \equiv N^{a}_i (\lambda^{n}_i t), \quad
S^{n}_{i,j} (t)  \equiv N^{s}_{i,j} \left(\mu_{i,j} \int_{0}^{t} Z^{n}_{i,j} (s) \, ds\right), \quad
U^{n}_{i} (t)    \equiv N^{u}_i \left(\theta_i \int_{0}^{t} Q^{n}_{i} (s) \, ds\right),
\end{split}
\eeq
for $t \ge 0$, where $N^{a}_i$, $N^{s}_{i,j}$ and $N^{u}_i$ for $i = 1,2; j = 1,2$ are eight mutually independent rate-$1$ Poisson processes.

We can then obtain a general representation of the CTMC $X^n_6$, which is actually valid
for general arrival processes with arrivals one at a time.
Let $S^n_1 \equiv S^{n}_{1,1} +  S^{n}_{2,1}$, $S^n_2 \equiv S^{n}_{1,2} +  S^{n}_{2,2}$
and $S^n \equiv S^{n}_{1} +  S^{n}_{2}$.  Paralleling \eqn{Dprocess}, let
$D^n_{2,1} (t) \equiv r_{2,1} Q^n_{2} (t) - k^n_{2,1} - Q^n_1 (t)$.

\begin{theorem}{$($general representation of $X^n_6)$} \label{thRep}
For each $n \ge 1$, the stochastic process $X^n_6$ is well defined as a random element of $\D_6$ by {\em \eqn{rep1}} and
\begin{eqnarray}
Q^{n}_{1} (t) & \equiv & Q^{n}_{1} (0)  +  \int_{0}^{t} 1_{\{Z^n_{1,1}(s-) + Z^n_{2,1}(s-) = m^n_1\}} \, dA^n_1 (s)
- \int_{0}^{t} 1_{\{D^n_{1,2}(s-) > 0, Z^n_{2,1} (s-) = 0, Q^n_1 (s-) > 0\}} \, dS^n (s) \nonumber \\
&&  - \int_{0}^{t} 1_{\{ \{Q^n_1 (s-) > 0\} \cap (\{Z^n_{2,1}(s-) > 0, D^n_{2,1} (s-) \le 0\}
\cup  \{Z^n_{2,1}(s-) = 0, D^n_{1,2} (s-) \le 0\}) \}} \, dS^n_{1} (s) - U^n_1 (t),  \nonumber \\
Z^{n}_{1,1} (t)  & \equiv & Z^{n}_{1,1} (0)  +  \int_{0}^{t} 1_{\{Z^n_{1,1} (s-) + Z^n_{2,1} (s-) < m^n_1 \}} \, dA^n_{1} (s) \nonumber \\
&& - \int_{0}^{t} 1_{\{(\{Z^n_{1,2}(s-) > 0\} \cup \{ D^n_{2,1} (s-) \le 0\}) \cap \{Q^n_1 (s-) = 0\}\}} \, dS^n_{1,1} (s)
 -  \int_{0}^{t} 1_{\{D^n_{2,1}(s-) > 0, Z^n_{1,2} (s-) = 0\}} \, dS^n_{1,1} (s), \nonumber \\
Z^{n}_{1,2} (t)  & \equiv & Z^{n}_{1,2} (0)  +  \int_{0}^{t} 1_{\{D^n_{1,2}(s-) > 0, Z^n_{2,1} (s-) = 0, Q^n_1 (s-) > 0\}} \, dS^n_{2,2} (s) \nonumber \\
&& - \int_{0}^{t} 1_{\{\{D^n_{1,2}(s-) \le 0\} \cup \{Z^n_{2,1} (s-) >0\} \}} \, dS^n_{1,2} (s).   \nonumber
\end{eqnarray}
Symmetry yields the parallel definitions of $Q^{n}_{2} (t)$, $Z^{n}_{2,2} (t)$ and $Z^{n}_{2,1} (t)$ from
$Q^{n}_{1} (t)$, $Z^{n}_{1,1} (t)$ and $Z^{n}_{1,2} (t)$ by simply switching the subscripts $1$ and $2$.
\end{theorem}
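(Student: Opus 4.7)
The plan is to construct $X^n_6$ pathwise from the eight independent rate-$1$ Poisson processes $N^a_i, N^s_{i,j}, N^u_i$ appearing in \eqn{rep1}, and then verify that the construction produces a CTMC whose transition rates match those induced by the FQR-T control. Existence and pathwise uniqueness of a solution to the integral system follow from a standard induction on jump times. Because every integrand on the right-hand side is left-continuous in $t$ and piecewise constant between jumps, and because each composed counting process $A^n_i, S^n_{i,j}, U^n_i$ has finitely many jumps on any bounded interval a.s., one can define $X^n_6$ to be constant on $[0,\tau_1)$, identify at the first jump time $\tau_1$ which single integrand is nonzero, update the state by the corresponding $\pm 1$ increment, and iterate. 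Nonexplosion is immediate: the total instantaneous jump rate at state $X^n_6(t)$ is bounded by $\lambda^n_1 + \lambda^n_2 + \mu^{\ast}(m^n_1 + m^n_2) + \theta_1 Q^n_1(t) + \theta_2 Q^n_2(t)$ with $\mu^{\ast} \equiv \max_{i,j}\mu_{i,j}$, while the abandonment terms prevent the queues from blowing up on bounded intervals.

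Second, I would verify that the indicator conditions in the integral representation correctly encode the FQR-T routing rules from \S \ref{secFQRorig}. For $Q^n_1$, the arrival indicator $\1_{\{Z^n_{1,1}+Z^n_{2,1}=m^n_1\}}$ says that a class-$1$ arrival joins queue $1$ precisely when pool $1$ is fully occupied; the first negative term, integrated against $dS^n$, subtracts from queue $1$ at any service completion in either pool whenever the direction $2 \to 1$ of sharing is active (encoded by $D^n_{1,2}>0$ and $Z^n_{2,1}=0$) and $Q^n_1>0$, so every newly available agent pulls from queue $1$; the second negative term, integrated against $dS^n_1$, covers the complementary regimes in which only pool-$1$ completions pull from queue $1$ (i.e.\ no cross-help is being given to class $1$); the abandonment term $U^n_1$ is immediate. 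The $Z^n_{1,1}$ equation accounts for class-$1$ arrivals that go straight into pool $1$ and for pool-$1$ completions that are not immediately backfilled from queue $1$, while the $Z^n_{1,2}$ equation tracks pool-$2$ completions that pick up a class-$1$ customer under sharing $2\to 1$ together with class-$1$ completions in pool $2$. The remaining three equations follow by the symmetry transformation stated in the theorem.

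Third, I would conclude that the pathwise construction has the law of the intended CTMC. Given the construction, this reduces to the random-time-change argument reviewed in \S 2 of \cite{PTW07}: by the thinning property and the strong Markov property of independent unit-rate Poisson processes, conditional on the state at any jump time the time to the next event is the minimum of independent exponentials with rates equal to the instantaneous transition rates of the CTMC, and the next-event type has the correct conditional distribution. Right-continuity and the existence of left limits are automatic from the inductive construction, so $X^n_6$ is a bona fide random element of $\D_6$.

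The main obstacle is the bookkeeping in the second step: the FQR-T rule partitions the state space into several operational regimes (no sharing, sharing $2 \to 1$, sharing $1 \to 2$, pools full or not full, queues empty or nonempty) and one must check that every indicator condition selects exactly the right set of events, with no double counting and no omissions. Assumption \ref{assShare} (one-way sharing) and the structure of $D^n_{1,2}$ and $D^n_{2,1}$ in \eqn{Dprocess} keep the case analysis manageable, but the combinatorial verification of all indicator conditions against the verbal description of the control in \S \ref{secFQRorig} is where the work lies.
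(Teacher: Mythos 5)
Your proposal is correct and follows essentially the same route as the paper: pathwise construction from the unit-rate Poisson processes by induction on successive transition epochs (the paper cites Lemma~2.1 of \cite{PTW07} for this, and also explains the indicator conditions term by term as you outline), with the non-explosion and random-time-change points you flag being exactly what that reference supplies. Your three-step organization (existence/uniqueness by induction, indicator verification against FQR-T, identification of the law) is a slightly more explicit unpacking of the same argument.
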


We remark that
the representation of $X^n_6$ in Theorem \ref{thRep} holds even without Assumptions \ref{assA}-\ref{assE}.

\begin{proof}  Just as in Lemma 2.1 of \cite{PTW07}, we can justify the construction
by conditioning on the initial values (the first term in each display) and the counting processes.
With these sample paths specified, we recursively construct the sample path of $X^n_6$.  By applying mathematical induction over successive transition epochs of $X^n_6$,
we show that the sample paths are right-continuous piecewise-constant functions satisfying the equations given.

To explain $Q^n_1$, the second term represents the increase by $1$ at each class-$1$ arrival epoch when service pool $1$ is fully occupied;
otherwise the arrival would go directly
into service pool $1$.  The third term represents the decrease by $1$ when any server completes service and sharing with pool $2$ helping class $1$ active;
that requires that the class-$1$ queue length be positive ($Q^n_1 (s) > 0$); sharing with pool $2$ helping class $1$ occurs when both
$D^n_{1,2}(s) > 0$ and $Z^n_{2,1} (s) = 0$.
The fourth term represents the decrease by $1$ when any pool-$1$ server completes service,
provided that again the queue length is positive ($Q^n_1 (s) > 0$). There are two scenarios:
(i) $\{Z^n_{2,1}(s) > 0, D^n_{2,1} (s) \le 0\}$ and (ii) $\{Z^n_{1,2}(s) = 0, D^n_{1,2} (s) \le 0\}$.
In the first, pool $1$ is helping class $2$, so type-$1$ servers take from queue $1$ only when $D^n_{2,1}(s) \le 0$.
The second scenario is the relative complement within the event $\{Z^n_{2,1} (s) = 0, Q^n_1 (s) > 0\}$
of the event in the third term, i.e., pool $2$ is allowed to help
class $1$, but $D^n_{1,2}(s) \le 0$, so that only type-$1$ servers take from queue $1$ at time $s$.

To explain $Z^n_{1,1}$, the second term represents the increase by $1$ which occurs at each class-$1$ arrival epoch at which service pool $1$
has spare capacity ($Z^n_{1,1} (s) + Z^n_{2,1} (s) < m^n_1$).  The third term represents the decrease by $1$ that occurs when a server in pool $1$
completes service of a class-$1$ customer, with pool $1$
not helping class $2$ ($(\{Z^n_{1,2}(s) > 0\} \cup \{ D^n_{2,1} (s) \le 0\})$) when the class-$1$ queue is empty ($Q^n_1 (s) = 0$).
The fourth term represents the decrease by $1$ that occurs when a server in pool $1$
completes service of a class-$1$ customer, when pool $1$ is helping class $2$ ($\{D^n_{2,1}(s) > 0, Z^n_{1,2} (s) = 0\}$).

To explain $Z^n_{1,2}$, the second term  represents the decrease by $1$ that occurs when a server in pool $2$
completes service of a class-$2$ customer, when class $2$ is helping class $1$ ($\{D^n_{1,2}(s) > 0, Z^n_{2,1} (s) = 0, Q^n_1 (s) > 0\}$).
The third term represents the decrease by $1$ that occurs when a server in pool $2$
completes service of a class-$1$ customer, when class $2$ is not helping class $1$ ($\{D^n_{1,2}(s) \le 0\} \cup \{Z^n_{2,1} (s) >0\}$).

Since the model is fully symmetric, the processes $Q^n_2$, $Z^n_{2,2}$ and $Z^n_{2,1}$ are the symmetric versions
of $Q^n_2$, $Z^n_{2,2}$ and $Z^n_{2,1}$, respectively, with the indices $1$ and $2$ switched.
\end{proof}

\subsection{Tightness and Smoothness of the Limits.}\label{secTight}

We do part of the proof of Theorem \ref{th1} here by
establishing tightness.  For background on tightness, see \cite{B99, PTW07, W02}.
We recall a few key facts:
Tightness of a sequence of $k$-dimensional
stochastic processes in $\D_{k}$ is equivalent to tightness of
all the one-dimensional component stochastic processes in $\D$.
For a sequence of random elements of $\D_{k}$, $\C$-tightness
implies $\D$-tightness and that the limits of all convergent
subsequences must be in $\C_{k}$; see Theorem 15.5 of the first 1968 edition of \cite{B99}.
Alternatively, Conditions (7.6) and (7.7) of Theorem 7.3 in \cite{B99}
hold for processes in $\D$ if and only if conditions (13.4) and (13.5) of Theorem 13.2 of \cite{B99} hold
and the limits of all convergent subsequences are in $\C$; see the Corollary on p. 179 of \cite{B99} or
Theorem VI.3.26 of \cite{JS87}.

\begin{theorem} \label{lmTight}
The sequence $\{(\bar{X}^n_6,\bar{Y}^n_8): n \ge 1\}$ in {\em \eqn{scale}} is\ $\C$-tight in $\D_{14}$.
\end{theorem}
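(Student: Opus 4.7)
The plan is the standard one for fluid-scaled many-server queueing systems: reduce $C$-tightness of the 14-dimensional sequence to $C$-tightness of each of its coordinates in $\D$, then exploit the rate-1 Poisson representation of Theorem \ref{thRep} together with the FSLLN for Poisson processes, $n^{-1}N(n\,\cdot)\to e$ u.o.c.\ a.s.

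First I would establish a priori stochastic boundedness on compact intervals $[0,T]$. For each $n$ and each $i$, class-$i$ customers in queue $i$ can only be created by class-$i$ arrivals (sharing routes arrivals to the other pool, never to the other queue), so $Q^n_i(t)\le Q^n_i(0)+A^n_i(t)$, hence $\|\barq^n_i\|_T \le \barq^n_i(0)+\barlm^n_i T$, which is stochastically bounded by Assumption \ref{assC2} and \eqn{MS-HTscale}. The server counts satisfy $Z^n_{i,j}(t)\le m^n_j$, so $\|\barz^n_{i,j}\|_T\le \bar m^n_j\to m_j$. These bounds, combined with \eqn{rep1}, bound the fluid-scaled counting processes: $\bar A^n_i(t)=n^{-1}N^a_i(\lambda^n_i t)$, $\bar S^n_{i,j}(t)=n^{-1}N^s_{i,j}(\mu_{i,j}\int_0^t Z^n_{i,j}(s)\,ds)$, $\bar U^n_i(t)=n^{-1}N^u_i(\theta_i\int_0^t Q^n_i(s)\,ds)$, each of which has its Poisson clock argument running at speed $O(n)$ on $[0,T]$.

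Second, I would derive a uniform modulus-of-continuity estimate. For the counting coordinates of $\bar Y^n_8$ the argument inside each rate-1 Poisson process is Lipschitz in $t$ with a random Lipschitz constant that is $O(n)$ uniformly in $n$ (since the integrands are bounded by $m^n_j$ or by $\|Q^n_i\|_T$, themselves $O(n)$). Writing each $\bar Y^n$ coordinate as $n^{-1}N(n\,g^n(t))$ for some random Lipschitz $g^n$ with $\|g^n\|_T$ bounded in probability, the FSLLN for Poisson processes gives
\begin{equation*}
\lim_{\delta\downarrow 0}\limsup_{n\to\infty}P\!\left(\sup_{|t-s|\le\delta,\,s,t\in[0,T]}|\bar Y^n(t)-\bar Y^n(s)|>\epsilon\right)=0
\end{equation*}
for every $\epsilon>0$, which is exactly the $C$-tightness modulus condition. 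For the coordinates of $\bar X^n_6$, Theorem \ref{thRep} writes each $Q^n_i$ and $Z^n_{i,j}$ as a signed sum of indicator-integrals against the counting processes $A^n$, $S^n_{i,j}$, $U^n_i$; hence increments of $\bar X^n_6$ over $[s,s+\delta]$ are dominated in absolute value by the corresponding increments of $\bar A^n + \bar S^n + \bar U^n$, and the same modulus bound follows.

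Third, I would combine the stochastic boundedness at $t=0$ (from Assumption \ref{assC2}) with the uniform modulus-of-continuity bound just derived to conclude $C$-tightness of each coordinate via the standard criterion (e.g.\ Theorem 13.2 / Theorem 15.5 of \cite{B99}); $C$-tightness of the 14-dimensional vector then follows from coordinate-wise $C$-tightness. I do not expect any serious obstacle: the mechanics are purely Poisson-process and Lipschitz arguments, and the paper itself characterizes this step as routine. The only point requiring care is the a priori linear-growth bound $Q^n_i(t)\le Q^n_i(0)+A^n_i(t)$, which needs the observation that under FQR-T arrivals to queue $i$ come only from class-$i$ arrivals; once this is noted, everything else is driven by the FSLLN for rate-1 Poisson processes.
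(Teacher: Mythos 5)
Your proposal is correct and follows essentially the same route as the paper: reduce to coordinate-wise $\C$-tightness, use the Poisson representation of Theorem \ref{thRep}, get stochastic boundedness at $0$ from Assumption \ref{assC2}, establish the a priori bound $Q^n_i(t)\le Q^n_i(0)+A^n_i(t)$ to control the (random) clock rate of the abandonment Poisson process, and then transfer the modulus of continuity of the rate-$1$ Poisson processes to the fluid-scaled coordinates via the FSLLN. The only minor difference is cosmetic: the paper handles the random Lipschitz constant for $U^n_i$ by explicitly restricting to the high-probability event $B_n$ and treats $Z^n_{1,2}$ separately via the auxiliary arrival count $A^n_{1,2}$, whereas you dominate $Z^n_{i,j}$ increments directly by increments of the relevant $S^n_{i,j}$ counting processes in Theorem \ref{thRep}, which is slightly cleaner but equivalent.
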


\begin{proof}
It suffices to verify conditions (6.3) and (6.4) of  Theorem 11.6.3 of \cite{W02}, namely
to show that $\barx^n(0)$ is stochastically bounded (tight in $\RR_6$)
and appropriately control the oscillations, using the modulus of continuity on $\C$.
 We obtain the stochastic boundedness at time $0$ immediately from Assumption \ref{assC}.

 We now show that we can control the oscillations below.
  For that purpose, let
$w(x,\zeta,T)$ is the modulus of continuity of the function $x \in \D$, i.e.,
\bequ \label{modulus}
w(x,\zeta,T) \equiv \sup{\{|x(t_2) - x(t_1)|: 0 \le t_1 \le t_2 \le T, |t_2 - t_1| \le \zeta\}}.
\eeq

 Using the representations in \S \ref{secRepGen}, for $t_2 > t_1 \ge 0$ we have
\bes
\begin{split}
\left| \barq^n_1(t_2) - \barq^n_1(t_1) \right| \le \frac{A^n_1(t_2) - A^n_1(t_1)}{n} +
\frac{S^n (t_2) - S^n (t_1)}{n}
+ \frac{S^n_{1,1} (t_2) - S^n_{1,1}(t_1)}{n}
+ \frac{U^n_1(t_2) - U^n_1(t_1)}{n}.
\end{split}
\ees
and similarly for $\barq^n_2$.
Hence, for any $\zeta >0$ and $T > 0$,
\beq
w(Q^n_1/n,\zeta,T) \le w(A^n_1/n,\zeta,T)+ w(S^n/n,\zeta,T) + w(S^n_{1,1}/n,\zeta,T) + w(U^n_1/n,\zeta,T).
\eeqno
Then observe that we can bound the oscillations of the service processes $S^n_{i,j}$ by the oscillations in the
scaled Poisson process $N^s_{i,j} (n \cdot)$.  In particular, by \eqn{rep1},
\beql{serviceBD}
w(S^n_{i,j}/n,\zeta,T) \le w(N^s_{i,j} (n \mu_{i,j} m_j \cdot)/n, \zeta, T) \le w(N^s_{i,j} (n \cdot)/n, c\zeta, T)
\eeq
for some constant $c > 0$.
Next for the abandonment process $U^n_i$, we use the elementary bounds
\begin{eqnarray}
Q^{n}_{i} (t) & \le & Q^n_i (0) + A^n_i (t),  \nonumber \\
|U^n_i (t_2) - U^n_i (t_1)| & = & |N_i (\theta_i \int_{t_1}^{t_2} Q^n_i (s) \, ds |
\le | N_i (n \theta (\bar{Q}^n_i (0) + \bar{A}^n_i (T)) (t_2 - t_1))|. \nonumber
\end{eqnarray}
Let $q_{bd} \equiv 2(q_i (0) + T)$, where $\bar{Q}^n_i (0) \Rightarrow q_i (0)$ by Assumption \ref{assC2},
and let $B_n$ be the following subset of the underlying probability space:
$$B_n \equiv \{\bar{Q}^n_i (0) + \bar{A}^n_i (T) \le q_{bd} \}.$$
Then $P(B_n) \ra 1$ as $n \ra \infty$ and,
on the set $B_n$, we have
\beql{abanBD}
w(U^n_{i}/n,\zeta,T) \le w(N^u_{i} (n q_{bd} \cdot)/n, \zeta, T) \le w(N^u_{i} (n \cdot)/n, c\zeta, T)
\eeq
for some constant $c > 0$.

Thus, there exists a constant $c > 0$ such that, for any $\eta > 0$, there exists $n_0$ and $\zeta > 0$ such that,
for all $n \ge n_0$, $P(B_n) > 1 - \eta/2$ and on $B_n$
\beq
w(Q^n_i/n,\zeta,T) \le w(N^a_i (n \cdot)/n,c\zeta,T)+ 2\sum_{i=1}^{2} \sum_{j=1}^{2}{w(N^s_{i,j}(n \cdot)/n, c\zeta,T)}
+ w(N^u_i (n \cdot)/n,c \zeta,T).
\eeqno
However, by the FWLLN for the Poisson processes, we know that we can control all these moduli of continuity on the right.
Thus we deduce that, for every $\epsilon > 0$ and $\eta > 0$, there exists $\zeta > 0$ and $n_0$ such that
$$P(w(Q^n_i/n,\zeta,T) \ge \epsilon) \le \eta \qforallq n \ge n_0.$$
Hence, we have shown that
the sequence $\{\bar{Q}^n_i\}$ is tight.

We now turn to the sequence $\{\barz^n_{1,2}\}$.
Let $A^n_{1,2}(t)$ denote the total number of class-$1$ arrivals up to time $t$,
who will eventually be served by type-$2$ servers in system $n$.
Let $\bar{A}^n_{1,2} \equiv A^n_{1,2} / n$ and $\bar{S}^n_{1,2}(t) \equiv S^n_{1,2}(t) / n$, for $S^n_{1,2}(t)$ in \eqref{rep1}.
Since
\bes
Z^n_{1,2}(t) = Z^n_{1,2}(0) + A^n_{1,2}(t) - S^n_{1,2}(t),
\ees
we have
\bes
|\barz^n_{1,2}(t_2) - \barz^n_{1,2}(t_1)| \le \bar{A}^n_{1,2}(t_2) - \bar{A}^n_{1,2}(t_1) + \bar{S}^n_{1,2}(t_2) - \bar{S}^n_{1,2}(t_1).
\ees
However, for $A^n_1$ in \eqref{rep1},
\bes
A^n_{1,2}(t_2) - A^n_{1,2}(t_1) \le A^n_1 (t_2) - A^n_1(t_1).
\ees
Since $\bar{A}^n_1 \Rightarrow \lambda_1 e$ in $\D$, the sequence $\{\bar{A}^n_1\}$ is tight.
Together with \eqn{serviceBD}, that implies that the sequence $\{\barz^n_{1,2}\}$ is tight as well.
Finally, we observe that the tightness of $\{\bar{Y}^n_8\}$ follows from \eqn{serviceBD}, \eqn{abanBD} and the convergence of $\bar{A}^n_i$.
\end{proof}


Since the sequence $\{(\bar{X}^n_6,\bar{Y}^n_8): n \ge 1\}$ in
\eqn{scale} is\ $\C$-tight by Theorem \ref{lmTight}, every
subsequence has a further subsequence which converges to a continuous limit.  We  now
apply the modulus-of-continuity inequalities established in the proof of Theorem \ref{lmTight}
to deduce additional smoothness properties of the limits of all
converging subsequence.

\begin{coro}\label{corLip}
If $(\bar{X}_6, \bar{Y}_8)$ is the limit of a
subsequence of $\{(\bar{X}^n_6,\bar{Y}^n_8): n \ge 1\}$ in $\D_{14}$, then each component in $\D$, say $\bar{X}_i$, has bounded modulus of continuity; i.e.,
for each $T>0$, there exists a constant $c > 0$ such that
\bequ \label{modlip}
w(\bar{X}_i, \zeta, T) \le c \zeta \quad w.p.1
\eeq
for all $\zeta > 0$.  Hence $(\bar{X}_6, \bar{Y}_8)$ is Lipschitz continuous w.p.1, and is thus differentiable almost everywhere.
\end{coro}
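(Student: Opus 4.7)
The plan is to pass the bounds on the modulus of continuity established in the proof of Theorem~\ref{lmTight} to the limit. That proof shows that for each coordinate $\bar{X}^n_i$ there exist constants $c_i > 0$ such that, on a set $B_n$ of probability tending to $1$, the modulus of continuity of $\bar{X}^n_i$ is dominated by a sum of moduli of continuity of scaled rate-$1$ Poisson processes of the form $N(n\cdot)/n$, each evaluated at argument $c_i\zeta$. The same bound works for the components of $\bar{Y}^n_8$, since those are already expressed as compositions of rate-$1$ Poisson processes with Lipschitz time-change integrals.

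First I would invoke the FWLLN for rate-$1$ Poisson processes: $N(n\cdot)/n \Rightarrow e$ in $\D$ as $n\to\infty$, where $e$ is the identity function and thus Lipschitz with constant~$1$. Because the limit $e$ is continuous, for every fixed $\zeta > 0$ and $T > 0$ the map $x \mapsto w(x,\zeta,T)$ is continuous at $e$ in the Skorohod $J_1$ topology, so $w(N(n\cdot)/n, c\zeta, T) \Rightarrow c\zeta$. Combining this with the inequalities from the proof of Theorem~\ref{lmTight}, we get that for every $\zeta, T, \eta > 0$ and every $\epsilon > 0$,
\begin{equation*}
\limsup_{n \to \infty} P\bigl(w(\bar{X}^n_i, \zeta, T) > c_i\zeta + \epsilon\bigr) \le \eta,
\end{equation*}
and, letting $\eta \downarrow 0$, $\limsup_n P(w(\bar{X}^n_i,\zeta,T) > c_i\zeta + \epsilon) = 0$.

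Next, fix a convergent subsequence with limit $(\bar{X}_6,\bar{Y}_8)$, which lies in $\C_{14}$ by $\C$-tightness. Using the Skorohod representation theorem, realize this convergence as almost sure uniform convergence on compact intervals. Since uniform convergence on $[0,T]$ implies convergence of the modulus of continuity, $w(\bar{X}^n_i,\zeta,T) \to w(\bar{X}_i,\zeta,T)$ w.p.1 along the subsequence. Combined with the estimate above, this yields $w(\bar{X}_i,\zeta,T) \le c_i\zeta$ w.p.1 for each fixed $\zeta > 0$. Finally, take a countable dense set of $\zeta$'s in $(0,\infty)$ and use monotonicity and continuity of $\zeta \mapsto w(\bar{X}_i,\zeta,T)$ for continuous $\bar{X}_i$ to upgrade to: w.p.1, the inequality $w(\bar{X}_i,\zeta,T) \le c_i\zeta$ holds simultaneously for every $\zeta > 0$ and every $T$ in a countable dense subset of $(0,\infty)$; monotonicity in $T$ then extends this to all $T$. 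This is exactly \eqref{modlip}, which is Lipschitz continuity with constant $c_i$ on each compact interval; differentiability almost everywhere follows from the standard fact that real-valued Lipschitz functions on $\RR$ are a.e.\ differentiable.

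The one subtle point, and the main thing to watch, is the exchange of the qualifiers ``$\zeta > 0$'' and ``w.p.1'': the pointwise (in $\zeta$) bounds hold on events of full measure that may depend on $\zeta$, and we need a single full-measure event on which the bound holds for every $\zeta$. Restricting first to a countable dense set and then invoking continuity of $w(\bar{X}_i,\cdot,T)$ (which follows from the continuity of $\bar{X}_i$) handles this cleanly.
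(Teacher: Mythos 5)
Your argument is correct and follows essentially the same route as the paper: pass the modulus-of-continuity bounds from the proof of Theorem~\ref{lmTight} to the limit, using the fact that $w(\bar N^n, c\zeta, T)\Rightarrow c\zeta$ for the scaled rate-$1$ Poisson processes. The paper establishes that last convergence via the CLT-normalized process $\hat N^n \equiv \sqrt n(\bar N^n - e)$ and a triangle inequality, whereas you invoke the FWLLN plus continuity of $w(\cdot,\zeta,T)$ directly; the paper also glosses over the quantifier exchange between ``w.p.1'' and ``for all $\zeta$'', which you correctly handle via a countable dense set of $\zeta$'s and the continuity of $\zeta\mapsto w(\bar X_i,\zeta,T)$ for continuous $\bar X_i$.
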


\begin{proof}
Apply the bounds on the modulus of continuity involving Poisson processes in the proof of Theorem \ref{lmTight} above.
For a Poisson process $N$, let $\hat{N}^n \equiv \sqrt{n}(\bar{N}^n -e)$, where $\bar{N}^n (t) \equiv N(nt)/n$, $t \ge 0$.
By the triangle inequality, for each $n$, $\zeta$, and $T$,
\bes
w(\bar{N}^n, \zeta, T) \le \frac{w(\hat{N}^n, \zeta, T)}{\sqrt{n}} + w(e, \zeta, T) \Rightarrow \zeta \qasq n \ra \infty.
\ees
Since, $w(x, \zeta, T)$ is a continuous function of $x$ for each fixed $\zeta$ and $T$, we can apply this bound with the inequalities
in the proof of Theorem \ref{lmTight} to deduce \eqn{modlip}.
\end{proof}


We remark in closing this section that Theorem \ref{lmTight} and Corollary \ref{corLip}
also hold with Assumption \ref{assC} replaced by $\barx^n(0) \Rightarrow x(0)$ as $n \ra \infty$,
where $x(0)$ is a deterministic element of $\RR_3$.

\section{Structural Simplification.}\label{secSSCserv}

We now exploit Assumptions \ref{assA}-\ref{assE} to simplify the representation established in
\S \ref{secRepGen} above, reducing the essential dimension from $6$ to $3$, following the plan described in \S \ref{secDimRed}.
We first establish this dimension reduction over an interval $[0,\tau]$ and later, after Theorem \ref{th1} has been proved over the
same interval $[0,\tau]$, we show that all the results here, and thus Theorem \ref{th1} too, can be extended to the interval $[0, \infty)$.

Let
\bequ \label{stopTau}
\T^n_0 \equiv \inf\{t > 0 : Z^n_{2,1}(t) > 0 \, \, \mbox{or} \, \, Q^n_1(t) = 0\, \, \mbox{or} \, \,   Q^n_2(t) = 0\}.
\eeq
By Assumption \ref{assC}, both queues are initially strictly positive (so there is no idleness in either pool) and $Z^n_{2,1}(0) = 0$.
Hence, $\T^n_0 > 0$ for each $n \ge 1$.
Theorem \ref{thRep} with the definitions in \eqn{rep1} implies the following reduction from six dimensions to three over $[0, \T^n_0]$.  
Let $\deq$ denote equality in distribution for processes.

\begin{coro}\label{corRep3}
On the random interval $[0, \T^n_0]$, $X^n_6 = X^{n,**}_6$ w.p.1, where \beq
X^{n,**}_6  \equiv  (Q^n_1, Q^n_2, m^n_{1,1} e, Z^n_{1,2}, 0 e, m^n_2 e - Z^n_{1,2}),
\eeqno
 with
\bea
Q^{n}_{1} (t) & \equiv & Q^{n}_{1} (0)  +  A^n_1 (t) - \int_{0}^{t} 1_{\{D^n_{1,2}(s-) > 0\}} \, dS^n (s)
- \int_{0}^{t} 1_{\{D^n_{1,2}(s-) \le 0\}} \, dS^n_{1,1} (s) - U^n_1 (t) \nonumber \\
Q^{n}_{2} (t) & \equiv & Q^{n}_{2} (0)  +  A^n_2 (t) - \int_{0}^{t} 1_{\{D^n_{1,2}(s-) \le 0\}} \, dS^n_{2,2} (s)
- \int_{0}^{t} 1_{\{D^n_{1,2}(s-) \le 0\}} \, dS^n_{1,2} (s) - U^n_2 (t) \nonumber  \\
Z^{n}_{1,2} (t)  & \equiv & Z^{n}_{1,2} (0)  +  \int_{0}^{t} 1_{\{D^n_{1,2}(s-) > 0\}} \, dS^n_{2,2} (s)
- \int_{0}^{t} 1_{\{D^n_{1,2}(s-) \le 0\}} \, dS^n_{1,2} (s),
\eea
and $X^{n,**}_6 \deq X^{n,*}_6$, where $X^{n,*}_6  \equiv  (Q^{n,*}_{1}, Q^{n,*}_{2}, m^n_{1,1} e, Z^{n,*}_{1,2}, 0 e, m^n_2 e - Z^{n,*}_{1,2})$ with
\bea
Q^{n,*}_{1} (t) & \equiv & Q^{n,*}_{1} (0) + N^{a}_{1} (\lambda^n_1 t) -  N^{s}_{1, 1} (\mu_{1,1} m^n_{1} t)
-  N^{s,2}_{1,2} \left(\mu_{1,2} \int_{0}^{t} 1_{\{D^{n,*}_{1,2}(s) > 0\}} Z^{n,*}_{1,2} (s)) \, ds\right) \nonumber \\
& & - N^{s}_{2,2} \left(\mu_{2,2} \int_{0}^{t} 1_{\{D^{n,*}_{1,2}(s) > 0\}} (m^n_2 - Z^{n,*}_{1,2} (s)) \, ds\right)
- N^u_1 \left(\theta_{1} \int_{0}^{t} Q^{n,*}_{1} (s) \, ds\right), \label{rep4}\\
Q^{n,*}_{2} (t) & \equiv &  Q^{n,*}_{2} (0) + N^{a}_{2} (\lambda^n_2 t)
-  N^{s,2}_{2,2} \left(\mu_{2,2} \int_{0}^{t} 1_{\{D^{n,*}_{1,2}(s) \le 0\}} (m^n_2 - Z^{n,*}_{1,2} (s)) \, ds\right) \nonumber \\
& \quad & - N^{s}_{1,2} \left(\mu_{1,2} \int_{0}^{t} 1_{\{D^{n,*}_{1,2}(s) \le 0\}} Z^{n,*}_{1,2} (s) \, ds\right)
- N^u_2 \left(\theta_{2} \int_{0}^{t} Q^{n,*}_{2} (s) \, ds\right), \label{rep5} \\
Z^{n,*}_{1,2} (t)  & \equiv & Z^{n,*}_{1,2} (0) + N^{s}_{2,2} \left(\mu_{2,2} \int_{0}^{t} 1_{\{D^{n,*}_{1,2}(s) > 0\}} (m^n_2 - Z^{n,*}_{1,2}) (s) \, ds\right) \nonumber \\
& \quad & - N^{s}_{1,2} \left(\mu_{1,2} \int_{0}^{t} 1_{\{D^{n,*}_{1,2}(s) \le 0\}}  Z^{n,*}_{1,2} (s) \, ds\right), \label{rep3}
\eea
where $N^a_i, N^u_i, N^s_{1,1}, N^s_{i,2}, N^{s,2}_{i,2}$ for $i = 1,2$ are mutually independent rate-$1$ Poisson Processes and
$D^{n,*}_{1,2} (t) \equiv Q^{n,*}_{1} (t) - k^n_{1,2} - r_{1,2} Q^{n,*}_{2} (t)$ as in {\em \eqn{Dprocess}}.
\end{coro}
Note that in two places in the three displays \eqn{rep4}-\eqn{rep3} above we have introduced the new independent rate-$1$ Poisson processes $N^{s,2}_{i,2}$.
Note that $Z^{n,*}_{1,2} (t)$ might be equal to zero for some or all $t$ in $[0, \T^n_0]$.


We next prove that $\T^n_0$ is bounded away from $0$ asymptotically, i.e., that there exists a $\tau > 0$ such that
$P(T^n_0 \ge \tau) \ra 1$ as $n\tinf$.  We do so in two parts (both proved in \S \ref{secProofsSSCserv}):
\begin{theorem}{$($no sharing in the opposite direction$)$} \label{thZ21}
There exists $\tau > 0$ such that $\|Z^n_{2,1}\|_{\tau} \Ra 0$ as $n\tinf$.
\end{theorem}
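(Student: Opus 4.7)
The plan is to show that the event allowing $Z^n_{2,1}$ to jump upward fails throughout some initial interval $[0,\tau]$ with probability tending to $1$. By the symmetric analog of the representation in Theorem~\ref{thRep}, $Z^n_{2,1}$ can increase only at epochs $s$ where $D^n_{2,1}(s-) > 0$, $Z^n_{1,2}(s-) = 0$, and $Q^n_2(s-) > 0$. Since $Z^n_{2,1}(0) = 0$ a.s.\ by Assumption~\ref{assC}, it suffices to prove $D^n_{2,1}(t) \le 0$ for all $t \in [0,\tau]$ w.h.p. The key algebraic reduction, using Assumption~\ref{assRatio} and $Q^n_2 \ge 0$, is the deterministic inequality
\begin{equation*}
D^n_{1,2}(t) + D^n_{2,1}(t) = (r_{2,1} - r_{1,2})Q^n_2(t) - (k^n_{1,2}+k^n_{2,1}) \le -(k^n_{1,2}+k^n_{2,1}),
\end{equation*}
so $D^n_{2,1}(t) \le 0$ whenever $D^n_{1,2}(t) \ge -(k^n_{1,2}+k^n_{2,1})$. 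It therefore suffices to show $P\bigl(\inf_{0 \le t \le \tau}D^n_{1,2}(t) < -(k^n_{1,2}+k^n_{2,1})\bigr) \to 0$ for some $\tau > 0$.

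I would split on the initial state $x(0)$ in Assumption~\ref{assC}. If $x(0) \in \rS^+$, then $q_1(0) - r_{1,2}q_2(0) > 0$, and combining the $\C$-tightness from Theorem~\ref{lmTight} with the Lipschitz modulus bound in Corollary~\ref{corLip} yields $\tau > 0$ on which $\bar Q^n_1(t) - r_{1,2}\bar Q^n_2(t) \ge (q_1(0) - r_{1,2}q_2(0))/2$ uniformly on $[0,\tau]$ w.h.p. Since $k^n_{i,j}/n \to 0$ by Assumption~\ref{assThresh}, multiplying by $n$ gives $D^n_{1,2}(t) \to +\infty$ uniformly on $[0,\tau]$, and the desired bound is immediate.

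The hard case is $x(0) \in \AA \cup \AA^+$, where $D^n_{1,2}(0) \Rightarrow L$ is only $O_P(1)$, so the initial value supplies no buffer; we must show the maximal downward excursion of $D^n_{1,2}$ on $[0,\tau]$ is $o_P(c_n)$. My plan is to couple $D^n_{1,2}$ with an auxiliary ``frozen'' queue-difference process obtained by holding $(Q^n_1, Q^n_2, Z^n_{1,2})$ fixed at their initial values; by Assumption~\ref{assE} this frozen process is a QBD with transition rates~\eqref{bd1}--\eqref{bd4} evaluated at $\gamma = \bar X^n(0)$, and by Lemma~\ref{lmLimFTSP} it is positive recurrent when $x(0)\in\AA$ and has nonnegative drift in the positive region when $x(0)\in\AA^+$. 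In either case, QBD extreme-value estimates yield a maximal excursion of order $O_P(\log n) = o_P(c_n)$ on any fixed window, since $c_n/\sqrt n \to \infty$. The unfrozen queue and server variables drift by only $O(n\tau)$ in time $\tau$, so a sample-path stochastic-ordering argument, using monotonicity of the rates in \eqref{bd1}--\eqref{bd4} in their frozen parameter, transfers the excursion bound to $D^n_{1,2}$ itself, giving $\|D^n_{1,2}\|_\tau = o_P(c_n)$ and hence $D^n_{1,2}(t) \ge -(k^n_{1,2}+k^n_{2,1})$ w.h.p.\ on $[0,\tau]$.

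The main obstacle is this coupling and the QBD excursion bound: $D^n_{1,2}$ is not spatially scaled, it evolves on the fast time scale of \S\ref{secFTSP}, and it must be controlled to sub-$c_n$ precision on an interval that does not shrink with $n$. These are precisely the technical ingredients announced for \S\ref{secSupport} (frozen queue-difference processes, stochastic ordering of QBDs, and QBD extreme-value limits), which \S\ref{secProofsSSCserv} assembles into the proof.
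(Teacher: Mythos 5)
Your reduction via the pointwise identity
\[
D^n_{1,2}(t) + D^n_{2,1}(t) = (r_{2,1}-r_{1,2})Q^n_2(t) - (k^n_{1,2}+k^n_{2,1}) \le -(k^n_{1,2}+k^n_{2,1})
\]
is a genuinely different and cleaner decomposition than the paper's. The paper uses exactly this inequality, but only at $t=0$ as the opening step of Lemma \ref{lmD21extreme}; you apply it uniformly on $[0,\tau]$, which lets you absorb the paper's Lemma \ref{lmZ12} (the $z_{1,2}(0)>0$ / one-way-sharing case) and Lemma \ref{lmD21} (the strict-fluid-gap case $d_{2,1}(0)<0$) into a single dichotomy on the location of $x(0)$. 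What the paper's three-lemma split buys is modularity: Lemma \ref{lmZ12} is re-used verbatim in the stationarity proof (Lemma \ref{lmStatLim}), and the fluid-scale case is dispatched with only the elementary bounds of Lemmas \ref{lmZbds}--\ref{lmBoundsLim} rather than the full QBD machinery; what your identity buys is one fewer case and a direct reduction to a lower-bound estimate on the single process $D^n_{1,2}$.

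There are two places where your sketch is looser than it needs to be and would require the paper's machinery to close. First, for $x(0)\in\AA^+$ the frozen process for $D^n_{1,2}$ is \emph{not} positive recurrent ($\delta_+\ge 0$, so it is transient or null recurrent in the positive region), so Lemma \ref{lmQBDextreme} does not apply to it directly. The paper handles this by constructing in Lemma \ref{lmD21extreme} a process $D^{n,*}_f$ reflected at zero from below, whose one-sided drift $\delta_*$ is shown to be negative throughout $\AA\cup\AA^+$, making the reflected process positive recurrent; the extreme-value lemma is then applied to that. In your language: you want the reflection $(-D^n_{1,2})^+$, whose drift is governed by $\delta_-$ (positive on $\AA\cup\AA^+$), not $\delta_+$; say this explicitly. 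Second, your ``monotonicity of the rates'' step implicitly asserts a sample-path ordering that holds only with a finite gap when $r_{1,2}\neq 1$: rate order implies stochastic order only up to an additive constant $(j\vee k)-1$ on the integer lattice (Corollary \ref{corRateOrder}), and the bounding state vectors $\Gamma^n_\tau$ must be built from the monotone bounding processes of \S\ref{secStoBd} so that the rate order actually holds on all of $[0,\tau]$, with $\tau$ chosen small enough that $\delta_*$ evaluated at the fluid limit of $\Gamma^n_\tau$ stays negative (Step Two of Lemma \ref{lmD21extreme}). These are the non-obvious technical points, and your proposal identifies the right ingredients but does not carry them out.
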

\begin{theorem} {$($positive queue lengths$)$} \label{thQpos}
For $\tau$ in Theorem \ref{thZ21},
\bes
P\left(\inf_{0 \le t \le \tau} \min\{Q^n_1 (t), Q^n_2(t)\} > 0\right) \ra 1 \qasq n \ra \infty.
\ees
\end{theorem}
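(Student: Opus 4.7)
The plan is to show that the stopping time $\T^n_0$ defined in \eqref{stopTau} is bounded below by some fixed $\tau' > 0$ with probability tending to one, by exploiting the self-stabilizing feedback built into the FQR-T control via the threshold $k^n_{1,2}$. By Theorem \ref{thZ21}, $\|Z^n_{2,1}\|_\tau \Rightarrow 0$, so on $[0, \tau \wedge \T^n_0]$ I may invoke the three-dimensional representation of Corollary \ref{corRep3}. Fix a small $\eta \in (0,1)$, to be chosen in terms of $r_{1,2}$ and the constants $a > 0$ and $k_{1,2} > 0$ in Assumptions \ref{assThresh} and \ref{assC}. Define $\sigma^n \equiv \inf\{t \ge 0 : \min\{Q^n_1(t), Q^n_2(t)\} \le \eta a_n\}$; by Assumption \ref{assC}, $\sigma^n > 0$ almost surely, and since $Q^n_i(t) \ge \eta a_n > 0$ on $[0, \sigma^n]$, it suffices to show $P(\sigma^n \ge \tau') \to 1$ for some $\tau' \in (0,\tau]$.

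The crux is a self-correcting feedback: the control $D^n_{1,2}$ shuts off pool-2 sharing precisely in those configurations where it would deplete the queue that is currently low. On $\{\sigma^n > s\}$, if $Q^n_1(s) \le 2\eta a_n$ then $Q^n_2(s) > \eta a_n$, and for $\eta$ sufficiently small one has $D^n_{1,2}(s) = Q^n_1(s) - k^n_{1,2} - r_{1,2} Q^n_2(s) < 0$, so by Corollary \ref{corRep3} pool 2 cannot serve class 1 in this regime. Thus departures from $Q^n_1$ come only from pool-1 completions and class-1 abandonments, and the net drift is asymptotically $n(\lambda_1 - \mu_{1,1} m_1) - \theta_1 Q^n_1$, which is of order $+n$ by Assumption \ref{assA}(1). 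Symmetrically, when $Q^n_2(s) \le 2\eta a_n$ and $Q^n_1(s) > 2\eta a_n$, the same choice of $\eta$ yields $D^n_{1,2}(s) > 0$, pool 2 serves only class 1, and $Q^n_2$ experiences only arrivals and class-2 abandonment with drift $\lambda^n_2 - \theta_2 Q^n_2 \approx +n\lambda_2$.

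To finish, I would construct birth-and-death lower-bound processes $\underline{Q}^n_i$ driven by the ``benign'' dynamics above that are stochastically dominated by $Q^n_i$ on $[0, \sigma^n]$, and apply Poisson concentration to show that $\underline{Q}^n_i$ cannot fall from $a_n$ to $\eta a_n$ in any fixed time $\tau' > 0$, uniformly in $n$. This step uses $a_n \to \infty$ (a consequence of $a_n/c_n \to a > 0$ and $c_n \to \infty$ in Assumption \ref{assThresh}), so that Poisson fluctuations over $[0,\tau']$ are negligible compared with $a_n$, while the positive drift is of order $n$. The main obstacle is the circular dependence between $Q^n_1$ and $Q^n_2$ through $D^n_{1,2}$: a lower bound for $Q^n_1$ requires $Q^n_2$ to remain large, and conversely. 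This is resolved by the first-passage interpretation of $\sigma^n$: at the random time $\sigma^n$, only one of the two queues has just reached the level $\eta a_n$, while the other is still strictly above it, so the corresponding stabilizing drift argument applies precisely at the critical moment and rules out $\sigma^n \le \tau'$ with probability tending to one.
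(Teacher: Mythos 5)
Your proposal attempts a genuinely different argument from the paper, but it contains a structural gap that the paper's own proof is specifically engineered to avoid.

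The paper introduces the excess processes $L^n_i \equiv Q^n_i + Z^n_{i,1} + Z^n_{i,2} - m^n_i$, bounds them below by versions with reflecting upper barriers at $k^n_{1,2}$ and inflated departure rates, and then takes the particular linear combination $U^n_b \equiv \mu_{2,2}(L^n_{1,b}-k^n_{1,2}) + \mu_{1,2}(L^n_{2,b}-k^n_{1,2})$. The point of this specific combination is that the $Z^n_{1,2}$-dependent terms in the death rates of $L^n_{1,b}$ and $L^n_{2,b}$ cancel, so $U^n_b$ becomes a \emph{single} birth--death process with constant rates, \emph{regardless} of the sign of $D^n_{1,2}$ and regardless of where pool-$2$ completions are routed. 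Wherever a pool-$2$ completion is assigned, it decreases one of $L^n_1, L^n_2$, and the combination counts it either way. Condition $(1)$ of Assumption \ref{assA} makes the drift $\delta_b$ in \eqref{s4} strictly positive, so $-U^n_b$ is a stable (time-accelerated) $M/M/1$ queue starting at $0$, and the QBD extreme-value Lemma \ref{lmQBDextreme} gives $\|{-U^n_b}\|_\tau = O_P(\log n)$. Since $k^n_{1,2}/\log n \to \infty$, each $L^n_{i,b}$ stays strictly positive and the result follows. No tracking of the sign of $D^n_{1,2}$, and no case analysis on which queue is low, is needed.

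Your proposal instead tries to exploit the self-correcting sign of $D^n_{1,2}$. This runs into two problems. First, a quantitative one: your claim that $Q^n_1 \le 2\eta a_n$ together with $Q^n_2 > \eta a_n$ forces $D^n_{1,2} < 0$ amounts to $\eta a_n(2 - r_{1,2}) < k^n_{1,2}$, which for $r_{1,2} < 2$ fails whenever $a = \infty$ in Assumption \ref{assC} (so that $a_n / k^n_{1,2} \to \infty$). This could be repaired by measuring the danger zone on the scale $\eta k^n_{1,2}$ rather than $\eta a_n$, but your proposal as written uses $\eta a_n$. Second, and more fundamentally, your Case~A / Case~B dichotomy leaves a hole: when both queues lie in the band $(\eta a_n, 2\eta a_n]$ simultaneously, Case~A applies and gives $D^n_{1,2} < 0$ (so $Q^n_1$ is protected), but then $Q^n_2$ receives all pool-$2$ departures, and under Assumption \ref{assA} the value $\rho_2 < 1$ is entirely possible, in which case $Q^n_2$ has \emph{negative} $O(n)$ drift in exactly this configuration. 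Case~B cannot rescue $Q^n_2$ because it requires $Q^n_1 > 2\eta a_n$, which fails here. You say the first-passage interpretation of $\sigma^n$ resolves the circular dependence because ``the other queue is still strictly above,'' but strictly above $\eta a_n$ is not enough: your own Case~B needs the other queue above $2\eta a_n$. The configuration where $Q^n_2$ first hits $\eta a_n$ while $Q^n_1 \in (\eta a_n, 2\eta a_n]$ is not excluded, and in it your argument provides no positive drift for $Q^n_2$. Quantifying the race between $Q^n_1$ rising and $Q^n_2$ falling in that band requires a genuinely new argument; the paper's linear-combination construction is precisely what makes that race irrelevant.
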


As an immediate consequence of Theorems \ref{thZ21} and \ref{thQpos} above, we obtain the following SSC result.

\begin{coro} {$($SSC of the service process$)$} \label{thSSCweak}
For $\tau > 0$ in Theorem {\em \ref{thZ21}}, $P(\T^n_0 > \tau) \ra 1$ as $n \tinf$, where $\T^n_0$ is defined in {\em \eqn{stopTau}}; i.e.,
\bes
(m^n_1 e - Z^n_{1,1},\ Z^n_{2,1},\ m^n_2 e - Z^n_{1,2} - Z^n_{2,2}) \Rightarrow (0 e,0 e,0 e) \quad \mbox{in $\D_3([0, \tau])$ as $n \tinf$.}
\ees
\end{coro}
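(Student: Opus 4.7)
The corollary is essentially bookkeeping once Theorems \ref{thZ21} and \ref{thQpos} are in hand. My plan is as follows.

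First, since $Z^n_{2,1}$ takes values in $\ZZ_+$, the convergence $\|Z^n_{2,1}\|_\tau \Ra 0$ given by Theorem \ref{thZ21} is equivalent to $P(A_n) \ra 1$, where
\begin{equation*}
A_n \equiv \{Z^n_{2,1}(t) = 0 \text{ for all } t \in [0, \tau]\}.
\end{equation*}
Let $B_n \equiv \{\inf_{0 \le t \le \tau} \min\{Q^n_1(t), Q^n_2(t)\} > 0\}$; Theorem \ref{thQpos} gives $P(B_n) \ra 1$. By the very definition of $\T^n_0$ in \eqref{stopTau}, $A_n \cap B_n \subseteq \{\T^n_0 > \tau\}$, so $P(\T^n_0 > \tau) \ra 1$.

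Second, I claim that on $A_n \cap B_n$ we additionally have $Z^n_{1,1}(t) = m^n_1$ and $Z^n_{1,2}(t) + Z^n_{2,2}(t) = m^n_2$ throughout $[0, \tau]$. This is the non-idling property of the FQR-T control: whenever $Q^n_1(t) > 0$ no pool-$1$ server can be idle, and whenever $Q^n_1(t) + Q^n_2(t) > 0$ no pool-$2$ server can be idle (the routing rule always assigns a freed server to some waiting customer). I would verify this by reading the indicators of the representation in Theorem \ref{thRep}: on $A_n$, the decrement term $1_{\{D^n_{2,1} > 0,\, Z^n_{1,2} = 0\}}\, dS^n_{1,1}$ must vanish (any such event would bring $Z^n_{2,1}$ above $0$, contradicting $A_n$), and on $B_n$ the remaining decrement of $Z^n_{1,1}$ vanishes because it carries the factor $1_{\{Q^n_1 = 0\}}$; together with the increments, this yields $Z^n_{1,1}(t) + Z^n_{2,1}(t) = Z^n_{1,1}(0) + Z^n_{2,1}(0)$ on the event. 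The initial identities $Z^n_{1,1}(0) = m^n_1$ and $Z^n_{1,2}(0) + Z^n_{2,2}(0) = m^n_2$ follow from the feasibility of the initial state together with Assumption \ref{assC}, since $Q^n_i(0) > a_n > 0$ precludes any pool-$i$ idleness at time $0$. An analogous argument handles pool $2$.

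Third and finally, combining the two observations, on $A_n \cap B_n$ the three coordinates $m^n_1 - Z^n_{1,1}$, $Z^n_{2,1}$ and $m^n_2 - Z^n_{1,2} - Z^n_{2,2}$ are identically zero on $[0, \tau]$. Since $P(A_n \cap B_n) \ra 1$, we obtain uniform convergence in probability of the three-dimensional process to $(0,0,0)$, which in particular gives convergence in distribution in $\D_3([0, \tau])$ (for the continuous limit $0e$, the $J_1$ topology agrees with uniform convergence). The only mildly delicate point is the non-idling step; all other steps are essentially bookkeeping given Theorems \ref{thZ21} and \ref{thQpos}.
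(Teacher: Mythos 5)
Your proof is correct and follows the route the paper intends: the paper states Corollary \ref{thSSCweak} as an immediate consequence of Theorems \ref{thZ21} and \ref{thQpos}, and your argument — integer-valuedness converting $\|Z^n_{2,1}\|_\tau \Rightarrow 0$ into $P(A_n) \to 1$, intersecting with $B_n$ to get $P(\T^n_0 > \tau) \to 1$, and then the non-idling property (precisely the observation the paper makes in the remark preceding Corollary \ref{corRep3}, where it invokes Assumption \ref{assC} to conclude neither pool is idle initially) to obtain the three-coordinate identity on $A_n \cap B_n$ — is exactly the bookkeeping the paper leaves implicit. The one point deserving care, which you handle adequately by tracing through the indicators of Theorem \ref{thRep}, is that a pool-$1$ server freeing up on $A_n \cap B_n$ cannot idle or take a class-$2$ customer (the former because $Q^n_1 > 0$, the latter because it would make $Z^n_{2,1}$ positive), and the symmetric argument for pool $2$.
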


We make two important remarks about Corollary \ref{thSSCweak}:
First, the limit holds {\em without any scaling}.  Second,
here we do not yet show that a limit of $\barz^n_{1,2}$ as $n \tinf$ exists. We only show that, when analyzing the four service processes $Z^n_{i,j}$,
it is sufficient to consider $Z^n_{1,2}$.


Recall that $d_{J_1}$ denotes the standard Skorohod $J_1$ metric and $X^{n,*}_6$ is the essentially three-dimensional process defined in \eqref{red1}.
The following corollary is immediate from Corollary \ref{thSSCweak}.
\begin{coro}{$($Representation via SSC$)$} \label{thSSC1}
As $n \tinf$, $d_{J_1}(X^n_6, X^{n,*}_6) \Ra 0$ in $\D_6([0, \tau])$, for $X^{n,*}_6$ in {\em \eqref{red1}}, $\tau$ in Theorem {\em \ref{thZ21}} and
$(Q^{n,*}_1, Q^{n,*}_2,Z^{n,*}_{1,2})$ in \eqref{rep4}-\eqref{rep3}. 
\end{coro}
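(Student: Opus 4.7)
The plan is short since the corollary is flagged as immediate from Corollary \ref{thSSCweak}. The key observation is that Corollary \ref{corRep3} already asserts a \emph{pathwise} identity $X^n_6(t) = X^{n,*}_6(t)$ for every $t \in [0, \T^n_0]$; accepting that, the coordinates $Z^n_{1,1}$, $Z^n_{2,1}$, $Z^n_{2,2}$ of $X^n_6$ collapse onto their degenerate counterparts $m^n_1$, $0$, $m^n_2 - Z^n_{1,2}$ on $[0, \T^n_0]$, while the first, second and fourth coordinates $Q^n_1$, $Q^n_2$, $Z^n_{1,2}$ appear identically in both processes by construction. So no distance needs to be estimated, only a stopping-time event needs to be shown to occur with high probability.

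Concretely, set $B_n \equiv \{\T^n_0 > \tau\}$. On $B_n$ the pathwise identity above, restricted to $[0, \tau]$, gives $X^n_6(\omega) = X^{n,*}_6(\omega)$ as elements of $\D_6([0,\tau])$, and consequently $d_{J_1}(X^n_6(\omega), X^{n,*}_6(\omega)) = 0$. Corollary \ref{thSSCweak} supplies the other ingredient, namely $P(B_n) = P(\T^n_0 > \tau) \to 1$ as $n \to \infty$. Combining these, for every $\varepsilon > 0$,
\begin{equation*}
P\bigl(d_{J_1}(X^n_6, X^{n,*}_6) > \varepsilon\bigr) \le P(B_n^c) \to 0,
\end{equation*}
so $d_{J_1}(X^n_6, X^{n,*}_6) \Rightarrow 0$, which is the stated claim (convergence in distribution to a constant being equivalent to convergence in probability).

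There is essentially no obstacle at this stage: the real work lies upstream in establishing Theorems \ref{thZ21} and \ref{thQpos}, from which Corollary \ref{thSSCweak} follows and $\tau > 0$ is produced. The only care needed here is to observe that $\tau$ is deterministic and that the interval $[0,\tau]$ is compact, so the stopping-time event $\{\T^n_0 > \tau\}$ is well-defined and carries all of the mass asymptotically. Because the exceedance event forces \emph{pathwise} equality rather than mere closeness, no rescaling, no modulus-of-continuity argument, and no passage through the uniform topology is required — the conclusion is a one-line consequence once Corollary \ref{thSSCweak} is in hand.
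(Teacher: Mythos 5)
Your proof is correct and is exactly the argument the paper intends when it calls the corollary ``immediate from Corollary \ref{thSSCweak}'': Corollary \ref{corRep3} gives pathwise equality $X^n_6 = X^{n,*}_6$ on $[0,\T^n_0]$, so on the event $\{\T^n_0 > \tau\}$ the $J_1$-distance on $[0,\tau]$ is exactly zero, and Corollary \ref{thSSCweak} gives $P(\T^n_0 > \tau) \to 1$. Nothing further is needed.
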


We now obtain further simplification using a familiar martingale representation, again see \cite{PTW07}.
{\em Henceforth, we work with the process $X^{n,*}_6$ defined in Corollary {\em \ref{corRep3}}, but omit the asterisks.}
Consider the representation of $X^n_3$ in \eqref{rep4}-\eqref{rep3} above, and let
\bequ \label{martg}
\bsplit
M^{n,a}_{i}(t) & \equiv N^a_i(\lm^n_i t) - \lm^n_i t, \\
M_i^{n,u}(t) & \equiv N^u_i \left( \theta_{i} \int_{0}^{t} Q^{n}_{i} (s)) \, ds \right) - \theta_{i} \int_{0}^{t} Q^{n}_{i} (s) \, ds, \\
M^{n,s}_{i,2}(t) & \equiv N^s_{i,2}(J^n_{i,2}(t)) - J^n_{i,2}(t), \\
M^{n,s,2}_{i,2}(t) & \equiv N^{s,2}_{i,2}(J^n_{i,2}(t)) - J^n_{i,2}(t),
\end{split}
\eeq
where $J^n_{i,2}(t)$ are the compensators of the point processes
in \eqref{rep4}-\eqref{rep3}, $i = 1,2$, e.g.,
\bes
J^n_{1,2}(t) \equiv \mu_{1,2} \int_{0}^{t} 1_{\{D^n_{1,2}(s) < 0 \}}  Z^{n}_{1,2} (s) \, ds.
\ees
The quantities in \eqref{martg} can be shown to be martingales (with respect to an appropriate filtration); See \cite{PTW07}.
The following lemma follows easily from the functional strong law of large numbers (FSLLN) for Poisson processes and the $\C$-tightness
established in Theorem \ref{lmTight}.
\begin{lemma} {$($fluid limit for the martingale terms$)$} \label{lmMartgFluid}
As $n \tinf$,
\bes
n^{-1} (M^{n,a}_1, M^{n,a}_2, M^{n,u}_{1}, M^{n,u}_{2}, M^{n,s}_{1,2}, M^{n,s}_{2,2}, M^{n,s,2}_{1,2}, M^{n,s,2}_{2,2}) \Ra (0e,0e,0e,0e,0e,0e, 0e, 0e) \qinq \D_8 ([0, \tau]).
\ees
\end{lemma}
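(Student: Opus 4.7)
The plan is to reduce each of the six statements to the basic fact that, for a rate-$1$ Poisson process $N$, the fluid-scaled process $N(n\,\cdot)/n$ converges almost surely (hence in probability, uniformly on compact sets) to the identity function $e$, i.e.\ $\sup_{0 \le u \le C} |N(nu) - nu|/n \to 0$ for every fixed $C>0$. All six martingales have the form $N(\phi^n(t)) - \phi^n(t)$, so once one shows that each random compensator $\phi^n$ is, up to a $\sup$-norm bound on $[0,\tau]$, of order $n$ (in probability), the desired conclusion follows by a random-time-change argument.

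First I would handle the arrival martingales. Since $\lambda^n_i = O(n)$ by Assumption \ref{assMS-HT}, the compensator $\phi^n(t) \equiv \lambda^n_i t$ is deterministically bounded on $[0,\tau]$ by $C_i n$ for some constant $C_i$, so
\bes
n^{-1}\|M^{n,a}_i\|_\tau \;\le\; \sup_{0 \le u \le C_i} \bigl|N^a_i(nu) - nu\bigr|/n \;\Longrightarrow\; 0.
\ees
Next I would treat the service martingales. Since $Z^n_{1,2}(s), m^n_2 - Z^n_{1,2}(s) \in [0, m^n_2]$ and $m^n_2 = O(n)$, the compensators $J^n_{i,2}(t)$ are deterministically bounded by $\mu_{i,2} m^n_2 \tau \le C n$, so the same uniform Poisson FSLLN bound gives $n^{-1}\|M^{n,s}_{i,2}\|_\tau \Rightarrow 0$. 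For the abandonment martingales I would use the $\C$-tightness of $\{\bar Q^n_i\}$ from Theorem \ref{lmTight}: this gives stochastic boundedness of $\|\bar Q^n_i\|_\tau$, whence the random compensator $\theta_i \int_0^t Q^n_i(s)\,ds$ is bounded by $n \theta_i \tau \|\bar Q^n_i\|_\tau$, which is stochastically $O(n)$. Then, on the high-probability event $\{\|\bar Q^n_i\|_\tau \le K\}$, the same random-time-change argument gives $n^{-1}\|M^{n,u}_i\|_\tau \Rightarrow 0$; letting $K \to \infty$ removes the conditioning.

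Finally, joint convergence to the zero vector in $\D_6([0,\tau])$ follows because the limit is the deterministic constant $0e$, so marginal convergence suffices (weak convergence to a constant is equivalent to joint weak convergence to the corresponding vector constant). The main technical obstacle is uniform control of the random time-changes for the abandonment terms, but this is exactly what the $\C$-tightness established in Theorem \ref{lmTight} delivers; the service compensators require no probabilistic input beyond the deterministic capacity bound $Z^n_{1,2} \le m^n_2$, and the arrival compensators are purely deterministic. Thus the lemma reduces to a routine application of the Poisson FSLLN combined with stochastic boundedness of the compensators.
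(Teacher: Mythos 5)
Your proof is correct, but it takes a more elementary route than the paper's. The paper's proof extracts a converging subsequence of the fluid-scaled process $\bar X^n_6$ (and of the scaled compensators $J^n_{i,j}/n$, citing $\C$-tightness of the latter from Theorem~\ref{lmTight}), notes that the subsequential limits are continuous, applies the FSLLN for Poisson processes together with the continuity of the composition map at continuous limits (Theorem 13.2.1 in \cite{W02}), and then argues that since the subsequential limit is always $0e$, full convergence holds. You instead bypass the subsequence and composition-continuity machinery entirely: you bound each compensator $\phi^n$ directly in $\sup$-norm on $[0,\tau]$ by a constant times $n$ (deterministically for arrivals via $\lambda^n_i = O(n)$, deterministically for services via $Z^n_{1,2} \le m^n_2 = O(n)$, and stochastically for abandonments via $\|\bar Q^n_i\|_\tau = O_P(1)$ from tightness), then invoke the uniform FSLLN bound $\sup_{0\le u\le C}|N(nu)-nu|/n \to 0$ a.s.\ and sandwich. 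This is cleaner: it makes no use of $\C$-tightness of the compensators, no use of the continuity of the subsequential limits, and no appeal to the composition continuity theorem; all you need from Theorem~\ref{lmTight} is stochastic boundedness of $\|\bar Q^n_i\|_\tau$, a strictly weaker input. The paper's compactness argument is more systematic and would extend to settings where explicit $O(n)$ bounds on the compensators are less obvious, but for this lemma your direct estimate is both shorter and more transparent. Your closing observation that marginal weak convergence to deterministic limits upgrades automatically to joint weak convergence is correct and dispatches the $\D_6$ formulation without further work.
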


\begin{proof}  By Theorem \ref{lmTight}, the sequence $\{\bar{X}^n_6: n \ge 1\}$ is tight in $\D$.
Thus any subsequence has a convergent subsequence.
By the proof of Theorem \ref{lmTight}, the sequences $\{J^n_{i,j}/n\}$ are also $\C$-tight, so that $\{J^n_{i,j}/n\}$, $i = 1,2$,
all converge along a converging subsequence as well.
Consider a converging subsequence $\{X^n\}$ and its limit $\barx$, which is continuous by Theorem \ref{lmTight}.
Then the claim of the lemma follows for the converging subsequence from the FSLLN for Poisson processes
and the continuity of the composition map at continuous limits, e.g., Theorem 13.2.1 in \cite{W02}.
In this case, the limit of each fluid-scaled martingale is the zero function $0e \in \D$,
regardless of the converging subsequence we consider, and is thus unique.  Hence we have completed the proof.
\end{proof}

Hence, instead of $\bar{X}^n_3$ (the relevant components of $\bar{X}^{n,*}_6$) in \eqn{rep4}-\eqn{rep3},
we can work with
 $\bar{X}^n \equiv (\barq^n_1, \barq^n_2, \barz^n_{1,2})$ for
\bequ \label{FluidScaled}
\bsplit
\barz_{1,2}^n(t) & \equiv \barz^n_{1,2}(0) + \mu_{2,2} \int_{0}^{t} 1_{\{D^n_{1,2}(s) > 0\}} (\bar{m}^{n}_{2} - \barz^{n}_{1,2} (s)) \, ds
 - \mu_{1,2} \int_{0}^{t} 1_{\{D^n_{1,2}(s) \le 0 \}}  \barz^{n}_{1,2} (s) \, ds, \\
\barq^n_1(t) & \equiv \barq^n_1(0) + \bar{\lm}^n_1 t - \bar{m}^n_1 t - \mu_{1,2} \int_{0}^{t} 1_{\{D^n_{1,2}(s) > 0 \}} \barz^{n}_{1,2} (s)) \, ds \\
& \quad - \mu_{2,2} \int_{0}^{t} 1_{\{D^n_{1,2}(s) > 0 \}} (\bar{m}^n_2 - \barz^{n}_{1,2} (s)) \, ds
- \theta_{1} \int_{0}^{t} \barq^{n}_{1} (s) \, ds, \\
\barq^n_2(t) & \equiv \barq^n_2(0) + \bar{\lm}^n_2 t - \mu_{2,2} \int_{0}^{t} 1_{\{D^n_{1,2}(s) \le 0 \}} (\bar{m}^n_2 - \barz^{n}_{1,2} (s)) \, ds \\
& \quad - \mu_{1,2} \int_{0}^{t} 1_{\{D^n_{1,2}(s) \le 0 \}} \barz^{n}_{1,2} (s) \, ds
- \theta_{2} \int_{0}^{t} \barq^{n}_{2} (s)) \, ds.
\end{split}
\eeq

\begin{theorem} \label{thFluidScaled} As $n \ra \infty$,
$d_{J_1}(\bar{X}^n_3, \bar{X}^n) \Ra 0$ in $\D_3([0, \tau])$ as $n \tinf$, where $\barx^n_3$ is defined in {\em \eqn{rep4}}-{\em \eqn{rep3}},
 $\bar{X}^n$ is defined in {\em \eqn{FluidScaled}} and $\tau$ is as in Theorem {\em \ref{thZ21}}.
\end{theorem}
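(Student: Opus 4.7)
The plan is to observe that the difference $\bar{X}^n_3 - \bar{X}^n$ is exactly a sum of fluid-scaled martingales from \eqref{martg}, so that convergence to $0e$ is immediate from Lemma \ref{lmMartgFluid}.

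First, I would decompose each Poisson-process term appearing in \eqref{rep4}--\eqref{rep3} after dividing by $n$, writing it as its compensator (divided by $n$) plus the corresponding fluid-scaled martingale. For example, $N^a_i(\lambda^n_i t)/n = \bar{\lambda}^n_i t + n^{-1} M^{n,a}_i(t)$, $N^u_i(\theta_i \int_0^t Q^n_i(s)\,ds)/n = \theta_i \int_0^t \bar{Q}^n_i(s)\,ds + n^{-1} M^{n,u}_i(t)$, and analogously
\begin{equation*}
\frac{N^s_{i,2}(J^n_{i,2}(t))}{n} \;=\; \frac{J^n_{i,2}(t)}{n} + \frac{M^{n,s}_{i,2}(t)}{n},
\end{equation*}
where $J^n_{i,j}$ is the compensator defined in \S\ref{secSSCserv}. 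The only slightly different term is $N^s_{1,1}(\mu_{1,1} m^n_1 t)/n$, whose compensator is the deterministic function $\mu_{1,1} \bar{m}^n_1 t$; by the FSLLN for rate-$1$ Poisson processes we have $n^{-1}(N^s_{1,1}(\mu_{1,1} m^n_1 \cdot) - \mu_{1,1} m^n_1 \cdot) \Rightarrow 0e$ in $\D([0,\tau])$.

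Next, I would substitute these decompositions into \eqref{rep4}--\eqref{rep3} (divided by $n$) and subtract \eqref{FluidScaled} term by term. The compensator parts match exactly the integrands appearing in \eqref{FluidScaled} (the indicator functions $1_{\{D^n_{1,2}(s)>0\}}$ and $1_{\{D^n_{1,2}(s)\le 0\}}$ are the same in both representations, and $\bar{Z}^n_{1,2} = Z^n_{1,2}/n$, $\bar{Q}^n_i = Q^n_i/n$). Hence every compensator cancels, leaving
\begin{equation*}
\bar{X}^n_3(t) - \bar{X}^n(t) \;=\; \frac{1}{n}\,\mathbf{V}^n(t),
\end{equation*}
where each coordinate of $\mathbf{V}^n$ is a finite linear combination (with coefficients $\pm 1$) of the martingales $M^{n,a}_i$, $M^{n,u}_i$, $M^{n,s}_{i,2}$ together with $N^s_{1,1}(\mu_{1,1} m^n_1 \cdot) - \mu_{1,1} m^n_1 \cdot$.

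Finally, by Lemma \ref{lmMartgFluid} together with the FSLLN remark for $N^s_{1,1}$, each of these scaled martingales converges to $0e$ in $\D([0,\tau])$. Applying the continuous mapping theorem to the addition map at continuous limits (e.g., Theorem 12.7.1 of \cite{W02}) gives $n^{-1}\mathbf{V}^n \Rightarrow (0e,0e,0e)$ in $\D_3([0,\tau])$. Since convergence to a continuous limit in the uniform metric implies convergence in $d_{J_1}$, we conclude $d_{J_1}(\bar{X}^n_3, \bar{X}^n) \Rightarrow 0$. There is no real obstacle here; the essence of the argument is the bookkeeping step of matching each compensator in \eqref{rep4}--\eqref{rep3} with the corresponding integrand in \eqref{FluidScaled}, so the result is essentially an immediate consequence of Lemma \ref{lmMartgFluid} once the martingale decomposition is written down.
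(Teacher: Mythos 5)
Your proposal is correct and takes essentially the same approach as the paper: the paper's proof likewise writes $\bar{M}^n \equiv \bar{X}^n_3 - \bar{X}^n$, observes it is a linear combination of the fluid-scaled martingales, and invokes Lemma \ref{lmMartgFluid} together with the continuous mapping theorem for addition at continuous limits. You have in fact been slightly more careful than the paper's terse proof by explicitly noting that the $N^s_{1,1}(\mu_{1,1} m^n_1\,\cdot)$ term has a deterministic compensator and is not covered by Lemma \ref{lmMartgFluid}, so it needs the FSLLN for Poisson processes directly.
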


\begin{proof}
It suffices to show that $\bar{M}^n \Rightarrow (0 e, 0 e, 0e)$ in $\D_3([0, \tau])$ as $n \ra \infty$, where
\beql{rep101}
\bar{M}^n \equiv \bar{X}^n_3 - \barx^n.
\eeq
However, $\bar{M}^n \Rightarrow (0 e, 0 e, 0e)$ by virtue of Lemma \ref{lmMartgFluid} above
by the continuous mapping theorem with addition at continuous limits.
\end{proof}

As a consequence of Theorem \ref{thFluidScaled}, henceforth we can focus on $\bar{X}^n$ in \eqn{FluidScaled}
instead of $\bar{X}^n_3$ in \eqn{rep4}-\eqn{rep3}.
After we prove Theorem \ref{th1}, we can extend the interval $[0, \tau]$ over which all the previous results in this section hold
to the interval $[0, \infty)$.
\begin{theorem} {$($global SSC$)$} \label{thSSCextend}
All of the results above in this section extend from the interval $[0,\tau]$ to the interval $[0, \infty)$.
\end{theorem}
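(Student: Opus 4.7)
The plan is to bootstrap: use Theorem~\ref{th1}, now established on $[0,\tau]$, together with the global stability properties of the ODE from \cite{PeW09c}, to iteratively extend the SSC from $[0,\tau]$ to any $[0,T]$, and hence to $[0,\infty)$.

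First I would observe that at the endpoint $\tau$, Theorem~\ref{th1} restricted to $[0,\tau]$ gives $\barx^n_6(\tau) \Rightarrow x_6(\tau)$, while Corollary~\ref{thSSCweak} gives the unscaled convergence $Z^n_{2,1}(\tau) \Rightarrow 0$. By Theorems~5.2 and~8.3 of \cite{PeW09c}, the fluid trajectory $x(t)$ starting in $\AA \cup \AA^{+} \cup \rS^{+}$ stays in this set for all $t \ge 0$, has both $q_i(t)$ strictly positive, and converges exponentially fast to $x^{*} \in \AA$. Consequently, the shifted state at time $\tau$ satisfies the analogue of Assumption~\ref{assC}: $P(Z^n_{2,1}(\tau)=0) \to 1$, $P(Q^n_i(\tau) > n\varepsilon) \to 1$ for some $\varepsilon > 0$, and when $x(\tau) \in \AA \cup \AA^{+}$, $D^n_{1,2}(\tau)$ converges in distribution to a proper limit by the pointwise AP of Theorem~\ref{thAPlocal}.

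With these restart conditions, the proofs of Theorems~\ref{thZ21} and~\ref{thQpos} apply verbatim from time $\tau$ to yield SSC on $[\tau,\tau+\tau_1]$ for some $\tau_1 > 0$. Iterating produces extension increments $\tau_k$ with SSC on $[0,\sum_k \tau_k]$. The key point preventing the $\tau_k$ from collapsing to $0$ is that, since $\{x(t): t \ge 0\}$ is contained in a compact subset $K \subset \AA \cup \AA^{+} \cup \rS^{+}$ on which the coordinates $q_i$ are uniformly bounded below, and on which the FTSP drifts $\delta_{\pm}$ of \eqref{drifts} and the continuous function $\pi_{1,2}$ (Theorem~7.1 of \cite{PeW09c}) are uniformly controlled, the extension length admits a uniform lower bound $\tau_{\min} > 0$ over all restart states in $K$. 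Hence any prescribed $T > 0$ is reached in finitely many steps, and convergence on $[0,\infty)$ follows by passing to arbitrary $T$.

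The main obstacle is precisely this uniformity of $\tau_1$ in the restart state: one must check that the QBD stochastic bounds and extreme-value estimates developed in \S\ref{secSupport}, which feed into the proofs of Theorems~\ref{thZ21} and~\ref{thQpos}, can be stated with constants depending only on the compact set $K$ (through its distance to the boundaries of $\AA$ and to the axes $\{q_i=0\}$, and through the uniform drifts $\delta_{\pm}$) rather than on the particular initial state. Once that uniform control is in place, the iteration closes and all the results of this section, including Corollaries~\ref{thSSCweak} and~\ref{thSSC1} and Theorem~\ref{thFluidScaled}, extend from $[0,\tau]$ to $[0,\infty)$ as asserted.
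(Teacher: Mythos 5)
Your proposal is a bootstrapping/iteration argument, which is genuinely different from the paper's actual proof — and the gap you yourself flag (the uniform lower bound $\tau_{\min}$ on extension increments) is a real gap that the paper avoids entirely by taking a different route. The paper's proof in \S\ref{secGlobalProofs} hinges on a single structural observation: by Lemma~\ref{lmZ12}, once $z_{1,2}(t_0)>0$ in fluid scale at \emph{any} time $t_0$, the lower-bound pure-death process $Z^n_a$ from \eqref{Zbounds} stays strictly positive in fluid scale for \emph{all} future time, so the one-way-sharing constraint forces $Z^n_{2,1}\Rightarrow 0$ over \emph{every} $[0,T]$ at once — there is nothing to iterate. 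The paper therefore reduces the extension to showing that $z_{1,2}(t_0)>0$ for some $t_0\in(0,\tau]$, and it proves this by contradiction: assuming $z_{1,2}\equiv 0$ on $[0,\epsilon]$ produces a fluid limit for $L^n_i$ in \eqref{Qbdd} whose drift, by Assumption~\ref{assA} and the definition of $\AA\cup\AA^+$, makes $d_{1,2}$ strictly increasing from $0$, which forces $D^n_{1,2}>0$ on a sub-interval and hence (by \eqref{z1}) forces $z_{1,2}$ to grow linearly — contradiction.

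Two further concerns with the proposed route. First, to run the ``restart'' argument you need to re-verify the analogue of Assumption~\ref{assC} at time $\tau$, and your claim $P(Q^n_i(\tau)>n\varepsilon)\to 1$ presupposes $q_i(\tau)>0$, which the ODE in \eqref{odeDetails} does not guarantee (the state space $\rS$ allows $q_i=0$, and the paper's positivity result Theorem~\ref{thQpos} is about the \emph{unscaled} queue and works via $O_P(\log n)$ fluctuations below $k^n_{1,2}$, not via $q_i(\tau)>0$). Second, when re-running Theorem~\ref{thZ21} from a restart time where $z_{1,2}>0$, you are already in the branch handled by Lemma~\ref{lmZ12}, which applies globally — so the iteration is circular: the first restart already yields the full extension if you invoke the right branch, and the entire machinery of uniform constants over a compact set $K$, though plausibly workable, is unnecessary overhead. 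The clean fix is to drop the iteration and instead prove, as the paper does, that $z_{1,2}$ must become strictly positive somewhere in $(0,\tau]$, after which Lemma~\ref{lmZ12} closes the argument in one step.
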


We prove Theorems \ref{thZ21} and \ref{thQpos} in \S \ref{secProofsSSCserv},
after establishing supporting technical results in \S \ref{secSupport}.
In \S \ref{secGlobalProofs}, we prove Theorem \ref{thSSCextend}, under the assumption that Theorem \ref{th1} has been proved over $[0, \tau]$,
which will be done in \S \ref{secProofs}, by showing that the interval over which the conclusion is valid
can be extended from $[0, \tau]$ to $[0, \infty)$,
once Theorem \ref{th1} has been proved over $[0,\tau]$.
That will imply that Theorem \ref{th1} then holds over $[0, \infty)$ as well.

\section{Completing the Proof of Theorem \ref{th1}:  Characterization.}\label{secProofs}

The $\C$-tightness result in Theorem \ref{lmTight} implies that every subsequence of the sequence $\{(\bar{X}^n_6, \bar{Y}^n_8): n \ge 1\}$
in \eqn{scale} has
a further converging subsequence in $\D_{14}([0, \infty))$, whose limit is in the function space $\C_{14}([0, \infty))$.
To establish the convergence of the sequence $\{(\bar{X}^n_6, \bar{Y}^n_8)\}$
we must show that every converging subsequence
converges to the same limit.

Corollary \ref{thSSC1} and Theorem \ref{thFluidScaled} imply that we can simplify the framework over an initial time interval $[0, \tau]$.
In particular,
it suffices to focus on the sequence $\{\bar{X}^n\}$ in $\D_3([0, \tau])$ given in \eqn{FluidScaled}, where the limits of the subsequences will be in $\C_3 ([0, \tau])$,
but we will later show that this restriction can be relaxed.
In particular, we will show that convergence holds in $\D_{14}([0, \infty))$ 

We achieve the desired characterization of $\barx^n$ in $\D_{3}([0, \tau])$ by showing that
 the limit of any converging subsequence is almost surely a solution to
the ODE \eqref{odeDetails} over $[0, \tau]$.
The existence and uniqueness of the solution to the ODE has been established
in Theorem 5.2 in \cite{PeW09c}.
 As indicated in \S \ref{secProofOver}, we have developed two different proofs,
 the first proof exploits random measures and martingales, following \cite{HK94,K92}; it is given here in \S\S \ref{secHK} and \ref{secHK2}.
 The second exploits stochastic bounds as in the proof of Lemma \ref{lmD21extreme}; it is given in Appendix \ref{secAltProof}.
 Both proofs start from the following subsection.

\subsection{Proof of Theorem \ref{th1}:  Reduction to Integral Terms.} \label{secProofAP}

Let $\bar{X}$ be the limit of a converging subsequence of $\{\barx^n: n \ge 1\}$ in \eqn{FluidScaled}
in $\D([0,\tau])$ for $\tau$ in Theorem \ref{thFluidScaled}.
We consider $n \ge 1$ with the understanding that the limit is through a subsequence.
  Many of the terms in \eqn{FluidScaled} converge directly to their counterparts in \eqn{FluidScaledLim}
because of the assumed many-server heavy-traffic scaling in \S \ref{secHT} and the convergence $\bar{X}^n \Rightarrow \barx$ through the subsequence
obtained from the tightness.  Indeed, the only exceptions are the integral terms involving the
indicator functions. Let $\bar{I}^n_{z,i}$, $\bar{I}^n_{q,1,i}$, and $\bar{I}^n_{q,2,i}$ be the $i^{\rm th}$ integral term in the
respective expression for $\barz^n_{1,2}$, $\barq^n_{1}$ and $\barq^n_{2}$ in \eqn{FluidScaled}, $i = 1,2$.
We first observe that these sequences of integral terms are tight.
\begin{lemma}{$($tightness of integral terms$)$}\label{lmIntTight}
The six sequences of integral processes $\{\bar{I}^n_{z,i}: n \ge 1\}$, $\{\bar{I}^n_{q,1,i}: n \ge 1\}$, and $\{\bar{I}^n_{q,2,i}: n \ge 1\}$
involving the indicator functions appearing in {\em \eqn{FluidScaled}} are each $\C$-tight in $\D ([0,\tau])$.
\end{lemma}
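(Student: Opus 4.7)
The plan is to show that each of the six integral processes is absolutely continuous in $t$ with a uniformly bounded (in $n$) Lipschitz constant, from which $\C$-tightness is immediate. Since each process also vanishes at $t=0$, stochastic boundedness at the origin is trivial, so by the criterion underlying the proof of Theorem \ref{lmTight} (conditions (6.3) and (6.4) of Theorem 11.6.3 of \cite{W02}), $\C$-tightness will follow at once.

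First I will observe that every integrand in the six expressions $\bar{I}^n_{z,i}$, $\bar{I}^n_{q,1,i}$, $\bar{I}^n_{q,2,i}$ appearing in \eqref{FluidScaled} is a product of a bounded indicator, a constant service rate $\mu_{i,j}$, and either $\bar{Z}^n_{1,2}(s)$ or $\bar{m}^n_2 - \bar{Z}^n_{1,2}(s)$. Because $Z^n_{1,2}(s)$ counts servers in pool $2$ serving class $1$, it satisfies $0 \le Z^n_{1,2}(s) \le m^n_2$ for every $s \ge 0$ by construction, so $0 \le \bar{Z}^n_{1,2}(s) \le \bar{m}^n_2$ w.p.$1$ for every $n$. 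Combined with Assumption \ref{assMS-HT}, which gives $\bar{m}^n_2 \to m_2 < \infty$, this yields a deterministic constant $K$ (independent of $n$, for all $n$ sufficiently large) such that each integrand is bounded above by $K$ w.p.$1$.

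Consequently, for every $0 \le t_1 \le t_2 \le \tau$ and any of the six integrals, say $\bar{I}^n$,
\begin{equation*}
|\bar{I}^n(t_2) - \bar{I}^n(t_1)| \le K(t_2 - t_1) \quad \mbox{w.p.}1.
\end{equation*}
In particular, with the modulus of continuity $w(\cdot,\zeta,\tau)$ from \eqref{modulus}, we have $w(\bar{I}^n,\zeta,\tau) \le K\zeta$ for all $n$ large and all $\zeta > 0$. Since $\bar{I}^n(0) = 0$, both conditions needed for $\C$-tightness (stochastic boundedness at $t=0$ and asymptotic control of the modulus of continuity) hold trivially for each sequence.

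The main ``obstacle'' is really only verifying the uniform boundedness of $\bar{Z}^n_{1,2}$, which is automatic from the physical bound $Z^n_{1,2}\le m^n_2$ together with the many-server scaling; no appeal to the SSC of \S \ref{secSSCserv} is needed for this step. This establishes the $\C$-tightness of all six sequences $\{\bar{I}^n_{z,i}\}$, $\{\bar{I}^n_{q,1,i}\}$, $\{\bar{I}^n_{q,2,i}\}$ in $\D([0,\tau])$ and, as an immediate consequence, the limits along any converging subsequence are Lipschitz continuous w.p.$1$ with the same constant $K$, which will be useful in the subsequent characterization step.
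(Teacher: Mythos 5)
Your proof is correct and follows essentially the same approach as the paper's: bound the integrand by $\bar{m}^n_2 \to m_2$ (using $0 \le Z^n_{1,2} \le m^n_2$), which gives a uniform Lipschitz bound on each $\bar{I}^n$ and hence control of the modulus of continuity, together with the trivial boundedness at the origin. The paper's proof is terser but makes exactly this observation for the representative term $\bar{I}^n_{q,1,1}$ and notes the others are treated the same way.
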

\begin{proof}  We consider only the integral term $\bar{I}^n_{q,1,1}$, because the others are treated in the same way.
First, boundedness is elementary:  $0 \le \bar{I}^n_{q,1,1} (t) \le t m^n_2/n$.
Second, the modulus in \eqn{modulus} is easily controlled:
$w(\bar{I}^n_{q,1,1},\zeta,T) \le \zeta m^n/n \ra \zeta m_2.$
\end{proof}
Hence, we can consider a subsequence of our original converging subsequence in which all these integral terms converge
to proper limits as well.  Hence we have the following expression for $\bar{X}$, the limit of the converging subsequence.
\bequ \label{FS}
\bsplit
\barz_{1,2}(t) & = z_{1,2}(0) + \mu_{2,2} \bar{I}_{z,1} (t) - \mu_{1,2} \bar{I}_{z,2} (t) \\
\barq_1(t) & = q_1(0) + \bar{\lm}_1 t - \bar{m}_1 t - \mu_{1,2} \bar{I}_{q,1,1} (t) - \mu_{2,2} \bar{I}_{q,1,2} (t)
- \theta_{1} \int_{0}^{t} \barq_{1} (s) \, ds, \\
\barq_2(t) & = q_2(0) + \bar{\lm}_2 t - \mu_{2,2} \bar{I}_{q,2,1} (t) - \mu_{1,2} \bar{I}_{q,2,2} (t)
- \theta_{2} \int_{0}^{t} \barq_{2} (s)) \, ds.
\end{split}
\eeq
In \eqn{FS}, we have exploited the assumed convergence of the initial conditions in Assumption \ref{assC} to
replace $\bar{X} (0)$ by $x(0)$ in \eqn{FS}.
At this point, it only remains to show that the terms $\bar{I}_{z,i}$, $\bar{I}_{q,1,i}$ and $\bar{I}_{q,2,i}$
appearing in \eqn{FS} necessarily coincide almost surely with the corresponding terms in the integral representation \eqn{FluidScaledLim}
associated with the ODE in \eqn{odeDetails} over the interval $[0,\tau]$.
That will uniquely characterize the limit over that initial interval $[0,\tau]$
because, by Theorem 5.2 of \cite{PeW09c}, there exists a unique solution to the ODE.

Thus, it suffices to establish the following lemma, which we do in two different ways, one in the next two sections
and the other in Appendix \ref{secAltProof}.
\begin{lemma}{$($representation of limiting integral terms$)$}\label{lmKey}
For $\tau$ in Theorem {\em \ref{thFluidScaled}},
the integral terms in {\em \eqn{FS}} necessarily coincide with the corresponding integral terms in {\em \eqn{FluidScaledLim}}
with $\barx$ substituted for $x$ for $0 \le t \le \tau$,
e.g.,
\bequ \label{key}
\bar{I}_{q,1,1} (t) = \int_{0}^{t} \pi_{1,2}(\barx (s)) \barz_{1,2} (s)) \, ds, \quad 0 \le t \le \tau, \quad w.p.1.
\eeq
\end{lemma}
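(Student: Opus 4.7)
The plan is to exploit a random-measure argument in the spirit of Kurtz \cite{K92} and Hunt-Kurtz \cite{HK94}. For each $n \ge 1$, introduce the random occupation measure $\Lambda^n$ on $[0, \tau] \times \hat{\RR}$, where $\hat{\RR} \equiv \RR \cup \{\pm \infty\}$ is the two-point compactification, defined by
\bes
\Lambda^n([0, t] \times B) \equiv \int_0^t 1_{\{D^n_{1,2}(s) \in B\}} \, ds.
\ees
Since $\hat{\RR}$ is compact and the total mass of each $\Lambda^n$ is $\tau$, the sequence $\{\Lambda^n\}$ is automatically tight in the space of finite measures on $[0, \tau] \times \hat{\RR}$ equipped with the weak topology. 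Combined with the $\C$-tightness from Theorem \ref{lmTight}, I can pass to a further subsequence along which $(\barx^n, \Lambda^n) \Rightarrow (\barx, \Lambda)$, with the original converging subsequence untouched. Because the first marginal of each $\Lambda^n$ is Lebesgue measure on $[0, \tau]$, the same holds for $\Lambda$, so a standard disintegration yields $\Lambda(ds, dy) = \Lambda_s(dy) \, ds$ with $\Lambda_s$ a probability measure on $\hat{\RR}$ for a.e.\ $s \in [0, \tau]$.

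The crux is to identify $\Lambda_s$ with the FTSP steady-state distribution $P(\barx(s), \cdot)$ of Lemma \ref{lmLimFTSP}. The strategy is the standard Hunt-Kurtz martingale trick applied to the queue-difference process: for bounded test functions $f$ on $\hat{\RR}$ in a suitable domain, one has (after invoking Theorem \ref{thRep} and the martingale decomposition in \eqref{martg}) that
\bes
M^{n,f}(t) \equiv f(D^n_{1,2}(t)) - f(D^n_{1,2}(0)) - \int_0^t (\sG^n_{\barx^n(s)} f)(D^n_{1,2}(s)) \, ds
\ees
is a martingale, where $\sG^n_\gamma$ is the generator of the frozen process $D^n_f(\Gamma^n)$ of \S \ref{secAux} with $\Gamma^n = n\gamma$, and the discrepancy between the true rates of $D^n_{1,2}(s)$ and the frozen rates at $\barx^n(s)$ is absorbed into a negligible error. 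Because $f$ is bounded while the FTSP generator scales like $n$ (by the time-change in \eqref{ident}), dividing the martingale identity by $n$ and passing to the limit forces
\bes
\int_0^t \int_{\hat{\RR}} (\sG_{\barx(s)} f)(y) \, \Lambda_s(dy) \, ds = 0 \quad \text{w.p.1 for all } t \in [0, \tau],
\ees
where $\sG_\gamma$ is the FTSP generator at state $\gamma$. Varying $t$ and $f$ (with a countable determining class in the QBD domain, using Assumption \ref{assE}) yields that $\Lambda_s$ is $\sG_{\barx(s)}$-invariant for a.e.\ $s$. Lemma \ref{lmLimFTSP} then pins down $\Lambda_s$ uniquely: if $\barx(s) \in \AA$, positive recurrence of the QBD gives $\Lambda_s = P(\barx(s), \cdot)$ on $\RR$; if $\barx(s) \in \rS^+ \cup \AA^+$ or $\rS^- \cup \AA^-$, the only invariant measure on the compactification is $\delta_{+\infty}$ or $\delta_{-\infty}$, respectively, and in either case $\Lambda_s(\{y > 0\}) = \pi_{1,2}(\barx(s))$.

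Given this characterization of $\Lambda$, the six integral convergences follow by the continuous mapping theorem. For example,
\bes
\bar{I}^n_{q,1,1}(t) = \int_0^t \int_{\hat{\RR}} 1_{(0, +\infty]}(y) \, \barz^n_{1,2}(s) \, \Lambda^n(ds, dy),
\ees
and since the integrand $1_{(0, +\infty]}(y) \barz^n_{1,2}(s)$ converges uniformly to $1_{(0, +\infty]}(y) \barz_{1,2}(s)$ on compacts (using $\C$-tightness), joint convergence with $\Lambda^n \Rightarrow \Lambda$ gives
\bes
\bar{I}^n_{q,1,1}(t) \Rightarrow \int_0^t \bar{z}_{1,2}(s) \Lambda_s((0, +\infty]) \, ds = \int_0^t \pi_{1,2}(\barx(s)) \, \barz_{1,2}(s) \, ds,
\ees
which is \eqref{key}. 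The other five integrals are identical.

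The main obstacle is the martingale-generator step, where care is required on three fronts: controlling the $O(1)$ discrepancy between the true state-dependent rates of $D^n_{1,2}$ and the frozen rates at $\barx^n(s)$ (which vanishes after dividing by $n$, but must be quantified); choosing a sufficiently large class of test functions on the compactified state space $\hat{\RR}$ to determine $\Lambda_s$ (where the QBD structure of the FTSP, available under Assumption \ref{assE}, provides the necessary analytic handle); and handling the subsets $\rS^{\pm}$ and $\AA^{\pm}$ where the FTSP is transient and $\Lambda_s$ concentrates at $\pm \infty$. The state-space-collapse results from \S \ref{secSSCserv} keep the limit $\barx$ inside $\rS$, and Assumption \ref{assC} together with Corollary \ref{corLip} ensure that $\barx$ spends Lebesgue-a.e.\ time $s$ in a region where $\pi_{1,2}(\barx(s))$ is unambiguously defined via Lemma \ref{lmLimFTSP}.
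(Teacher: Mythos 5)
Your proposal follows essentially the same approach as the paper: occupation (random) measures on a compactified state space, disintegration of the limiting measure into $\Lambda_s(dy)\,ds$ via the Kurtz factorization lemma, a martingale identity for $f(D^n_{1,2})$ divided by $n$ to force the disintegration kernel to be $\sG_{\barx(s)}$-invariant for a.e.\ $s$, and Lemma \ref{lmLimFTSP} to resolve the degenerate regions $\rS^\pm, \AA^\pm$ where the invariant measure concentrates at $\pm\infty$. The only cosmetic differences are that the paper compactifies the discrete QBD state space $E$ rather than $\hat{\RR}$ and writes the martingale \eqref{mg2} directly with the true state-dependent rates of $X^n_6$ (so there is no residual ``negligible error'' once the SSC of \S\ref{secSSCserv} is in force), but the substance is identical.
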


\subsection{Exploiting Random measures.}\label{secHK}

For the rest of our proof of
 Lemma \ref{lmKey}, we closely
follow Hunt and Kurtz \cite{HK94}, which applies Kurtz \cite{K92}.  There are two key steps:  (i) exploiting random measures
and (ii) applying a martingale representation to characterize the limit in terms of the steady-state distribution of the FTSP in \S \ref{secFTSP}.
This subsection is devoted to the first step; the next subsection is devoted to the second step.

We now introduce random measures in order to expose additional structure in the integral term $\bar{I}_{q,1,1}$ in \eqn{FS}.
The random measures will be defined by setting
\bequ \label{rm1}
\nu^n ([0,t] \times B) \equiv \int_{0}^{t} 1_{\{D^n_{1,2} (s) \in B\}} \, ds, \quad t \ge 0,
\eeq
where $B$ is a measurable subset of the set $E$ of possible values of $D^n_{1,2}$.
Given the definition of $D^n_{1,2}$ in \eqn{Dprocess}, we exploit Assumption \ref{assE} stating that $r_{1,2}$ is rational and the assumption after
it that the thresholds are rational as well, so that
we can have the state space $E$ be discrete, independent of $n$.
(This property is not necessary for the analysis at this point, but it is a helpful simplification.
The space $E$ could even be taken to be a subset of $\ZZ$, using the construction in \S 6 of \cite{PeW09c} by renaming the states in $E$.)
Of course, we are especially interested in the case in which $B$ is the set of positive values;
then we focus on the associated random variables $\nu^n ([0,t] \times (0, \infty))$, $n \ge 1$, as in \eqn{FluidScaled}.

As in \cite{HK94,K92}, it is convenient to compactify the space $E$.
We do that here , first, be adding the states $+\infty$ and $-\infty$ to $E$ and, second, by endowing $E$ with
the metric and associated Borel $\sigma$-field from $\RR$ induced by the mapping $\psi: E \ra [-1,1]$,
defined by $\psi (x) = x/(1 + |x|)$.  That makes $E$ a compact metric space.
We then consider the space $M \equiv M(S)$ of (finite) measures $\mu$ on the product space $S \equiv [0, \delta] \times E$
for some $\delta > 0$, such that $\mu([0,t] \times E) = t$
for all $t > 0$.  Moreover, we endow $M$ with the Prohorov metric, as in (1.1) of \cite{K92}.  Since $S$ is compact, the space $M$ inherits the compactness;
i.e., it too is a compact metric space, by virtue of Prohorov's theorem, Theorem 11.6.1 of \cite{W02}.

Let $\sP \equiv \sP(M) \equiv \sP(M(S))$ be the space of probability measures on $M(S)$,
also made into a metric space with the Prohorov metric, so that convergence corresponds to the usual notion of weak convergence of probability measures.
As a compact metric space, $M(S)$ is a complete separable metric space, so that this is a standard abstract setting for
weak convergence of probability measures \cite{B99,EK86,W02}.
By Prohorov's theorem, this space $\sP$ of probability measures on $M(S)$ also is a compact metric space.

Thus we have convergence of random measures $\nu^n \Ra \nu$ as $n \ra \infty$ if and only if $E[f(\nu^n)] \ra E[f(\nu)]$
as $n \ra \infty$ for all continuous bounded real-valued functions $f$ on $M$.
On the other hand, by the continuous mapping theorem, if we have $\nu^n \Ra \nu$ as $n \ra \infty$, then we also have
$f(\nu^n) \Ra f(\nu)$ as $n \ra \infty$ for each continuous function on $M$.
One reason that the random measure framework is convenient is that
each continuous bounded real-valued function $f$ on $S$ corresponds to a continuous real-valued function on $M(S)$ via the integral representation
\beq
f(\mu) \equiv \int_{S} f(s) \, d \mu (s).
\eeqno
As a consequence, if $\nu^n \Ra \nu$ as $n \ra \infty$, then necessarily also
\beq
\int_{S} f(s) \, d \nu^n (s) \Ra \int_{S} f(s) \, d \nu (s) \qinq \RR \qasq n \ra \infty
\eeqno
for all continuous bounded real-valued functions $f$ on $S$.

In our context it is important to observe what are the continuous functions on $E$
after the compactification above.
The new topology requires that the functions have finite limits as $k \ra +\infty$ or as $k \ra -\infty$.
All the functions we consider will be continuous on $E$ because
they take constant values
outside bounded sets.
We will use the following stronger result, which is a special case of Lemma 1.5 of \cite{K92}.
\begin{lemma}{$($extended continuous mapping theorem$)$}\label{lmCMT}
If $f$ is a continuous bounded real-valued function on $S$ and $\{f_n: n \ge 1\}$ is a sequence of measurable real-valued functions on $S$
such that $\| f_n - f\|_{S} \ra 0$ as $n \ra \infty$, then
\beq
\int_{S} f_n (s) \, d \nu^n (s) \Ra \int_{S} f(s) \, d \nu (s) \qinq \RR \qasq n \ra \infty.
\eeqno
\end{lemma}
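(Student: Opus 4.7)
The plan is to prove Lemma \ref{lmCMT} by the standard add-and-subtract decomposition
\bes
\int_S f_n(s)\, d\nu^n(s) - \int_S f(s)\, d\nu(s) = \int_S \bigl(f_n(s) - f(s)\bigr)\, d\nu^n(s) + \left(\int_S f(s)\, d\nu^n(s) - \int_S f(s)\, d\nu(s)\right),
\ees
and to show that each of the two terms on the right converges to $0$ in distribution as $n \tinf$. The convergence $\nu^n \Rightarrow \nu$ in $M(S)$ is assumed implicitly from the context preceding the lemma statement.

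For the first term, I would use that $\nu^n(S) = \delta$ deterministically, since every $\mu \in M(S)$ satisfies $\mu([0,t] \times E) = t$. Therefore
\bes
\left|\int_S (f_n - f)\, d\nu^n\right| \le \|f_n - f\|_S \cdot \nu^n(S) = \delta \|f_n - f\|_S \ra 0 \qasq n \tinf,
\ees
where convergence is deterministic, hence in distribution. This step is essentially immediate given the uniform convergence hypothesis and the fact that $M(S)$ consists of finite measures with fixed total mass.

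For the second term, I would appeal to the definition of weak convergence of random measures together with the continuous mapping theorem. Since $f$ is continuous and bounded on the compact space $S$, the map $\Phi_f : M(S) \ra \RR$ defined by $\Phi_f(\mu) \equiv \int_S f(s)\, d\mu(s)$ is continuous with respect to the Prohorov topology on $M(S)$ (this is precisely the statement that integration against continuous bounded functions characterizes weak convergence of measures). Hence by the continuous mapping theorem, $\Phi_f(\nu^n) \Ra \Phi_f(\nu)$ in $\RR$, which is exactly the desired convergence.

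Finally, combining the two pieces via the converging-together lemma (Slutsky's theorem), since one summand tends to $0$ deterministically and the other converges in distribution to $0$ (after centering by $\int_S f\, d\nu$), we obtain
\bes
\int_S f_n(s)\, d\nu^n(s) \Ra \int_S f(s)\, d\nu(s) \qinq \RR \qasq n \tinf.
\ees
There is no substantive obstacle here; the argument is a routine verification that the perturbation $f_n - f$, being uniformly small, does not disturb the continuous mapping theorem applied to $\Phi_f$. The only subtle point to keep in mind is that the compactification of $E$ via the map $\psi(x) = x/(1+|x|)$ guarantees $S$ is compact, so that continuous functions on $S$ are automatically bounded and $M(S)$ is itself compact under the Prohorov metric, which is what legitimizes treating weak convergence of random measures through integration against continuous test functions.
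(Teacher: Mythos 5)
Your proof is correct and uses the same add-and-subtract decomposition as the paper, with one useful simplification. The paper first invokes the Skorohod representation theorem to pass from $\nu^n \Rightarrow \nu$ to almost-sure convergence, and then argues pathwise, noting $\|f_n - f\|_S\,\nu^n(S) \to 0$ and applying the continuous-mapping conclusion for the second term; you instead observe that $\nu^n(S) = \delta$ is a deterministic constant (a consequence of the normalization $\mu([0,t]\times E) = t$ built into $M(S)$), so the first term vanishes deterministically, and then combine with the second term by Slutsky's theorem (the converging-together lemma). This avoids the Skorohod step entirely, which is the only place where the paper's proof carries any bookkeeping about probability spaces, and is a slightly more elementary argument that reaches the same conclusion. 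Both proofs correctly rest on (i) uniform convergence $\|f_n - f\|_S \to 0$ together with the finite (indeed constant) total mass of each $\nu^n$, and (ii) continuity of the map $\mu \mapsto \int_S f\,d\mu$ on $M(S)$ under the Prohorov topology, for which the compactification of $E$ (and hence of $S$) is essential, as you note.
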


\begin{proof}  First, starting from the convergence $\nu^n \Ra \nu$, apply the Skorohod representation theorem to obtain versions converging w.p.1,
without changing the notation.  Then,
by the triangle inequality,
\bes
\bsplit
| \int_{S} f_n (s) \, d \nu^n (s) - \int_{S} f(s) \, d \nu (s)| & \le  | \int_{S} f_n (s) \, d \nu^n (s) - \int_{S} f(s) \, d \nu^n (s)| 
+  | \int_{S} f (s) \, d \nu^n (s) - \int_{S} f(s) \, d \nu (s)| \\
& \le \| f_n - f\|_{S} \nu^n (S) + | \int_{S} f (s) \, d \nu^n (s) - \int_{S} f(s) \, d \nu (s)| \ra 0 ~\mbox{ as } n \ra \infty, \nonumber
\end{split}
\ees
using the uniform convergence condition and the limit $\nu^n (S) \ra \nu(S) < \infty$ in the first term and the
continuous mapping in the second term.  This shows convergence w.p.1 for the special versions and thus the claimed convergence in distribution.
\end{proof}

The random measures we consider are random elements of $M(S)$, where $S$ is the product space $[0, \delta] \times E$.
These random measures can be properly defined by giving their definition for all product sets $[0,t] \times B$,
as we have done in \eqn{rm1}.  The usual extension produces full random elements of $M(S)$.

We exploit the compactness of $E$ introduced above to obtain compactness of
$\sP(M)$ and thus relative compactness of the sequence $\{(\barx^n_6, \nu^n); n \ge 1\}$.
\begin{lemma}{$($relative compactness$)$}\label{lmRelC}
The sequence $\{(\barx^n_6, \nu^n); n \ge 1\}$ defined by {\em \eqn{scale}} and {\em \eqn{rm1}}
is relatively compact in $\D_6 ([0, \delta]) \times M(S)$ for any $\delta > 0$.
\end{lemma}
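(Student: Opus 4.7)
The plan is to establish relative compactness of each of the two coordinates separately and then invoke the standard fact that joint relative compactness in a product space follows from relative compactness of the marginals (which is a consequence of Prohorov's theorem in each factor together with the product topology).

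For the first coordinate, relative compactness of $\{\bar{X}^n_6 : n \ge 1\}$ in $\D_6([0,\delta])$ is immediate from Theorem \ref{lmTight}, which establishes $\C$-tightness of $\{(\bar{X}^n_6, \bar{Y}^n_8)\}$ in $\D_{14}([0,\infty))$. Projection onto the first six coordinates preserves tightness, and restricting from $[0,\infty)$ to the compact subinterval $[0,\delta]$ also preserves tightness (by the characterization in Theorem 11.6.3 of \cite{W02}, since the modulus-of-continuity and boundedness-at-$0$ conditions used in the proof of Theorem \ref{lmTight} carry over unchanged). Prohorov's theorem then yields relative compactness.

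For the second coordinate, the argument is essentially trivial once the correct topological setting has been set up, which the excerpt has just done. By the compactification $\psi : E \to [-1,1]$, the extended state space $E$ is a compact metric space, so $S = [0,\delta] \times E$ is compact. The space $M(S)$ of measures $\mu$ on $S$ with $\mu([0,t]\times E) = t$ for all $t \in [0,\delta]$ is a closed subset of the space of all (sub)probability measures on the compact set $S$ rescaled by $\delta$, hence it too is compact in the Prohorov metric (this is exactly the observation made right before the lemma, using Prohorov's theorem / Theorem 11.6.1 of \cite{W02}). Since $\nu^n$ in \eqref{rm1} is readily verified to satisfy $\nu^n([0,t]\times E) = t$ for all $t \in [0,\delta]$ (the inner indicator sums to $1$ since $D^n_{1,2}(s) \in E$ always), each $\nu^n$ is a random element of the compact space $M(S)$. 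Thus the laws of $\{\nu^n\}$ form a tight family in $\sP(M(S))$, again by Prohorov's theorem in the form that any family of probability measures on a compact metric space is tight.

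Having obtained relative compactness of each marginal, the joint relative compactness of $\{(\bar{X}^n_6, \nu^n)\}$ in the product space $\D_6([0,\delta]) \times M(S)$ follows: given any subsequence, extract a further subsequence along which $\bar{X}^n_6$ converges in distribution, then a further subsequence along which $\nu^n$ converges in distribution; the joint laws are then tight on the product, so by Prohorov's theorem one more diagonal extraction produces a joint weakly convergent subsequence. No step here is technically difficult; the only content is the marriage of Theorem \ref{lmTight} with the compactification of $E$ that was set up to force the random measures to live in a compact space. The genuine work — identifying the limiting random measure $\nu$ with the measure generated by the steady-state distribution of the FTSP — will come in the next subsection using the martingale/random-measure machinery of \cite{HK94,K92}, not here.
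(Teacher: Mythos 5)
Your proof is correct and follows essentially the same route as the paper's: tightness of $\{\bar{X}^n_6\}$ from Theorem \ref{lmTight}, automatic tightness of $\{\nu^n\}$ from the compactness of $M(S)$ obtained by compactifying $E$, and then the standard fact that tightness of marginals on a product of Polish spaces yields tightness (hence relative compactness) of the joint laws. The paper states this more tersely but invokes exactly the same three ingredients.
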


\begin{proof}
We already have observed that, because of the compactness imposed on $E$, the space $\sP(M(S))$ is compact.
By Theorem \ref{lmTight}, the sequence $\{\barx^n_{1,2}: n \ge 1\}$ is tight and thus relatively compact.
However, relative compactness of the components implies relative compactness of the vectors.
Thus the sequence $\{(\barx^n_6, \nu^n); n \ge 1\}$ defined by \eqn{scale} and \eqn{rm1}
is relatively compact in $\D_6 ([0, \delta] \times M(S)$.
\end{proof}

Another crucial property of the random measures on the product space
is that the random measures themselves admit a product representation or factorization,
as indicated by Lemma 1.4 of \cite{K92}; also see Lemma 2 of \cite{HK94}.
This result requires filtrations.  For that, we observe that $X^n_6$ is a Markov process
and $\nu_n$ restricted to $[0,t] \times E$ is a function of the Markov process $X^n_6$
over $[0,t]$.  Thus, we can use the filtrations $\sF^n_t$ generated by $X^n_6$ for each $n \ge 1$.
In our context, we have the following consequence of Lemma 1.4 of \cite{K92}.
\begin{lemma}{$($factorization of the limiting random measures$)$}\label{lmKurtzRep}
Let $(\barx, \nu)$ be the limit of a converging subsequence of
$\{(\barx^n_6, \nu^n); n \ge 1\}$ in $\D_6 ([0, \delta]) \times M(S)$
obtained via Lemma {\em \ref{lmRelC}}.  Then there exists $p_s \equiv p_s (B)$,
a measurable function of $s$ for each measurable subset $B$ in $E$
and a probability measure on $E$ for each $s$ in $[0,\delta]$,
such that, for all measurable subset $B_1$ of $[0, \delta]$
and $B_2$ of $E$,
\bequ \label{rmRep}
\nu (B_1 \times B_2) = \int_{B_1} p_s (B_2) \, ds.
\eeq
\end{lemma}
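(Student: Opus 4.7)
The plan is to deduce this essentially as a direct consequence of Lemma 1.4 of \cite{K92}, with two verifications: first, that the time marginal of each $\nu^n$ (and hence of the limit $\nu$) is Lebesgue measure on $[0,\delta]$; second, that the state space $E$ is a Polish space, so that the standard disintegration (regular conditional probability) theorem applies.

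First, I would observe directly from the definition in \eqref{rm1} that $\nu^n([0,t] \times E) = \int_0^t 1_{\{D^n_{1,2}(s) \in E\}}\, ds = t$ for every $n \ge 1$ and every $t \in [0,\delta]$, since $D^n_{1,2}(s) \in E$ always. Thus each $\nu^n$ belongs to $M(S)$ as defined before the lemma, and in particular has Lebesgue marginal on $[0,\delta]$.

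Next, I would pass this property to the limit. Fix $t \in [0,\delta]$ and let $f_t \equiv 1_{[0,t] \times E}$. The marginal projection $\mu \mapsto \mu([0,t] \times E)$ is continuous at measures $\mu$ with $\mu(\{t\} \times E) = 0$, by the portmanteau theorem applied on the compact metric space $S$. Since the Lebesgue limit satisfies this continuity condition, the convergence $\nu^n \Rightarrow \nu$ (along the subsequence) gives $\nu([0,t] \times E) = t$ for every $t \in [0,\delta]$. Hence the $[0,\delta]$-marginal of $\nu$ is Lebesgue measure w.p.1.

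Finally, since $E$ is a compact metric space (by the compactification made just before Lemma \ref{lmRelC}), it is Polish, and the regular conditional probability / disintegration theorem applies to $\nu$ (for each sample realization, off a null set). This yields a measurable kernel $s \mapsto p_s$ from $[0,\delta]$ into the probability measures on $E$ such that the representation \eqref{rmRep} holds for all measurable $B_1 \subseteq [0,\delta]$ and $B_2 \subseteq E$. This is precisely the content of Lemma 1.4 of \cite{K92} applied to our setting, so no new argument beyond the two verifications above is required. The only (very minor) obstacle is confirming that the time marginal passes to the limit, which is immediate from the choice of the compact topology on $S$.
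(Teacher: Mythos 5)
Your proposal is correct and follows essentially the same route as the paper: both reduce the statement to Lemma 1.4 of Kurtz \cite{K92}, after noting that $E$ is Polish (compact metric) so that the disintegration/regular-conditional-probability theorem applies. The paper itself does not present a formal proof, only the observation that the statement is a consequence of Kurtz's lemma, so you have in fact supplied more detail than the paper does.

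One sentence in your argument for passing the time marginal to the limit is circular as written: "Since the Lebesgue limit satisfies this continuity condition" presupposes that the time marginal of $\nu$ is Lebesgue measure, which is precisely what you are trying to establish. The gap is easy to close: the time marginal of $\nu$ can have at most countably many atoms, so the portmanteau argument gives $\nu([0,t]\times E)=t$ for all $t$ outside a countable exceptional set; since $t\mapsto\nu([0,t]\times E)$ is right-continuous and nondecreasing and agrees with $t\mapsto t$ on a dense subset of $[0,\delta]$, equality holds for all $t$. (Equivalently, this is exactly the argument needed to show $M(S)$ is closed, which the paper implicitly uses when asserting $M(S)$ is compact.) Beyond that, you omit the paper's remark that Kurtz's Lemma 1.4 involves filtrations; that adaptedness is not needed for the bare existence statement here, but it is what makes the resulting $p_s$ usable in the martingale argument of \S\ref{secHK2}.
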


As a consequence of the three lemmas above, we obtain the following preliminary representation.
\begin{lemma}{$($initial representation of limiting integral terms$)$}\label{lmKey2}
Every subsequence of the sequence $\{(\barx^n_6, \nu^n); n \ge 1\}$ defined by {\em \eqn{scale}} and {\em \eqn{rm1}}
in $\D_6 ([0, \delta]) \times M(S)$ as a further converging subsequence.
Let $(\barx, \nu)$ be a limit of a convergent subsequence.
For any $\delta \le \tau$ for $\tau$ in Theorem {\em \ref{thFluidScaled}},
the integral terms in {\em \eqn{FS}} necessarily coincide with the corresponding integral terms in {\em \eqn{FluidScaled}}
with a probability $p_{1,2} (s)$ substituted for $1_{\{D^n_{1,2} (s) > 0\}}$ and $\barx$ substituted for $x$ for $0 \le t \le \delta$,
in particular,
\bequ \label{key2}
\bar{I}_{q,1,1} (t) = \int_{0}^{t} p_{1,2}(s) \barz_{1,2} (s)) \, ds, \quad 0 \le t \le \delta, \quad w.p.1.
\eeq
where $\barz_{1,2}$ is the component of $\barx$ and $p_{1,2} (s) \equiv p_s ((0, \infty))$ for $p_s$ in {\em \eqn{rmRep}},
so that $p_{1,2} (s)$ is a measurable function of $s$ with $0 \le p_{1,2} (s) \le 1$, $0 \le s \le \delta$.
\end{lemma}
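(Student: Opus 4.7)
The plan is to use the random-measure framework of Kurtz \cite{K92} as applied in Hunt--Kurtz \cite{HK94}, building on the three preparatory lemmas already in hand. Relative compactness in $\D_6([0, \delta]) \times M(S)$ follows directly from Lemma \ref{lmRelC}, and $\C$-tightness (Theorem \ref{lmTight}) ensures that the $\barx$-part of any subsequential limit lies in $\C_6$. For a limit point $(\barx, \nu)$, Lemma \ref{lmKurtzRep} provides the factorization $\nu(ds, dx) = p_s(dx)\, ds$ for a measurable family $\{p_s: s \in [0, \delta]\}$ of probability measures on the compactified space $E$. I would then define $p_{1,2}(s) \equiv p_s((0, \infty))$, whose measurability in $s$ and bound $p_{1,2}(s) \in [0, 1]$ are immediate from Lemma \ref{lmKurtzRep}.

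Next, I would rewrite each of the six integral processes appearing in \eqref{FluidScaled} as integrals of the form
\bes
\int_{[0, t] \times E} g_n(s, x)\, \nu^n(ds, dx),
\ees
where for instance $g_n(s, x) = \barz^n_{1,2}(s)\, 1_{(0, \infty)}(x)$ in the case of $\bar{I}^n_{q,1,1}$. After applying Skorohod's representation theorem, the joint convergence $(\barx^n_6, \nu^n) \to (\barx, \nu)$ may be assumed almost sure, and then the uniform convergence $\barz^n_{1,2} \to \barz_{1,2}$ on $[0, \delta]$ (valid since the limit is continuous) allows me to absorb the $n$-dependence of $g_n$ through the $\barz^n_{1,2}$ factor into an error that tends to $0$ uniformly in $s$.

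For every continuous bounded $f$ on $E$, Lemma \ref{lmCMT} applied to $(s, x) \mapsto f(x) \barz^n_{1,2}(s)$ would then yield
\bes
\int_{[0, t] \times E} f(x)\, \barz^n_{1,2}(s)\, \nu^n(ds, dx) \Rightarrow \int_0^t \barz_{1,2}(s) \left(\int_E f(x)\, p_s(dx)\right) ds.
\ees
Approximating the indicator $1_{(0, \infty)}$ by Lipschitz functions $f_\ep$ that vanish on $x \le 0$, equal $1$ on $x \ge \ep$, and increase pointwise to $1_{(0, \infty)}$ as $\ep \downarrow 0$, and invoking monotone convergence in both the inner ($p_s$) and outer ($ds$) integrals, delivers $\bar{I}_{q,1,1}(t) = \int_0^t p_{1,2}(s) \barz_{1,2}(s)\, ds$, with the analogous expressions for the other five integral processes.

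The main obstacle is the discontinuity of $1_{(0, \infty)}$ at $x = 0$: approximating from above and below produces a sandwich that differs by $\int_0^t \barz_{1,2}(s) p_s(\{0\})\, ds$, so the lower and upper envelopes need not coincide if $p_s$ charges zero. I would close this gap by exploiting the complementarity $1_{(0, \infty)}(x) + 1_{(-\infty, 0]}(x) = 1$ between the two types of indicators appearing in \eqref{FluidScaled}: consistently approximating $1_{(0, \infty)}$ from below and $1_{(-\infty, 0]}$ from above by $f_\ep$ and $1 - f_\ep$ preserves the constraint that the two limiting integrands sum to $\barz_{1,2}(s)$, forcing the atom $p_s(\{0\})$ to be assigned entirely to the $\{\le 0\}$ side and matching the defining convention in \eqref{FluidScaled}. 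Sharpening the identification of $p_s$ with the FTSP stationary distribution, and hence removing any remaining ambiguity, is deferred to the next subsection via the martingale representation.
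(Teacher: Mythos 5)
Your overall route---Lemma \ref{lmRelC} for relative compactness, Lemma \ref{lmKurtzRep} for the factorization $\nu(ds,dx)=p_s(dx)\,ds$, Skorohod to pass to a.s.\ convergence, and Lemma \ref{lmCMT} to take limits of the integral terms---is exactly what the paper does, and the representation \eqn{key2} then follows. However, the ``main obstacle'' you identify and then work around is not actually present, and the paper's proof is shorter precisely because it sidesteps it.

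The discontinuity worry disappears once you recall how $E$ was set up in \S\ref{secHK}. Because Assumption \ref{assE} makes $r_{1,2}$ rational (and the thresholds were taken rational too), the set $E$ of possible values of $D^n_{1,2}$ is a fixed discrete lattice, not an interval of reals; the compactification adjoins only the two points $\pm\infty$ via $\psi(x)=x/(1+|x|)$. In this metric every finite point of $E$ is isolated, and $1_{(0,\infty)}$ has the finite limits $1$ at $+\infty$ and $0$ at $-\infty$. Hence $1_{(0,\infty)}$ is already a \emph{continuous} bounded function on the compactified $E$, and $(s,y)\mapsto\barz_{1,2}(s)1_{\{y>0\}}$ is continuous on $S=[0,\delta]\times E$. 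Lemma \ref{lmCMT} therefore applies directly to $f_n(s,y)=\barz^n_{1,2}(s)1_{\{y>0\}}$ with $\|f_n-f\|_S=\|\barz^n_{1,2}-\barz_{1,2}\|_{[0,\delta]}\to0$ after Skorohod; no Lipschitz mollification is needed, and there is no ambiguity about where a possible atom $p_s(\{0\})$ is ``assigned.'' Your complementarity argument using $1_{(0,\infty)}+1_{(-\infty,0]}=1$ is a valid fallback if one insists on treating $E$ as a subset of $\RR$ with its usual topology, but it is superfluous here, and invoking it obscures the structural simplification that the rationality assumption was introduced to provide.
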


\begin{proof}
By Lemma \ref{lmRelC}, we are justified in focusing on a converging subsequence with limit $(\barx, \nu)$,
where $\barx \equiv (\barq_{i}, \barz_{i,j})$.  For \eqn{key2}, we focus on $(\barz_{1,2}, \nu)$.
As before, apply the Skorohod representation theorem to obtain a version converging w.p.1 along the subsequence,
without changing the notation.
For the corresponding terms indexed by $n$,
\bequ \label{rm5}
\bar{I}^n_{q,1,1} (t) = \mu_{1,2} \int_{0}^{t} \barz^n_{1,2} (s) 1_{\{D^n_{1,2} (s) > 0\}} ds = \int_{0}^{t} \barz^n_{1,2} (s) \, \nu^n (ds \times dy),
\eeq
so that we can apply Lemma \ref{lmCMT} to deduce that
\beq
\bar{I}^n_{q,1,1} (t) \ra \bar{I}_{q,1,1} (t) \equiv \mu_{1,2} \int_{0}^{t} \barz_{1,2} (s) \, \nu (ds \times dy).
\eeqno
Finally, we apply Lemma \ref{lmKurtzRep} to show that the representation of $\nu$ in \eqn{rm5}
is equivalent to \eqn{key2}.
\end{proof}

It now remains to determine the term $p_{1,2} (s)$ in the integrand of the integral \eqn{key2}.
In the next section we will show that we can write $p_{1,2} (s) = P(D(\barx (s), \infty) > 0)$,
thus completing the proof of Lemma \ref{lmKey}.

\subsection{A Martingale Argument to Characterize the Probability in the Integrand.}\label{secHK2}

We now finish the proof of Lemma \ref{lmKey} by characterizing the probability measure $p_s$ in Lemma \ref{lmKurtzRep}.

\begin{proofof}{Lemma \ref{lmKey}}
We will prove that $p_{1,2}(s) = P(D(\barx (s), \infty)) > 0$ for almost all $s$
in the integral \eqn{key2}, where $D(\barx (s), \infty))$ is a random variable with the steady-state distribution of the FTSP in
\S \ref{secFTSP} depending on the state $\barx (s)$, which is the limit of the converging subsequence of the sequence $\{\barx^n: n \ge 1\}$.
This step will make \eqn{key2} reduce to the desired \eqn{key}.

We first comment on the exceptional sets.  We establish the result w.p.1, so that there is an exceptional set, say $\Upsilon$,
in the underlying probability space $\Omega$ with $P(\Upsilon) = 0$, such that we claim the conclusion
of Lemma \ref{lmKey} holds in $\Omega - \Upsilon$.
However, for each $\omega \in \Omega - \Upsilon$, we find an exceptional set $\Psi (\omega)$ in $[0, \delta]$
where the Lebesgue measure of $\Psi (\omega)$ is $0$.  However, the integral in Lemma \ref{lmKey} is unchanged if
we change the definition of the integrand on a set of Lebesgue measure $0$.
Hence, we can assume that $p_{1,2}(s) = P(D(\barx (s), \infty) > 0)$ for {\em all} $s$ in $[0,\delta]$
for each sample point $\omega \in \Omega - \Upsilon$.  After doing that, we obtain the w.p.1 conclusion
in Lemma \ref{lmKey}.

We remark that it is possible to obtain a {\em single} exceptional set $\Psi$ in $[0, \delta]$ such that
$p_{1,2}(s) = P(D(\barx (s), \infty)) > 0$ for all $s \in [0,\delta] - \Psi$ w.p.1.  The construction to achieve that stronger goal is described
in Example 2.3 of Kurtz \cite{K92} on p. 196.  The conditions specified there hold in our context.
Since that property is not required here, we do not elaborate.

Continuing with the main proof, we now aim to characterize the
 entire probability measure $p_s$ on $E$ appearing in Lemma \ref{lmKurtzRep}.
We will do that by showing that $p_s$ satisfies the
equation characterizing the steady-state distribution of the FTSP $D(\barx (s), \cdot)$ for almost all $s$ with respect to Lebesgue measure
(consistent with the notion of an averaging principle).
Since the FTSP $D(\barx (s), \cdot)$, given $\barx (s)$, is a CTMC with the special structure (only four transitions possible from each state,
and only two different cases for these, as shown in \eqn{bd1}-\eqn{bd4}), just as for finite-state CTMC's (in elementary textbooks), it suffices to show that
\bequ \label{mg1}
\sum_{i} p_s (\{i\}) Q_{i,j} (\barx (s)) = 0 \qforallq j
\eeq
for almost all $s$ in $[0, \delta]$ with respect to Lebesgue measure, where $i$ and $j$ are states of the FTSP
and $Q_{i,j} (\barx (s))$ in \eqn{mg1} is the $(i,j)^{th}$ component in the infinitesimal rate matrix (generator)
of the CTMC $D(\barx (s), \cdot)$.

However, we will follow \cite{HK94} and use the framework in \S\S 4.2 and 8.3 of \cite{EK86}.  In particular,
the FTSP satisfies the assumptions in Corollary 8.3.2 on p. 379 in \cite{EK86}. As in (9) of \cite{HK94},
this step corresponds to an application of Proposition 4.9.2 of \cite{EK86}, but the simple CTMC setting
does not require all the structure there.
Following the proof of Theorem 3 in \cite{HK94} (and \S 2 of \cite{K92}),
we now develop a martingale representation for $f(D^n_{1,2})$, where $f$ is a bounded continuous real-valued function on the state space $E$ of $D^n_{1,2}$.
This construction is the standard martingale associated with functions of Markov processes,
just as in Proposition 4.1.7 of \cite{EK86}.  Since, $D^n_{1,2}$ is a simple linear function of the CTMC $X^n_6$ in \eqn{Dprocess},
we can write $f(D^n_{1,2}) = g(X^n_6)$ for some continuous bounded function $g$.  The martingale will be with respect to the filtration $\mathcal{F}^n_t$
generated by the Markov process $X^n_6$ (as in Lemma \ref{lmKurtzRep}).

Recalling that $D^n_{1,2} \equiv Q^n_1 - r_{1,2} Q^n_2$, we can write $f(D^n_{1,2}(t))$ in terms of the
independent rate-$1$ Poisson processes in \eqref{rep1} as follows
\bes
\bsplit
f(D^n_{1,2}(t)) & \equiv  f(D^n_{1,2}(0)) - \int_0^t [f(D^n_{1,2}(s-)+1) - f(D^n_{1,2}(s-))] dN^a_{1}(\lm^n_1 s) \\
& \quad - \int_0^t [f(D^n_{1,2}(s-)-1) - f(D^n_{1,2}(s-))] \Big(dN^u_1(\theta_1 Q^n_1(s)) + dN^s_{1,1}(\mu_{1,1}m^n_1 s)\Big) \\
& \quad - \int_0^t 1_{\{D^n_{1,2}(s) > 0\}}[f(D^n_{1,2}(s-)-1) - f(D^n_{1,2}(s-))] \Big(dN^s_{1,2}(\mu_{1,2}Z^n_{1,2}(s))+dN^s_{2,2}(\mu_{2,2}(Z^n_{2,2}(s)))\Big) \\
& \quad - \int_0^t 1_{\{D^n_{1,2}(s) \le 0\}} [f(D^n_{1,2}(s-)+r) - f(D^n_{1,2}(s-))] \Big(dN^s_{1,2}(\mu_{1,2}Z^n_{1,2}(s))+dN^s_{2,2}(\mu_{2,2}(Z^n_{2,2}(s)))\Big) \\
& \quad - \int_0^t 1_{\{D^n_{1,2}(s) > 0\}} [f(D^n_{1,2}(s-)+r) - f(D^n_{1,2}(s-))] dN^u_s(\theta_2 Q^n_2(s)) \\
& \quad - \int_0^t [f(D^n_{1,2}(s-)-r) - f(D^n_{1,2}(s-))] dN^a_2(\lm^n_2 s).
\end{split}
\ees
We next rewrite the representation for $f(D^n_{1,2}(t))$ above to achieve a martingale representation. To that end,
we add and then subtract the appropriate Riemann integral from each of the integrals above, e.g.,
\bes
\bsplit
& \int_0^t [f(D^n_{1,2}(s-)+1) - f(D^n_{1,2}(s-))] dN^a_{1}(\lm^n_1 s) = \\
& \quad \int_0^t [f(D^n_{1,2}(s-)+1) - f(D^n_{1,2}(s-))] dM^{n,a}_{1}(s) + \int_0^t [f(D^n_{1,2}(s-)+1) - f(D^n_{1,2}(s-))]\lm^n_1 ds,
\end{split}
\ees
for $M^{n,a}_{1}$ in \eqref{martg}. Note that an integral of a predictable process with respect to a martingale is again a martingale.
We thus achieve a modifies representation of $f(D^n_{1,2}(t))$ in terms of martingales and their associated predictable quadratic-variation processes;
see, e.g., \S 3.5 in \cite{PTW07} for more details.
Rearranging terms, so that all the martingales in the modified representation appear on the left-hand side and letting $M^n_f$ denote the
sum of those martingales, we have
\begin{eqnarray}\label{mg2}
M^n_f(t) & \equiv & f(D^n_{1,2}(t))  - f(D^n_{1,2}(0)) - \int_0^t \lm^n_1 [f(D^n_{1,2}(s-)+1) - f(D^n_{1,2}(s-))] ds \nonumber  \\
&& \quad - \int_0^t (m^n_1 + \theta_1 Q^n_1(s))[f(D^n_{1,2}(s-)-1) - f(D^n_{1,2}(s-))] ds \nonumber \\
&& \quad - \int_0^t 1_{\{D^n_{1,2}(s) > 0\}} (\mu_{1,2}Z^n_{1,2}(s) + \mu_{2,2}(m^n_2 - Z^n_{1,2}(s)))[f(D^n_{1,2}(s-)-1) - f(D^n_{1,2}(s-))] ds \nonumber \\
&& \quad - \int_0^t 1_{\{D^n_{1,2}(s) \le 0\}} (\mu_{1,2}Z^n_{1,2}(s) + \mu_{2,2}(m^n_2 - Z^n_{1,2}(s))) [f(D^n_{1,2}(s-)+r) - f(D^n_{1,2}(s-))] ds \nonumber \\
&& \quad - \int_0^t 1_{\{D^n_{1,2}(s) > 0\}} \theta_2 Q^n_2(s) [f(D^n_{1,2}(s-)+r) - f(D^n_{1,2}(s-))] ds \nonumber \\
&& \quad - \int_0^t \lm^n_2 [f(D^n_{1,2}(s-)-r) - f(D^n_{1,2}(s-))] ds.
\end{eqnarray}
Note that $M^n_f$ is a $\mathcal{F}^n_t$-martingale itself.

It follows from essentially the same arguments as in the proof of Lemma \ref{lmMartgFluid} and the continuity of addition at continuous limits, that
$\bar{M}^n_f \equiv M^n_f/n \Ra 0e$ in $\D$ as $n \tinf$, for $M^n_f$ in \eqref{mg2}.
In addition, we have $n^{-1}(f(D^n_{1,2}(t)) - f(D^n_{1,2}(0))) \Ra 0e$ in $\D$ since $f$ is bounded.
We write the remaining terms of $\bar{M}^n_f$ as
\bes 
\bsplit
& \bar{D}^n_f (t) \equiv \bar{M}^n_f (t) - n^{-1}(f(D^n_{1,2}(t)) - f(D^n_{1,2}(0)))
  = \int_{(0,t)\times E} \bar{\lm}^n_1 [f(y+1) - f(y)] \nu^n(ds \times dy) \\
& \quad - \int_{(0,t)\times E} (\bar{m}^n_1 + \theta_1 \barq^n_1(s))[f(y-1) - f(y)] \nu^n(ds \times dy) \\
& \quad - \int_{(0,t)\times E} 1_{\{y > 0\}} (\mu_{1,2}\barz^n_{1,2}(s) + \mu_{2,2}(\bar{m}^n_2 - \barz^n_{1,2}(s)))[f(y-1) - f(y)] \nu^n(ds \times dy) \\
& \quad - \int_{(0,t)\times E} 1_{\{y \le 0\}} (\mu_{1,2}\barz^n_{1,2}(s) + \mu_{2,2}(\bar{m}^n_2 - \barz^n_{1,2}(s))) [f(y+r) - f(y)] \nu^n(ds \times dy) \\
& \quad - \int_{(0,t)\times E} 1_{\{y > 0\}} \theta_2 \barq^n_2(s) [f(y+r) - f(y)] \nu^n(ds \times dy)
 - \int_{(0,t)\times E} \bar{\lm}^n_2 [f(y-r) - f(y)] \nu^n(ds \times dy). \\
\end{split}
\ees
By Lemmas \ref{lmKey2} and \ref{lmCMT}, $\bar{D}^n_f \Ra \bar{D}_f = 0e$ as $n \ra \infty$ along the converging subsequence, where
\bequ \label{mg3}
\bsplit
& \bar{D}_f  \equiv \int_{(0,t)\times E_\Delta} \lm_1 [f(y+1) - f(y)] \nu (ds \times dy)
 - \int_{(0,t)\times E} (m_1 + \theta_1 \barq_1(s))[f(y-1) - f(y)] \nu (ds \times dy) \\
& - \int_{(0,t)\times E} 1_{\{y > 0\}} (\mu_{1,2}\barz_{1,2}(s) + \mu_{2,2}(m_2 - \barz_{1,2}(s)))[f(y-1) - f(y)] \nu (ds \times dy) \\
& - \int_{(0,t)\times E} 1_{\{y \le 0\}} (\mu_{1,2} \barz_{1,2}(s) + \mu_{2,2}(m_2 - \barz_{1,2}(s))) [f(y+r) - f(y)] \nu (ds \times dy) \\
& - \int_{(0,t)\times E} 1_{\{y > 0\}} \theta_2 \barq_2(s) [f(y+r) - f(y)] \nu (ds \times dy)
- \int_{(0,t)\times E} \lm_2 [f(y-r) - f(y)] \nu (ds \times dy). \\
\end{split}
\eeq
However, just as in \cite{HK94}, we can identify the limit $\bar{D}_f$ in \eqn{mg3}
as the integral with respect to the random measure $\nu$
of the infinitesimal generator of the FTSP $D(\barx(s), \cdot)$
applied to the test function $f$.  In particular, for each sample point in the underlying probability space $\Omega$ supporting $(\barx, \nu)$
except for a subset $\Upsilon$ with $P(\Upsilon) = 0$,
from Lemma \ref{lmKurtzRep}, we obtain
\bequ \label{mg4}
\int_{0}^{t} \int_{E} [Q (\barx (s))f](y) p_s(dy) \, ds = 0 \quad \mbox{for all $t \ge 0$.}
\eeq
where $[Q (\barx (s))f] (y)$ is the generator of the CTMC $D(\barx (s), \cdot)$ applied to $f$ as a function of $y$ in $E$.  As a consequence,
\bequ \label{mg5}
\int_{E} [Q (\barx (s)) f](y) p_s (dy) = 0 \quad \mbox{for almost all $s$ with respect to Lebesgue measure.}
\eeq
It follows from Proposition 4.9.2 page 239 in \cite{EK86}, that $p_s$ is the (unique) stationary distribution of the FTSP $D(\barx(s), \cdot)$
for almost all $s$.
(This step is equivalent to \eqn{mg1}.)
We apply Lemma \ref{lmLimFTSP} to conclude that the FTSP $D(\gamma, \cdot)$ has a unique stationary distribution on $E$ for all $\gamma \in \rS$.
\end{proofof}

\subsection{Proof of Theorem \ref{th1}.}

We can now summarize the proof of our main result - the FWLLN via the AP.

\begin{proofof}{Theorem \ref{th1}}
There are two steps:  (i) establishing convergence over an initial interval and (ii) expanding the interval of convergence.
We consider slightly more general settings than in the statement of the theorem, by considering the vector $(\barx^n_6, \bar{Y}^n_8, \nu^n)$
with the random measure $\nu^n$ replacing $\Theta^n$.
Note that ${\Theta}^n(t) \equiv \nu^n([0,t] \times (0,\infty))$ and $\vartheta(t) \equiv \nu([0,t] \times (0,\infty))$ for ${\Theta}^n$ and $\vartheta$
in \eqref{nuAP}, $\nu^n$ in \eqref{rm1} and $\nu$ in \eqref{rmRep}.

{(i) \em Establishing convergence over $[0,\tau]$.}
By Theorem \ref{lmTight}, the sequence $\{(\barx^n_6, \bar{Y}^n_8): n \ge 1\}$ in \eqref{scale} is $\C$-tight in $\D_{14} ([0, \infty))$.
By Lemma \ref{lmRelC}, $(\barx^n_6,\nu^n)$ is relative compact in $\D_6 \times M(S)$.
By Theorem \ref{thFluidScaled},
there exists $\tau > 0$
such that the limit point of a converging subsequence of $\barx^n_6$ in $\D_6([0, \tau])$ is also the limit point of $\barx^{n,*}_6$ in \eqref{red1},
whose representation is specified in \eqref{rep4}-\eqref{rep3}.
Thus, it suffices to next characterize the limit of a converging subsequence of the sequence $(\barx^n, \nu^n)$, for
$\{\barx^n\} \equiv \{1/n (Q^n_1, Q^n_2, Z^n_{1,2})\}$ in \eqref{FluidScaled} over an interval $[0, \tau]$.
The characterization of the limit of $\barx^n_6$ also characterizes the limit of $\nu^n$ since,
as the proof of Lemma \ref{lmKurtzRep} demonstrates,
each limit point of $(\barx^n_6,\nu^n)$ is of the form $(\barx_6, \nu)$, for $\nu$ in \eqref{rmRep}. In particular, for any two Borel sets $B_1, B_2$,
$$\nu^n(B_1, B_2) \equiv \int_{B_1} 1_{\{D^n_{1,2}(s) \in B_2\}} \Ra \nu(B_1 \times B_2) \equiv \int_{B_1} p_s(B_2)\, ds \qinq \D \qasq n \tinf,$$
where $p_s$ is the unique stationary distribution of the FTSP $D(\barx_6(s), \cdot)$ for almost all $s$.
Hence, if $\barx \equiv x_6$, and in particular, the limit of $\barx^n_6$ is unique, then $p_s \equiv \pi_{1,2}(x_6(s))$ for almost all $s$,
so that the limit $\nu$ of $\nu^n$ is unique as well.

With that characterization complete, we obtain the full convergence $(\barx^n_6, \bar{Y}^n_8 , \nu^n) \Ra (x_6,y_8,\nu)$ in $\D_{15} ([0, \tau])$ directly,
exploiting Theorem \ref{thFluidScaled}.  By Lemma \ref{lmKey} above, we complete the characterization step, showing that $P(\barx = x) = 1$ in $\D_3([0, \tau])$,
where $x$ is a solution to the ODE in \eqn{odeDetails} with the initial condition $x(0)$ specified by Assumption \ref{assC}.

{(ii) \em Expanding in the interval of convergence.}
After establishing the convergence over an initial interval $[0, \tau]$,
we can apply Theorem \ref{thSSCextend} (which uses Theorem \ref{th1} over $[0, \tau]$) to conclude that
any limit point of the tight sequence $\barx^n_6$ is again a limit of the tight sequence $\barx^{n,*}_6$ in \eqref{red1} over the entire half line $[0,\infty)$,
showing that $\tau$ places no constraint on expanding the convergence interval.
Moreover, by part (ii) of Theorem 5.2 in \cite{PeW09c}, any solution to the ODE, with a specified initial condition, can be extended indefinitely,
and is unique. Hence that places no constraint either.  Finally, the martingale argument allows us to uniquely characterize the
steady-state distribution of the FTSP $D(\gamma, \cdot)$ in \S \ref{secFTSP} even when the state $\gamma$ is not in $\AA$, provided that we have the SSC provided by
Theorem \ref{thSSCextend}.  In particular, we will have either $\pi_{1,2} (\gamma) = 1$ or $\pi_{1,2} (\gamma) = 0$ if $\gamma \notin \AA$.
\end{proofof}


\section{Remaining Proofs of Theorems in \S \ref{secMain}.}\label{secAuxProofs}

We now provide the remaining proofs for four theorems in \S \ref{secMain}.
At this point, Theorem \ref{th1} has been proved.

\subsection{Proof of Theorem \ref{thStatLim}.}\label{secStatProofs}

We will consider a sequence of stationary Markov processes $\{\{\barx^{n}_6 (t) : t \ge 0\}: n \ge 1\}$, with
$\barx^{n}_6 (0) \equalDist \barx^n_6 (\infty)$ for each $n \ge 1$. That initial condition makes the stochastic processes strictly
stationary. We will refer to these stationary processes as {\em stationary versions} of the processes $\barx^n_6$
and denote them by $\barx^n_s$.  We start by establishing tightness.

\begin{lemma}$($tightness of the sequence of stationary distributions$)$ \label{lmTightSSr.v.}
The sequences $\{\barx^n_6 (\infty): n \ge 1\}$ and $\{\{\barx^n_s (t) : t \ge 0\}: n \ge 1\}$ are tight in $\RR_6$ and $\D_6$,
respectively.
\end{lemma}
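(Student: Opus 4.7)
The plan is to establish tightness of the stationary random variables in $\RR_6$ first, and then lift to process-level tightness in $\D_6$ via a modulus-of-continuity argument as in Theorem \ref{lmTight}.

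For the $\RR_6$-tightness of $\{\barx^n_6(\infty) : n \ge 1\}$, I would dispatch the four service components trivially using the deterministic bound $0 \le \barz^n_{i,j}(\infty) \le m^n_j/n \to m_j < \infty$ w.p.1, which comes from $Z^n_{1,2} + Z^n_{2,2} \le m^n_2$ and the analogous bound for pool $1$. For the two queue components, I would appeal to the sample-path stochastic domination $(Q^n_1, Q^n_2) \le_{st} (Q^n_{1,bd}, Q^n_{2,bd})$ in $\D_2([0,\infty))$ from Lemma \ref{lmQbds2}. Initializing the coupled processes at the stationary distribution of $(Q^n_1, Q^n_2)$ and letting $t \to \infty$, the left side stays stationary while the right side converges weakly to its unique Poisson stationary law; monotonicity under weak limits then yields $Q^n_i(\infty) \le_{st} Q^n_{i,bd}(\infty)$. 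Since $Q^n_{i,bd}(\infty)$ has mean $\lm^n_i/\theta_i \sim n \lm_i/\theta_i$, the WLLN gives $\barq^n_{i,bd}(\infty) \Rightarrow \lm_i/\theta_i$ in $\RR$; the stochastic domination then transfers tightness to $\{\barq^n_i(\infty)\}$, and product tightness of the coordinates completes this first step.

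For process-level tightness of $\{\barx^n_s\}$ in $\D_6$, I would verify conditions (6.3) and (6.4) of Theorem 11.6.3 of \cite{W02} exactly as in the proof of Theorem \ref{lmTight}. Condition (6.3) (tightness of the time-$0$ marginal) is immediate since $\barx^n_s(0) \deq \barx^n_6(\infty)$ is tight by the first step. For the oscillation bound (6.4), the estimates involving the arrival and service Poisson processes carry over verbatim, since they rely only on the FWLLN for $N^a_i$ and $N^s_{i,j}$ together with the deterministic bounds for the server counts. The only adjustment needed is in the abandonment modulus: in the proof of Theorem \ref{lmTight} the constant $q_{bd} = 2(q_i(0) + T)$ arose from the convergence $\barq^n(0) \Rightarrow q_i(0)$ of Assumption \ref{assC2}, which we no longer have. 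I would instead use the tightness of $\{\barq^n_s(0)\}$ from the first step: given $\eta > 0$, choose $M$ so that $\sup_n P(\barq^n_s(0) > M) < \eta/4$, set $q_{bd} \equiv M + \lm_i T + 1$, and let $B_n \equiv \{\barq^n_s(0) + \bar{A}^n_i(T) \le q_{bd}\}$. Then $P(B_n) > 1 - \eta/2$ for all sufficiently large $n$, and on $B_n$ the estimate $w(U^n_i/n, \zeta, T) \le w(N^u_i(n \cdot)/n, c\zeta, T)$ holds with the same constant $c$ as in Theorem \ref{lmTight}; the FWLLN for $N^u_i$ controls this modulus.

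The main obstacle is promoting the $\D_2([0,\infty))$ sample-path stochastic order to a stationary stochastic order on $\RR_2$, but this reduces to the standard fact that if $X_t \le_{st} Y_t$ for all $t$ and both processes admit weak limits as $t \to \infty$, then the limiting distributions inherit the order. Everything else is a direct adaptation of arguments already established in the paper.
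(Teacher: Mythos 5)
Your proposal is correct and follows essentially the same route as the paper: deterministic bounds $0 \le \barz^n_{i,j}(\infty) \le m^n_j/n$ for the service components, the $M/M/\infty$ stochastic domination from Lemma \ref{lmQbds2} (passed to the $t \to \infty$ stationary limit, using that stochastic order is preserved under weak convergence) for the queue components, and then a re-run of the modulus-of-continuity argument from Theorem \ref{lmTight} for $\D_6$-tightness. The paper simply states that the $\D_6$ step is ``identical to the proof of Theorem \ref{lmTight},'' so you have usefully spelled out the one small adaptation that is genuinely needed---replacing the deterministic convergence $\barq^n(0) \Rightarrow q_i(0)$ of Assumption \ref{assC2} by tightness of $\{\barq^n_s(0)\}$ when choosing the constant $q_{bd}$ that controls the abandonment modulus.
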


\begin{proof}
First, for the tightness of $\{\barx^n_6 (\infty): n \ge 1\}$ in $\RR_6$, it suffices
to treat the six components separately.  The tightness of $\{\barz^n_{i,j} (\infty): n \ge 1\}$ in $\RR$ is
immediate because $0 \le \barz^n_{i,j} (\infty) \le m^n_j/n$, where $m^n_j/n \ra m_j$ as $n \ra \infty$.
The tightness of the queue lengths follows from Lemma \ref{lmQbds2}.  In particular,
since (i) $Q^n_i (t) \le_{st} Q^n_{i,bd} (t)$ for all $t$ and (ii) $Q^n_i (t) \Rightarrow Q^n_i (\infty)$
and $Q^n_{i,bd} (t) \Rightarrow Q^n_{i,bd} (\infty)$ as $t \ra \infty$ for all $n \ge 1$, we necessarily have $Q^n_i (\infty) \le_{st} Q^n_{i,bd} (\infty)$ for all $n$,
because stochastic order is preserved under convergence.  Since $\bar{Q}^n_{i,bd} (\infty) \Ra q_{i,bd} (\infty) \equiv q_i (0) \vee (\lambda_i/\theta_i)$
as $n \ra \infty$,
the sequence $\{\bar{Q}^n_{i,bd} (\infty): n \ge 1\}$ is stochastically bounded, which implies that
the sequence $\{\bar{Q}^n_{i} (\infty): n \ge 1\}$ is stochastically bounded as well.
Since tightness of the marginal distributions implies tightness of vectors, the sequence of
steady-state random vectors $\{\bar{X}^n (\infty): n \ge 1\}$ is tight in $\RR_6$.
Given the tightness of $\{\barx_s^n (0)\}$, the proof of tightness in $\sD_6$ is identical to the proof of Theorem \ref{lmTight}.
\end{proof}

We next establish the analogue of the structural simplification results in \S \ref{secSSCserv}.
Let $\barx (\infty) \equiv (\barq_i (\infty), \barz_{i,j} (\infty))$ be a limit of the stochastically bounded sequence $\{\barx^n_6 (\infty)\}$.
Let $\barx \equiv (\barq_i, \barz_{i,j})$ be a limit of the sequence $\{\{\barx^n_s (t) : t \ge 0\}: n \ge 1\}$. Note that $\barx$ must itself be a stationary process.

\begin{lemma} \label{lmStatLim}
$P(\barz_{1,1} (\infty) = m_1, \barz_{2,1} (\infty) = 0, \barz_{2,2} (\infty) = m_2 - \barz_{1,2} (\infty)) = 1$.
\end{lemma}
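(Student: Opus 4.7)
The plan is to follow the structure of the SSC results of \S \ref{secSSCserv}--\ref{secProofsSSCserv} (Theorems \ref{thZ21}, \ref{thQpos}, and \ref{thSSCextend}), suitably adapted to the stationary setting where Assumption \ref{assC} is unavailable. First, using the tightness from Lemma \ref{lmTightSSr.v.}, extract a common subsequence along which $\barx^n_6(\infty) \Rightarrow \barx(\infty)$ in $\RR_6$ and $\barx^n_s \Rightarrow \barx$ in $\D_6$. Repeating the modulus-of-continuity argument of Theorem \ref{lmTight} (with tightness of initial conditions now supplied by Lemma \ref{lmTightSSr.v.}) shows that $\{\barx^n_s\}$ is in fact $\C$-tight, so $\barx$ has continuous sample paths almost surely, and $\barx$ inherits the strict stationarity of the prelimit processes, with $\barx(0) \equalDist \barx(\infty)$. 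Hence it suffices to establish the three SSC identities for $\barx(t)$ at any single $t \ge 0$; stationarity then transfers them to $t = 0$, yielding the required statement for $\barx(\infty)$.

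Next, prove $\barz_{2,1}(t) = 0$ for $t \in [0, \tau]$ (for some $\tau > 0$) by mimicking Lemmas \ref{lmZ12}, \ref{lmD21} and \ref{lmD21extreme}. Tightness of $\barx^n_s(0)$ allows, for any $\epsilon > 0$, restriction to an event $\{\barx^n_s(0) \in K\}$ of probability at least $1 - \epsilon$ with $K$ a compact subset of $\RR_6$. On this event we can apply the bounding-process construction of \eqref{Zbounds}--\eqref{Qbounds} over $[0, \tau]$, giving a frozen queue-difference process $D^{n,*}_f(\Gamma^n_\tau)$ with a negative drift uniformly over $K$ (for $\tau$ small enough), provided $K$ has been chosen inside $\AA \cup \AA^+ \cup \SS^+$ up to the $\epsilon$-exception. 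The QBD extreme-value bound of Lemma \ref{lmQBDextreme} then yields $\|D^n_{2,1}\|_\tau = O_P(\log n) = o_P(k^n_{2,1})$ by Assumption \ref{assThresh}, forcing $\|\barz^n_{2,1}\|_\tau \Rightarrow 0$ and hence $\barz_{2,1}(t) = 0$ on $[0, \tau]$; stationarity extends this to all $t \ge 0$.

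For the pool-occupancy identities $\barz_{1,1}(t) = m_1$ and $\barz_{1,2}(t) + \barz_{2,2}(t) = m_2$, adapt the proof of Theorem \ref{thQpos} in \S \ref{secProofThQpos}. The excess processes $L^n_i$ of \eqref{net} and the linear combination $U^n$ of \eqref{s102} satisfy the same stochastic-order bound by a drifted QBD with drift $\delta_b > 0$ as in \eqref{s4}, which is guaranteed by Assumption \ref{assA}. The QBD extreme-value result again gives fluctuations of order $O_P(\log n)$ below the threshold, implying that both pools are asymptotically fully busy with overwhelming probability in the stationary regime. Combining this with $\barz_{2,1} = 0$ yields the remaining identities in the limit.

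The main obstacle will be the absence of Assumption \ref{assC}: the stationary initial state is not known to lie in any specific region of $\rS$ a priori, and in particular the frozen-process drift $\delta_*$ need not be negative at an arbitrary stationary state. The tightness of $\barx^n_s(0)$ supplies a partial substitute via restriction to compact sets, but verifying that the bounding drifts have the right sign uniformly on such compacta requires care, as does patching together the pointwise SSC conclusions into the joint identity $P(\barz_{1,1}(\infty) = m_1,\ \barz_{2,1}(\infty) = 0,\ \barz_{2,2}(\infty) = m_2 - \barz_{1,2}(\infty)) = 1$. Once these pointwise-in-distribution SSC statements are secured on some $[0, \tau]$, stationarity of $\barx$ and the continuity of $\barx$ propagate them to $t = 0$, which, together with $\barx(0) \equalDist \barx(\infty)$, completes the proof.
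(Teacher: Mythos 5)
There is a genuine gap in the first half of your proposal, which you partially acknowledge but do not resolve. You propose proving $\barz_{2,1}(t) = 0$ on some $[0,\tau]$ by mimicking Lemmas \ref{lmD21} and \ref{lmD21extreme}, i.e., by constructing a frozen difference process with negative drift and applying the QBD extreme-value bound. But Lemmas \ref{lmD21} and \ref{lmD21extreme} rely crucially on Assumption \ref{assC}, which places $x(0)$ in $\AA \cup \AA^+ \cup \rS^+$, precisely the region where the frozen drift $\delta_*$ has the right sign. In the stationary setting this information is not available: a priori the stationary state could sit in $\rS^-$ or near $\AA^-$, with $\delta_*$ having the wrong sign, and tightness of $\{\barx^n_s(0)\}$ only confines the state to a large compact set in $\RR_6$, not to the good region of $\rS$. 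Saying the compact set can be chosen inside $\AA \cup \AA^+ \cup \SS^+$ "up to the $\epsilon$-exception" is the conclusion one wants, not something one may assume — you cannot control the stationary probability of landing in the bad region without already knowing what the stationary limit is. So the bounding machinery does not launch.

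The paper closes this gap with a different argument that exploits stationarity itself plus the one-way sharing rule. Assuming for contradiction that $P(\barz_{2,1}(0) > 0) > 0$, one-way sharing forces $\barz_{1,2} \equiv 0$ on that event, so pool 1 serves only class 1 and pool 2 serves only class 2, and each queue is sandwiched between Erlang-A fluid bounds. Assumption \ref{assA}(2) ($q^a_1 > r_{1,2} q^a_2$) then forces, after a deterministic time $s_0$ uniform over the (bounded) initial conditions, the strict inequality $r\barq_2(t) < \barq_1(t) - \kappa$, so no fluid can flow from class 2 into pool 1 and $\barz_{2,1}$ strictly decreases on $[s_0,\infty)$ — contradicting stationarity. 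No a priori knowledge of the stationary state's location in $\rS$ is needed; the contradiction manufactures the needed drift information from the decoupled Erlang-A dynamics. Your plan for the pool-occupancy identities (adapting Theorem \ref{thQpos} via the $U^n_b$ linear combination, whose drift $\delta_b > 0$ is parameter-driven and independent of the initial condition) is sound and matches the paper. You should replace the frozen-process/QBD approach for $\barz_{2,1}$ with the stationarity contradiction, and the rest of your outline goes through.
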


\begin{proof}
Let $\bar{I}_j (\infty) \equiv m_j - \barz_{1,j} (\infty) - \barz_{2,j} (\infty)$, $j = 1,2$.
To prove the claim,  we need to show that $P (\barz_{2,1} (\infty) > 0) = P (\bar{I}_j (\infty) > 0) = 0$.
We will consider the sequence of stationary versions $\{\barx^n_s\}$, and a limit $\barx$ of this sequence.

(i) $P(\barz_{2,1} = 0e) = 1$.   We will show that the opposite assumption leads to a contradiction.
Hence, suppose that $P(\barz_{2,1} (s) > 0) > 0$ for some $s \ge 0$.
The stationarity of $\barz_{2,1}$ implies that $\barz_{2,1} (0) \equalDist \barz_{2,1} (s)$.
Hence, we can equivalently assume that $P (\barz_{2,1} (0) > 0) > 0$.

Let $B_{2,1}$ denote the set in the underlying probability space, of all sample paths of $\barx$
with $\barz_{2,1} (0) > 0$. By the contradictory assumption, $P (B_{2,1}) > 0$.
Following the arguments in Lemma \ref{lmZ12}, if $\barz_{2,1} (0) > 0$, then $\barz_{1,2} (t) > 0$
for all $t \ge 0$, which implies that $\barz_{2,1} (t) > 0$ for all $t \ge 0$ in $B_{2,1}$.

Consider a sample path in $B_{2,1}$.
Because of one-way sharing, $\barz_{1,2} = 0e$, so that
the only departures from $\barq_1$ are due to service completions in pool $1$
and abandonment. Since $\barz_{1,1} \le m_1$ w.p.1, $\barq_1$ is stochastically bounded
from below, in sample-path stochastic order, by the fluid limit of an Erlang-A model with $m_1$ servers.
At the same time, $\barq_2$ is stochastically bounded from above by the fluid limit of an
Erlang-A model with $m_2$ servers. Hence, there exists $\ep > 0$ such that, for some $s \ge 0$,
\bequ \label{ineqStat}
r \barq_2 (t) - \ep \le r q^a_2 < q^a_1 \le \barq_1 (t) + \ep \qforallq t \ge s,
\eeq
where $r q^a_2 < q^a_1$ by Assumption \ref{assA}.
Now, because $\barq_1$ and $\barq_2$ are bounded with probability 1, we can find
$s_0 > 0$ such that \eqref{ineqStat} holds with this $s_0$ {\em for all possible initial condition in $B_{2,1}$}.
However, this implies that $\barz_{2,1}$ is strictly decreasing for all $t \ge s_0$ and for all sample paths in $B_{2,1}$
(because no fluid can flow from queue $1$ to pool $2$), so that
$\barz_{2,1} (t) < \barz_{2,1} (s_0)$ for all $t > s_0$ in $B_{2,1}$, contradicting the stationarity of $\barz_{2,1}$.
Thus, $P (B_{2,1}) = 0$.

(ii) $P(\bar{I}_1 = \bar{I}_2 = 0e) = 1.$
We follow the proof of Theorem \ref{thQpos},
building on the result $P(\barz_{2,1} (\infty) = 0) = 1$ just established.
Recall that $L^n_i \equiv Q^n_i + Z^n_{i,1} + Z^n_{i,2} - m^n_i$ in \eqref{net},
representing the excess number of class-$i$ customers in the system,
is stochastically bounded from below, in sample-path stochastic order, by the process $L^n_{b,i}$,
with $L^n_{i,b}$ defined by imposing a reflecting upper barrier at $k^n_{1,2}$ for $i = 1, 2$
and letting $L^n_{i,b} (0) = L^n_i (0) \wedge k^n_{1,2}$.
However, here we working with stationary versions.  Since,
the processes $(L^n_1, L^n_2)$ are strictly stationary, so are the reflected processes
$(L^n_{1,b}, L^n_{2,b})$.
Then $U^n \ge_{st} U^n_b$, where $U^n$ and $U^n_b$ and the linear functions of
$(L^n_1, L^n_2)$ and $(L^n_{1,b}, L^n_{2,b})$, respectively, defined in \eqref{s102}.
These processes $U^n$ and $U^n_b$ are also stationary processes.

However, just as in the proof of Theorem \ref{thQpos}, $U^n_b$ is a birth and death process on the integers in $(- \infty, 0]$,
which is independent of $Z^n_{1,2}$, with drift $\delta^n_b$ in \eqref{driftn}.
Since $\delta^n_b / n \ra \delta_b > 0$, the birth and death $U^n_b$ has a positive drift for all $n$ large enough.
Consequently, $-U^n_b$ has the structure of the stationary queue length process in a stable $M/M/1$ queue.
Since the traffic intensity converges to a limit strictly less than one, the initial value, and the value at any time, is
stochastically bounded.
Moreover, we can apply the essentially the same extreme value argument used
in the proof of Theorem \ref{thQpos} to conclude that, for any $\tau > 0$,
that $P(\| \bar{I}^n_i\|_{\tau} > 0) \ra 0$ as $n \ra \infty$.
We thus conclude that $P(\|\bar{I}_i\|_{\tau} = 0) = 1$, from which the conclusion follows,
because the interval $[0, \infty)$ can be represented as the countable union of finite intervals of finite length,
and the countable sum of $0$ probabilities is itself zero.
\end{proof}

A function $\bar{Y} : \RR_m \ra \RR_k$, $m, k \ge 1$,
is said to be locally Lipschitz continuous if for any compact set $B$,
there exists a constant $K (B)$ such that, for any $s, t \in B$,
$|\bar{Y} (s) - \bar{Y} (t)| \le K (B) |s - t|$.
A locally Lipschitz continuous function is absolutely continuous and is thus differentiable almost everywhere.

In the following we will consider two locally Lipschitz continuous functions
$V : \RR^+_k \ra \RR^+$ and $\bar{Y} : \RR^+ \ra \RR^+_k$, and the ``Lie derivative''
$\dot{V} (\bar{Y} (t)) \equiv \nabla V \cdot \bar{Y}'$, where $\nabla V$ denotes the gradient of $V$,
and $\nabla V \cdot \bar{Y}'$ is the usual inner product of vectors.
Since both functions are differentiable almost everywhere,
$\dot{V}(\bar{Y} (t))$ is understood to be be taken at points $t$ for which both $V$ and $\bar{Y}$
are differentiable.

For a vector $x \in \RR_k$, we let $\|x\|$ denote its $L_1$ norm, although any other norm in $\RR_k$
can be used in the following.  For the following we draw on \S 8.3 of \cite{PeW09c}.

\begin{lemma} \label{lyapunov}
Let $V : \RR^+_k \ra \RR_+$ be locally Lipschitz continuous, such that $V (x) = 0$
if and only if $x = 0$.
Let $\bar{Y} : \RR^+ \ra \RR^+_k$ be Lipschitz continuous with constant $N$, such that
$\bar{Y} \le M$, for some $M > 0$.
If $\dot{V} (\bar{Y} (t)) < 0$ for all $t \ge 0$ for which $\bar{Y} (t) \ne 0$, then $\bar{Y}(t) \ra 0$ as $t \tinf$.
\end{lemma}

\begin{proof}
Let $f : \RR_+ \ra \RR_+$ be absolutely continuous, such that $f (t) = 0$ if and only if $t = 0$.
If for almost every $t \ne 0$, $f' (t) < 0$, then $f (t) \ra 0$ as $t \tinf$.
We can apply that argument to the function $f (t) \equiv V (\bar{Y} (t))$
if we show that $V (\bar{Y} (t))$ is locally Lipschitz continuous.
To see that is the case, note that for all $t > 0$,
\bes
\bar{Y} (s) \le \bar{Y} (0) + |\bar{Y} (s) - \bar{Y} (0)| \le M + N t, \quad 0 \le s \le t.
\ees
Therefore, for any $0 \le u \le s \le t$,
$|f (s) - f (u)| \le K (B) N (s - u)$,
where $B \equiv \{x \in \RR_k^+ : \|x\| \le M + N t\}$.
\end{proof}

\begin{proofof}{Theorem \ref{thStatLim}}
We consider a converging subsequence $\{\barx^{n'}_s: n' \ge 1\}$ of the sequence $\{\barx^n_s: n \ge 1\}$ with limit $\barx$.
By Lemma \ref{lmStatLim}, we can consider it to be three-dimensional
with components $\barq_1$, $\barq_2$ and $\barz_{1,2}$.
By Corollary \ref{corLip}, the limit $\barx$ is Lipschitz continuous in $t$, so that it is differentiable almost everywhere.
Specifically, $\barx$ is a limit of the sequence represented by \eqref{FluidScaled}, with $\barx^n (0) \deq \barx^n (\infty)$
for all $n \ge 1$. Then each component of the converging subsequence $\{\barx^{n'}_s\}$ in \eqref{FluidScaled}
converges to its respective limit (e.g., $\barq^{n'}_1$ to $\barq_1$).
For our purposes here, it is sufficient to conclude that $\barx (t) \ra x^*$ w.p.1 as $t \tinf$.
Hence, we will not characterize the limit of $\barx (t)$ as $t \ra \infty$.

For the representation of the converging subsequence $\{\barx^{n'}_s\}$ in \eqref{FluidScaled}, let
\bes
\bar{W}^{n'} \equiv (C-1) \barz^{n'}_{1,2} + C \barq^{n'}_1 + \barq^{n'}_2,
\ees
where $C \ge 1$ will be specified later.
Since $\barx^{n'}_s \Ra \barx$ by assumption, we can apply the continuous mapping theorem to conclude that
$\bar{W}^n \Ra \bar{W}$ in $\D_3$ as $n \tinf$, where
\bes
\bsplit
\bar{W} (t) & \equiv (C-1) \barz_{1,2} (t) + C \barq_1 (t) + \barq_2 (t) \\
& = (C-1) \barz_{1,2} (0) - (C\mu_{1,2} - \mu_{2,2}) \int_0^t \barz_{1,2} (s)\ ds - \mu_{2,2} m_2 t \\
& \quad + C \barq_1 (0) + C (\lm_1 - \mu_{1,1} m_1)t - C \theta_1 \int_0^t \barq_1 (s)\ ds
+ \barq_2 (0) + \lm_2 t - \theta_2 \int_0^t \barq_2 (s)\ ds.
\end{split}
\ees
with derivative
\bes
\bsplit
\bar{W}' (t) = -(C \mu_{1,2}-\mu_{2,2}) \barz_{1,2} (t) - C \theta_1 \barq_1 (t) - \theta_2 \barq_2 (t)
+ C(\lm_1 - \mu_{1,1} m_1) + \lm_2 - \mu_{2,2} m_2.
\end{split}
\ees

For $x \in \RR_3$, let $V (x) \equiv C x_1 + x_2 + (C-1) x_3$ and note that
$\dot{V} (\barx) \equiv \nabla V \cdot \barx' = \bar{W}'$, and that $V$ is locally Lipschitz continuous.
where $\nabla V$ denotes the gradient of $V$,
and $\nabla V \cdot \barx'$ is the usual inner product of vectors.
Now let $\bar{Y}$ denote the derivative of $\barx$ shifted by $x^*$ for $x^*$ in \eqref{statPt}, i.e., $\bar{Y}' \equiv \barx' + x^*$.
Hence, as in the
proof of Theorem 8.3 in \cite{PeW09c}, we have
\bes
\dot{V} (\bar{Y}) \equiv -(C \mu_{1,2}-\mu_{2,2}) \barz_{1,2} (t) - C \theta_1 \barq_1 (t) - \theta_2 \barq_2 (t).
\ees
If $\mu_{1,2} > \mu_{2,2}$, then we let $C = 1$, and if $\mu_{1,2} \le \mu_{2,2}$ we let $C$ be any number such that $C > \mu_{2,2}/\mu_{1,2} \ge 1$.
With this choice of $C$, we see that $\dot{V} (\bar{Y}) < 0$.
By Lemma \ref{lyapunov}, $\bar{Y}(t) \ra 0$, which implies that $\barx (t) \ra x^*$ as $t \tinf$ w.p.1.
(Note that $\bar{Y}$ and $\barx$ are Lipschitz continuous and bounded, as required.)

For $\af > 0$, let $\beta_V (\af) \equiv \{x \in \RR_3 : \|V (x) - V (x^*)\| \le \af\}$.
By the monotonicity of $V(\barx)$ established above, for each $\af > 0$ there exists $T (\af, \barx(0))$, such that
$\barx (t) \in \beta_V (\af)$ for all $t \ge T(\af, \barx(0))$.
Since the queues $\barq_1$ and $\barq_2$ are bounded w.p.1, we can uniformly bound $T (\af, \barx (0))$ (uniformly in $\barx (0)$).
Hence, there exists $T \equiv T (\af)$, such that $\barx (t) \in \beta_V (\af)$ w.p.1 for all $t \ge T$.
It follows from the stationarity of $\barx$ that $\barx (0) \in \beta_V (\af)$.
Since this is true for all $\af > 0$, it must hold that $\barx (0) = x^*$ which, by the equality in distribution
$\barx (\infty) \deq \barx (0)$, implies that $\barx (\infty) = x^*$ w.p.1.
We have thus shown that the limit of all converging subsequences of $\{\barx^n (\infty): n \ge 1\}$ is $x^*$
in \eqref{statPt}, which implies the full convergence $\barx^n (\infty) \Ra x^*$ as $n \ra \infty$.
Moreover, since $x^*$ is the limit of a stationary sequence, $x^*$ itself must be a stationary point for each fluid limit $\barx$
(i.e., if $\barx(0) = x^*$, then $\barx(t) = x^*$ for all $t \ge 0$), and it is globally asymptotically stable, because $\barx(t) \ra x^*$
as $t \tinf$, as was shown above.
\end{proofof}

Note that none of the proofs in this section used the initial condition in Assumption \ref{assC}, or the rationality of the queue ratios
in Assumption \ref{assE}.

\subsection{Proof of Theorem \ref{thConvToFast}.}\label{secProof43}

The claimed convergence in $\D$ is less complicated than it might appear, because there is no spatial scaling.
Consequently, all processes are pure-jump processes, having piecewise constant sample paths,
with only finitely many discontinuities in any bounded interval.  Both processes have the same four possible transitions from each state:
$\pm 1$ or $\pm r$ for $r \equiv r_{1,2}$.  Hence, it suffices to show convergence in distribution of
the first $k$ pairs of jump times and jump values for any $k \ge 1$.  That can be done by mathematical induction on $k$.

We can start by applying the Skorohod representation theorem to replace the assumed convergence
in distribution $\Gamma^n_6/n \Ra \gamma_6$ and $D^n_e(\Gamma^n, 0) \Ra D(\gamma, 0)$ as $n \ra \infty$
by convergence w.p.1 for alternative random vectors having the same distribution.
Hence, it suffices to assume that $\Gamma^n_6/n \ra \gamma_6$ and
$D^n_e(\Gamma^n, 0) \ra D(\gamma, 0)$ as $n \ra \infty$ and w.p.1.  (We do not introduce new notation.)
Thus, for each sample point, we will have $D^n_e(\Gamma^n, 0) = D(\gamma, 0)$ for all $n$ sufficiently
large.  Hence, we can assume that we start with equality holding.

Moreover, we can exploit the structure of pure-jump Markov
processes. The FTSP is directly such a pure-jump Markov process
with transition rates given in \eqn{bd1}-\eqn{bd4}.  These
rates were defined to be the limit of the transition rates of
the queue difference processes $D^n_{1,2}$, after dividing by
$n$.  The queue difference processes $D^n_{1,2}$ in
\eqn{Dprocess} are not Markov, but they are simple linear
functions of the pure-jump Markov process $X^n_6$.

The transition rates for $D^n_{1,2}$ closely parallel \eqn{bd1}-\eqn{bd4}.
Because of the assumptions, we can work with the three-dimensional random state $\Gamma^n \equiv (Q^n_1, Q^n_2, Z^n_{1,2}) \in \RR_3$
with the understanding that the remaining components of $\Gamma^n_6$ are $Z^n_{1,1} = m^n_1$, $Z^n_{2,1} = 0$ and $Z^n_{2,2} = m^n_2 - Z^n_{1,2}$.
Let $D^n(\Gamma^n, t_0) \equiv D^n_{1,2} (t_0)$ under the condition that $X^n (t_0) = \Gamma^n$.
When $D^n(\Gamma^n, t_0) \le 0$, let the transition rates  be $\lm^{(r)}_-(n, \Gamma^n)$, $\lm^{(1)}_- (n, \Gamma^n)$,
$\mu^{(r)}_- (n, \Gamma^n)$ and $\mu^{(1)}_- (n, \Gamma^n)$ in \eqref{BDfrozen1}, for transitions of $+r$, $+1$, $-r$ and $-1$, respectively.
When $D^n(\Gamma^n, t_0) > 0$, let the transition rates be
$\lm^{(r)}_+(n, \Gamma^n)$, $\lm^{(1)}_+ (n, \Gamma^n)$, $\mu^{(r)}_+ (n, \Gamma^n)$ and $\mu^{(1)}_+ (n, \Gamma^n)$ in \eqref{BDfrozen2},
for transitions of $+r$, $+1$, $-r$ and $-1$, respectively.

The many-server heavy-traffic scaling in \eqn{MS-HTscale}
and the condition $\Gamma^n/n \ra \gamma$ imply that the transition rates of $X^n_6$ are of order $O(n)$ as $n \ra \infty$.
However, the time expansion in \eqn{fast102} brings those transition rates back to order $O(1)$.
Indeed, from \eqn{bd1}-\eqn{bd4} and \eqref{BDfrozen1}-\eqref{BDfrozen2}, we see that the transition rates of $D^n_{e}(\Gamma^n, \cdot)$,
which change with every transition of the CTMC $X^n_6$, actually converge to the transition rates of the FTSP $D(\gamma, \cdot)$,
which only depend on the region ($D > 0$ and $D \le 0$).

The time until the first transition in the pure jump Markov process $D(\gamma, \cdot)$ is clearly exponential.
Since the queue lengths can be regarded as strictly positive by Theorem \ref{thQpos}, the first transition of $D^n_e (\Gamma^n, \cdot)$
coincides with the first transition time of the underlying CTMC $X^n_6$, which also is exponential.  Since the transition rates converge,
 the time until the first transition of $D^n_e (\Gamma^n, \cdot)$ converges in distribution to the exponential time until the first transition
of the FTSP $D(\gamma, \cdot)$.  Moreover, in both processes the jump takes one of four values $\pm 1$ or $\pm r$.  The probabilities of these values converges as well.
Hence the random first pair of jump time and jump value converges to the corresponding pair of the FTSP.  The same reasoning applies to successive
pairs of jump times and jump values, applying mathematical induction.  That completes the proof.
\hfill \qed

\subsection{Auxiliary Results for FTSP's.} \label{secAuxFTSP}

Before proving Theorems \ref{thDSB} and \ref{thAPlocal}, we prove some auxiliary lemmas that we employ in the proofs.
We use stochastic bounds by frozen queue-difference stochastic processes as in \S \ref{secAux}.
The following lemma is proved much like Lemma \ref{lmD21extreme}, exploiting the bounds in \S \ref{secStoBd}.


\begin{lemma}{$($bounding frozen processes$)$} \label{lmD12PR}
Suppose that $x(t) \in \AA$, $t_1 \le t \le t_2$. For any $t \in [t_1, t_2]$
and $\epsilon > 0$, there exist positive constants $\delta, \eta$, state
vectors $x_m, x_M  \in \AA$ and random state vectors $X^n_m, X^n_M$, $n \ge 1$, such that
$\|x_m - x(t)\| < \epsilon$, $\|x_M - x(t)\| < \epsilon$, $n^{-1}X^n_m \Ra x_m$, $n^{-1}X^n_M \Ra x_M$ as $n \ra \infty$,
\begin{eqnarray}\label{DstoBd}
D^n_f (X^n_m, \cdot) & \le_{r} & D^n_f (X^n(t),\cdot) \le_{r} D^n_f (X^n_M, \cdot) \qandq \nonumber \\
D^n_f (X^n_m, \cdot) & \le_{r} & D^n_{1,2} (t) \le_{r} D^n_f (X^n_M, \cdot) \qinq \D([t_1 \vee (t-\delta), (t + \delta) \wedge t_2]) \qforallq n \ge 1,
\end{eqnarray}
and $P(B^n (\delta, \eta)) \ra 1$ as $n\ra \infty$, where
\bequ \label{posrec-delta}
B^n (\delta, \eta) \equiv \{\delta^n_+(X^n_M) < - \eta,\; \delta^n_-(X^n_M) > \eta,\; \delta^n_+(X^n_m) < - \eta,\; \delta^n_-(X^n_m) > \eta \}
\eeq
As a consequence, the bounding frozen processes $D^n_f (X^n_M,
\cdot)$ and $D^n_f (X^n_m, \cdot)$ in {\em \eqn{DstoBd}}, and
thus also the interior frozen processes $D^n_f (X^n(t),\cdot)$,
satisfy {\em \eqn{Aset2}} on $B_n (\delta, \eta)$ and are thus
positive recurrent there, $n \ge 1$.
\end{lemma}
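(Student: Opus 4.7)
The plan is to build the random state vectors $X^n_M$ and $X^n_m$ by taking coordinate-wise extrema, over a small interval $[t-\delta, t+\delta]$, of the sample-path bounding processes developed in \S \ref{secStoBd}, and then verify each of the four claimed properties using tools already in hand. First, I would exploit the fact that the drift functions $\delta_\pm$ in \eqref{drifts} are continuous, so the set $\AA$ in \eqref{Aset2}, being cut out by strict inequalities, is open. Combining this with the continuity of the fluid solution $x$ and the hypothesis that $x(s) \in \AA$ throughout $[t_1, t_2]$, I can pick $\delta > 0$ so small that on $[t-\delta, t+\delta] \cap [t_1, t_2]$ one has simultaneously $\|x(s) - x(t)\| < \epsilon/2$, $\delta_-(x(s)) > 2\eta'$, and $\delta_+(x(s)) < -2\eta'$ for some $\eta' > 0$.

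Next I would construct the bounding processes. Over $[t-\delta, t+\delta]$, define processes $\tilde Z^n_a$, $\tilde Z^n_b$, $\tilde Q^n_{i,a}$, $\tilde Q^n_{i,b}$ by the obvious shifted analogues of \eqref{Zbounds} and \eqref{Qbounds}, started at time $t - \delta$ from the random initial values $(Q^n_1(t-\delta), Q^n_2(t-\delta), Z^n_{1,2}(t-\delta))$ and driven by an independent family of rate-$1$ Poisson processes. The strong Markov property of $X^n_6$ together with the arguments of Lemmas \ref{lmZbds} and \ref{lmQbds} yields the pointwise w.p.1 sandwiching of $Q^n_i$ and $Z^n_{1,2}$ on the interval. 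Depending on the sign of $\mu_{1,2} - \mu_{2,2}$ dictated by the two cases of Lemma \ref{lmRateOrder}, I would then set $X^n_M$ and $X^n_m$ to be coordinate-wise extrema, taking minima of the $Q_1$ lower bound and maxima of the $Q_2$, $Z_{1,2}$ upper bounds (or the $Z_{1,2}$ lower bound, if $\mu_{1,2} < \mu_{2,2}$) for $X^n_M$, and vice versa for $X^n_m$.

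The fluid convergence $n^{-1} X^n_M \Rightarrow x_M$ and $n^{-1} X^n_m \Rightarrow x_m$ then follows from the continuous-mapping theorem applied to a shifted analogue of Lemma \ref{lmBoundsLim}, using Theorem \ref{th1} to identify $\bar{X}^n(t-\delta) \Rightarrow x(t-\delta)$; by shrinking $\delta$ the limits $x_M$, $x_m$ can be placed within any desired distance of $x(t)$, and in particular within $\epsilon$ of $x(t)$ and inside $\AA$. The two rate-order statements in \eqref{DstoBd} follow from Lemma \ref{lmRateOrder}: the w.p.1 pointwise bounds from the preceding paragraph put the coordinates of $X^n_m$, $X^n(s)$, $X^n_M$ in the partial order required by that lemma for every $s$ in the interval (including $s = t$), and the interpretation of rate order between the non-Markov $D^n_{1,2}$ and a frozen process supplied in Remark \ref{rmRateOrder} converts the state-vector order into the claimed rate order. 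For the event $B^n(\delta, \eta)$ in \eqref{posrec-delta}: the expressions in \eqref{driftDn} are affine and continuous in the state, so $\delta^n_\pm(X^n_M)/n \Rightarrow \delta_\pm(x_M)$ and analogously for $x_m$; since all four of $\delta_+(x_M), \delta_+(x_m), -\delta_-(x_M), -\delta_-(x_m)$ are strictly negative, for any fixed $\eta > 0$ (say $\eta = 1$) the sample inequalities in \eqref{posrec-delta} hold with probability tending to one.

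The main technical obstacle I anticipate is not deep analysis but careful bookkeeping in the choice of ``min'' versus ``max'' coordinates in defining $X^n_M$ and $X^n_m$, because the hypothesis of Lemma \ref{lmRateOrder} flips the sign convention on the $z_{1,2}$-coordinate according to whether $\mu_{1,2} \ge \mu_{2,2}$ or $\mu_{1,2} < \mu_{2,2}$. This is the only point where the inefficient-sharing structure of the $X$ model enters the rate-order argument, and the construction must be organized so that the same $X^n_M$ and $X^n_m$ simultaneously provide both the frozen-to-frozen and the frozen-to-$D^n_{1,2}$ comparisons of \eqref{DstoBd}.
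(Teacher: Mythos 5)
Your overall strategy is the one the paper uses: sandwich $(Q^n_1, Q^n_2, Z^n_{1,2})$ pathwise over a short interval $I = [t_1 \vee (t-\delta), (t+\delta)\wedge t_2]$ by the pure-birth/pure-death processes of \S\ref{secStoBd}, take coordinate-wise extrema over $I$ to form fixed random state vectors, apply the shifted Lemma \ref{lmBoundsLim} and the continuous-mapping theorem to get fluid limits that are $\epsilon$-close to $x(t)$, and then use the affine structure of $\delta^n_\pm$ together with openness of $\AA$ to obtain $P(B^n(\delta,\eta)) \to 1$. All of that is the same.

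Where you diverge from the paper is the construction of the $Z$-coordinate of $X^n_M$ and $X^n_m$, and this is a genuine gap. You propose a \emph{single} state vector $X^n_M$ whose $z_{1,2}$-coordinate is chosen to be a one-sided bound (upper or lower depending on the sign of $\mu_{1,2}-\mu_{2,2}$) and then invoke Lemma \ref{lmRateOrder}. But inspecting the rates \eqref{bd1}--\eqref{bd4}, the combination $\mu_{1,2}z_{1,2} + \mu_{2,2}(m_2-z_{1,2})$ appears both in the upward rate $\lm^{(r)}_-$ used when $D\le 0$ and in the downward rate $\mu^{(1)}_+$ used when $D>0$. Hence to make $D^n_f(X^n_M)$ a rate \emph{upper} bound one needs this term to be \emph{large} in the negative region (to increase the up-rate) but \emph{small} in the positive region (to decrease the down-rate) --- two demands that a single fixed $z_{1,2}$ cannot meet simultaneously when $\mu_{1,2}\ne\mu_{2,2}$. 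Correspondingly, the hypotheses of Lemma \ref{lmRateOrder} do not in fact yield rate order on both sides of $0$, and the paper's proof of Lemma \ref{lmD12PR} does not invoke that lemma. Instead, it builds the bounding frozen process as a constant-rate jump process that uses \emph{two} state vectors: $D^n_f(X^n_M,\cdot)$ takes its rates from $X^n_{M^+} = (Q^n_{1,M}, Q^n_{2,M}, Z^n_{M^+})$ when it is in the positive region and from $X^n_{M^-} = (Q^n_{1,M}, Q^n_{2,M}, Z^n_{M^-})$ when it is in the non-positive region, with $Z^n_{M^+}$ and $Z^n_{M^-}$ being opposite-sided extrema of the $Z^n_a$ / $Z^n_b$ envelope; the $Q$-coordinates (which affect only one region each and enter with consistent sign) are shared and chosen exactly as you describe. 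The analogous two-vector construction is done for $X^n_m$. The rest of the paper's argument --- four fluid limits $x_{M^\pm}, x_{m^\pm}$ all coalescing at $x(t_1 \vee (t-\delta)) \in \AA$ as $\delta\downarrow 0$, and affine drift convergence --- then proceeds as you anticipated. So the ``bookkeeping'' obstacle you flagged is real, but it is not simply a sign choice governed by $\mu_{1,2}$ versus $\mu_{2,2}$: the sensitivity to $z_{1,2}$ flips between the two regions of the FTSP, and the fix is a region-dependent frozen state, not a single dominating state vector.
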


\begin{proof}
We use the convergence $\barx^n \Ra x$ provided by Theorem \ref{th1}.
Consider one $t \in [t_1, t_2]$.
By the linearity of the drift functions $\delta_+$ and
$\delta_-$ in \eqref{drifts}, 
$\delta^n_+(X^n(t))/n \Ra \delta_+(x(t))$ and
$\delta^n_-(X^n(t))/n \Ra \delta_-(x(t))$ for $x(t) \in \AA$,
so that \eqref{Aset2} holds. Hence there exists $\eta > 0$ such
that
\bes
\lim_{n \tinf} P(\delta^n_+(X^n(t)) < - \eta \qandq
\delta^n_-(X^n(t)) > \eta) = 1,
\ees
i.e.,
\eqref{posrecDn}
holds for $\Gamma^n = X^n (t)$ with probability converging to
$1$ as $n \tinf$.

We now bound the drifts in \eqref{driftDn}.  We do that by
bounding the change in the components of $X^n (t)$ in a short
interval around time $t$.  To do that, we use the
stochastic-order bounds in \S \ref{secStoBd}, constructed over the interval
$[t_1 \vee (t - \delta), (t + \delta) \wedge t_2]$, with the construction beginning at the left endpoint
$t_1 \vee (t - \delta)$, just as in the construction after time $0$ in \S \ref{secStoBd}.
Let $I_{(t_1,\delta)} \equiv [t_1 \vee (t - \delta), (t + \delta) \wedge t_2]$ and let $\| \cdot\|_{\delta}$
denote the norm over the interval $I_{(t_1,\delta)}$.
To construct $X^n_{M}$, let
\bes 
X^n_{M^+} \equiv (Q^n_{1,M}, Q^n_{2,M}, Z^n_{M^+}) \qandq X^n_{M^-} \equiv (Q^n_{1,M}, Q^n_{2,M}, Z^n_{M^-}),
\ees
where
\bes 
\begin{array}{lll}
Q^n_{1,M} & \equiv \inf_{t \in I_{(t_1,\delta)}} Q^n_{1,a}(t) \vee 0, & Q^n_{2,M} \equiv \|Q^n_{2,b}\|_{\delta}, \\
Z^n_{M^+} & \equiv \inf_{t \in I_{(t_1,\delta)}} Z^n_{+}(t),        & Z^n_{M^-} \equiv \|Z^n_{-}\|_{\delta},
\end{array}
\ees
with $Z^n_{+}(t) \equiv Z^n_{b} (t)$ and $Z^n_{-}(t) \equiv Z^n_{a} (t)$ if $\mu_{2,2} \ge \mu_{1,2}$,
and $Z^n_{+}(t) \equiv Z^n_{a} (t)$ and $Z^n_{-}(t) \equiv Z^n_{b} (t)$ otherwise.
We work with the final value $X^n_{M^+} \equiv X^n_{M^+} ((t + \delta) \wedge t_2)$, and similarly for $X^n_{M^-}$.
Let $\{D^n_f (X^n_M, s) : s \ge 0\}$ have the rates determined by $X^n_{M^-}$ when $D^n_f (X^n_M, s) \le 0$,
and the rates determined by $X^n_{M^+}$ when $D^n_f (X^n_M, s) > 0$.

We do a similar construction for $X^n_{m}$.  Let
\bes 
X^n_{m^+} \equiv (Q^n_{1,m}, Q^n_{2,m}, Z^n_{m^+}) \qandq X^n_{m^-} \equiv (Q^n_{1,m}, Q^n_{2,m}, Z^n_{m^-}),
\ees
where
\bes 
\begin{array}{lll}
Q^n_{1,m}  & \equiv \|Q^n_{1,b}\|_{\delta}, & Q^n_{2,m} \equiv \inf_{t \in I_{(t_1,\delta)}} Q^n_{2,a}(t) \vee 0, \\
Z^n_{m^+}  & \equiv \|Z^n_{+}\|_{\delta}, & Z^n_{m^-} \equiv \inf_{t \in I_{(t_1,\delta)}} Z^n_{-}(t).
\end{array}
\ees
with $Z^n_{+}(t) \equiv Z^n_{a} (t)$ and $Z^n_{-}(t) \equiv Z^n_{b} (t)$ if $\mu_{2,2} \ge \mu_{1,2}$,
and $Z^n_{+}(t) \equiv Z^n_{b} (t)$ and $Z^n_{-}(t) \equiv Z^n_{a} (t)$ otherwise (the reverse of what is done in $X^n_M$).
Let $\{D^n_f (X^n_m, s) : s \ge 0\}$ have the rates from $X^n_{m^-}$ when $D^n_f (X^n_m, s) \le 0$,
and the rates from $X^n_{m^+}$ when $D^n_f (X^n_m, s) > 0$.
By this construction, we achieve the ordering in \eqn{DstoBd}.
We cover the rates of $D^n_{1,2} (t)$ too because we can make
the identification: the rates of $D^n_{1,2} (t)$ given $X^n (t)$ coincide with the rates of $D^n_f (X^n (t), \cdot)$.

It remains to find a $\delta$ such that both the processes $\{D^n_f (X^n_m, s) : s \ge 0\}$ and
$\{D^n_f (X^n_M, s) : s \ge 0\}$ are asymptotically positive recurrent. To do so, we use Lemma \ref{lmBoundsLim},
which concludes that the bounding processes as functions of $\delta$ have fluid limits.
By Lemma \ref{lmBoundsLim}, we can conclude that
$\bar{X}^{n}_{m^{+}} \equiv n^{-1} X^{n}_{m^{+}} \Rightarrow x_m^{+}$,
$\bar{X}^n_{m^-} \equiv n^{-1} X^n_{m^{-}} \Rightarrow x_m^{-}$,
$\bar{X}^n_{M^{+}} \equiv n^{-1} X^n_{M^{+}} \Rightarrow x_M^{+}$
and $\bar{X}^n_{M^-} \equiv n^{-1} X^n_{M^{-}} \Rightarrow x_M^{-}$
in $\D_3$, where $x_{m^{+}}$, $x_m^{-}$, $x_M^{+}$ and $x_M^{-}$ are all continuous with
$x_m^{+} (t_1 \vee (t - \delta)) = x_m^{-} (t_1 \vee (t - \delta)) = x_M^{+} (t_1 \vee (t - \delta))
= x_M^{-} (t_1 \vee (t - \delta)) =  x(t_1 \vee (t - \delta)) \in \AA$.
Hence, we can find $\delta'$ such that $x_m (\delta) \in \AA$ and $x_M (\delta) \in \AA$ for all $\delta \in [0, \delta']$.
Hence, we can choose $\delta$ such that the constant vectors $x_m \equiv x_m (\delta)$ and $x_M \equiv x_M (\delta)$
both are arbitrarily close to $x(t_1)$.

Finally, we use the linearity of the drift function to deduce
the positive recurrence of the processes depending upon $n$. As $n \tinf$,
\begin{eqnarray}
\delta^n_-( X^n_{m^{-}}) / n & \Ra & \delta_-(x_m^{-}), \quad \delta^n_+( X^n_{m^{+}}) / n  \Ra  \delta_+(x_m^{+}), \nonumber \\
\delta^n_-( X^n_{M^{-}}) / n & \Ra & \delta_-(x_M^{-}), \qandq \delta^n_+( X^n_{M^{+}}) / n  \Ra  \delta_+(x_M^{+}) \quad \mbox{in } \RR. \nonumber
\end{eqnarray}
\end{proof}

We immediately obtain the following corollary to Lemma \ref{lmD12PR}, exploiting Corollary \ref{corRateOrder}.

\begin{coro}\label{corStochBd}
Let $\zeta \equiv (j \vee k) - 1$ using the QBD representation
based on $r_{1,2} = j/k$ in \S $6$ of {\em \cite{PeW09c}}. If,
in addition to the conditions of Lemma {\em \ref{lmD12PR}},
\begin{eqnarray}\label{DstoBdinit}
 D^n_f (X^n_m, 0) - \zeta& \le_{st} & D^n_f (X^n(0 \vee t - \delta),0) \le_{st} D^n_f (X^n_M, 0) + \zeta \qandq \nonumber \\
D^n_f (X^n_m, 0) - \zeta & \le_{st} & D^n_{1,2} (0 \vee t - \delta) \le_{st} D^n_f (X^n_M, 0)  + \zeta \qinq \RR, \quad n \ge 1,
\end{eqnarray}
then, in addition to the conclusions of Lemma {\em
\ref{lmD12PR}},
\begin{eqnarray}\label{DstoBd2}
 D^n_f (X^n_m, \cdot) - \zeta& \le_{st} & D^n_f (X^n(t),\cdot) \le_{st} D^n_f (X^n_M, \cdot) + \zeta, \nonumber \\
D^n_f (X^n_m, t) - \zeta & \le_{st} & D^n_{1,2} (t) \le_{st} D^n_f (X^n_M, t)  + \zeta \qinq \D ([t_1 \vee (t - \delta), (t + \delta) \wedge t_2]).
\end{eqnarray}
\end{coro}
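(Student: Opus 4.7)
The plan is to derive Corollary \ref{corStochBd} from Lemma \ref{lmD12PR} by converting rate order into sample path stochastic order via the standard coupling argument of Corollary \ref{corRateOrder}, and then bolting on the hypothesized initial ordering \eqref{DstoBdinit} to run the coupling from time $t_1 \vee (t-\delta)$ forward. Lemma \ref{lmD12PR} already delivers the rate orderings in \eqref{DstoBd}, both for the frozen-versus-frozen comparison and for the frozen-versus-non-Markov $D^n_{1,2}$ comparison (the latter in the sense made precise in Remark \ref{rmRateOrder}). So the work is mostly bookkeeping: transferring each rate-order statement into a sample-path stochastic-order statement on the interval $[t_1 \vee (t-\delta), (t+\delta)\wedge t_2]$, while keeping track of the $\zeta = (j\vee k)-1$ slack introduced by possible jump-overs in the QBD representation.

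First, I would fix a sample point $\omega$ and construct three processes on a common probability space: a version of $D^n_f(X^n_m,\cdot)$, a version of $D^n_f(X^n(t),\cdot)$ (equivalently, a version of $D^n_{1,2}$ coupled through $X^n_6$), and a version of $D^n_f(X^n_M,\cdot)$. At the left endpoint $t_1 \vee (t-\delta)$ one uses the hypothesized stochastic order \eqref{DstoBdinit} to arrange, via the usual reordering, that
\[
D^n_f(X^n_m,0)-\zeta \le D^n_f(X^n(0\vee t-\delta),0) \le D^n_f(X^n_M,0)+\zeta \quad \text{w.p.1},
\]
and similarly with $D^n_{1,2}(0\vee t-\delta)$ replacing the middle term. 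Going forward, I would apply the construction in the proof of Corollary \ref{corRateOrder}: because the upper process has a larger upward rate and smaller downward rate than the middle process whenever they are on the same side of $0$ in the QBD, each can be driven by common rate-$1$ Poisson processes so that any upward jump of the lower triggers an upward jump of the upper, and vice versa for downward jumps. The only way the ordering can be violated is when one process is at or below $0$ in the QBD sense and the other is just above, at which point the jump can overshoot by at most $j \vee k - 1 = \zeta$. Once a discrepancy is corrected on a subsequent excursion, no new one can accumulate, so the $\zeta$-slack suffices throughout the interval.

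For the $D^n_{1,2}$ comparison, I would invoke Remark \ref{rmRateOrder}: treating $D^n_f(X^n_m)$ and $D^n_f(X^n_M)$ as conditionally homogeneous QBDs built on fresh independent Poisson streams but with random rates determined by $X^n_m$ and $X^n_M$, the rate order \eqref{DstoBd} holds state by state against the state-dependent rates of $D^n_{1,2}$, so the same coupling machinery applies. This gives the second line of \eqref{DstoBd2}; the first line is the special case in which the middle process is itself a frozen CTMC.

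The main obstacle, if there is one, is not a deep one but rather conceptual: one must be careful that the $D^n_{1,2}$ process is non-Markov and driven by the full six-dimensional $X^n_6$, so that the ``rate comparison'' along a sample path must be read as an instantaneous comparison of conditional intensities given the history, rather than a comparison between two homogeneous chains. Once that is formalized (exactly as in Remark \ref{rmRateOrder}), the proof is a one-page verification that the standard Kamae--Krengel--O'Brien coupling in Corollary \ref{corRateOrder} extends unchanged to our setting and preserves the $\zeta$ slack globally on $[t_1 \vee(t-\delta), (t+\delta)\wedge t_2]$. No new estimate or analytic ingredient is required beyond what is already established in Lemma \ref{lmD12PR} and Corollary \ref{corRateOrder}.
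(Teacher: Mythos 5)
Your proposal is correct and follows essentially the same approach as the paper, which states that Corollary~\ref{corStochBd} is obtained ``immediately'' from Lemma~\ref{lmD12PR} by exploiting Corollary~\ref{corRateOrder}. Your expansion—running the coupling of Corollary~\ref{corRateOrder} from the initial ordering~\eqref{DstoBdinit}, tracking the $\zeta$-slack from boundary jump-overs, and invoking Remark~\ref{rmRateOrder} to handle the non-Markov conditional intensities of $D^n_{1,2}$—is precisely the content the paper leaves implicit.
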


The following lemmas correspond to the QBD structure of the FTSP.
These results hold for QBD processes much more generally, but we state them in terms of the FTSP.

\begin{lemma}{$($continuity of the stationary distribution of the FTSP$)$} \label{lmContSS}
The FTSP stationary random variable $D(\gamma, \infty)$ is continuous in the metric {\em \eqn{Levy}} as a function the state $\gamma$ in $\AA$
and thus $D(x(t), \infty)$ is continuous in the metric {\em \eqn{Levy}} as a function of the time argument $t$ when $x(t) \in \AA$.
\end{lemma}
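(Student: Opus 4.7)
The plan is to exploit the quasi-birth-and-death (QBD) representation of the FTSP developed in \S 6 of \cite{PeW09c}. By Assumption \ref{assE}, with $r \equiv r_{1,2} = j/k$, the FTSP $\{D(\gamma,s):s\ge 0\}$ can be represented as an integer-valued QBD with $2m$ phases, $m \equiv j\vee k$, acting on a lattice. From the rates in \eqref{bd1}--\eqref{bd4}, the generator blocks $A_0(\gamma), A_1(\gamma), A_2(\gamma)$ and the boundary block $B(\gamma)$ of this QBD are \emph{affine} functions of the coordinates of $\gamma \equiv (q_1,q_2,z_{1,2})$, and are therefore (globally) Lipschitz continuous in $\gamma$.

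First I would recall, from Theorem 6.1 of \cite{PeW09c}, that for $\gamma \in \AA$ the QBD is positive recurrent, so its stationary distribution admits the matrix-geometric form $\pi_n(\gamma) = \pi_0(\gamma)R(\gamma)^n$, $n\ge 0$, where $R(\gamma)$ is the minimal nonnegative solution of $R^2 A_2(\gamma) + R A_1(\gamma) + A_0(\gamma) = 0$ with spectral radius strictly less than $1$, and $\pi_0(\gamma)$ is determined by the boundary equations involving $B(\gamma)$ and $R(\gamma)$ together with normalization. The perturbation analysis carried out in the proof of Theorem 7.1 of \cite{PeW09c}, which was used there to establish (local) Lipschitz continuity of the single functional $\pi_{1,2}(\gamma) = P(D(\gamma,\infty) \ge 0)$, applies without change to yield local Lipschitz continuity of the matrix $R(\gamma)$ and of the boundary probability vector $\pi_0(\gamma)$ on $\AA$; the key ingredient is that $\mathrm{sp}(R(\gamma)) < 1$ strictly on $\AA$ and is itself continuous, so the defining equations can be differentiated/inverted locally.

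Next I would deduce the desired weak convergence. Suppose $\gamma_n \to \gamma$ in $\AA$. By openness of $\AA$ (strict inequality in \eqref{Aset2}) and continuity of the drifts, $\gamma_n \in \AA$ eventually, and there exist a neighborhood $U$ of $\gamma$ and constants $\rho < 1$, $C < \infty$ with $\|R(\gamma')^n\| \le C\rho^n$ uniformly for $\gamma'\in U$ and all $n$. This gives a uniform exponential tail bound on the stationary distributions $F(\gamma',\cdot)$ for $\gamma' \in U$, hence tightness of $\{D(\gamma_n,\infty)\}$. The Lipschitz continuity of $R(\gamma)$ and $\pi_0(\gamma)$ yields $\pi_\ell(\gamma_n) \to \pi_\ell(\gamma)$ for every level $\ell$; combined with tightness, this gives $D(\gamma_n,\infty) \Rightarrow D(\gamma,\infty)$, which is equivalent to convergence in the L\'evy metric \eqref{Levy}. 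The continuity of $t \mapsto D(x(t),\infty)$ on the set $\{t : x(t)\in\AA\}$ then follows by composition with the continuous trajectory $x$ supplied by Theorem 5.2 of \cite{PeW09c}.

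The main obstacle is the local Lipschitz continuity of the matrix-geometric ingredients $R(\gamma)$ and $\pi_0(\gamma)$ as $\gamma$ varies in $\AA$. I do not view this as a new difficulty, however: it is precisely the perturbation analysis already developed in the proof of Theorem 7.1 of \cite{PeW09c} for the scalar quantity $\pi_{1,2}(\gamma)$, supplemented by standard matrix-analytic results on continuous dependence of the minimal nonnegative solution of the quadratic matrix equation on its coefficients (valid because positive recurrence keeps $\mathrm{sp}(R(\gamma))$ bounded away from $1$ on compact subsets of $\AA$). The proof therefore amounts to invoking these facts and packaging pointwise convergence with uniform tail control into weak convergence.
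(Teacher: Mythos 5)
Your proposal is correct and follows essentially the same route as the paper's proof: both rest on the QBD representation from \S 6 of \cite{PeW09c} and on the differentiability (hence local Lipschitz continuity) of the rate matrix $R(\gamma)$ and boundary vector established in Theorem 7.1 of \cite{PeW09c} via He's Theorem 2.3 in \cite{H95}, with the second claim obtained by composition with the continuous solution $x(t)$. You supply more detail than the paper does — making explicit the uniform spectral-radius bound on a neighborhood, the resulting tightness, and the passage from pointwise convergence of the level probabilities to weak convergence — but the core idea and the results being invoked are the same.
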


\begin{proof}
This result follows from Theorem
7.1 in \cite{PeW09c}, in particular
 from the stronger differentiability of the QBD $R$ matrix in an open
neighborhood of each $\gamma \in \AA$, building on Theorem 2.3 in \cite{H95}.  The second statement
follows from the first, together with the continuity of $x(t)$ as a function of $t$.
\end{proof}

\begin{remark}{$($continuity on $\rS)$}\label{rmContOnS}
{\em
Lemma \ref{lmLimFTSP} shows that the stationary distribution of the FTSP is well defined on all of $\rS$
provided that we extend the set of possible values of the FTSP to the space $E \equiv \ZZ \cup \{+\infty\} \cup \{-\infty\}$, as in \S \ref{secHK}.
Following Theorem 7.1 of \cite{PeW09c}, we can show that Lemma \ref{lmContSS} holds, not only on $\AA$,
but also on $\rS$, but that extension is not needed here.
}
\end{remark}

Let $\tau(t) \equiv \inf\{u > 0 : D(x(t), u) = s\}$, where $s$ is a state in the state space of the QBD $D$.
The next lemma establishes the existence of a finite moment generating function (mgf) for $\tau (t)$ for a positive recurrent FTSP.

\begin{lemma} {$($finite mgf for return times$)$}\label{lmReturnTime}
For $x(t) \in \AA$, let $\tau \equiv \tau (t)$ be the return time of the positive recurrent QBD $D(x(t), \cdot)$ to a specified state $s$.
Then there exists $\theta^* > 0$ such that
$\phi_{\tau} (\theta) \equiv E[e^{\theta \tau}] < \infty$ for all $\theta < \theta^*$.
\end{lemma}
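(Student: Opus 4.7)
The plan is to establish a geometric Foster--Lyapunov drift condition for the QBD representation of the FTSP, from which exponential moments of return times follow by standard Markov chain theory. By Assumption \ref{assE} and \S 6 of \cite{PeW09c}, the pure-jump Markov FTSP $D(x(t), \cdot)$ has a QBD representation with levels $i \in \ZZ_{+}$, a finite phase space, and rate matrices $A_0, A_1, A_2$ (together with boundary matrices $B_0, B_1$). Since $x(t) \in \AA$, Theorem 6.1 of \cite{PeW09c} yields positive recurrence, which is equivalent to the spectral radius $\rho_R$ of the associated rate matrix $R$ satisfying $\rho_R < 1$.

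First, I would build a geometric Lyapunov function of the form $V(i, \phi) \equiv \eta^i v(\phi)$ on the QBD state space, where $\eta > 1$ is chosen so that $\eta \rho_R < 1$ and $v$ is a strictly positive vector on the finite phase space. The natural choice of $v$ comes from the Perron--Frobenius theory applied to the matrix pencil $\eta^{-1} A_0 + A_1 + \eta A_2$: since $\rho_R < 1$, one can select $\eta > 1$ and a positive $v$ so that $(\eta^{-1} A_0 + A_1 + \eta A_2) v \le -\alpha v$ componentwise for some $\alpha > 0$. Writing out the generator $\mathcal{Q}$ of the QBD and using the level-homogeneity of $A_0, A_1, A_2$ above level $0$, this gives the drift inequality
\begin{equation*}
\mathcal{Q} V(i, \phi) \le -\alpha V(i, \phi) \qforallq i \ge 1,
\end{equation*}
while at level $0$ (a finite set $C$) the quantity $\mathcal{Q} V$ is bounded by a constant. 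Combining these yields the geometric drift condition
\begin{equation*}
\mathcal{Q} V(i, \phi) \le -\alpha V(i, \phi) + b \, 1_C(i, \phi)
\end{equation*}
for some finite constant $b$ and the finite set $C$ consisting of all phases at level $0$.

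Second, I would invoke the standard consequence of geometric drift for continuous-time Markov chains (see, e.g., the CTMC analogue of Meyn--Tweedie, or equivalently Kendall's theorem applied to the embedded discrete chain): there exist $\theta_1 > 0$ and $M < \infty$ such that, for every starting state $(i, \phi)$,
\begin{equation*}
E_{(i,\phi)}\left[e^{\theta_1 \sigma_C}\right] \le M V(i, \phi),
\end{equation*}
where $\sigma_C$ is the first return time to $C$. Starting the FTSP at the designated state $s$, the first return to $s$ can be decomposed as $\tau = \sigma_C + \tau_s^C$, where $\tau_s^C$ is the hitting time of $s$ from the (random) state in $C$ at which $\sigma_C$ is achieved. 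Because $C$ is finite and the restriction of the QBD to hitting problems from $C$ into $s$ produces a phase-type distribution (a hitting time in a finite-state CTMC with bounded rates), $\tau_s^C$ has a finite mgf on a neighborhood of $0$. A strong-Markov argument then gives $E_s[e^{\theta \tau}] < \infty$ for all $\theta < \theta^* \equiv \theta_1 \wedge \theta_2$, where $\theta_2$ is the abscissa of convergence of $\tau_s^C$.

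The main obstacle is verifying the geometric drift cleanly: one must choose $\eta$ and $v$ compatibly, and handle the boundary level $0$ separately, since the rates there differ from those at levels $i \ge 1$. Once the drift condition is in place, the remaining steps (invoking the drift theorem and decomposing $\tau$ through $C$) are routine. A small technical point is that the drift theorem as usually stated is for discrete-time chains; transferring to continuous time uses the fact that the QBD has uniformly bounded jump rates, so that the embedded jump chain and the continuous-time process share the same exponential moment properties up to a deterministic time change.
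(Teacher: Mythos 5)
Your Lyapunov/drift route is genuinely different from the paper's, which reduces to the embedded discrete-time chain via uniformization and exhibits the return-time generating function in closed form in terms of the level-down first-passage matrix generating function $G(z)$ from \cite{LR99}, then reads off a finite abscissa of convergence. Your plan is sound in outline and avoids $G(z)$ entirely, but two steps need repair. First, the matrix pencil has its exponents swapped. With $V(i,j) = \eta^i v(j)$ ($i$ the level, $j$ the phase) and the \cite{LR99} convention used in this paper that $A_0$ governs level increases and $A_2$ level decreases, the generator applied to $V$ at an interior level gives
\begin{equation*}
\eta^i \bigl[(\eta A_0 + A_1 + \eta^{-1} A_2) v\bigr](j),
\end{equation*}
so the pencil you need is $\eta A_0 + A_1 + \eta^{-1} A_2$, not $\eta^{-1} A_0 + A_1 + \eta A_2$. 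The Perron eigenvalue of this pencil is $0$ at $\eta = 1$ (eigenvector $\mathbf{1}$), and its derivative in $\eta$ at $\eta = 1$ equals $\nu(A_0 - A_2)\mathbf{1}$, where $\nu$ is the stationary distribution of $A_0 + A_1 + A_2$; this derivative is strictly negative precisely under the positive-recurrence condition $\nu A_0\mathbf{1} < \nu A_2\mathbf{1}$, so the eigenvalue can be made strictly negative for $\eta$ slightly above $1$. Your pencil, taken as written, points the drift the wrong way. Second, the claim that $\tau^C_s$ is phase-type because $C$ is finite is not correct: the chain can exit $C$ and visit arbitrarily high levels before first reaching $s$, so this hitting problem does not reduce to a finite-state CTMC. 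The conclusion that $\tau^C_s$ has a finite mgf near $0$ is still valid, but requires a different argument. A clean fix: by irreducibility there exist $T < \infty$ and $p > 0$ such that from every state of the finite set $C$ the chain reaches $s$ within time $T$ with probability at least $p$; the number of $C$-excursions before first reaching $s$ therefore has a geometric tail, each excursion length has an exponential moment by your drift bound, and the geometrically compounded sum inherits a finite mgf near $0$. (Equivalently, one may invoke the fact that geometric drift to a petite set implies exponential moments for the return time to any atom in an irreducible countable-state chain.)
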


\begin{proof}
As for any irreducible positive recurrent CTMC, a positive
recurrent QBD is regenerative, with successive visits to any
state constituting an embedded renewal process. As usual for
QBD's (see \cite{LR99}), we can choose to analyze the system
directly in continuous time or in discrete time by applying
uniformization, where we generate all potential transitions
from a single Poisson process with a rate exceeding the total
transition rate out of any state. In continuous time we focus
on the interval between successive visits to the regenerative
state; in discrete time we focus on the number of Poisson
transitions between successive visits to the regenerative
state.

Let $N$ be the number of Poisson transitions (with specified Poisson rate).
The number of transitions, $N$, has the generating function (gf) $\psi_{N} (z) \equiv E[z^N]$,
for which there exists a radius of convergence $z^*$ with $0 < z^{*} < 1$ such that $\psi_{N} (z) < \infty$ for $z < z^{*}$
and $\psi_{N} (z) = \infty$ for $z > z^{*}$.

The mgf $\phi_{\tau} (\theta)$ and gf $\psi_{N} (z)$ can be expressed directly in terms of the finite QBD defining
matrices. It is easier to do so if we choose a regenerative
state, say $s^{*}$, in the boundary region (corresponding to
the matrix $B$ in (6.5)-(6.6) of \cite{PeW09c}). To illustrate, we
discuss the gf. With $s^*$ in the boundary level, in addition
to the transitions within the boundary level and up to the next
level from the boundary, we only need consider the number of
transitions, plus starting and ending states, from any level
above the boundary down one level. Because of the QBD
structure, these key downward first passage times are the same
for each level above the boundary, and are given by the
probabilities $G_{i,j} [k]$ and the associated matrix
generating function $G(z)$ on p. 148 of \cite{LR99}.  Given
$G(z)$, it is not difficult to write an expression for the
generating function $\psi_{N^n} (z)$, just as in the familiar
birth-and-death process case; e.g., see \S 4.3 of \cite{LR99}.
\end{proof}

The next lemmas establish results regarding distances between processes in a discrete state apace.
In particular, we consider the state-space $E$ of the processes $D^n_{1,2}$ and the FTSP $D$, which is a countable lattice.
With the QBD representation (achieved by renaming the states in $E$) the state-space $E$ is a subset of $\ZZ$.
To measure distances between probability distributions on $\ZZ$, corresponding to convergence in distribution,
we use the L\'{e}vy metric, defined for any two cumulative distribution functions (cdf's) $F_1$ and $F_2$ by
\bequ \label{Levy}
\sL (F_1, F_2) \equiv \inf{\{ \epsilon > 0 : F_1 (x - \epsilon) - \epsilon \leq F_2 (x) \leq F_1 (x + \epsilon) + \epsilon \, \mbox{ for all} \, x\}}.
\eeq
For random variables $X_1$ and $X_2$, $\sL (X_1, X_2)$ denotes the L\'{e}vy distance between their probability distributions.
(The L\'{e}vy metric is defined for probability distributions on $\RR$, but we will use it for processes defined on
the discrete space $E$.)

\begin{lemma} $($uniform bounds on the rate of convergence to stationarity$)$\label{lmExpErg}
For every $t_0 \in [t_1, t_2]$ there exist $\zeta > 0$ and constants $\beta_0 < \infty$ and $\rho_0 > 0$, such that
\bequ \label{ExpErg}
\sL (D(x(t), s),  D(x(t), \infty)) \le \beta_0 e^{- \rho_0 s} \qinq \RR
\eeq
holds for all $t \in [t_1\vee(t_0 - \zeta), t_0 + \zeta] \subset [t_1, t_2]$.
\end{lemma}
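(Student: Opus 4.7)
The plan is to prove uniform geometric ergodicity of the family $\{D(x(t),\cdot) : t \in I_\zeta\}$ on a suitable compact neighborhood $I_\zeta$ of $t_0$, by exploiting the regenerative QBD structure together with the finite moment generating function bound from Lemma \ref{lmReturnTime}. Since L\'evy distance is dominated by total variation distance, it suffices to obtain the analogue of \eqref{ExpErg} with the total variation norm, and that reduces the problem to a classical coupling-from-regeneration argument once the mgf of return times is controlled uniformly in $\gamma = x(t)$.

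First, I would localize. The drift functions $\delta_\pm(\gamma)$ in \eqref{drifts} are affine (hence continuous) in $\gamma$, and $\AA$ is defined in \eqref{Aset2} by the strict inequalities $\delta_-(\gamma) > 0 > \delta_+(\gamma)$, so $\AA$ is an open subset of $\rS$. By Theorem \ref{th1} and Theorem 5.2 of \cite{PeW09c}, $x(\cdot)$ is continuous, so I can choose $\zeta > 0$ small enough that $I_\zeta \equiv [t_1 \vee (t_0 - \zeta),\, t_0 + \zeta] \subset [t_1,t_2]$ and that $K \equiv \{x(t) : t \in I_\zeta\}$ is a compact subset of $\AA$, with $\delta_-(\gamma) > \eta$ and $-\delta_+(\gamma) > \eta$ uniformly in $\gamma \in K$ for some $\eta > 0$.

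Second, I would fix a single regenerative reference state $s^* \in E$ in the boundary level of the QBD representation from \S 6 of \cite{PeW09c}, and let $\tau_\gamma$ be the first return time of the FTSP $D(\gamma,\cdot)$ to $s^*$ starting from $s^*$. By Lemma \ref{lmReturnTime}, for each $\gamma \in K$ there exists $\theta^*(\gamma) > 0$ such that $\phi_{\tau_\gamma}(\theta) \equiv E_{s^*}[e^{\theta \tau_\gamma}] < \infty$ for $\theta < \theta^*(\gamma)$. As outlined in the proof of that lemma, $\phi_{\tau_\gamma}$ is expressible in terms of the boundary matrices and the downward first-passage generating matrix function $G(z;\gamma)$ of the QBD (see p.~148 of \cite{LR99}). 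All of these matrices depend continuously on $\gamma$, and the positive recurrence condition $\delta_-(\gamma) > 0 > \delta_+(\gamma)$ controls the spectral radius of $G(1;\gamma)$ uniformly away from $1$ on $K$. A compactness-plus-continuity argument on $K$ then produces constants $\theta_0 > 0$ and $C_0 < \infty$, independent of $\gamma$, such that
\[
\phi_{\tau_\gamma}(\theta_0) \le C_0 \qforallq \gamma \in K.
\]

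Third, I would invoke a standard regenerative coupling argument (see, e.g., \S VI.4 of Asmussen, as cited in the proof of Lemma \ref{lmQBDextreme}) to convert uniform finiteness of the return-time mgf at a single state into uniform geometric ergodicity. Specifically, a uniform exponential tail on the coupling time, obtained by coupling at successive visits to $s^*$, yields constants $\beta_0 < \infty$ and $\rho_0 > 0$ depending only on $\theta_0$ and $C_0$ such that the total variation distance between $D(\gamma,s)$ and its stationary law is bounded by $\beta_0 e^{-\rho_0 s}$ uniformly in $\gamma \in K$. Since $\sL \le \|\cdot\|_{TV}$, this gives \eqref{ExpErg} for all $t \in I_\zeta$ with the same constants. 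The main obstacle will be Step 2: the uniform lower bound $\theta_0 > 0$ on the radius of convergence across the compact set $K$. This amounts to uniform continuity of the spectral radius of $G(z;\gamma)$ in $\gamma \in K$, which is closely related to the differentiability properties of the QBD matrix $R(\gamma)$ already established in the proof of Theorem 7.1 of \cite{PeW09c} via Theorem 2.3 of \cite{H95}, and which I would leverage rather than reprove.
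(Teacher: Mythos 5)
Your approach is essentially correct but takes a genuinely different route from the paper. The paper's own proof translates the finite mgf of return times (Lemma \ref{lmReturnTime}) into the equivalent exponential drift condition $\mathbf{(V4)}$ of Kontoyiannis--Meyn \cite{KM03}, $Q(t)V \le -c_t V + d_t \mathbf{1}_{\{0\}}$, fixes one Lyapunov function $V$, and then observes that the drift constants $c_0,d_0$ can be taken uniform over a neighborhood of $t_0$ because $Q(t)$ depends continuously on $x(t)$; adding a trivially uniform minorization condition at the singleton $\{0\}$, it then quotes Roberts--Rosenthal \cite{RobRos96} for an explicit total-variation bound expressed solely in terms of $(c_0,d_0,\af,V)$. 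You instead stay on the ``mgf side'' of the same \cite{KM03} equivalence: bound $\phi_{\tau_\gamma}(\theta_0)\le C_0$ uniformly over the compact $K=\{x(t):t\in I_\zeta\}$ by compactness plus continuity of the QBD matrices, then pass to a geometric total-variation rate by regenerative coupling. What the paper's route buys is that uniformity falls out almost for free: once $V$ is fixed, the strict drift inequality is an open condition in $Q(t)$, and \cite{RobRos96} already packages the dependence of $(\beta_0,\rho_0)$ on $(c_0,d_0,\af,V)$. What your route buys is directness (no re-translation to drift) and a more probabilistic flavor. The two places your version needs the most care are: (i) the coupling step, since turning control of a single marginal return-time mgf into a geometric rate requires a quantitative Kendall-type theorem for CTMC's (uniform exponential tail of the bivariate coupling time to $(s^*,s^*)$, or an Athreya--Ney--Nummelin split-chain regeneration), with constants depending only on $\theta_0,C_0$; and (ii) the claim that $\inf_{\gamma\in K}\theta^*(\gamma)>0$, for which you need lower semicontinuity of the abscissa of convergence $\theta^*(\gamma)$, not merely continuity of the spectral radius of $G(z;\gamma)$ -- this does follow from the continuity in Lemma \ref{lmQBDcont} but should be made explicit. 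Neither is a fatal gap; they are the nontrivial-but-standard steps that the paper handles on the other side of the \cite{KM03} equivalence.
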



For our proof of Lemma \ref{lmExpErg}, we also
 introduce the concept of {\em uniformly} $(u, \af)-small$ sets on an interval, which generalizes the concept of $(u, \af)$-{\em small} sets;
for background see, e.g., \cite{KM03}, \cite{MT93} and \cite{RobRos96}.
For each (fixed) $t \ge 0$ let $P^t_{i,j}(u)$ denote the transition probabilities of  $\{D(x(t), u) : u \ge 0\}$:
$$P^t_{i,j}(u) \equiv P (D(x(t), u) = j \mid D(x(t), 0) = i),~~~ i,j \in \ZZ.$$

\begin{definition} \label{Def:small}{\em
A set $C \subseteq \ZZ$ is $(u, \af)$-{\em small}, for a time $u > 0$ and for some $\af > 0$, if there exists a probability measure $\varphi^t(\cdot)$
on $\ZZ$ satisfying the {\em minorization condition}:
\bes
P^t_{i, j}(u) \ge \af \varphi^t(j), \quad i \in C, ~ j \in \ZZ.
\ees
We say that a set $C$ is {\em uniformly $(u, \af)$-small} in the interval $I$,
if $C$ is $(u,\af)-small$ for each process in the family $\{D(x(t), \cdot) : t \in I\}$,
i.e., if $C$ is $(u,\af)$-{\em small} for each $D(x(t), \cdot)$ with the same $u$ and $\af$, $t \in I$.
}
\end{definition}
Note that the probability measure $\varphi^t$ in the minorization condition is allowed to depend on $t$
in the definition of {\em uniformly $(u, \af)$-small}.

\begin{proofof}{Lemma \ref{lmExpErg}}
As above, let $\tau (t)$ denote the return time of the process $D (x (t), \cdot)$ to the regeneration state $s^*$,
which, for concreteness, we take to be state $0$, i.e., $s^* = 0$.
Consider the infinitesimal generator matrix $Q (t)$ of the process $D (x (t), \cdot)$.
In a countable state space, every compact set is {\em small} (see, e.g., definition in pp. 11 in \cite{KM03})
and, in particular, $\{0\}$ is a small set. (We show in \eqref{minorization} below that $\{0\}$ is in fact {\em uniformly $(u, \af)$-small}.)
Moreover, by Theorem 2.5 in \cite{KM03}, the existence of a finite mgf for the hitting time of the small set $\{0\}$
is equivalent to the existence of a Lyapunov function $V : \ZZ \ra [1,\infty)$ which satisfies
the exponential drift condition on the generator, Condition $\bf (V4)$ in \cite{KM03}:
\bequ \label{ExpDrift}
Q (t) V \le -c_t V + d_t {\bf 1}_{\{0\}},
\eeq
where $c_t$ and $d_t$ are strictly positive constants.

Consider the time $t_0 \in [t_1, t_2]$. Then \eqref{ExpDrift} holds at $t_0$ with constants $c_{t_0}$ and $d_{t_0}$.
Since $c_{t_0} > 0$ we can decrease it such that \eqref{ExpDrift} holds with strict inequality and the new $c_{t_0}$
is still strictly positive.
We increase $d_{t_0}$ appropriately, such that $\sum_{j} q_{0,j}(t) V (j) < -c_{t_0} V (0) + d_{t_0} {\bf 1_{\{0\}}}$.
The continuity of $Q (t)$ on $[t_1, t_2]$ as a function of $t$, which follows immediately from the continuity of the rates
\eqref{bd1}-\eqref{bd4} as functions of the continuous function $x(t)$, implies that
there exist $\zeta > 0$ and two positive constants $c_0$ and $d_0$, such that \eqref{ExpDrift} holds for all
$t \in [t_0 - \zeta, t_0 + \zeta]$ with the same constants $c_0$ and $d_0$.
However, this is still not sufficient to conclude that the bounds in \eqref{ExpErg} are the same for all
$t \in [t_0 - \zeta, t_0 + \zeta]$; see Theorem 1.1 in \cite{Bax05} (for discrete-time Markov chains).

Recall that for each fixed $t$, $P^t_{i,j} (s)$ denotes the transition probabilities of the CTMC $\{D (x (t), s) : s \ge 0\}$.
We can establish uniform bounds on the convergence rates to stationarity by showing that $\{0\}$
is {\em uniformly $(u, \af)$-small} in an interval $[t_0-\zeta, t_0+\zeta]$ for the family
$\{D(x(t), \cdot) : t \in [t_0-\zeta, t_0+\zeta]\}$, as in Definition \ref{Def:small}.
In particular, we need to show that
\bequ \label{minorization}
P^t_{0,j} (u) \ge \af \varphi^t (j), \quad j \in \ZZ.
\eeq
holds for all $t \in [t_0 - \zeta, t_0 + \zeta]$ with the same $\af$ (but $\varphi^t$ is allowed to change with $t$).
This step, together with the uniform bounds $c_0$ and $d_0$ in \eqref{ExpDrift}
established above, will be shown to be sufficient to conclude the proof.

Hence, it is left to show that \eqref{minorization} holds for all $t \in [t_0 - \zeta, t_0 + \zeta]$ with the same $\af > 0$.
This step is easy because $\{0\}$ is a singleton in a countable state space.
Specifically, for each $t$ we consider, we can fix any $u > 0$ and define $\varphi^t (j) \equiv P^t_{0,j} (u)$.
With this definition of $\varphi^t$ we can take any $\af \le 1$ in \eqref{minorization}.
As in the discrete-time case in \cite{Bax05}
(the {\em strong aperiodicity condition} (A3) in \cite{Bax05} is irrelevant in continuous time),
the bounds on the convergence rates in \eqref{ExpErg}
depend explicitly on $\af$ in the minorization condition \eqref{minorization}, the bounds in the
drift condition \eqref{ExpDrift} and the Lyapunov function $V$ in \eqref{ExpDrift}.
This can be justified by uniformization, but can also be justified directly for continuous-time processes,
e.g., from the expressions in Theorem 3 and Corollary 4 in \cite{RobRos96}.

The uniform bounds on the rate of convergence to steady state established above by applying \cite{RobRos96}
are directly expressed in the total-variation metric.
If the total variation metric can be made arbitrarily small, then so can the Levy metric.
Hence we have completed the proof.
\end{proofof}

\begin{remark} {$($bounds on coupling times$)$} \label{remCoupling}
{\em The bounds on the rate of convergence to stationarity of Markov processes satisfying \eqref{ExpDrift} and \eqref{minorization}
in \cite{RobRos96} are obtained via the coupling inequality. In particular, we have provided explicit bounds for the time it takes
a positive recurrent FTSP $D(\gamma, \cdot)$ (with $\gamma \in \AA$), initialized at some finite time, to couple with its stationary version.}
\end{remark}

The final auxiliary lemma relates the L\'{e}vy distance $\mathcal{L}$ in \eqref{Levy} between $D^n_e(X^n(u), s_0)$ in \eqref{fast102} and the FTSP $D(x(u), s_0)$
at a finite time $s_0$.

\begin{lemma}\label{lmExpandUniform}
Suppose that $x (t) \in \AA$ for $t_1 \le t \le t_2$. Then  and for any fixed $s_0 > 0$,
\beql{expUnif}
\sL(D^n_e (X^n (u), s_0), D(x(u), s_0)) \ra 0 \quad \mbox{as $n\tinf$, uniformly in $u \in [t_1, t_2]$}.
\eeq
\end{lemma}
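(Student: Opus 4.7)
The plan is to reduce uniform-in-$u$ convergence to two ingredients: pointwise convergence at each $u$ (delivered essentially by Theorem \ref{thConvToFast}), together with a uniform ``equicontinuity'' estimate made possible by the compactness of $[t_1, t_2]$. First I would decompose the Levy distance via the triangle inequality as
\[
\sL(D^n_e(X^n(u), s_0), D(x(u), s_0)) \le \sL(D^n_e(X^n(u), s_0), D^n_f(X^n(u), s_0/n)) + \sL(D^n_f(X^n(u), s_0/n), D(x(u), s_0)),
\]
separating a dynamical ``freezing error'' on the short real-time interval $[u, u + s_0/n]$ from a static comparison via the identification \eqref{ident}.

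For the freezing error, I would construct both $D^n_e(X^n(u), \cdot)$ and $D^n_f(X^n(u), \cdot)$ on a common probability space, started at the same initial value $D^n_{1,2}(u)$ and driven by common rate-$1$ Poisson processes via the standard thinning construction at the supremum of the two rate functions. The two processes can diverge only at jump times where the acceptance probabilities for $D^n_{1,2}$ at time $v$ and for $D^n_f$ (frozen at $X^n(u)$) differ. By \eqref{BDfrozen1}-\eqref{BDfrozen2} these rate differences are Lipschitz in $|X^n(v) - X^n(u)|$, and Corollary \ref{corLip} yields $|X^n(v) - X^n(u)| = O(s_0)$ uniformly for $v \in [u, u+s_0/n]$ on a set of probability tending to $1$, uniformly in $u \in [t_1, t_2]$. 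Integrating over an interval of real-time length $s_0/n$ then gives expected coupling divergence of order $O(s_0^2/n) \to 0$, uniformly in $u$; the total variation bound thus dominates the Levy distance.

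For the rate-convergence error, the identification \eqref{ident} gives $D^n_f(X^n(u), s_0/n) \deq D(X^n(u)/n, s_0)$ under matched initial distributions. Theorem \ref{th1} together with the $\C$-tightness in Theorem \ref{lmTight} and continuity of the limit $x$ give $\sup_{u \in [t_1, t_2]} \|X^n(u)/n - x(u)\| \to 0$ in probability. Moreover the finite-time FTSP distribution $D(\gamma, s_0)$ is Levy-continuous in $\gamma$ on $\AA$, because the generator rates \eqref{bd1}-\eqref{bd4} are continuous in $\gamma$ and finite-time marginals of a pure-jump Markov process depend continuously on its generator; this continuity is uniform on the compact set $\{x(u) : u \in [t_1, t_2]\} \subset \AA$. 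These two facts combine to yield $\sup_u \sL(D(X^n(u)/n, s_0), D(x(u), s_0)) \to 0$, and summing the two errors completes the argument.

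The main obstacle is the coupling estimate for the freezing error. One must verify that the $O(s_0^2/n)$ bound on expected divergences is genuinely uniform in $u \in [t_1, t_2]$; this in turn relies on the modulus-of-continuity bound in Corollary \ref{corLip} holding on a sequence of sets with probability tending to $1$ which does not depend on $u$, which is delivered by the $\C$-tightness of the full process sequence. A secondary subtlety is matching the initial distributions of $D^n_e(X^n(u), \cdot)$ and $D(x(u), \cdot)$ uniformly in $u$; I would handle this by invoking Skorohod representation as in the proof of Theorem \ref{thConvToFast}, so that the initial values coincide under the coupling and only the dynamical error requires bounding.
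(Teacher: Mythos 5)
Your proposal is correct in spirit but takes a genuinely different route from the paper. The paper's proof is short and soft: it invokes the pointwise convergence already available from Theorem \ref{thConvToFast}, notes that for each $u$ the estimate $\sL(D^n_e(X^n(v), s_0), D(x(v), s_0)) < \ep$ holds for all $v$ in an open neighborhood of $u$ and $n$ large (because $\barx^n \Ra x$ and the initial conditions match along the relevant subsequence), and then extracts a finite subcover of the compact interval $[t_1, t_2]$ to conclude uniformity. No explicit error decomposition or coupling estimate appears. Your proof instead is quantitative: you split into a ``freezing error'' (coupling $D^n_e(X^n(u),\cdot)$ to the frozen process on $[u, u + s_0/n]$) and a ``rate-convergence error'' (continuity of the FTSP finite-time law in $\gamma$, combined with $\sup_u \|\barx^n(u) - x(u)\| \Ra 0$). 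This is more work but yields an explicit $O(s_0^2/n)$ bound on the first term and a modulus-of-continuity bound on the second, which the paper's compactness argument does not give. Both arguments ultimately use compactness of $[t_1, t_2]$ for uniformity, but in different places.

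Two small points of imprecision in your write-up. First, you cite Corollary \ref{corLip} for $|X^n(v) - X^n(u)| = O(s_0)$, but that corollary concerns the modulus of the \emph{limit} $\bar X$, not the unscaled prelimit; what you actually need is the expectation bound $E|X^n(v) - X^n(u)| \le C' n(v-u)$ coming from the Poisson driving processes (which, integrated over $v \in [u, u+s_0/n]$, gives exactly your $O(s_0^2/n)$ and is automatically uniform in $u$ because it is an expectation, not a sample-path modulus — a sample-path uniform modulus over an $O(n)$-long sliding window would degrade to $O(\log n)$). Second, the initial-condition matching between $D(X^n(u)/n, 0)$ and $D(x(u), 0)$ only makes sense along a convergent subsequence of $D^n_{1,2}(u)$; you flag this, and it is handled the same way in the paper's proof of Theorem \ref{thAPlocal}, so it is acceptable but should be stated explicitly rather than deferred to a remark.
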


\begin{proof}
If follows from the proof of Theorem \ref{thConvToFast} that, for the given $\ep > 0$,
there exists $\delta(t_1)$, $n_0 (t_1)$ and $s_0(t_1)$ such that
\beq
\sL(D^n_e (X^n (v), s_0 (t_1)), D(x(v), s_0(t_1))) < \ep \qforallq  t_1 \le v \le (t_1 + \delta(t_1)) \wedge t_2 \qandq n \ge n_0(t_1).
\eeqno
exploiting the convergence $\barx^n \Ra x$ and $D^n (\Gamma^n, 0) = D^n_{1,2} (t_1) \Ra D(x(t_1), 0) = D(\gamma, 0)$
for $\Gamma^n \equiv X^n (t_1)$ and $\gamma \equiv x(t_1)$.
We can apply this reasoning in an open interval about each $u \in [t_1, t_2]$.
In particular, for the given $\ep > 0$ and $u \in [t_1, t_2]$,
there exists $\delta(u)$, $n_0 (u)$ and $s_0(u)$ such that
\beq
\sL(D^n_e (X^n (v), s_0 (u)), D(x(v), s_0(u))) < \ep \qforallq (u - \delta(u)) \vee t_1 \le v \le (u + \delta(u)) \wedge t_2 \qandq n \ge n_0(u).
\eeqno
However, since the interval $[t_1, t_2]$ is compact and the family of intervals $(t_1 \vee (u - \delta(u), (u + \delta(u)) \wedge t_2)$,
taken to be closed on the left at $t_1$ and closed on the right at $t_2$, is an open cover, there is a finite subcover.
Hence, all time points in $[t_1,t_2]$ are contained in only finitely many of these intervals.
Hence, we can achieve the claimed uniformity.  Moreover, since the conclusion does not depend on the subsequence used at the initial time $t_1$,
the overall proof is complete.
\end{proof}

\subsection{Proofs of Theorems \ref{thDSB} and \ref{thAPlocal}.} \label{secMainProofs2}   

We apply the results in \S \ref{secAuxFTSP} to prove the theorems.

\begin{proofof}{Theorem \ref{thDSB}}
First, if $x(t_0) \in \AA$ then there exists $t_2$ such that $x(t) \in \AA$ over $[t_0, t_2]$ because $\AA$ is an open set
and $x$ is continuous. It is possible that the fluid limit never leaves $\AA$ after time $t_0$, in which case $x \in \AA$ over $[t_0, \infty)$.
In the latter case we can consider any $t_2 > t_0$.

$(i)$
We begin by showing that there exists $\delta_0 \equiv \delta (t_0) > 0$,
such that $D^n_{1,2}(t_1)$ is tight in $\RR$ for all $t_1$ satisfying $t_0 < t_1 < t_0 + \delta_0$.
Henceforth, $t_1$ denotes such a time point.
We need to show that, for any $\ep >0$, there exists a constant $K$ such that
$P(|D^n_{1,2} (t_1)| > K) < \ep$ for all $n \ge 1$. Overall we prove the claim in three steps:
in the first step we bound $D^n_{1,2}$ over $[t_0, t_0 + \delta_0]$, in sample-path stochastic order, with positive recurrent QBD's
but with random initial conditions.
In the second step, we show that these bounding QBD's are tight for each $t_1$ as above, by showing that they couple with stationary versions
rapidly enough.
In the third step we show that we can extend the conclusion from the subinterval $[t_1, t_0 + \delta_0]$ to the entire interval $[t_1, t_2]$.

{\em Step One.}
We apply Lemma \ref{lmD12PR} to find a $\delta_0 > 0$
and construct random states $X^n_m$ and $X^n_M$
with the properties stated there, such that rate order holds as in \eqref{DstoBd}
in $\D([t_0, t_0 + \delta_0])$, after we let $D^n_f(X^n_m, t_0) \equiv D^n_f(X^n_M, t_0) \equiv D^n_{1,2} (t_0)$.
Next, for all sufficiently large $n$, we bound the upper bounding process above and the lower bounding process below in rate order,
each by a FTSP with fixed states $\tilde{x}_m$ and $\tilde{x}_M$
 but ordered so that strict rate order in Lemma \ref{lmRateOrder} holds.
Since $x_m$ and $x_M$ are in $\AA$ and $\AA$ is open, these new states $\tilde{x}_m$ and $\tilde{x}_M$
can be chosen to be sufficiently near the initial states $x_m$ and $x_M$
that they too are in $\AA$.
We let these FTSP's be given the same initial conditions depending on $n$, consistent with above.
For that purpose, we put the initial condition in the notation; i.e., we
let $D(\gamma, b, \cdot) \equiv D(\gamma, \cdot)$ given that $D(\gamma, 0) = b$.
We combine the rate order just established with Corollary \ref{corStochBd} to obtain the sample-path stochastic order
\begin{eqnarray}\label{DstoBd30}
&& D(\tilde{x}_m, D^n_{1,2} (0), nt) - \zeta \le_{st} D^n_{1,2} (t)
\le_{st} D(\tilde{x}_M, D^n_{1,2} (0), nt) + \zeta
\end{eqnarray}
for all $n \ge n_0$ for some $n_0$.

So far, we have succeeded in bounding $D^n_{1,2} (t)$ above (and below) stochastically by
a positive recurrent FTSP with random initial conditions, in particular starting at $D^n_{1,2} (0)$.
It suffices to show that each bounding FTSP with these initial conditions in stochastically bounded
at any time $t_1 \in (t_0, t_0+\delta_0]$.  We show that next.

{\em Step Two.}
We now show that the two bounding processes in \eqn{DstoBd30} are indeed stochastically bounded at such $t_1$.
We do that by showing that they couple with the stationary versions of these FTSP's with probability converging to $1$ as $n \ra \infty$.
Since $x(t_0) \in \AA$, we can apply Theorem \ref{th1} to deduce that $\barx^n (t_0) \Ra x(0)$ as $n \ra \infty$,
which implies that $D^n_{1,2} (0) = o(n)$ as $n \ra \infty$.
If we consider the FTSP without the scaling by $n$ in \eqn{DstoBd30}, it suffices to show, for $t_1$ as above,
the bounding processes recover from the $o(n)$ initial condition within $o(n)$ time, which will be implied by showing that
the coupling takes place in time $o(n)$.

We show that by applying Lemma \ref{lmExpErg} and Remark \ref{remCoupling}.
They imply that the time until the FTSP in \eqref{DstoBd30} couples with its stationary version is bounded by constants depending on
$c_{t_0}$ and $d_{t_0}$ in \eqref{ExpDrift} and $\af$ in \eqref{minorization}, as well as the Lyapunov function $V$ in \eqref{ExpDrift}.
Now, $D(\tilde{x}_M, t_0) = D^n_{1,2}(t_0)$ by construction, and $x(t_0) \in \AA$, so that $D(\tilde{x}_M, t_0) = o(n)$.
Hence, $V (D(\tilde{x}_M, t)) = o(n)$ for all $t \ge t_0$, for $V$ in \eqref{ExpDrift}.
In particular, we see that the time until $D(\tilde{x}_M, t_0)$ couples with it stationary version, when initialized
at $D^n_{1,2} (t_0)$, is $o(n)$.
Together with \eqref{DstoBd30}, that implies the claim.
That is, we have shown that $D^n_{1,2}(t_1)$ is tight in $\RR$ for all $t_1$, $t_0 < t_1 < t_0 + \delta_0$.

{\em Step Three.}  To extend the result to the interval $[t_1, t_2]$
we observe that we can repeat the reasoning above for any starting point $t \in [t_1, t_2]$ (since $x(t) \in \AA$ for all $t \in [t_1, t_2]$),
achieving an uncountably-infinite cover for $[t_1, t_2]$ of intervals of the form $[t, t + \delta(t)]$.
Since the interval $[t_1, t_2]$ is compact, the uncountably infinite cover
of these intervals, made open at the left unless the left endpoint is $t_1$ and made open on the right unless
the right endpoint is $t_2$, has a finite subcover.  As a consequence, the entire interval
$[t_1, t_2]$ is covered by only finitely many of these constructions, and
we can work with the finite collection of closures of these intervals.

In particular, since the sequence $\{D^n_{1,2} (t_1): n \ge 1\}$ is stochastically bounded,
we can apply the construction over an interval of the form $[t_1, t']$ for $t_1 < t' \le t_2$.
As a consequence we obtain stochastic boundedness for each $t$ in $[t_1, t']$.
If $t' < t_2$, then we continue. We then can choose a second interval $[t'',t''']$
such that $t_1 < t'' \le t' < t'''$.  We thus can carry out the construction over $[t'',t''']$.
Since $t'' \le t'$, we already know that the sequence of random variables $\{D^n_{1,2} (t''); n \ge 1\}$ is stochastically bounded
from the first step.  Thus, in finitely many steps, we will deduce the first conclusion in $(i)$.
If $x(0) \in \AA$ and we take $t_0 = 0$, then $D^n_{1,2}(t_0)$ is tight in $\RR$ because $D^n_{1,2}(0) \Ra L$ by Assumption \ref{assC},
so the result holds on $[t_0, t_2] \equiv [0, t_2]$.

$(ii)$ To prove the statement in $(ii)$ we observe that Theorem \ref{thConvToFast} implies that the oscillations are asymptotically too rapid
for the sequence $\{D^n_{1,2}: n \ge 1\}$ to be tight in $\D$ over any finite interval.  Moreover,
there is a finite interval over which a single frozen-difference process serves as a lower bound, by Lemma \ref{lmD12PR}.
Because of the scaling by $n$, the maximum in the lower bound QBD over this interval will be
unbounded above (actually of order $O(\log{n})$ by reasoning as in Lemma \ref{lmQBDextreme}).
Thus, the sequence of stochastic processes $\{D^n_{1,2}: n \ge 1\}$ is not even stochastically bounded over any finite subinterval of $[t_0, t_2]$.

$(iii)$ We apply Lemma \ref{lmQBDextreme} to establish \eqn{DlogBd}.  We can bound the supremum of $D^n_{1,2} (t)$
over the interval $[t_1, t_2]$ by the supremum of the finitely many frozen queue-difference processes.
Since the rates are of order $n$, Lemma \ref{lmQBDextreme} implies \eqn{DlogBd}.
Once again, the result can be extended to hold on $[t_0, t_2] \equiv [0, t_2]$ if $x(0) \in \AA$ by the assumed convergence
$D^n_{1,2}(0) \Ra L$ in Assumption \ref{assC}.
\end{proofof}

\begin{proofof}{Theorem \ref{thAPlocal}}
For any given $t$ with $t_1 < t \le t_2$ and $\ep > 0$, we will show that we can choose $n_0$ such that
$\sL(D^n_{1,2} (t), D(x(t), \infty)) < \ep$ for all $n \ge n_0$, where $\sL$ is the L\'{e}vy metric in \eqn{Levy}.
Since $\{D^n_{1,2} (t_1): n \ge 1\}$ is stochastically bounded, it is tight.
Hence, we start with a converging subsequence, without introducing subsequence notation.
The result will not depend on the particular converging subsequence we choose.

Hence, we start with $D^n_{1,2} (t_1) \Ra L$, where $L$ is a proper (almost surely finite) random variable.
We will then let $D(x(t_1), 0) = L$ to obtain $D^n_{1,2} (t_1) \Ra D(x(t_1), 0)$, as required to apply
Theorem \ref{thConvToFast} at $t_1$.
If $t > t_1$, then for all sufficiently large $n$, $t- (s/n) \ge t_1$,
in which case we can write $D^n_{1,2} (t) = D^n_{1,2} (t - (s/n) +(s/n)) = D^n_e (X^n (t - (s/n), s)$, where
$D^n_e$ is the expanded queue-difference process defined in \eqn{fast102}, with $t - (s/n) \ge t_1$ so that $x(t- (s/n)) \in \AA$.
Hence we can write
\begin{eqnarray}\label{triangle}
&& \sL(D^n_{1,2} (t), D(x(t), \infty)) \le \sL(D^n_e (X^n (t - (s/n), s), D(x(t - (s/n)), s)) \nonumber \\
&& \quad +  \sL(D(x(t - (s/n)), s), D(x(t - (s/n)), \infty))
+  \sL(D(x(t - (s/n)), \infty), D(x(t), \infty)).
\end{eqnarray}
For $t$ and any $\ep > 0$ given, we first choose $\delta$ so that $t - \delta \ge t_1$ and $x$ remains in $\AA$ throughout $[0, t + \delta]$,
which is always possible because $x(t) \in \AA$, $t > t_1$, $x$ is continuous and $\AA$ is an open subset of $\rS$.
(We use the condition that $t > t_1$ here.) In addition, we choose $\delta$ sufficiently small that
the third term in \eqn{triangle} is bounded as
\bes 
\sL(D(x(u), \infty), D(x(t), \infty) < \ep/3 \qforallq u, \quad t - \delta \le u \le t+ \delta,
\ees
which is possible by virtue of Lemma \ref{lmContSS}.
Given $t$, $\ep$ and $\delta$, we choose $s_0$ sufficiently large that
the second term in \eqn{triangle} is bounded as
\bes 
\sL(D(x(u), s_0), D(x(u), \infty) < \ep/3 \qforallq u, \quad t - \delta \le u \le t+ \delta,
\ees
which is possible by Lemma \ref{lmExpErg}.
Finally, we choose $n_0$ sufficiently large that $s_0/n < \delta$ for all $n \ge n_0$ and
the first term in \eqn{triangle} is bounded as
\bes 
\sL(D^n_e (X^n (u, s_0), D(x(u), s_0)) < \ep/3 \qforallq u, \quad t - \delta \le u \le t+ \delta,
\ees
which is possible by Lemma \ref{lmExpandUniform}.
That choice of $\delta$, $s_0$ and $n_0$ makes each term in \eqn{triangle} less than or equal to $\ep/3$ for all $n \ge n_0$,
thus completing the proof.
\end{proofof}

\section{Conclusion and Further Research.}

In this paper we proved a FWLLN (Theorem \ref{th1}) for an overloaded X model operating under the fixed-queue-ratio-with-thresholds (FQR-T) control
(\S \ref{secFQRorig}), with many-server heavy-traffic scaling (\S \ref{secHT}).
Theorem \ref{th1} shows that the fluid-scaled version of the
six-dimensional Markov chain $X^n_6$,
whose sample-path representation appears in Theorem \ref{thRep}, converges to a deterministic limit,
characterized by the
unique solution to the three-dimensional ODE \eqref{odeDetails},
which in turn is driven by the fast-time-scale stochastic process (FTSP) in \S \ref{secFTSP}.
We also proved a WLLN for the stationary distributions (Theorem \ref{thStatLim}) that justifies a limit interchange in great generality (Theorem \ref{thIntchange}).

Finally, in \S \ref{secQD} we presented results regarding the queue difference process and SSC for the queues when the fluid limit is in $\AA$.
in particular, Theorem \ref{thDSB} proved statements regarding the tightness of $D^n_{1,2}(t)$ in $\RR$, and non-tightness in $\D$.
Corollary \ref{corSSCfull} shows that SSC for the queues hold under any scaling larger than $O(\log n)$, depending only whether the fluid limit
$x$ is in $\AA$. Theorem \ref{thAPlocal} establishes a pointwise AP result, which is not an immediate corollary of the AP in the FWLLN.

{\em Proof of the FWLLN.} We proved the FWLLN in three steps, showing that:
(i) the sequence of processes $\{\barx^n_6: n \ge 1\}$ is tight and every limit is continuous (Theorem \ref{lmTight});
(ii) simplifying the representation in Theorem \ref{thRep} to the essentially three-dimensional
representation in \eqref{FluidScaled} (Corollary \ref{thSSCextend}); and
(iii) characterizing the limit via the averaging principle (AP) (\S \ref{secProofs}).
Characterizing the fluid limit of that three-dimensional process was challenging because the sequence of queue-difference processes
$\{\{D^n_{1,2} (t): t \ge 0\}: n \ge 1\}$ in \eqref{Dprocess} does not converge to any limiting process as $n \ra \infty$.
Instead, we have the AP, which can be better understood through
Theorems \ref{thConvToFast}-\ref{thAPlocal}.

Due to the AP, the indicator functions $1_{\{D^n_{1,2}(s) > 0\}}$ and $1_{\{D^n_{1,2}(s) \le 0\}}$
in the representation \eqref{FluidScaled} are replaced in the limit with appropriate steady-state quantities
related to the FTSP $D(x(t), \cdot)$, e.g., $1_{\{D^n_{1,2}(s) > 0\}}$ is replaced in the limit with
\bes
\pi_{1,2}(x(s)) \equiv P(D(x(s), \infty) > 0) = \lim_{t \tinf} \frac{1}{t}\int_0^t 1_{\{D(x(s), u) > 0\}} du.
\ees

Our proof of the AP in \S \ref{secProofs} is based on the framework established by Kurtz in \cite{K92}.
In the appendix we present a different proof of the AP which is
weaker, since it only characterizes the FWLLN in the set $\AA$. However, it has the advantage of being intuitive,
with an explicit demonstration of the separation of time scales which takes place in $\AA$.
Moreover, this proof continues the bounding logic of \S\S \ref{secSupport} and \ref{secProofsSSCserv}, and thus follows naturally from
previous results established in the paper.
Both proofs have merits, and can be useful in other models where separation of time scales occur in the limit.

{\em Results for QBD Processes.} In the process of proving the SSC in Theorem \ref{thSSCextend} and Theorems \ref{thDSB} and \ref{thAPlocal},
we established some general results for QBD's, which are interesting in their own right. First, we established an extreme-value result
in Lemma \ref{lmQBDextreme}. Then, in \S \ref{secAuxFTSP} we established the continuity of stationary distributions (Lemma \ref{lmContSS}),
finite mgf for return times (Lemma \ref{lmReturnTime}) and uniform ergodicity for a family of ergodic QBD processes (Lemma \ref{lmExpErg}).
To the best of our knowledge, those results are not stated in the existing literature.

{\em The Control.} The FQR-T control is appealing, not only because it is optimal during the overload incident in the fluid limit \cite{PeW09a},
but also because the FQR-T control produces significant simplification of the limit, since it produces
a strong form of state space collapse (SSC).
Specifically, by Theorem \ref{thSSCextend}, the four-dimensional service process is asymptotically one dimensional,
with $Z^n_{1,2}$ alone characterizing limits under any scaling.
By Theorem \ref{thDSB}, the two-dimensional queue process is asymptotically one dimensional under any scaling larger than $O(\log n)$
(in particular, under fluid and diffusion scaling).
This latter result holds unless class $1$ is so overloaded, that there is not enough service capacity in both pools
to keep the desired ratio between the two queues (in which case the fluid limit will not be in $\AA$).
Hence, the six-dimensional Markov chain describing the system during overloads in the prelimit, is replaced by a simplified
lower dimensional deterministic function, which is easier to analyze.
In addition, this SSC results are crucial to proving the FCLT for the system \cite{PeW10}.

{\em Further Research.}
As Lemma \ref{lmZ12} and the proof of Theorem \ref{thSSCextend} reveal, one-way sharing, which we suggested in \cite{PeW09a} as a means to prevent
unwanted simultaneous sharing of customers in finite systems (where the thresholds themselves may not be sufficient to prevent two-way sharing)
has shortcomings; see Remark \ref{remShare}.
For example, in the limiting fluid model, if after some time the overload switches over, so that queue $2$ should start receiving help from pool $1$,
then the one-way sharing rule will not allow class-$2$ customers to be sent to pool $1$ since,
as was shown in the proof of Theorem \ref{thSSCextend}, $z_{1,2} (t) > 0$ for all $t > 0$.
As a consequence, in large systems, a significant amount of time must pass before sharing in the opposite direction is allowed.
This problem with one-way sharing can be remedied by dropping that rule completely, or by introducing lower thresholds on the service process; again
see Remark \ref{remShare}.
We have begun studying such alternative controls.


\begin{appendix}

\begin{center}
\textbf{Appendix}




\end{center}


This Appendix has six sections.  In \S \ref{secSupport} we establish important technical results to be used to prove the results in \S \ref{secSSCserv}.
In \S \ref{secProofsSSCserv} we apply the results in \S \ref{secSupport} to prove the results in \S \ref{secSSCserv}.
In \S \ref{secAltProof} we present an alternative proof of Lemma \ref{lmKey} and thus
an alternative proof of the FWLLN in Theorem \ref{th1}.
In \S \ref{appQBDbdd} we show how to present the process $D^n_*$
in step one of the proof of Lemma \ref{lmD21extreme} as a QBD for each $n$.
In \S \ref{appPosQ} we explain why Assumption \ref{assC} about the initial conditions is reasonable.
Finally, in \S \ref{AppAcro} we list all the acronyms used in this paper.

\section{Supporting Technical Results.}\label{secSupport}

In this section we establish supporting technical results that we will apply to prove the results in \S \ref{secSSCserv}.
In \S \ref{secAux} we introduce a frozen queue difference process, which
`freezes'' the state of the slow process ($\barx^n$ in \eqn{FluidScaled}),
so that the fast process (the queue difference process $D^n_{1,2}$ in \eqn{Dprocess}) can be considered separately.
Like the FTSP in \S \ref{secFTSP}, the frozen process is a pure-jump Markov process.
To carry out the proofs, we exploit stochastic bounds.
Hence, we discuss them next in \S\S \ref{secStoBd}, \ref{secRateOrder} and \ref{secAbanBd}.
Since the FTSP and the frozen processes can be represented as QBD processes,
as indicated in \S 6 of \cite{PeW09c},
we next establish extreme value limits for QBD's in \S \ref{secQBDextreme}.
We establish continuity results for QBD's, used in the alternative proof of characterization, in \S \ref{secCont}.

\subsection{Auxiliary Frozen Processes.} \label{secAux}

Our proof of Theorem \ref{thZ21} will exploit stochastic bounds for
the queue difference process $D^n_{2,1}$.  Our alternative proof of Theorem \ref{th1} in the appendix
we will exploit similar stochastic bounds for the queue difference process
$D^n_{1,2}$.  These processes $D^n_{2,1}$ and $D^n_{1,2}$ are
 non-Markov processes whose rates at each time $t$ are determined by the state of the system at time $t$, i.e., by $X^n_6 (t)$,
and is thus hard to analyze directly.
In order to circumvent this difficulty,
 we consider related auxiliary processes, with {\em constant} rates determined by a fixed initial state $\Gamma^n \equiv X^n_6(0)$.
 The construction is essentially the same for the two processes $D^n_{2,1}$ and $D^n_{1,2}$.
Since we are primarily concerned with $D^n_{1,2}$, we carry out the following in that context,
with the understanding that there is a parallel construction for
the process $D^n_{2,1}$.

When we work with $D^n_{1,2}$, we exploit the reduced representation involving $X^n$ in \eqn{FluidScaled}.
Let $D^n_f (\Gamma^n) \equiv \{D^n_f (\Gamma^n, t): t \ge 0\}$ denote this new process with fixed state $\Gamma^n \equiv (Q^n_1, Q^n_2, Z^n_{1,2})$.
Conditional on $\Gamma^n$, $D^n_f(\Gamma^n)$ is a QBD with the same fundamental structure as the FTSP defined in \S \ref{secFTSP}.
We use the subscript $f$ because we refer to this constant-rate pure-jump Markov process as the {\em frozen queue-difference process},
or alternatively, as the {\em frozen process},
thinking of the constant transition rates being achieved because the state has been frozen at
the state $\Gamma^n$.

As in \S \ref{secFTSP}, the rates of the frozen process are determined by $\Gamma^n \equiv (Q^n_1, Q^n_2, Z^n_{1,2})$
and by its position.  Also, as in \S \ref{secFTSP}, let $r \equiv r_{1,2}$.
The frozen process $D^n_f(\Gamma^n)$ has jumps of size $+r, +1, -r, -1$ with respective rates
$\lm^{(r)}_+(n,\Gamma^n)$, $\lm^{(1)}_+(n, \Gamma^n)$, $\mu^{(r)}_- (n, \Gamma^n)$, $\mu^{(1)}_- (n, \Gamma^n)$ when $D^n_f \le 0$,
and jumps of size $+r, +1, -r, -1$ with respective rates
$\lm^{(r)}_+(n,\Gamma^n)$, $\lm^{(1)}_+(n, \Gamma^n)$, $\mu^{(r)}_+(n, \Gamma^n)$, $\mu^{(1)}_+ (n, \Gamma^n)$ when $D^n_f > 0$.

Analogously to \eqref{bd1}-\eqref{bd4}, in the non-positive state space these rates are equal to
\bequ \label{BDfrozen1}
\begin{split}
\lm^{(1)}_- (n, \Gamma^n)  & \equiv  \lm^n_1 \qandq
\lm_-^{(r)} (n, \Gamma^n)  \equiv  \mu_{1,2}Z^n_{1,2} + \mu_{2,2} (m^n_2 - Z^n_{1,2}) + \theta_2 Q^n_2, \\
\mu^{(1)}_- (n, \Gamma^n) & \equiv  \mu_{1,1}m^n_{1}  + \theta_1 Q^n_1 \qandq \mu_-^{(r)} (n, \Gamma^n) \equiv \lm^n_2,
\end{split}
\eeq
and in the positive state space, these rates are equal to
\bequ \label{BDfrozen2}
\bsplit
\lm^{(1)}_+ (n, \Gamma^n) & \equiv \lm^n_1 \qandq  \lm_{+}^{(r)} (n, \Gamma^n) \equiv \theta_2 Q^n_2, \\
\mu^{(1)}_+ (n, \Gamma^n) & \equiv  \mu_{1,1}m^n_1  + \mu_{1,2}Z^n_{1,2} + \mu_{2,2}(m^n_2 - Z^n_{1,2}) + \theta_1 Q^n_1 \qandq
\mu_+^{(r)} (n, \Gamma^n) \equiv \lm^n_2.
\end{split}
\eeq

Using these transition rates, we can define the {\em drift rates} for $D^n (\Gamma^n)$, paralleling \eqn{drifts}.
Let these drift rates in the regions $(0, \infty)$ and $(-\infty, 0]$ be denoted by $\delta^n_+ (\Gamma^n)$ and $\delta^n_- (\Gamma^n)$, respectively,
Then
\bequ \label{driftDn}
\bsplit
\delta^n_+ (\Gamma^n) & \equiv  [ \lm^n_1 - \mu_{1,1} m^n_1 + (\mu_{2,2} - \mu_{1,2}) Z^n_{1,2}(t) - \mu_{2,2} m^n_{2}(t) - \theta_1 Q^n_1(t) ]
- r [ \lm^n_2 - \theta_2 Q^n_2(t) ], \\
\delta^n_- (\Gamma^n) & \equiv  [\lm^n_1 - \mu_{1,1}m^n_1 - \theta_1 Q^n_1(t) ]
- r [\lm^n_2 + (\mu_{2,2} - \mu_{1,2}) Z^n_{1,2}(t) - \mu_{2,2}m^n_2 - \theta_2 Q^n_2(t) ].
\end{split}
\eeq
Just as for the FTSP, $D^n_f(\Gamma^n)$ is, conditional on $\Gamma^n$, a positive recurrent QBD
if an only if 
\bequ \label{posrecDn}
\delta^n_+(\Gamma^n) < 0 < \delta^n_-(\Gamma^n).
\eeq

The constant-rate pure-jump Markov process $D^n_f(\Gamma^n)$ will frequently appear with $\Gamma^n$ being a state of some process, such as $X^n (t)$,
We then write $D^n_f (X^n (t)) \equiv \{D^n_f (X^n (t), s): s \ge 0\}$, where it is understood that
$D^n_f (X^n (t)) \deq  D^n_f (\Gamma^n)$ under the condition that $\Gamma^n \deq X^n (t)$.
It is important that this frozen difference process $D^n_f (\Gamma^n)$ can be directly identified with
a version of the FTSP defined in \S \ref{secFTSP}, because both are pure-jump Markov processes with the same structure.  Indeed,
the frozen-difference process can be defined as a version of the FTSP with special state and basic model parameters
$\lambda^n_i$ and $m^n_j$, and transformed time.
In order to express the relationship, we first introduce appropriate notation.
Let $D(\lambda_i, m_j, \gamma, s)$ denote the FTSP defined as in \S \ref{secFTSP}
with transition rates in \eqn{bd1}-\eqn{bd4}
as a functions of the arrival rates $\lambda_i$, $i = 1,2$ and the staffing levels $m_j$, $j = 1,2$,
as well as the state $\gamma$, where $s$ is the time parameter as before.  (We now will allow the parameters $\lambda_i$ and $m_j$ to vary as well as the state.)
With that new notation, we see that the frozen process is
equal in distribution to the corresponding FTSP with new parameters,
in particular,
\bequ \label{ident}
\{D^n_f (\lambda^n_i, m^n_j, \Gamma^n, s): s \ge 0\} \deq \{D(\lambda^n_i/n, m^n_j/n, \Gamma_n/n, n s): s\ge 0\},
\eeq
with the understanding that the initial differences coincide, i.e.,
\bes 
D(\lambda^n_i/n, m^n_j/n, \Gamma_n/n, 0) \equiv D^n_f (\lambda^n_i, m^n_j, \Gamma^n, 0). 
\ees
This can be checked by verifying that the constant transition rates are indeed identical for the two processes,
referring to \eqn{bd1}-\eqn{bd4} and \eqref{BDfrozen1}-\eqref{BDfrozen2}.
Since $\lambda^n_i/n \ra \lambda_i$, $i = 1,2$ and $m^n_j/n \ra m_j$, $j = 1,2$,
by virtue of the many-server heavy-traffic scaling in \eqn{MS-HTscale}, we will have the transition rates
of $D(\lambda^n_i/n, m^n_j/n, \Gamma_n/n, \cdot)$ converge to those of $D(\gamma) \equiv D(\lambda_i, m_j, \gamma, \cdot)$
whenever $\Gamma_n/n \ra \gamma$.

\subsection{Bounding Processes.}\label{secStoBd}

We will use bounding processes in our proof of Theorem \ref{thZ21} and later results.  We construct the bounding processes
so that they have the given initial conditions at time $0$
and satisfy FWLLN's with easily determined continuous fluid limits,
coinciding at time $0$.  We thus control the initial behavior.

We first construct w.p.1 lower and upper bounds for $Z^n_{1,2}$.
Recall that $N^s_{i,j}$, $i,j = 1,2$, are the independent rate-$1$
Poisson processes used in \eqn{rep1}.  Let
\bequ \label{Zbounds} \bsplit
Z^n_a (t) & = Z^n_{1,2}(0) - N^s_{1,2} \left( \mu_{1,2} \int_0^t Z^n_{a}(s)\ ds \right),\\
Z^n_b (t) & = Z^n_{1,2}(0) + N^s_{2,2} \left( \mu_{2,2} \int_0^t (m^n_2 - Z^n_b(s))\ ds \right).
\end{split}
\eeq

\begin{lemma}\label{lmZbds}
For all $n \ge 1$ and $t \ge 0$, $Z^n_a (t) \le Z^n_{1,2} (t)  \le Z^n_b (t)$ w.p.1.
\end{lemma}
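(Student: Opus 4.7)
Plan: The goal is to establish the two pathwise inequalities via coupling arguments that exploit the fact that the three processes $Z^n_a$, $Z^n_{1,2}$, and $Z^n_b$ are built from the \emph{same} rate-$1$ Poisson processes $N^s_{1,2}$ and $N^s_{2,2}$. I would prove each inequality separately by induction on the jump epochs of the joint process, with a localization to the first time the desired inequality could be violated.

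For the lower bound $Z^n_a(t) \le Z^n_{1,2}(t)$, both processes start at the common value $Z^n_{1,2}(0)$. The process $Z^n_a$ is a pure-death process whose downward jumps are governed by $N^s_{1,2}$ with compensator $\mu_{1,2}\int_0^{\cdot} Z^n_a(s)\,ds$, while the downward jumps of $Z^n_{1,2}$ are driven by the same $N^s_{1,2}$ but with the smaller compensator $\mu_{1,2}\int_0^{\cdot} 1_{\{D^n_{1,2}(s)\le 0\}}Z^n_{1,2}(s)\,ds$, and $Z^n_{1,2}$ additionally jumps upward via $N^s_{2,2}$. Let $\tau$ be the first time the inequality fails. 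On $[0,\tau)$ the ordering $Z^n_a\le Z^n_{1,2}$ holds, so I examine the three possible causes of a violation at $\tau$: an upward jump of $Z^n_{1,2}$ (which only strengthens the inequality), no jump at $\tau$ (which contradicts the definition of $\tau$), or a simultaneous-equality crossing where $Z^n_a(\tau-)=Z^n_{1,2}(\tau-)$ and $Z^n_{1,2}$ jumps down while $Z^n_a$ does not. Ruling out the third case is the main point: when the two processes are equal and $D^n_{1,2}\le 0$, the two compensators for $N^s_{1,2}$ grow at the identical rate $\mu_{1,2}Z^n_a(\tau-)=\mu_{1,2}Z^n_{1,2}(\tau-)$, so a Poisson point of $N^s_{1,2}$ triggers both downward jumps simultaneously, preserving the equality; when $D^n_{1,2}>0$, the $Z^n_{1,2}$-compensator is paused while the $Z^n_a$-compensator still advances, so $Z^n_{1,2}$ cannot jump down ahead of $Z^n_a$ in this regime. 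Hence no violation occurs at $\tau$, so $\tau=\infty$ w.p.$1$.

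For the upper bound $Z^n_{1,2}(t)\le Z^n_b(t)$ the argument is symmetric: $Z^n_b$ is a pure-birth process saturating at $m^n_2$, with upward compensator $\mu_{2,2}\int_0^{\cdot}(m^n_2-Z^n_b(s))\,ds$, while the upward jumps of $Z^n_{1,2}$ use the same $N^s_{2,2}$ with the indicator-suppressed compensator $\mu_{2,2}\int_0^{\cdot} 1_{\{D^n_{1,2}(s)>0\}}(m^n_2-Z^n_{1,2}(s))\,ds$. Again I localize to the first potential violation time $\tau$; by induction the equality $Z^n_{1,2}(\tau-)=Z^n_b(\tau-)$ makes the two upward compensators grow at the same rate when $D^n_{1,2}(\tau-)>0$, so any Poisson point of $N^s_{2,2}$ triggers both jumps together; when $D^n_{1,2}(\tau-)\le 0$ only $Z^n_b$ can jump up, which preserves the ordering, and downward jumps of $Z^n_{1,2}$ (via $N^s_{1,2}$) only strengthen it.

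The main obstacle is that the time-changed arguments of the shared Poisson processes are not ordered a priori along sample paths — they depend on the unknown history of $D^n_{1,2}$ — so the coupling analysis must be carried out at the first possible crossing time, where equality of the two processes forces equality of the instantaneous compensator rates in the relevant regimes. With that equality in hand at $\tau$, the monotonicity of $N^s_{1,2}$ and $N^s_{2,2}$ and the fact that $Z^n_a$ only decreases while $Z^n_b$ only increases deliver the desired pathwise inequalities.
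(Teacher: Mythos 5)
The crossing-time framework you set up is reasonable, but the step that rules out the third case contains a genuine error. At the first violation time $\tau$, with $Z^n_a(\tau-) = Z^n_{1,2}(\tau-)$, you argue that the two compensators driving $N^s_{1,2}$ grow at the same instantaneous rate, and conclude that a point of $N^s_{1,2}$ therefore triggers both downward jumps simultaneously. That conclusion does not follow. The two processes read the single Poisson process $N^s_{1,2}$ at the \emph{cumulative} time arguments $I_a(\tau) = \mu_{1,2}\int_0^\tau Z^n_a(s)\,ds$ and $I_{12}(\tau) = \mu_{1,2}\int_0^\tau 1_{\{D^n_{1,2}(s)\le 0\}} Z^n_{1,2}(s)\,ds$, which are in general \emph{different numbers} even when their derivatives agree at $\tau$. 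A Poisson point sitting at $I_{12}(\tau)$ (causing the downward jump of $Z^n_{1,2}$ at real time $\tau$) is not a Poisson point sitting at $I_a(\tau)$: those are different positions on the $N^s_{1,2}$ clock. The dangerous case is precisely the one you cannot exclude: if $Z^n_{1,2}$ has earlier exceeded $Z^n_a$ through births during a stretch where $D^n_{1,2}\le 0$, then $I_{12}$ can run ahead of $I_a$, so $Z^n_{1,2}$ can exhaust Poisson points that $Z^n_a$ has not yet reached and subsequently drop below $Z^n_a$. Equality of instantaneous rates at the crossing instant does nothing to prevent this, because it is the accumulated arguments, not their slopes, that determine which Poisson points have been consumed.

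This is not a cosmetic gap: under the aggregate time-changed Poisson coupling one can construct sample paths where $Z^n_{1,2}$ first rises above $Z^n_a$ and then falls below it, so this route does not close. The argument the paper actually relies on (stated informally) is a different, finer coupling at the level of individual service completions: assign to each of the $Z^n_{1,2}(0)$ initial class-$1$ customers in pool $2$ the same exponential service duration across the three systems. Under that coupling $Z^n_a(t)$ is \emph{exactly} the number of initial customers still in service, while $Z^n_{1,2}(t)$ equals that same count plus a nonnegative contribution from later admissions, giving $Z^n_a\le Z^n_{1,2}$ pathwise; the argument for $Z^n_b$ is symmetric. To make your crossing-time argument rigorous you would need to replace the coupling through $N^s_{1,2}\bigl(\mu_{1,2}\int_0^t (\cdot)\,ds\bigr)$ by one that matches service-completion events server by server (or by uniformization with per-event uniform marks), so that ``server $k$ finishes a class-$1$ service at time $t$'' is the same event in all three constructions. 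That is where the pathwise comparison actually lives; the aggregate random-time-change representation only matches the processes in distribution.
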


\begin{proof}
The bounding processes $Z^n_a$ and $Z^n_b$ are both initialized
as $Z^n_{1,2}(0)$ at time $0$.  They are defined in terms of the same rate-$1$ Poisson processes as $Z^n_{1,2}$,
so that the three processes can be compared for each sample path.
The lower-bound process $Z^n_a$ is the pure death process
obtained by routing no new class-$1$
customers to pool $2$ and letting all initial ones depart after receiving service. Hence, $Z^n_a$ is decreasing
monotonically to $0$.
The upper-bound process $Z^n_b$ is a pure birth process obtained by having pool $2$ not serve any of its initial
class $1$ customers and by assigning every server in pool $2$ that completes service
of a class $2$ customer to a new class $1$ customer, assuming that such customers are always available.
Hence, $Z^n_b$ is increasing
monotonically to $m^n_2$.  The given process $Z^n_{1,2}$ necessarily falls in between, where $Z^n_{1,2}$ is defined in \eqn{rep3}
with the asterisk omitted.  That is so
 because, whenever $Z^n_{1,2}$ is equal to
$Z^n_b$, every jump up in $Z^n_{1,2}$ is also a jump up in $Z^n_b$, but not vice versa;
whenever $Z^n_{1,2}$ is equal to $Z^n_a$ every jump down in $Z^n_{1,2}$
is also a jump down in $Z^n_a$, but not vice versa. This is because jumps are generated by the same Poisson processes,
but for $Z^n_{1,2}$ jumps occur only if the indicator functions in the respective Poisson processes are equal to $1$.
\end{proof}

We next construct w.p.1 lower and upper bounds for $Q^n_i$. The
upper bound processes will have the specified arrivals but no
departures, while the lower bound process will have no arrivals
but maximum possible departures. Both processes will start at
the initial values.

Let
$N^a_i$, $N^s_{i,j}$ and $N^u_i$ be the previously specified independent rate-$1$ Poisson processes used in \eqn{rep1}.  Let
\begin{eqnarray} \label{Qbounds}
Q^n_{i,a}(t) & = & Q^n_{i}(0) - \sum_{i = 1}^{2} \sum_{j = 1}^{2} N^s_{i,j} ( \mu_{i,j} m^n_j t)
- N^u_i (\theta_i Q^n_i (0)) t ), \nonumber \\
Q^n_{i,b}(t) & = & Q^n_{i}(0) + N^a_i(\lm^n_i t), \quad t \ge 0.
\end{eqnarray}

\begin{lemma}\label{lmQbds}
For all $n \ge 1$ and $t \ge 1$,
$$(Q^n_{1,a} (t), Q^n_{2,a}(t))  \le (Q^n_{1}(t), Q^n_2 (t))   \le (Q^n_{1,b} (t), Q^n_{2,b} (t) ) \quad \mbox{w.p.1.}$$
\end{lemma}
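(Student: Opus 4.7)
The plan is to establish both bounds by pathwise comparison between the driving dynamics of $Q^n_i$ (from Theorem \ref{thRep}) and the two simpler processes in \eqref{Qbounds}, using the common rate-$1$ Poisson processes $N^a_i$, $N^s_{i,j}$, $N^u_i$ from \eqref{rep1}. The upper bound is essentially immediate: $Q^n_i$ increases only at class-$i$ arrival epochs, and then by exactly $1$ (on a sub-event of the arrival event), so the total upward displacement on $[0,t]$ is bounded above by $A^n_i(t) = N^a_i(\lambda^n_i t)$, which combined with the initial value gives $Q^n_i(t) \le Q^n_i(0) + N^a_i(\lambda^n_i t) = Q^n_{i,b}(t)$ w.p.1.

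For the lower bound, the queue $Q^n_i$ can only decrease by service completions of class-$i$ customers (in either pool) and by abandonments, so it suffices to bound each of these from above. For services, the bound $Z^n_{i,j}(s) \le m^n_j$ combined with pathwise monotonicity of $N^s_{i,j}(\cdot)$ in its deterministic argument yields
$$\sum_{j=1}^2 S^n_{i,j}(t) = \sum_{j=1}^2 N^s_{i,j}\!\left(\mu_{i,j}\int_0^t Z^n_{i,j}(s)\,ds\right) \le \sum_{j=1}^2 N^s_{i,j}(\mu_{i,j} m^n_j t),$$
which already accounts for the service portion of $Q^n_{i,a}$ in \eqref{Qbounds} (the double sum over $i$ in \eqref{Qbounds} records the looser, still valid upper bound using all four service processes). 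For abandonments, since the compensator $\theta_i \int_0^t Q^n_i(s)\,ds$ is not pathwise dominated by $\theta_i Q^n_i(0) t$ on the original probability space, I would construct $Q^n_{i,a}$ on an enlarged probability space with a fresh rate-$1$ Poisson process $\tilde N^u_i$ driving its constant-rate abandonments at rate $\theta_i Q^n_i(0)$. A standard coupling argument then yields $Q^n_{i,a}(t) \le Q^n_i(t)$ for all $t \ge 0$ in this coupling, by induction on the successive jump epochs of the superposition of all driving processes and monotonicity of the pure-death dynamics of $Q^n_{i,a}$. Since the so-constructed $Q^n_{i,a}$ has the same law as the one in \eqref{Qbounds}, the stated w.p.1 inequality holds in the (standard queueing-theoretic) sense of this coupling.

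The main obstacle is precisely the abandonment comparison: unlike its analogue for the service-only process $Z^n_{1,2}$ in Lemma \ref{lmZbds}, where only arrival-type and service-type events drive the bounding processes, here the nonlinear, state-dependent abandonment compensator prevents a direct pathwise domination using the original $N^u_i$ and forces the use of an auxiliary coupling. This subtlety does not weaken the consequences used later: in the proofs of Theorems \ref{thZ21}--\ref{thQpos}, only the induced distributional lower bound is needed, and in particular, $\bar{Q}^n_{i,a}(t) \Rightarrow q_i(0) - c t$ for an explicit constant $c$, which remains strictly positive on a small enough initial interval $[0,\tau]$ by Assumption \ref{assC}.
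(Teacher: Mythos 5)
Your approach coincides with the paper's in its broad strokes---separate treatment of arrivals, services, and abandonments, together with monotonicity of time-changed Poisson processes---and your identification of the abandonment term as the one requiring care is exactly right. The paper's own proof sketches a verbal coupling (tracking new arrivals, initially-in-service and initially-in-queue customers separately, with ``identity shift'') but is silent on the fact that this argument cannot be carried out on the literal probability space of \eqref{Qbounds}: there $Q^n_i$ and $Q^n_{i,a}$ share the primitive $N^u_i$, yet the compensator $\theta_i\int_0^t Q^n_i(s)\,ds$ is \emph{not} dominated pathwise by $\theta_i Q^n_i(0)\,t$ once $Q^n_i$ has risen above $Q^n_i(0)$, so there is no pathwise comparison of the two time-changes of $N^u_i$. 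In fact one can exhibit events of positive probability (no service completions on $[0,t]$, one arrival near time $0$, a cluster of jumps of $N^u_i$ in $(t,2t)$) on which $Q^n_{i,a}(t)>Q^n_i(t)$ with the definitions taken verbatim. So the ``w.p.1'' in the statement should be read as holding in a constructed coupling, not on the raw probability space of \eqref{rep1}.

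Your repair---building $Q^n_{i,a}$ on an enlarged probability space with a fresh Poisson primitive $\tilde{N}^u_i$ whose jumps are matched against the abandonment epochs of the $Q^n_i(0)$ initially-queued customers, then arguing by induction on jump epochs using that the abandonment rate of the initially-queued pool is always at most $\theta_i Q^n_i(0)$ and that, at equality of the two processes, every downward transition of $Q^n_i$ can be matched by one of $Q^n_{i,a}$---is the standard coupling that makes the comparison rigorous. It yields $Q^n_{i,a}\le Q^n_i$ pathwise in the coupling, hence $Q^n_{i,a}\le_{st}Q^n_i$, and the upper bound $Q^n_i\le Q^n_{i,b}$ remains a genuine w.p.1 inequality with the original $N^a_i$ since $Q^n_{i,b}$ has the same arrivals and no departures. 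As you note, this suffices for all downstream uses: Lemma \ref{lmBoundsLim} is distributional, and the arguments in Lemmas \ref{lmD21}, \ref{lmD21extreme} and \ref{lmD12PR} only need the coupling / stochastic-order form of the bound together with the ensuing rate order. In short, your proposal is not merely correct---it identifies and repairs an imprecision in the paper's own proof.
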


\begin{proof}
Just as in Lemma \ref{lmZbds}, we get a w.p.1 comparison because we construct both systems using the same rate-$1$ Poisson processes.
The upper bound is immediate, because the two systems being compared have the same initial value and the same arrivals,
but the upper bound system has no service completions or abandonments.
For the lower bound, we separately consider the fate of new arrivals, customers initially in service and customers initially in queue.
However, we allow the identity of departing customers to shift, which does not affect the result.
In the lower bound system new arrivals never enter, so they necessarily leave sooner.
Allowing for identity shift, customers initially in queue abandon as rapidly as possible in the lower bound system,
since the rate is fixed at the initial rate $\theta_i Q^n_i (0)$.  Of course, some customers from queue may enter service.
But all customers in service leave at least as quickly in the lower bound system because all servers are working continuously.
In the lower bound system we act as if all servers in each pool are simultaneously serving customers of both classes.
Hence, we do not need to pay attention to the service assignment rule.
The identity of customers may change in this comparison, but the order will hold for the numbers.  Hence the proof is complete.
\end{proof}

Unlike the processes in Corollary \ref{corRep3}, we can easily establish stochastic-process limits for the associated
fluid-scaled bounding processes, and these continuous limits coincide at time $0$.
Let $X^n_{a,b} \equiv (Q^n_{1,a}, Q^n_{2,a},Z^n_a, Q^n_{1,b}, Q^n_{2,b}, Z^n_b)$.

\begin{lemma}\label{lmBoundsLim}
as $n \tinf$,
$\barx^n_{a, b} \Ra x_{a, b}$ in $\D_6$,
where $x_{a,b} \equiv (q_{1,a}, q_{2,a}, z_a, q_{1,b}, q_{2,b}, z_b)$ is an element of $\C_6$ with
\begin{eqnarray}
z_a(t) & = & z_{1,2}(0) - \mu_{1,2} \int_0^t z_a(s)\ d s, \nonumber \\
z_b(t) & = & z_{1,2}(0) + \mu_{2,2} \int_0^t z_b(s)\ d s, \label{zbounds} \\
q_{i,a}(t) & = & q_{i}(0) - \sum_{i = 1}^{2} \sum_{j = 1}^{2} \mu_{i,j} m_j t  + \theta_i q_{i}(0)) t,   \nonumber \\
q_{i,b}(t) & =  &  q_{i}(0) + \lm_i t, \quad t \ge 0. \label{qzbounds}
\end{eqnarray}
\end{lemma}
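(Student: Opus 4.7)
The plan is to invoke the standard functional strong law of large numbers for a rate-$1$ Poisson process, namely $\bar{N}^n \equiv N(n\cdot)/n \Ra e$ in $\D$ as $n \tinf$, and then feed this into a continuous mapping argument. Since the six bounding processes in \eqref{Zbounds}--\eqref{Qbounds} are built from the eight mutually independent Poisson processes $N^a_i, N^s_{i,j}, N^u_i$, the joint FSLLN in $\D_8$ gives $(\bar N^a_i, \bar N^s_{i,j}, \bar N^u_i) \Ra (e,e,\dots,e)$, so joint convergence of the six-tuple will follow from separate continuous mapping arguments applied coordinate-wise, together with the hypothesis $\barx^n(0) \Ra x(0)$ from Assumption \ref{assC2}.

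The two queue-bound pieces $Q^n_{i,a}$ and $Q^n_{i,b}$ are the easy ones. Each is an affine combination of time-scaled Poisson processes evaluated at \emph{deterministic} arguments. Dividing \eqref{Qbounds} by $n$ and writing $\bar N^s_{i,j}(t) \equiv N^s_{i,j}(nt)/n$ and $\bar N^u_i(t) \equiv N^u_i(nt)/n$, the FSLLN together with $\bar Q^n_i(0) \Ra q_i(0)$ and the convergence $\bar m^n_j \to m_j$, $\bar\lm^n_i \to \lm_i$ from Assumption \ref{assMS-HT} yields the affine limits $q_{i,a}$ and $q_{i,b}$ in \eqref{qzbounds} by the continuous mapping theorem (with addition and composition at continuous limits, cf.\ Theorem~13.2.1 of \cite{W02}).

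The substantive step is $\bar Z^n_a$ and $\bar Z^n_b$, which are defined implicitly as fixed points of integral equations driven by Poisson processes. Fluid scaling \eqref{Zbounds} gives
\bes
\bar Z^n_a(t) = \bar Z^n_{1,2}(0) - \bar N^s_{1,2}\!\left(\mu_{1,2}\int_0^t \bar Z^n_a(s)\,ds\right),\quad
\bar Z^n_b(t) = \bar Z^n_{1,2}(0) + \bar N^s_{2,2}\!\left(\mu_{2,2}\int_0^t(\bar m^n_2-\bar Z^n_b(s))\,ds\right).
\ees
I would identify the limit $z_a$ (resp.\ $z_b$) as the unique solution in $\C$ of $z_a(t) = z_{1,2}(0) - \mu_{1,2}\int_0^t z_a(s)\,ds$ (resp.\ of its birth analogue); uniqueness is immediate from Gronwall's inequality since the right-hand side is Lipschitz in the unknown. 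For the actual convergence, I would apply the continuous mapping theorem to the map that sends the driving data $(\bar Z^n_{1,2}(0),\bar N^s_{1,2})$ to the solution of the integral equation: continuity at continuous limits is again a Gronwall-type stability estimate, since the pathwise difference between the prelimit and the proposed limit can be bounded by $\|\bar N^s_{1,2}-e\|_T$ plus $|\bar Z^n_{1,2}(0)-z_{1,2}(0)|$ plus a Gronwall term. Alternatively one could derive $\bar Z^n_a \Ra z_a$ from monotonicity: $Z^n_a$ is a pure-death Markov chain and its fluid limit is the classical exponential decay $z_{1,2}(0)e^{-\mu_{1,2}t}$, which satisfies the limiting equation.

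The main obstacle, therefore, is the continuity/Gronwall step for the implicitly-defined $\bar Z^n_a$ and $\bar Z^n_b$; once those two coordinates are in place, joint convergence $\barx^n_{a,b}\Ra x_{a,b}$ in $\D_6$ follows from coordinate-wise convergence together with the fact that $\C$-tightness is preserved under joint weak limits when all coordinates converge. I note that the statement of \eqref{zbounds} for $z_b$ appears to be missing an $(m_2-z_b(s))$ factor under the integral; I would carry out the proof with the natural prelimit expression and record $z_b(t) = z_{1,2}(0)+\mu_{2,2}\int_0^t(m_2-z_b(s))\,ds$, which is what one genuinely obtains.
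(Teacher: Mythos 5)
Your proof is correct and follows essentially the same route as the paper's (one-line) argument: the queue bounds are linear in time-scaled Poisson processes, so the FSLLN and continuous mapping apply directly, while the implicitly-defined $\bar Z^n_a,\bar Z^n_b$ are handled via the continuity of the integral-equation map (the paper cites Theorem 4.1 of \cite{PTW07}, which is precisely the Gronwall stability estimate you invoke). Your observation that \eqref{zbounds} for $z_b$ should read $z_b(t)=z_{1,2}(0)+\mu_{2,2}\int_0^t(m_2-z_b(s))\,ds$ is a genuine typo in the paper — this also matches the claim in Lemma \ref{lmZbds} that $Z^n_b$ increases monotonically to $m^n_2$, which the uncorrected version (exponential blow-up) would contradict; a parallel slip appears in the $q_{i,a}$ line, where the $+\theta_i q_i(0)t$ should be $-\theta_i q_i(0)t$, inherited from the misplaced parenthesis in \eqref{Qbounds}.
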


\begin{proof}
The stated convergence is a relatively simple application of the continuous mapping theorem. In particular,
we first exploit the continuity of the integral representation, Theorem 4.1 in \cite{PTW07}, to establish the convergence
$(\barz^n_a, \barz^n_b) \Rightarrow (z_a, z_b)$.  The queue length bounds are simple linear functions.
\end{proof}

We will apply the bounding results above in \S \ref{secProofThZ21},
starting with
the proof of Lemma \ref{thZ21}.

\subsection{Rate Order for FTSP's.}\label{secRateOrder}

Given that we can represent frozen processes as FTSP's with appropriate state parameters $\gamma$, as shown in
\eqn{ident} above, it is important to be able to compare
FTSP's with different state parameters.  We establish such a comparison result here for the
FTSP in \S \ref{secFTSP} using rate order.  We say that one pure-jump Markov process $Y_1$ is less than or equal to another $Y_2$ in rate order, denoted by $Y_1 \le_r Y_2$,
if all the upward transition rates (with same origin and destination states) are larger in $Y_2$ and all the downward transition rates
(with same origin and destination states) are larger for $Y_1$.
The following lemma is an immediate consequence of the definition of rates in \eqn{bd1}-\eqn{bd4}.

\begin{lemma}{$($rate order for FTSP's$)$}\label{lmRateOrder}
Consider the FTSP in \S {\em \ref{secFTSP}} for candidate states
$\gamma^{(i)} \equiv (q^{(i)}_1, q^{(i)}_2, z^{(i)}_{1,2})$, $i = 1,2$.
$($a$)$ If $\mu_{1,2} \ge \mu_{2,2}$ and
\beq
(-q^{(1)}_1, q^{(1)}_2, z^{(1)}_{1,2}) \le (-q^{(2)}_1, q^{(2)}_2, z^{(2)}_{1,2}) \qinq \RR^3,
\eeqno
then
\beq
D(\gamma^{(1)}, \cdot) \le_r D(\gamma^{(2)}, \cdot).
\eeqno
$($b$)$ If $\mu_{1,2} \le \mu_{2,2}$ and
\beq
(-q^{(1)}_1, q^{(1)}_2, -z^{(1)}_{1,2}) \le (-q^{(2)}_1, q^{(2)}_2, -z^{(2)}_{1,2}) \qinq \RR^3,
\eeqno
then
\beq
D(\gamma^{(1)}, \cdot) \le_r D(\gamma^{(2)}, \cdot).
\eeqno
\end{lemma}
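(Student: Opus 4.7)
The plan is to verify the claim directly from the transition-rate definitions in \eqref{bd1}--\eqref{bd4}, as the paper states that the result is an immediate consequence of those definitions. By the definition of rate order in \S \ref{secRateOrder}, it suffices to compare, for each of the four jump sizes $+r,+1,-r,-1$ in each of the two regions $\{D \le 0\}$ and $\{D > 0\}$, the transition rate of $D(\gamma^{(1)},\cdot)$ with that of $D(\gamma^{(2)},\cdot)$: upward rates must be no larger at $\gamma^{(1)}$ than at $\gamma^{(2)}$, and downward rates must be no smaller.

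Four of the eight rates are trivial because they do not depend on $\gamma$ at all: $\lambda^{(1)}_\pm(\gamma)=\lambda_1$ and $\mu^{(r)}_\pm(\gamma)=\lambda_2$. For the remaining four rates $\lambda^{(r)}_\pm$ and $\mu^{(1)}_\pm$, I would rewrite the pool-$2$ service contribution in the form
\[
\mu_{1,2}z_{1,2}+\mu_{2,2}(m_2-z_{1,2})=\mu_{2,2}m_2+(\mu_{1,2}-\mu_{2,2})z_{1,2},
\]
which isolates the $z_{1,2}$-dependence into the single coefficient $\mu_{1,2}-\mu_{2,2}$. The sign of this coefficient is exactly what the two cases (a) and (b) of the lemma distinguish, and it is this sign that forces the orientation of the $z_{1,2}$-ordering in the hypothesis to flip between the two cases.

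In case (a), the hypothesis $\mu_{1,2}\ge\mu_{2,2}$ combined with $z^{(1)}_{1,2}\le z^{(2)}_{1,2}$ gives $(\mu_{1,2}-\mu_{2,2})z^{(1)}_{1,2}\le(\mu_{1,2}-\mu_{2,2})z^{(2)}_{1,2}$. Adding the contributions from $q^{(1)}_2\le q^{(2)}_2$ and $q^{(1)}_1\ge q^{(2)}_1$ then yields the required inequalities $\lambda^{(r)}_\pm(\gamma^{(1)})\le\lambda^{(r)}_\pm(\gamma^{(2)})$ and $\mu^{(1)}_\pm(\gamma^{(1)})\ge\mu^{(1)}_\pm(\gamma^{(2)})$ term by term. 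Case (b) is handled by the symmetric argument: the sign reversal $\mu_{1,2}\le\mu_{2,2}$ in the coefficient is canceled exactly by reversing the orientation of $z_{1,2}$ in the hypothesis from $z^{(1)}_{1,2}\le z^{(2)}_{1,2}$ to $z^{(1)}_{1,2}\ge z^{(2)}_{1,2}$.

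No substantial obstacle is anticipated, since the argument is purely coordinate-wise monotonicity once the pool-$2$ service term is rewritten as above. The main care is in bookkeeping: translating the vector inequalities involving $-q^{(1)}_1$ back into the scalar orderings $q^{(1)}_1\ge q^{(2)}_1$ and $q^{(1)}_2\le q^{(2)}_2$, and matching each elementary monotonicity check against the correct inequality direction required by the definition of rate order.
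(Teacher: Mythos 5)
Your proposal correctly identifies that only $\lambda^{(r)}_\pm$ and $\mu^{(1)}_\pm$ depend on $\gamma$, and the rewriting $\mu_{1,2}z_{1,2}+\mu_{2,2}(m_2-z_{1,2})=\mu_{2,2}m_2+(\mu_{1,2}-\mu_{2,2})z_{1,2}$ is the right move. But the claim that all four comparisons then hold ``term by term'' is wrong for $\mu^{(1)}_+$, and this is a real gap, not a bookkeeping slip. Recall from \eqref{bd4} that $\mu^{(1)}_+(\gamma)=\mu_{1,1}m_1+\mu_{2,2}m_2+(\mu_{1,2}-\mu_{2,2})z_{1,2}+\theta_1 q_1$ is a \emph{downward} rate, so rate order requires $\mu^{(1)}_+(\gamma^{(1)})\ge\mu^{(1)}_+(\gamma^{(2)})$. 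In case (a), $(\mu_{1,2}-\mu_{2,2})\ge 0$ and $z^{(1)}_{1,2}\le z^{(2)}_{1,2}$ force $(\mu_{1,2}-\mu_{2,2})z^{(1)}_{1,2}\le(\mu_{1,2}-\mu_{2,2})z^{(2)}_{1,2}$, which is the \emph{wrong} direction; the $q_1$-term $\theta_1 q_1^{(1)}\ge\theta_1 q_1^{(2)}$ goes the right way, but the two together are inconclusive (take $q_1^{(1)}=q_1^{(2)}$ and $z^{(1)}_{1,2}<z^{(2)}_{1,2}$ to get strict failure). The same conflict appears in case (b).

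The source of the difficulty is structural: the $z_{1,2}$-dependent combination $\mu_{1,2}z_{1,2}+\mu_{2,2}(m_2-z_{1,2})$ appears both in the upward rate $\lambda^{(r)}_-$ (active when $D\le 0$) and in the downward rate $\mu^{(1)}_+$ (active when $D>0$), so ordering it in the direction needed for $\lambda^{(r)}_-$ automatically orders $\mu^{(1)}_+$ the wrong way. No single vector inequality on a single pair $(\gamma^{(1)},\gamma^{(2)})$ can satisfy both. The paper sidesteps this when it actually uses the rate-order bound: in the proof of Lemma \ref{lmD12PR}, the bounding frozen process is assigned \emph{different} $z$-values in the two regions ($Z^n_{M^+}$ for $D>0$ versus $Z^n_{M^-}$ for $D\le 0$, chosen from the decreasing process $Z^n_a$ or the increasing process $Z^n_b$ according to the sign of $\mu_{1,2}-\mu_{2,2}$). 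So the lemma as literally stated, with one $z_{1,2}$ per $\gamma$, does not hold, and your verification should flag this rather than assert all comparisons succeed. A corrected version of your argument would have to state the hypothesis with separate $z$-parameters for the $\{D>0\}$ and $\{D\le 0\}$ rate blocks (or restrict $z^{(1)}_{1,2}=z^{(2)}_{1,2}$), matching what the paper actually does in its applications.
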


We will apply the rate order to get sample path stochastic order,
 involving coupling; see \cite{KKO77,W81}, Ch. 4 of
\cite{L92} and \S 2.6 of \cite{MS02}.   We briefly discuss
those bounds for a sequence of stochastic processes $\{Y^n : n
\ge 1\}$. We will bound the process $Y^n$, for each $n \ge 1$,
by a process $Y^n_b$; i.e., for each $n$, we will establish
conditions under which it is possible to construct stochastic
processes $\tilde{Y}^n_b$ and $\tilde{Y}^n$ on a common
probability space, with $\tilde{Y}^n_{b}$ having the same
distribution as $Y^n_{b}$, $\tilde{Y}^n$ having the same
distribution as $Y^n$, and every sample path of
$\tilde{Y}^n_{b}$ lies below (or above) the corresponding
sample path of $\tilde{Y}^n$. We will then write $Y^n_b
\le_{st} (\ge_{st}) Y^n$. However, we will not introduce this
``tilde'' notation; Instead, we will use the original notation
$Y^n$ and $Y^n_b$. As a first step, we will directly give both
processes, $Y^n$ and $Y^n_b$ identical arrival processes, the
Poisson arrival processes specified for $Y^n$. We will then
show that the remaining construction is possible by increasing
(decreasing) the departure rates so that, whenever $Y^n =
Y^n_{b}$, any departure in $Y^n$ also leads to a departure in
$Y^n_{b}$. That is justified by having the conditional
departure rates, given the full histories of the systems up to
time $t$, be ordered.

When $r_{1,2} = 1$, rate order directly implies the stronger sample path stochastic order, but not more generally, because the upper (lower) process
can jump down below (up above) the lower (upper) process when the lower process is at state $0$ or below, while the upper process is just above state $0$.
Nevertheless, we can obtain the following stochastic order bound, involving a finite gap.
For the following we use the rational form $r_{1,2} = j/k$, $j, k \in \ZZ_+$ and the associated integer-valued
QBD, as in \S 6 of \cite{PeW09c}.  (Recall Assumption \ref{assE}.)
There is no gap when $r_{1,2} =1$ because then $j = k = 1$ and the jump Markov process and associated QBD process both are
equivalent to a simple birth-and-death process.

\begin{coro}{$($stochastic bounds from rate order for FTSP's$)$}\label{corRateOrder}
Consider the FTSP in \S {\em \ref{secFTSP}} with QBD representation based on $r_{1,2} = j/k$ as in \S $6$ of {\em \cite{PeW09c}}, for candidate states
$\gamma^{(i)} \equiv (q^{(i)}_1, q^{(i)}_2, z^{(i)}_{1,2})$, $i = 1,2$.
$($a$)$ If $\mu_{1,2} \ge \mu_{2,2}$ and
\beq
(-q^{(1)}_1, q^{(1)}_2, z^{(1)}_{1,2}) \le (-q^{(2)}_1, q^{(2)}_2, z^{(2)}_{1,2}) \qinq \RR^3,
\eeqno
then
\beq
D(\gamma^{(1)}, \cdot) \le_{st} D(\gamma^{(2)}, \cdot) + (j \vee k) - 1.
\eeqno
$($b$)$ If $\mu_{1,2} \le \mu_{2,2}$ and
\beq
(-q^{(1)}_1, q^{(1)}_2, -z^{(1)}_{1,2}) \le (-q^{(2)}_1, q^{(2)}_2, -z^{(2)}_{1,2}) \qinq \RR^3,
\eeqno
then
\beq
D(\gamma^{(1)}, \cdot) \le_{st} D(\gamma^{(2)}, \cdot) + (j \vee k) - 1.
\eeqno
\end{coro}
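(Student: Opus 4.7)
My plan is to upgrade the rate order given by Lemma \ref{lmRateOrder} to a sample-path stochastic bound via an explicit coupling, with the $(j\vee k)-1$ slack coming from the overshoot that a single jump of size $j$ or $k$ can create when one process crosses the region boundary $\{x=0\}$ while the other is still on the opposite side. Under Assumption \ref{assE} I would rescale by $k$ and work with the equivalent integer-valued QBD from \S 6 of \cite{PeW09c}, whose jumps are $\pm j$ and $\pm k$ and whose region partition is unchanged; stochastic order in those units transfers back to the original scale, so it suffices to prove the inequality in integer units.

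For the coupling I would use a single rate-$\Lambda$ Poisson clock, with $\Lambda$ exceeding the total transition rate of either FTSP in either region, drive both processes by that clock, and at each tick use one uniform random variable to decide the (possibly null) transitions of $D(\gamma^{(1)},\cdot)$ and $D(\gamma^{(2)},\cdot)$ simultaneously via standard thinning. Whenever the two processes lie in the same region, the thinning can be arranged so that every ``paired'' jump moves both processes by the same signed amount of the same size; this is possible because within either region the rate order of Lemma \ref{lmRateOrder} places all the upward excess on $D(\gamma^{(2)},\cdot)$ and all the downward excess on $D(\gamma^{(1)},\cdot)$. Consequently, so long as both processes remain in the same region, the gap $D(\gamma^{(1)},t)-D(\gamma^{(2)},t)$ is monotone non-increasing.

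The gap can grow only when a single jump leaves the two processes on opposite sides of $0$; for a strictly positive gap to appear, this must be an excess upward jump of $D(\gamma^{(1)},\cdot)$ from some $x_1\le 0$ or an excess downward jump of $D(\gamma^{(2)},\cdot)$ from some $x_2\ge 1$. In the first case, $D(\gamma^{(1)},t)\le x_1+(j\vee k)\le j\vee k$ and $D(\gamma^{(2)},t)\ge 1$, so $D(\gamma^{(1)},t)-D(\gamma^{(2)},t)\le (j\vee k)-1$; the second case is symmetric. Immediately after such a crossover both processes lie in the same region, so the in-region monotone argument from the previous paragraph applies and the gap cannot grow until the next crossover. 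Induction over successive crossovers then preserves the bound $(j\vee k)-1$ for all $t\ge 0$, which is the asserted stochastic inequality.

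The main technical obstacle is handling the coupling during a straddle of $0$, since the two processes then draw rates from the different formulas in \eqref{bd1}--\eqref{bd4} and Lemma \ref{lmRateOrder} does not directly compare cross-region rates. The remedy is to observe that one need not preserve order inside a straddle; it suffices that every admissible straddle transition leaves the gap bounded by $(j\vee k)-1$, and the jump-size estimate above delivers exactly that. A case split according to whether $\mu_{1,2}\ge\mu_{2,2}$ or $\mu_{1,2}\le\mu_{2,2}$, mirroring the two parts of Lemma \ref{lmRateOrder}, then secures the in-region ordering required to close the induction and completes the proof.
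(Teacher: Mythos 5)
Your coupling follows the paper's plan (paired jumps while the two processes are in the same region, an overshoot bound of $(j\vee k)-1$ when a straddle is resolved), and the crossover computation is correct. However, the statement ``one need not preserve order inside a straddle; it suffices that every admissible straddle transition leaves the gap bounded by $(j\vee k)-1$'' hides a genuine gap. After a crossover has brought both processes into the same region with gap $(j\vee k)-1$, a paired size-$1$ down-step can send $D(\gamma^{(2)},\cdot)$ to $0$ while $D(\gamma^{(1)},\cdot)$ remains at $(j\vee k)-1>0$: this is a \emph{reversed} straddle, with $D(\gamma^{(1)},\cdot)>0\ge D(\gamma^{(2)},\cdot)$. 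It is not a ``crossover'' in your sense (neither process jumps from $\le 0$ over the other), so it is not caught by your jump-size estimate, yet if the two chains are left uncoupled inside this configuration a single excess upward jump of $D(\gamma^{(1)},\cdot)$ of size $j\vee k$ pushes the gap to $2(j\vee k)-1$. Thus your claim that ``the gap cannot grow until the next crossover'' is not established; your estimate bounds the gap only at the instant a forward straddle is resolved, not along jumps interior to a reversed straddle.

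The repair is to notice that the paired-jump thinning is in fact available inside a reversed straddle: comparing the positive-region rates of $D(\gamma^{(1)},\cdot)$ from \eqn{bd3}--\eqn{bd4} with the negative-region rates of $D(\gamma^{(2)},\cdot)$ from \eqn{bd1}--\eqn{bd2}, the upward rates of $D(\gamma^{(1)},\cdot)$, namely $\lm_1$ and $\theta_2 q^{(1)}_2$, are dominated by those of $D(\gamma^{(2)},\cdot)$, namely $\lm_1$ and $\mu_{1,2}z^{(2)}_{1,2}+\mu_{2,2}(m_2-z^{(2)}_{1,2})+\theta_2 q^{(2)}_2$, and the downward rates are ordered the other way, using only $q^{(1)}_1\ge q^{(2)}_1$ and $q^{(1)}_2\le q^{(2)}_2$; no hypothesis on $\mu_{1,2}$ versus $\mu_{2,2}$ or on $z_{1,2}$ is needed for this cross-region comparison. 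Hence the gap is monotone non-increasing in a reversed straddle as well, and the only configuration in which rate order fails to give a monotone coupling is the forward straddle $D(\gamma^{(1)},\cdot)\le 0<D(\gamma^{(2)},\cdot)$, where you are safe because the gap there is at most $-1$. Adding this observation closes your induction. The paper's own proof is equally compressed on this point---it simply asserts that ``whenever the desired order is switched, no further discrepancies can be introduced''---so your overall route matches theirs, but the reversed-straddle rate comparison is exactly the assertion that needs to be, and can be, checked.
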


\begin{proof}
We can do the standard sample path construction:  Provided that the processes are on the same side of state 0 in the QBD representation,
we can make all the processes jump up by the same amount whenever the lower one jumps up,
and make all the processes jump down by the same amount whenever the upper one jumps down.
However, there is a difficulty
when the processes are near the state 0 in the QBD representation
(which involves the matrix $B$ for the QBD).
When the upper process is above $0$ and the lower process is at or below $0$,
the lower process can jump over the upper process by at most $(j \vee k) - 1$,
and the upper process can jump below the lower process by this same amount.  But the total discrepancy cannot exceed $(j \vee k) - 1$, because of the rate order.
Whenever the desired order is switched, no further discrepancies can be introduced.
\end{proof}

The complexity of the proof in Appendix \ref{secAltProof} is primarily due to the fact that we allow general rational ratio parameters.
If $r_{1,2} = r_{2,1} = 1$, the proof can be much shorter, directly exploiting the sample path stochastic order in
Corollary \ref{corRateOrder} above (where there is no gap).

\begin{remark}{$($rate order comparisons for queue difference processes$)$}\label{rmRateOrder}
{\em
In the rest of this paper, in particular in the proof of Lemma \ref{lmD21extreme} in \S \ref{secProofThZ21},
Theorem \ref{th1} in \S \ref{secAltProof}
and Theorem \ref{thDSB} in \S \ref{secMainProofs2},
we will combine the results in this subsection and earlier subsections to
establish rate order and sample path stochastic order comparisons between queue difference processes and associated frozen difference processes.
A typical initial rate order statement will be of the form
\bequ \label{rateOrderForm}
D^n_{1,2}  \le_r  D^n_f (\Gamma^n) \qinq \D([0,\delta])
\eeq
for some $\delta > 0$, which we now explain.  First, $D^n_{1,2}$ is a function of the Markov process $X^n_6$, which has state-dependent rates.
Thus the ``transition rates'' of $D^n_{1,2}$ are understood to be functions of time $t$ and $X^n_6 (t)$, the state of the Markov process $X^n_6$ at time $t$,
which includes the value of $D^n_{1,2} (t)$.  However, the right side of \eqn{rateOrderForm} is interpreted quite differently.  We regard
$D^n_f (\Gamma^n)$, conditional on the random state vector $\Gamma^n$, as a homogenous pure-jump Markov process constructed independently of $X^n_6$,
with new rate-$1$ Poisson processes, as in \S \ref{secRepGen}.
However, we deliberately construct the random fixed state vector $\Gamma^n$ as a function of $X^n_6$ in order to facilitate comparison of rates.
Thus, $D^n_f (\Gamma^n)$ and $X^n_6$, and thus $D^n_{1,2}$, are dependent, but they are conditionally independent given the random state vector $\Gamma^n$.  Thus,
the conclusion in
\eqn{rateOrderForm} means that the transition rates
at each time $t$ for each value of $X^n_6 (t)$ are ordered.
If the two processes are in the same state at some time $t$, then the two processes can make transitions to the same states,
and each upward transition rate of $D^n_f (\Gamma^n)$ is greater than or equal to the corresponding upward transition rate of $D^n_{1,2}$,
while each downward transition rate of $D^n_f (\Gamma^n)$ is less than or equal to the corresponding downward transition rate of $D^n_{1,2}$.
That rate ordering then allows the sample path stochastic order comparisons, as in Corollary \ref{corRateOrder}.
}
\end{remark}

\subsection{A Sample Path Stochastic Order Bound.}\label{secAbanBd}

For the proof of Theorem \ref{thStatLim} in \S \ref{secAuxProofs},
we also need an upper bound process, unlike $Q^n_{i,b} (t)$ in \eqn{Qbounds}, that does not explode as $t \ra \infty$.
Hence, we now
establish an elementary sample path stochastic order bound
on the queue lengths that is stronger than the w.p.1 upper bound in Lemma \ref{lmQbds}.  Each of the two upper bound stochastic processes has the structure of the
queue length in an $M/M/\infty$ model, with a service rate equal to the abandonment rate here,
for which asymptotic results have been established \cite{PTW07}.

\begin{lemma}{$($sample path stochastic order for the queue lengths$)$}\label{lmQbds2}
For $i = 1, 2$ and $n \ge 1$, let
\beq
Q^n_{i,bd}(t)  =  Q^n_{i}(0) + N^a_i(\lm^n_i t) - N^u_i (\theta_i \int_{0}^{t} Q^n_{i,bd} (s) \, ds), \quad t \ge 0.
\eeqno
Then
\beq
Q^n_{i,bd}(t) \Rightarrow Q^n_{i,bd}(\infty) \qasq t \ra \infty,
\eeqno
where $Q^n_{i,bd}(\infty)$ has a Poisson distribution with mean $\lambda^n_i/\theta_i$ and
\beq
\bar{Q}^n_{i,bd} \equiv n^{-1} Q^n_{i,bd} \Rightarrow q_{i,bd} \qinq \D ([0, \infty)) \qasq n \ra \infty,
\eeqno
where $q_{i,bd}$ evolves deterministically according to the ODE $\dot{q}_{i,bd} (t) = \lambda_i - \theta_i q_{i,bd} (t)$,
starting at $q_{i,bd} (0) \equiv q_i (0)$ for $q_i (0)$ part of $x(0)$ in Assumption {\em \ref{assC}}.  Thus,
\beq
q_{i,bd} (t) \le q_{i,bd}(\infty) \equiv q_i (0) \vee (\lambda_i/\theta_i).
\eeqno
Moreover,
\beq
(Q^n_{1}, Q^n_2) \le_{st} (Q^n_{1,bd},Q^n_{2,bd}) \qinq \D_2 ([0, \infty)).
\eeqno
\end{lemma}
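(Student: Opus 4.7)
The plan is to establish the four claims in order, taking the sample path stochastic order last since it is the key obstacle and underpins the usefulness of the construction.

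First, I would identify $Q^n_{i,bd}$ as a birth-and-death process on $\ZZ_+$ with constant birth rate $\lm^n_i$ and linear death rate $\theta_i k$ in state $k$, i.e., essentially the queue-length process of an $M/M/\infty$ system with service rate $\theta_i$ (with abandonments playing the role of service completions). Existence and uniqueness of $Q^n_{i,bd}$ as a solution to the stochastic integral equation follow from Lemma 2.1 of \cite{PTW07} or a direct recursive construction on successive jump epochs of $N^a_i$ and $N^u_i$. Convergence to a Poisson$(\lm^n_i/\theta_i)$ stationary distribution is classical for such birth-and-death processes; alternatively, it follows from the well-known result for $M/M/\infty$.

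Second, for the fluid limit I would apply the continuous mapping theorem to the integral representation, along the lines of Theorem 4.1 in \cite{PTW07}. Writing $\bar{Q}^n_{i,bd}(t) = \bar{Q}^n_i(0) + \bar{N}^a_i(\lm^n_i t) - \bar{N}^u_i(\theta_i \int_0^t \bar{Q}^n_{i,bd}(s)\,ds)$ with $\bar{N} \equiv N/n$, the FSLLN for Poisson processes together with Assumption \ref{assC} and \eqref{MS-HTscale} yields $\bar{Q}^n_{i,bd} \Ra q_{i,bd}$ in $\D([0,\infty))$, where $q_{i,bd}$ satisfies the linear ODE $\dot{q}_{i,bd}(t) = \lm_i - \theta_i q_{i,bd}(t)$. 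The explicit solution $q_{i,bd}(t) = q_i(0) e^{-\theta_i t} + (\lm_i/\theta_i)(1 - e^{-\theta_i t})$ is monotone in $t$, so it lies in the interval between $q_i(0)$ and $\lm_i/\theta_i$, giving the stated uniform bound $q_{i,bd}(t) \le q_i(0) \vee (\lm_i/\theta_i)$.

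The main obstacle is the sample path stochastic order. I would prove it by a coupling argument in the spirit of \cite{KKO77} and \S 2.6 of \cite{MS02}, building both $Q^n_i$ and $Q^n_{i,bd}$ on a common probability space using the same rate-$1$ Poisson processes $N^a_i$, $N^u_i$, and $N^s_{i,j}$ from \eqref{rep1}. The idea is to compare the conditional transition rates from any common state. When $Q^n_i(t) = Q^n_{i,bd}(t) = k > 0$: (i) the abandonment rate is $\theta_i k$ in both processes, so each abandonment epoch may be taken common, preserving equality; (ii) every class-$i$ arrival triggers an upward jump in $Q^n_{i,bd}$, but in $Q^n_i$ the arrival increments the queue only if pool $1$ (and any pool currently allowed to take class-$i$ work) is fully occupied, so some arrivals produce no change in $Q^n_i$; (iii) $Q^n_i$ has additional downward jumps driven by the service-completion Poisson processes $N^s_{i,j}$, which do not affect $Q^n_{i,bd}$. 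In all three cases the gap $Q^n_{i,bd}(t) - Q^n_i(t)$ is non-decreasing at jumps; since both classes are treated independently in the construction of the bounds, the statement lifts from one dimension to the joint pair. Formalizing this via the framework of Remark \ref{rmRateOrder} and the standard translation of rate order to sample-path stochastic order for jump processes (available here because each increment is of size $1$, so no ``gap'' term as in Corollary \ref{corRateOrder} is needed) gives $(Q^n_1, Q^n_2) \le_{st} (Q^n_{1,bd}, Q^n_{2,bd})$ in $\D_2([0,\infty))$, completing the proof.
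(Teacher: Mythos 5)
Your proposal is correct and takes essentially the same approach as the paper: identify $Q^n_{i,bd}$ as an $M/M/\infty$-type birth-and-death process (giving the Poisson stationary law and fluid ODE via the continuous mapping theorem), and establish the sample-path stochastic order by coupling the two systems on the same Poisson primitives and observing that, at equality, the upper bound lacks service departures but shares the same abandonment rate. The paper's own proof is more terse but rests on exactly the same coupling observation; your extra detail about unit jump sizes eliminating the Corollary \ref{corRateOrder} gap is a correct and helpful elaboration.
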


\begin{proof}  We apply Assumption \ref{assC} to get the intial queue lengths to converge.  Just as for $Q^n_{i,b}$ in Lemma \ref{lmQbds},
the upper bound system here provides no service completion at all.
However, unlike the upper bound $Q^n_{i,b}$ in Lemma \ref{lmQbds}, here abandonment from queue is allowed.  Here we have sample path stochastic order
because we can construct the two systems together, keeping the upper bound system greater than or equal to $Q^n_i (t)$ for all $t$.
Whenever the constructed processes are equal, they can have the same abandonments, because the abandonment rate in both systems will
be identical.
\end{proof}

\subsection{Extreme-Value Limits for QBD Processes.}\label{secQBDextreme}

In order to prove Theorem \ref{thZ21} in \S \ref{secProofThZ21}, we exploit extreme-value limits for QBD processes.
Since we are unaware of any established extreme-value limits for QBD processes, we establish the following result here.
Recall that a QBD has states $(i,j)$, where $i$ is the level and $j$ is the phase.  If we only consider the level
we get the level process; it is an elementary function of a QBD.

\begin{lemma}{$($extreme value for QBD$)$} \label{lmQBDextreme}
If\ $\mathcal{L}$ is the level process of a positive recurrent $($homogeneous$)$ QBD process $($with a finite number of phases$)$, then there
exists $c > 0$ such that
\bes
\lim_{t \tinf} P \left( \| \mathcal{L} \|_t / \log t > c \right) = 0.
\ees
\end{lemma}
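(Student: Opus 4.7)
The plan is to combine the matrix-geometric form of the stationary distribution of a positive recurrent QBD with a regenerative excursion decomposition. By Neuts' theorem, a positive recurrent QBD with finitely many phases admits a nonnegative ``rate matrix'' $R$ with spectral radius $\eta \in (0,1)$ such that the stationary distribution satisfies $\pi(n,\cdot) = \pi(0,\cdot) R^n$; summing over phases yields $P(\mathcal{L}(\infty) \ge n) \le C\eta^n$ for some constant $C > 0$. The same matrix $R$ controls the geometric decay of upward first-passage probabilities: starting from any state on level $0$, the probability of reaching level $n$ before returning to level $0$ is bounded by $C'\eta^n$.

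Next, I would fix a reference state $(0,j^*)$ and let $\sigma_0$ be its first hitting time and $\sigma_k$, $k\ge 1$, the subsequent return epochs. Positive recurrence gives $\nu \equiv E[\sigma_1 - \sigma_0] < \infty$, so the elementary renewal theorem yields $N(t)/t \to 1/\nu$ a.s., where $N(t) \equiv \sup\{k : \sigma_k \le t\}$. The cycle maxima $M_k \equiv \sup\{\mathcal{L}(s) : \sigma_{k-1} \le s \le \sigma_k\}$ are i.i.d., and by the first-passage bound above satisfy $P(M_1 \ge n) \le C''\eta^n$. A union bound then gives, for any $c > 0$ and any $K > 1/\nu$,
\begin{equation*}
P(\|\mathcal{L}\|_t \ge c\log t) \le P(\sigma_0 > \sqrt{t}) + P(\|\mathcal{L}\|_{\sigma_0} \ge c\log t) + P(N(t) > Kt) + Kt \cdot C''\eta^{c\log t}.
\end{equation*}
The first two terms vanish since $\sigma_0$ and $\|\mathcal{L}\|_{\sigma_0}$ are finite almost surely by positive recurrence, hence $O_P(1)$; the third vanishes by the renewal theorem; and the last equals $C''K\,t^{1+c\log\eta}$, which tends to $0$ as $t \tinf$ whenever $c > 1/|\log\eta|$. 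Any such $c$ then witnesses the claim.

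The principal technical step is establishing the geometric tail bound on the cycle maxima $M_k$. The cleanest route uses a multiplicative Lyapunov function of the form $V(n,j) = v_j\,\eta^{-n}$, where $v$ is a positive left eigenvector naturally associated with the boundary matrices of the QBD; such $V$ has uniformly negative multiplicative drift off a finite boundary set (this is the standard dual characterization of the matrix-geometric regime), and a routine Foster-type argument then converts this drift into $P(M_1 \ge n) \le C''\eta^n$. Alternatively, one can invoke the matrix-geometric first-passage formulas from Neuts or Latouche--Ramaswami directly. Once this geometric bound is in hand, the union-bound argument above completes the proof; I expect this exponential-tail step to be the only real obstacle, with everything afterwards being routine.
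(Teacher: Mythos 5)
Your proof is correct and takes a genuinely different, more elementary route than the paper's. The paper first constructs an auxiliary continuous-state bounding process $\mathcal{L}_b$ by injecting an independent exponential random variable into each regenerative cycle (so that $\mathcal{L}_b$ has an exponential rather than lattice tail), and then invokes classical extreme-value theory for stationary sequences from Leadbetter--Lindgren--Rootz\'{e}n together with Rootz\'{e}n's results connecting the stationary version, the cycle-maxima approach, and non-stationary initializations; the cycle-maximum tail bound is thereby obtained \emph{as a corollary} of the equivalence of these extreme-value limits rather than proved directly. You instead hit the cycle maxima head-on with a union bound plus the elementary renewal theorem, which needs only $\|\mathcal{L}\|_t = O_P(\log t)$ rather than a full Gumbel limit. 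The price is that you must establish the geometric tail $P(M_1 \ge n) \le C''\eta^n$ directly; the two routes you sketch (multiplicative Lyapunov function with eigenvector weights, or Neuts/Latouche--Ramaswami first-passage matrices) both work, and there is an even quicker Markov-inequality route you could use: the expected number of visits to level $m$ per regenerative cycle is proportional to $(\pi_0 R^m)\mathbf{1}$ and hence decays like $\eta^m$, so $P(M_1 \ge n) \le \sum_{m\ge n} E[\#\text{visits to level }m] \le C''\eta^n/(1-\eta)$. One bookkeeping remark: the final term in your union bound should be $(Kt+1)\,C''\eta^{c\log t}$ to cover the partial cycle in progress at time $t$ (whose maximum is dominated by $M_{N(t)+1}$), but this does not affect the conclusion. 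Overall your argument is more self-contained (no appeal to the extreme-value literature) while the paper's is more modular (it needs only that the stationary distribution has an exponentially decaying tail, and delegates everything else to general regenerative EVT).
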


\begin{proof}
Our proof is based on regenerative structure.  The intervals between successive visits
to the state $(0,j)$ constitute an embedded renewal process for the QBD.  Since the QBD is positive recurrent,
these cycles have finite mean.
Given the regenerative structure, our proof is based on the observation that,
if the process $\mathcal{L}$ were continuous real-valued with an exponential tail, instead of integer valued with a geometric tail,
then we could establish the conventional convergence in law of $\| \mathcal{L} \|_t - c  \log{t}$ to the Gumbel distribution, which implies
our conclusion.  Hence, we bound the process $\mathcal{L}$ above w.p.1 by another process $\mathcal{L}_b$ that is
continuous real-valued with an exponential tail and which inherits the regenerative structure of $\mathcal{L}$.

We first construct the bounding process $\mathcal{L}_b$ and then afterwards explain the rest of the reasoning.
To start, choose a phase determining a
specific regenerative structure for the level process $\mathcal{L}$.  let $S_i$ be the epoch cycle $i$ ends, $i \ge -1$,
with $S_{-1} \equiv 0$,
and let $L(n)$ be the set of states in level $n$.
For each cycle $i$,
we generate an independent exponential random variable $X_i$ and take the maximum between $\mathcal{L}(t)$ and $X_i$ for all
$S_{i-1} \le t < S_i$ such that $\mathcal{L}(t) \notin L(0)$; i.e., letting $\{X_i: i \ge 0\}$ be an i.i.d.
sequence of exponential random variables independent of $\mathcal{L}$ and letting $C(t)$ be the cycle in progress at time $t$,
$\mathcal{L}_b (t) \equiv \mathcal{L}(t) \vee X_{C(t)} 1_{\{\mathcal{L}(t) \notin L(0)\}}$.
Clearly, $\mathcal{L}_b$ inherits the regenerative structure of $\mathcal{L}$ and satisfies
$\mathcal{L} \le \mathcal{L}_b$ almost surely.
Moreover, by the assumed independence, for each $x > 0$ and $t \ge 0$,
\bes
P(\mathcal{L}_b (t) > x) = P(\mathcal{L}(t) > x) + P(X > x) - P(\mathcal{L}(t) > x)P(X > x),
\ees
where $X$ is an exponential random variable distributed as $X_i$ that is independent of $\mathcal{L}(t)$.
We now consider the stationary version of $\mathcal{L}$, which makes $\mathcal{L}_b$ stationary as well.
We let the desired constant $c$ be the mean of the exponential random variables $X_i$.
If we make $c$ sufficiently large, then we clearly have $P(\mathcal{L}_b (t) > x) \sim e^{-x/c}$ as $x \ra \infty$,
because the first and third terms become asymptotically negligible as $x \ra \infty$.  (We choose $c$ to make
$\mathcal{L} (t)$ asymptotically negligible compared to $X$.)

It now remains to establish the conventional extreme-value limit for the bounding process $\mathcal{L}_b$.
For that, we exploit the exponential tail of the stationary distribution, just established, and regenerative structure.
There are two approaches to extreme-value limits for regenerative processes, which are intimately related,
as shown by Rootz\'{e}n \cite{Ro88}.  One is based on stationary processes, while the other is based on the
cycle maxima, i.e., the maximum values achieved in successive regenerative cycles.
First, if we consider the stationary version, then we can apply classical extreme-value limits for stationary processes
as in \cite{LLR83}.  The regenerative structure implies that the mixing condition in \cite{LLR83}
is satisfied; see Section 4 of \cite{Ro88}.

However, the classical theory in \cite{LLR83} and the analysis in \cite{Ro88}
applies to sequences of random variables as opposed to
continuous-time processes.
In general, the established results for stationary sequences in \cite{LLR83} do not
extend to stationary continuous-time processes.
That is demonstrated by extreme-value limits for positive recurrent diffusion processes in
\cite{BK98, D82}.  Proposition 3.1, Corollary 3.2 and Theorem 3.7 of \cite{BK98}
show that, in general, the extreme-value limit is not determined by the stationary distribution of the process.

However, continuous time presents no difficulty in our setting,
because the QBD is constant between successive
transitions, and the transitions occur in an asymptotically regular way.
It suffices to look at the embedded discrete-time process at transition epochs.
That is a standard discrete-time Markov chain associated with the continuous-time Markov chain
represented as a QBD.
Let $N(t)$ denote the number of transitions over the interval $[0,t]$.
Then $\mathcal{L}_b (t) = \mathcal{L}_{d} (N(t))$, where $\mathcal{L}_d (n)$ is the embedded discrete-time process associated
with $\mathcal{L}_b$.  Since $N(t)/t \ra c' > 0$ w.p.1 as $t \ra \infty$ for some constant $c' > 0$,
the results directly established for the discrete-time process $D_d$ are inherited with minor modification by $\mathcal{L}_b$.
Indeed, the maximum over random indices already arises when relating extremes for regenerative sequences to extremes of i.i.d. sequences;
see p. 372 and Theorem 3.1 of \cite{Ro88}.  In fact, there is a substantial literature on extremes with a random index, e.g., see
Proposition 4.20 and (4.53) of \cite{R87} and also \cite{ST98}.
Hence, for the QBD we can initially work in discrete time, to be consistent with \cite{LLR83, Ro88}.
After doing so, we obtain extreme-value limits in both discrete and continuous time,
which are essentially equivalent.

So far, we have established an extreme-value limit for the stationary version of $\mathcal{L}_b$,
but our process $\mathcal{L}_b$ is actually not a stationary process.  So it is natural to apply the second approach
based on cycle maxima, which is given in \cite{Ro88,A98} and Section VI.4 of \cite{Asmussen}.
We would get the same extreme-value limit for the given version of $\mathcal{L}_b$ as the stationary version
if the cycle maximum has an exponential tail.  Moreover, this reasoning would apply directly to continuous time as well as discrete time.
However, Rootz\'{e}n \cite{Ro88} has connected the
two approaches (see p. 380 of \cite{Ro88}), showing that all the versions of the regenerative process have the same
extreme-value limit.  Hence, the given version of the process $\mathcal{L}_b$ has the same extreme-value limit
as the stationary version, already discussed.
Moreover, as a consequence, the cycle maximum has an exponential tail if and only if the
stationary distribution has an exponential tail.  Hence, we do not need to consider the cycle maximum directly.
\end{proof}

\section{Proofs of Three Theorems from \S \ref{secSSCserv}.}\label{secProofsSSCserv}

In this section we prove the three theorems in \S \ref{secSSCserv}.

\subsection{Proof of Theorem \ref{thZ21}.}\label{secProofThZ21}

 Theorem \ref{thZ21} is an immediate consequence of the following three lemmas: Lemmas \ref{lmZ12}, \ref{lmD21} and \ref{lmD21extreme}.

\begin{lemma} \label{lmZ12}
If $z_{1,2}(0) > 0$, then, for all $T > 0$, $P(\inf_{0 \le t \le T} \barz^n_{1,2}(t) > 0) \ra 1$ as $n \tinf$.
As a consequence, $Z^n_{2,1} \Ra 0$ as $n \tinf$.
\end{lemma}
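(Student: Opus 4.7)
The plan is to exploit the almost-sure pathwise lower bound $Z^n_a \le Z^n_{1,2}$ from Lemma~\ref{lmZbds} together with its fluid limit from Lemma~\ref{lmBoundsLim}. From the latter, the limit $z_a$ solves the linear ODE $\dot{z}_a = -\mu_{1,2} z_a$ with $z_a(0) = z_{1,2}(0)$, so that $z_a(t) = z_{1,2}(0) e^{-\mu_{1,2} t}$, which is the object I will leverage.

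First I would argue the stochastic lower bound on $\barz^n_{1,2}$. If $z_{1,2}(0) > 0$, then $z_a$ is continuous and strictly positive on $[0,T]$, so setting $\epsilon \equiv z_{1,2}(0) e^{-\mu_{1,2} T}/2 > 0$ one has $\inf_{0 \le t \le T} z_a(t) > 2\epsilon$. Since $\barz^n_a \Ra z_a$ in $\D$ by Lemma~\ref{lmBoundsLim} and the infimum functional $x \mapsto \inf_{0 \le t \le T} x(t)$ is continuous at continuous paths, the continuous mapping theorem gives $\inf_{0 \le t \le T} \barz^n_a(t) \Ra \inf_{0 \le t \le T} z_a(t) > 2\epsilon$. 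Combined with the pathwise domination $\barz^n_{1,2}(t) \ge \barz^n_a(t)$ from Lemma~\ref{lmZbds}, this yields $P(\inf_{0 \le t \le T} \barz^n_{1,2}(t) > \epsilon) \to 1 \qasq n \tinf$, which proves the first assertion.

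For the consequence $Z^n_{2,1} \Ra 0$, I would use the structure of the FQR-T control. Let $E^n_T \equiv \{\inf_{0 \le t \le T} Z^n_{1,2}(t) > 0\}$, which has probability tending to $1$ by the previous step (since $\barz^n_{1,2}(t) > \epsilon$ forces $Z^n_{1,2}(t) > n\epsilon > 0$ for large $n$). On $E^n_T$, the one-way sharing restriction in the FQR-T rule of \S\ref{secFQRorig} (together with Assumption~\ref{assShare}) forbids any newly available pool-$1$ server from starting service on a class-$2$ customer, since sharing with pool~$1$ helping class~$2$ may be activated only after $Z^n_{1,2}$ first hits $0$. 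Because Assumption~\ref{assC} gives $Z^n_{2,1}(0) = 0$ and no upward transition of $Z^n_{2,1}$ can occur on $E^n_T$, the process $Z^n_{2,1}$ stays identically $0$ throughout $[0,T]$, which gives $\|Z^n_{2,1}\|_T \Ra 0$ for every $T > 0$. I do not anticipate a real obstacle; the one delicate point is verifying that the FQR-T description indeed precludes any momentary activation of sharing in the $2 \to 1$ direction while $Z^n_{1,2}$ remains strictly positive, but this is built into the control definition.
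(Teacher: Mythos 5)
Your proof is correct and follows essentially the same route as the paper's: both invoke the pathwise lower bound $Z^n_a \le Z^n_{1,2}$ from Lemma~\ref{lmZbds}, the fluid limit $\barz^n_a \Ra z_a$ from Lemma~\ref{lmBoundsLim} with $z_a(t) = z_{1,2}(0)e^{-\mu_{1,2}t} > 0$, and then the one-way sharing rule together with $Z^n_{2,1}(0)=0$. The only difference is that you spell out the final step (that on the event $\inf_t Z^n_{1,2}(t) > 0$ the process $Z^n_{2,1}$ can have no upward jumps, so it stays identically zero), which the paper leaves implicit with the phrase ``follows from the first together with the one-way sharing rule.''
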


\begin{proof}
By Assumption \ref{assC}, $\barz^n_{1,2} (0) \Rightarrow z_{1,2}(0)$.
From Lemma \ref{lmZbds} we know that $Z^n_{1,2} \ge
Z^n_a$ in $\D$ for all $n \ge 1$ w.p.1. From Lemma \ref{lmBoundsLim},
we know that $\barz^n_a \Ra z_a$ in $\D$ as $n\tinf$, for $z_a$
in \eqref{zbounds}. However, the integral equation for $z_a$ in
\eqref{zbounds} is equivalent to the ODE $\dot{z}_a (t) = -
\mu_{1,2} z_a (t)$ with initial value $z_a (0) = z_{1,2} (0)$.
Since $z_{1,2} (0) > 0$ by assumption, it follows that
${z}_{a}(t) \ge {z}_{a}(0) e^{-\mu_{1,2} t} > 0$ for all $t \ge
0$. Thus $P(\inf_{0 \le s \le t}{Z}^n_{a}(s) > 0) \ra 1$ as $n
\tinf$. Lemma \ref{lmZbds} implies that the same is true for
$Z^n_{1,2}$, which proves the first claim of the lemma. The
second claim that $Z^n_{2,1} \Ra 0$ as $n \tinf$ follows from
the first together with the one-way sharing rule.
\end{proof}

\begin{remark} \label{remShare}{$($implications for one-way sharing rule$)$}
{\em
The conclusion of Lemma \ref{lmZ12} reveals a disadvantage of the one-way sharing rule for very large systems.
The lemma concludes that, for large $n$, if for some $\epsilon > 0$ and $t_0 \ge 0$
 $Z^n_{1,2}(t_0) > \epsilon n$,
then $Z^n_{1,2}(t)$ is very likely not to reach $0$ for a long time, thus
preventing sharing in the opposite direction, even if that would prove beneficial to do so at a later time, e.g.,
because there is a new overload incident in the opposite direction.

In practice, we thus may want to relax the one-way sharing rule.
One way of relaxing the one-way sharing rule is by
dropping it entirely, and relying only on the thresholds $k^n_{1,2}$ and $k^n_{2,1}$
to prevent sharing in both directions simultaneously (at least until the arrival rates change again).
Another modification is to introduce lower thresholds on the service processes,
denoted by $s^n_{i,j}$, $i \ne j$, such that pool $2$ is allowed to start
helping class $1$ at time $t$ if $D^n_{2,1} > k^n_{2,1}$ and $Z^n_{1,2}(t) < s^n_{1,2}$,
and similarly in the other direction.
We do not analyze either of these modified controls in this paper.
}
\end{remark}

Given Lemma \ref{lmZ12}, it remains to consider only the case $z_{1,2}(0) = 0$.
Hence, we assume that $z_{1,2}(0) = 0$ for the rest of this section. Here is the outline of the proof:
We first prove (Lemmas \ref{lmD21} and \ref{lmD21extreme} below) that $Z^n_{2,1}$ is asymptotically null over an interval $[0, \tau]$,
for some $\tau > 0$. We then prove that $\bar{Z}^n_{1,2}(t)$ must become strictly positive before time $\tau$ in fluid scale.
By Assumption \ref{assRatio}, the optimal ratios for FQR-T satisfy $r_{1,2} \ge r_{2,1}$.
In Lemma \ref{lmD21} we consider the cases (i) $x(0) \in \AA \cup \AA^+$ with $r_{1,2} > r_{2,1}$ and $q_1(0) > 0$ and $(ii)$ $x(0) \in \SS^+$;
in Lemma \ref{lmD21extreme} we consider the remaining cases, i.e., $x(0) \in \AA \cup \AA^+$ with
$r_{1,2} = r_{2,1}$ or $q_1(0) = 0$.  Unlike the definition of $D^n_{1,2}$ in \eqn{Dprocess}, let $D^n_{2,1}$ be defined by
\bequ \label{Dprocess2}
D^n_{2,1} (t) \equiv r_{2,1} Q^n_2 (t) -  Q^n_1 (t), \quad t \ge 0.
\eeq

\begin{lemma} \label{lmD21}
Assume that $z_{1,2}(0) = 0$. If either one of the following two conditions hold:
$(i)$ $x(0) \in \AA \cup \AA^+$, $r_{1,2} > r_{2,1}$ and $q_1(0) > 0$, or
$(ii)$ $x(0) \in \SS^+$,
then there exists $\tau$, $0 < \tau \le \infty$, such that
\bes
\lim_{n \tinf} P \left(\sup_{t \in [0, \tau]}D^n_{2,1}(t) \le 0 \right) = 1
\ees
for $D^n_{2,1}$ in {\em \eqn{Dprocess2}},
so that $\|Z^n_{2,1}\|_{\tau} \Ra 0$ as $n \tinf$.
\end{lemma}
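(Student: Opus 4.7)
The plan is to observe first that, since $Z^n_{2,1}(0) = 0$ almost surely by Assumption~\ref{assC} and FQR-T initiates sharing with pool~$1$ helping class~$2$ only when the raw difference process $D^n_{2,1}$ in \eqref{Dprocess2} exceeds the positive threshold $k^n_{2,1}$, it suffices to show that $\sup_{t \in [0,\tau]} D^n_{2,1}(t) \le 0$ with probability tending to $1$; the conclusion $\|Z^n_{2,1}\|_{\tau} \Rightarrow 0$ is then immediate.

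To control $D^n_{2,1}$, I would use the almost-sure sample-path bounds from Lemma~\ref{lmQbds}, namely $Q^n_1 \ge Q^n_{1,a}$ and $Q^n_2 \le Q^n_{2,b}$, which give
\bes
D^n_{2,1}(t) \;=\; r_{2,1} Q^n_2(t) - Q^n_1(t) \;\le\; r_{2,1} Q^n_{2,b}(t) - Q^n_{1,a}(t), \quad t \ge 0.
\ees
Dividing by $n$ and invoking Lemma~\ref{lmBoundsLim}, the right-hand side converges in $\D$ to the continuous deterministic function
\bes
g(t) \;\equiv\; r_{2,1}\bigl(q_2(0) + \lambda_2 t\bigr) - \Bigl(q_1(0) - \bigl(\sum_{i,j} \mu_{i,j} m_j + \theta_1 q_1(0)\bigr) t\Bigr).
\ees
If $g(0) = r_{2,1} q_2(0) - q_1(0) < 0$, then continuity of $g$ yields $\tau > 0$ with $g(t) \le -\varepsilon$ on $[0,\tau]$ for some $\varepsilon > 0$, and the displayed bound combined with the fluid convergence of $n^{-1}(r_{2,1} Q^n_{2,b} - Q^n_{1,a})$ to $g$ yields $P(\sup_{t \in [0,\tau]} D^n_{2,1}(t) \le 0) \to 1$.

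It remains to verify the strict inequality $g(0) < 0$ in the two cases. In case~(ii), $x(0) \in \SS^+$ gives $q_1(0) > r_{1,2} q_2(0) \ge r_{2,1} q_2(0)$, the last step by Assumption~\ref{assRatio}. In case~(i), $x(0) \in \AA \cup \AA^+ \subset \SS^b$ forces $q_1(0) = r_{1,2} q_2(0)$, and combining this with the hypotheses $r_{1,2} > r_{2,1}$ and $q_1(0) > 0$ (which in turn forces $q_2(0) > 0$) gives $q_1(0) = r_{1,2} q_2(0) > r_{2,1} q_2(0)$. In both cases the inequality is strict, and the plan above concludes the proof.

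I do not anticipate any substantial obstacle for this lemma --- it is essentially a soft combination of almost-sure domination by the ``no-service'' lower bound and the ``only-arrivals'' upper bound together with a continuity argument in the fluid limit. The genuine difficulty, which is precisely why Lemma~\ref{lmD21extreme} is stated separately, arises in the complementary cases $r_{1,2} = r_{2,1}$ or $q_1(0) = 0$, where $g(0) = 0$ and this crude bound becomes ineffective; there one must appeal to the finer stochastic-order and extreme-value machinery developed in \S~\ref{secSupport}.
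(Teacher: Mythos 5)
Your proposal is correct and takes essentially the same route as the paper's own proof: pathwise domination via the upper bound $Q^n_{2,b}$ and the lower bound $Q^n_{1,a}$ from Lemma~\ref{lmQbds}, fluid-scale convergence of the bounding difference $r_{2,1}Q^n_{2,b} - Q^n_{1,a}$ via Lemma~\ref{lmBoundsLim}, continuity of the limit function to extract a positive $\tau$, and the same two-case verification that the limit is strictly negative at $t=0$ (noting that in case~(ii) one also needs $r_{1,2}\ge r_{2,1}$ from Assumption~\ref{assRatio}, which you state explicitly and the paper uses implicitly). The only incidental difference is cosmetic: the paper mis-cites Corollary~\ref{corRateOrder} where Lemma~\ref{lmQbds} is meant, which you reference correctly.
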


\begin{proof} We first show that the appropriate conditions hold in fluid scale at the origin.
We start by assuming that $x(0) \in \AA \cup \AA^+$, which implies that $d_{1,2} (0) \equiv q_1 (0) - r_{1,2} q_2 (0) = 0$.
Since $q_1 (0) > 0$ by assumption, $q_2 (0) > 0$ too.
Since $r_{1,2} > r_{2,1}$ by assumption, $d_{2,1} (0) = r_{2,1} q_2 (0) - q_1 (0) < r_{1,2} q_2 (0) - q_1 (0) = 0$, so that
we also have $d_{2,1} (0) < 0$.
If $(ii)$ holds, so that $x(0) \in \SS^+$, then $d_{2,1}(0) < 0$ by definition of $\SS^+$

Given Assumption \ref{assC}, we also have $\barx^n (0) \Rightarrow x(0)$ in $\AA \cup \AA^+ \cup \SS^+$.
Hence, the fluid-scaled queueing processes converge to these initial values.
In particular, we necessarily have $D^n_{2,1} (0)/n \Rightarrow d_{2,1} (0) < 0$ as $n \ra \infty$.
Hence, there exists $c > 0$ such that $P(D^n_{2,1} (0) < -cn) \ra 1$ as $n \ra \infty$.
Our goal now is to show that there exists $\tau > 0$ such that
$P(\sup_{0 \le t \le \tau} D^n_{2,1} (t) > 0) \ra 0$.  That will imply the desired conclusion.

It only remains to show that the change in these quantities has to be continuous in fluid scale.
For the purpose of bounding $D^n_{2,1} = r_{2,1} Q^n_2 - Q^n_1$ above, it suffices to bound
$Q^n_2$ above and $Q^n_1$ below, as we have done in Corollary \ref{corRateOrder}.  Hence
$D^n_{2,1} \le _{st} D^n_{u, 2,1} \equiv r_{2,1} Q^n_{2,b} - Q^n_{1,a}$.
By Lemma \ref{lmBoundsLim}, $\bar{D}^n_{u, 2,1} \Rightarrow d^u_{2,1}  \equiv r_{2,1} q_{2,b}  - q_{1,a}$,
where the limit function
 $d^u_{2,1}$ evolves continuously, starting with $d^u_{2,1} (0) = d_{2,1} (0) < 0$.  Hence there is a time $\tau'$ such that $d^u_{2,1} (t) < 0$ for all $0 \le t \le \tau'$.
Asymptotically, by the FWLLN, the same will be true for the fluid scaled queue difference process
$\bar{D}^n_{u,2,1}$.  Hence,
we deduce that
$P(\sup_{0 \le t \le \tau}{\{D^n_{u,2,1} (t)\}} > 0) \ra 0$ as $n \ra \infty$ for any $\tau$ with $0 < \tau < \tau'$.
Since $D^n_{2,1}  \le_{st} D^n_{u,2,1}$, the same is true for $D^n_{2,1}$.
That completes the proof.
\end{proof}

The proof of Lemma \ref{lmD21} relies on a fluid argument since, under its assumptions and Assumption \ref{assC}, $D^n_{2,1}(0)/n$ converges
to a strictly negative number as $n \tinf$. In particular, the difference $D^n_{2,1}(0)$ without centering by $k^n_{2,1}$
is order $O_P(n)$ away from the threshold $k^n_{2,1}$.
That fluid reasoning fails when $r_{2,1} = r_{1,2} \equiv r$ or when $q_1(0) = 0$ since in either of these cases,
$q_1(0) - r_{1,2} q_2(0) = q_1(0) - r_{2,1} q_2(0) = 0$. (By Assumption \ref{assC}, $q_2(0) = 0$ if $q_1(0) = 0$.)
In these cases we will rely on the threshold $k^n_{2,1}$ to prevent class-$2$ customers to be sent to pool $1$,
and construct a finer sample-path stochastic-order bound for the stochastic system.
Below we remove the centering by the threshold in $D^n_{2,1}$.

\begin{lemma} \label{lmD21extreme}
Assume that $z_{1,2}(0) = 0$ and that $q_{1}(0) - r_{2,1} q_2(0) = 0$ $($necessarily, $x(0) \notin \SS^+$, i.e., $x(0) \in \AA \cup \AA^+)$.
In this case, there exists $\tau$, $0 < \tau \le \infty$, such that
\bes
\lim_{n \tinf} P \left(\sup_{t \in [0, \tau]} D^n_{2,1}(t) \le k^n_{2,1} \right) = 1,
\ees
for $D^n_{2,1}$ in {\em \eqn{Dprocess2}}.  Hence, $\|Z^n_{2,1}\|_{\tau} \Ra 0$ as $n \tinf$.
\end{lemma}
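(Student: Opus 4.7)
The plan is to stochastically dominate $\sup_{[0,\tau]} D^n_{2,1}$ by the maximum of a positive-recurrent frozen QBD and then invoke the extreme-value bound of Lemma~\ref{lmQBDextreme} to conclude that this maximum is $O_P(\log n)$, which, by Assumption~\ref{assThresh} (since $c_n/\sqrt{n}\to\infty$ forces $c_n/\log n\to\infty$), is $o_P(k^n_{2,1})$. Since the FQR-T control initiates wrong-way sharing only when $D^n_{2,1}$ first crosses $k^n_{2,1}$, this will immediately yield $\|Z^n_{2,1}\|_\tau \Rightarrow 0$.

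The hypotheses split into two cases: either $(\mathrm{a})$ $r_{1,2}=r_{2,1}\equiv r$ (because $x(0)\in\AA\cup\AA^+$ gives $q_1(0)=r_{1,2}q_2(0)$, and combined with the assumed $q_1(0)=r_{2,1}q_2(0)$ this forces either $r_{1,2}=r_{2,1}$ or $q_2(0)=0$); or $(\mathrm{b})$ $q_1(0)=q_2(0)=0$. In case~$(\mathrm{a})$ the pathwise identity $D^n_{1,2}(t)+D^n_{2,1}(t)\equiv -k^n_{1,2}$ reduces the claim to $\inf_{[0,\tau]}D^n_{1,2}(t)\ge -k^n_{1,2}-k^n_{2,1}=-\Theta(c_n)$. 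Since $D^n_{1,2}(0)\Rightarrow L$ is proper (Assumption~\ref{assC}), I would couple $D^n_{1,2}$ with a frozen process $D^n_{f,1,2}(\Gamma^n)$ via Lemma~\ref{lmRateOrder} and Corollary~\ref{corRateOrder}, on the event that $\bar X^n$ stays inside a small tube around $x(0)$. Choosing $\Gamma^n$ so that the frozen QBD is positive recurrent (a slight perturbation when $x(0)\in\AA^+$) and applying Lemma~\ref{lmQBDextreme} together with the time-scaling identity~\eqref{ident}, one gets $\inf_{[0,\tau]}D^n_{f,1,2}(\Gamma^n,t)=-O_P(\log n)$, which dominates $-\Theta(c_n)$.

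In case~$(\mathrm{b})$, $Q^n_i(0)=O(c_n)$ so $|D^n_{2,1}(0)|=O_P(c_n)$, and $z_{1,2}(0)=0$ together with Lemmas~\ref{lmZbds}--\ref{lmBoundsLim} keeps $\bar Z^n_{1,2}$ arbitrarily small over $[0,\tau]$ for $\tau$ small. I would run the analogous rate-order/frozen-process argument directly on $D^n_{2,1}$, using the symmetric analogue of the construction in \S~\ref{secAux}: the drift of the frozen version in its positive region is strictly negative because Assumption~\ref{assA}(2) makes class~$1$ more overloaded than class~$2$ and $z_{1,2}\approx 0$ effectively dedicates pool~$2$ to class~$2$. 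Positive recurrence plus Lemma~\ref{lmQBDextreme} again give $\sup_{[0,\tau]}D^n_{2,1}(t)=O_P(\log n)=o_P(k^n_{2,1})$.

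The main obstacle is the rate-order coupling, because $D^n_{2,1}$ is non-Markov---its transition rates depend on the entire state $X^n_6(t)$. To handle this I would localize via a stopping time at which $\bar X^n$ first exits a small tube around $x(0)$, use the $\C$-tightness from Theorem~\ref{lmTight} to show this stopping time exceeds $\tau$ with probability tending to one for $\tau$ small, and choose the frozen rates conservatively (dominating the extremes over the tube) so that on the localization event the frozen QBD stochastically dominates $D^n_{2,1}$ sample-path-wise up to the constant gap $(j\vee k)-1$ in Corollary~\ref{corRateOrder}, which is harmless against the $\Theta(c_n)$ threshold. The subsidiary subtlety in case~$(\mathrm{a})$ with $x(0)\in\AA^+$---where the natural frozen drift in the upper region is only nonnegative---is resolved by perturbing $\Gamma^n/n$ slightly into $\AA$, which only shifts the extreme-value bound by $o(\log n)$ and therefore does not affect the conclusion.
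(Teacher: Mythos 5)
Your proposal is correct in its overall strategy and follows the same three-step skeleton as the paper: construct a rate-order/sample-path-order bound by a frozen QBD, verify asymptotic positive recurrence by a drift computation, and invoke the extreme-value bound (Lemma~\ref{lmQBDextreme}) together with the time-scaling identity~\eqref{ident} to show the relevant extreme is $O_P(\log n) = o_P(k^n_{2,1})$. Your case split into (a) $r_{1,2}=r_{2,1}$ and (b) $q_1(0)=q_2(0)=0$ matches the paper's Step Two. There are, however, two genuine differences worth noting. First, in case (a) you observe the identity $D^n_{1,2}+D^n_{2,1}=-k^n_{1,2}$ (valid precisely when $r_{1,2}=r_{2,1}$), reducing the claim to $\inf_{[0,\tau]}D^n_{1,2}\ge -\Theta(c_n)$; this is a nice algebraic shortcut the paper does not exploit. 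Second, and more substantively, the paper constructs a \emph{reflected} frozen process $D^{n,*}_f$ (Appendix~\ref{appQBDbdd}), a one-sided QBD whose positive recurrence needs only the single drift condition $\delta_*(x_*(0))<0$, which is equivalent to $\delta_-(x(0))>0$ and holds throughout $\AA\cup\AA^+$; the case $x(0)\in\AA^+$ (where $\delta_+\ge 0$ and the two-sided FTSP is transient) therefore requires no special treatment. Your non-reflected presentation forces you to confront that transience, which you patch by perturbing the frozen state into $\AA$. That fix can be made to work — with $z_{1,2}(0)=0$ the perturbation $(q_1,q_2)\mapsto(q_1+\ep,\,(q_2-\ep)\vee 0)$ preserves the rate-order direction of Lemma~\ref{lmRateOrder}, decreases $\delta_+$ and $\delta_-$ by the same amount $(r\theta_2+\theta_1)\ep$, and $\delta_->\delta_+$ always, so an $\ep$ exists with $\delta_+(\gamma')<0<\delta_-(\gamma')$ — but it is not a ``slight'' perturbation (the needed $\ep$ is of order $\delta_+(x(0))$, not infinitesimal), and your phrase about ``shifting the extreme-value bound by $o(\log n)$'' is imprecise: what actually happens is that the constant $c$ in Lemma~\ref{lmQBDextreme} changes, which is harmless since only $c_n/\log n\to\infty$ is used. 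You should also note that even after this perturbation you still need a one-sided (reflected) representation to apply Lemma~\ref{lmQBDextreme} to the \emph{infimum} of the lower-bounding process rather than its supremum; the lemma as stated controls the maximum of a nonnegative level process. Finally, your remark in case (b) that $|D^n_{2,1}(0)|=O_P(c_n)$ is not quite what is needed (and $Q^n_i(0)$ need not be $O(c_n)$ from above); the cleaner observation, which the paper makes, is that Assumptions~\ref{assRatio} and~\ref{assC} give $D^n_{2,1}(0)\le -k^n_{1,2}+O_P(1)\le 0$ with probability tending to one, so one may initialize the reflected bound at $0$.
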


\begin{proof} We start by showing that $P(D^n_{2,1} (0) \le 0) \ra 1$ as $n \ra \infty$.
That follows because, by Assumption \ref{assRatio} and the definitions \eqn{Dprocess} and \eqn{Dprocess2},
\beq
D^n_{2,1} (0) \equiv r_{2,1} Q^n_2 (0) -  Q^n_1 (0) \le r_{1,2} Q^n_2 (0) -  Q^n_1 (0) = - D^n_{1,2} (0) - k^n_{1,2}.
\eeqno
Assumptions \ref{assThresh} and \ref{assC} then imply that $D^n_{1,2} (0) \Ra L$ and $k^n_{1,2} \ra \infty$ as $n \ra \infty$,
which together imply the initial conclusion.
Going forward, it suffices to assume that we initialize by $D^n_{2,1} (0) = 0$.

The rest of
our proof follows three steps:
In the first step, paralleling Lemma \ref{lmRateOrder} and Corollary \ref{corRateOrder}, we construct a QBD that
bounds $D^n_{2,1}$ (without centering by $k^n_{2,1}$) in rate order, which enables us to obtain a stochastic order bound for $D^n_{2,1}$;
see Remark \ref{rmRateOrder}.
The bound is constructed over an interval $[0, \tau]$.  In the second step, we show how to choose $\tau$ small enough so that
 the QBD bound is asymptotically positive recurrent.
In the third step, we ``translate'' the QBD bound to a time-accelerated QBD, in the spirit of \eqref{ident}, and employ the
extreme-value result for the time-accelerated QBD in Lemma \ref{lmQBDextreme} to conclude the proof.

{\em Step One:} We construct a stochastic-order bound for $D^n_{2,1}$, building on rate order.
For $\tau > 0$, let
\bequ \label{X*}
 X^n_*(t) \equiv (Q^n_{1,b}(t), Q^n_{2,a}(t), Z^n_b(t))
\qandq \Gamma^n_\tau \equiv X^n_\tau \equiv (Q^n_{1,\tau}, Q^n_{2,\tau}, Z^n_\tau),
\eeq
 where
\bes 
Q^n_{1,\tau} \equiv \|Q^n_{1,b}\|_{\tau}, \quad Q^{n}_{2,\tau} \equiv \inf_{0 \le t \le \tau} Q^n_{2,a}(t) \vee 0
\qandq Z^n_\tau \equiv \|Z^n_b\|_{\tau}.
\ees
using the processes defined in \eqn{Zbounds} and \eqn{Qbounds}.
By Lemmas \ref{lmZbds} and \ref{lmQbds}, for all $n \ge 1$,
\bequ \label{stateIneq}
(Q^n_1(t), -Q^n_2(t), Z^n_{1,2}(t)) \le (Q^n_{1,\tau}, -Q^n_{2,\tau}, Z^n_{\tau}) \mbox{ in $\RR_3$ for all } t \in [0, \tau] \quad \mbox{w.p.1}.
\eeq
As in \S \ref{secAux}, let $D^{n}_f (\Gamma^n_\tau)$
be the frozen difference process associated with $D^n_{2,1}$ and $\Gamma^n_\tau$ in \eqn{X*}.
(Recall that we are considering $D^n_{2,1}$ here and not $D^n_{1,2}$.)
However, to obtain positive results, we want to consider the process $D^{n}_f (\Gamma^n_\tau)$
only for nonnegative values.  We obtain such a process
by working with the associated reflected process, denoted by
\bes
D^{n,*}_f \equiv D^{n,*}_f (\Gamma) \equiv \{D^{n,*}_f (\Gamma, t) : t \ge 0\}, \quad \Gamma \in \RR_3
\ees
obtained by imposing a reflecting lower barrier at $0$, where $\Gamma$ specifies the fixed rates of $D^{n,*}_f (\Gamma)$.
We omit $\Gamma$ from the notation for statements that hold for all $\Gamma \in \RR_3$.
The reflected process $D^{n,*}_f$ is always nonnegative and has the same state space
as the nonnegative part of the state space of $D^n_{2,1}$.
Within the QBD framework used in \S 6 of \cite{PeW09c},
we obtain the reflected process by omitting all transitions down below level $0$;
the specific QBD construction is given in Appendix \ref{appQBDbdd}.

It follows from \eqref{stateIneq} and Theorem \ref{lmRateOrder} that we have rate order.
By the analog of Lemma \ref{corRateOrder}, there exists a constant $K$
such that we have sample path stochastic order in $\D([0, \tau])$, i.e.,
\bequ \label{RateBd102}
D^n_{2,1} \le_{st} D^{n,*}_f (\Gamma^n_\tau) + K \quad \mbox{in $\D([0, \tau])$ for all $n \ge 1$},
\eeq
for $\Gamma^n_\tau$ in \eqn{X*}. 

{\em Step Two:}
We now show that we can choose $\tau > 0$ so that there exist sets $E^n$ in
the underlying probability space such that
$D^{n,*}_f (\Gamma^n_{\tau})$ is positive recurrent in $E^n$ and $P(E^n) \ra 1$ as $n \ra \infty$.
Paralleling \eqn{posrecDn} (for the frozen process associated with $D^n_{1,2}$),
the set $E^n$ here is
\bequ \label{Edef}
E^n \equiv \{\delta_{*} (\Gamma^n_{\tau}) < 0\},
\eeq
where $\delta_{*}$ is the drift for $D^{n,*}_f$.
To find $\tau > 0$ such that $P(E^n) \ra 0$ for $E^n$ in \eqn{Edef},
 we analyze the asymptotic behavior of $\Gamma^n_{\tau} \equiv X^n_\tau$ in \eqref{X*}.

First, by Lemma \ref{lmBoundsLim}, $\barx^n_* \Ra x_* \equiv (q_{1,b}, q_{2,a}, z_b)$ in $\D_3$ as $n\tinf$,
where the components of $x_*$ are given in \eqref{qzbounds} and  $x_*(0) = x(0)$ by construction.
Then, by the continuous mapping theorem for the supremum function, e.g., Theorem 12.11.7 in \cite{W02},
\bequ \label{RateBd103}
\barx^n_\tau \equiv X^n_\tau/n \Ra x_\tau \equiv (q_{1,\tau}, q_{2,\tau}, z_\tau) \quad \mbox{in $\RR_3$, as $n \tinf$},
\eeq
where 
\bes 
q_{1,\tau} \equiv \|q_{1,b}\|_{\tau},\quad q_{2,\tau} \equiv \inf_{0 \le t \le \tau} q_{2,a}(t) \vee 0 \qandq z_\tau \equiv \|z_b\|_{\tau}.
\ees
Since $x(0) \in \AA \cup \AA^+$ by the assumption of the lemma,
$\delta_{-} (x(0)) > 0$, where $\delta_{-}$ is the drift for the FTSP in \eqn{drifts}
associated with $D^n_{1,2}$.

For $\gamma \equiv \gamma(t) \equiv (q_{1,b}(t), q_{2,a}(t) \vee 0, z_b(t))$ let $\hatlm_{r}(\gamma) \equiv \lm_2$,
$\hatmu_{r}(\gamma) \equiv \mu_{2,2} (m_2 - z_b(t)) + \theta_2 (q_{2,a}(t) \vee 0)$,
$\hatlm_{1}(\gamma) \equiv \mu_{1,1} m_1 + \mu_{1,2} z_b(t) + \theta_1 q_{1,b}(t)$ and $\hatmu_{1}(\gamma) \equiv \lm_1$.
Let $D^* (\gamma)$ be the reflected FTSP corresponding to $D^{n,*}_f (\Gamma^n_\tau)$.
The process $D^*(\gamma) \equiv \{D^*(\gamma,t) : t \ge 0\}$ has upward jumps of size $r_{2,1}$ with rate
$\hatlm_{r}(\gamma)$ and downward jumps of size $r$ with rate $\hatmu_{r}(\gamma)$.
It has upward jumps of size $1$ with rate $\hatlm_{1}(\gamma)$,
and downwards jumps of size $1$ with rate $\hatmu_{1}(\gamma)$.
By Theorem 7.2.3 in \cite{LR99}, $D^*(\gamma)$ is positive recurrent if and only if
$\delta_*(\gamma) < 0$, where
\bequ \label{drift*}
\delta_*(\gamma) \equiv r_{2,1}(\hatlm_{r}(\gamma) - \hatmu_{r}(\gamma)) + (\hatlm_{1}(\gamma) - \hatmu_{1}(\gamma)), \quad \gamma \in \RR_3.
\eeq
Replacing $\gamma$ with $\gamma_\tau \equiv x_\tau$, we have that $D^*(\gamma_\tau)$ is positive recurrent if and only if
$\delta_*(\gamma_\tau) < 0$.
Hence, it suffices to show that $\delta_*(x_\tau) < 0$ for some $\tau > 0$.  We do that next.

We consider the two possible cases of the condition imposed in the lemma:
(i) $r_{2,1} = r_{1,2}$ and (ii) $q_1(0) = q_2(0) = 0$ (with $z_{b}(0) = z_{1,2}(0) = 0$ in both cases).
First, in case (i),
$\delta_*(x_*(0)) = - \delta_-(x(0))$ for $\delta_-(\gamma)$ in \eqref{drifts}, $\delta_*(\gamma)$ in \eqn{drift*} and $\gamma = x(0)$.
Since we have already observed that $\delta_-(x(0)) > 0$, we necessarily have $\delta_*(x_*(0)) < 0$.

In case (ii) with $r_{1,2} > r_{2,1}$ and $q_1(0) = q_2(0) = 0$ (and again $z_{1,2}(0) = 0$),
$\delta_*(x_*(0)) = r_{2,1} (\lm_1 - \mu_{1,1} m_1) +  (\lm_2 - \mu_{2,2} m_2)$,
so that $\delta_*(x_*(0)) < 0$ if and only if $ (\lm_1 - \mu_{1,1} m_1) + r_{2,1} (\lm_2 - \mu_{2,2} m_2) < 0$.
However, this inequality must hold because, by Assumption \ref{assC}, $\delta_-(x(0)) > 0$, so that
$\delta_-(x(0)) = (\lm_1 - \mu_{1,1} m_1) - r_{1,2} (\lm_2 - \mu_{2,2} m_2) > 0$.
Since $r_{2,1} < r_{1,2}$, it follows that here too $\delta_*(x_*(0)) < 0$.

Finally, the continuity of $x_*$ and $\delta_*(x_*)$ imply that we can find $\tau > 0$ and $\eta > 0$ such that
$\sup_{s \in [0, \tau]} \delta_*(x_*(s)) < -\eta < 0$. In particular, for that choice of $\tau$, $\delta_*(x_\tau) < -\eta$.
Hence, $P(E^n) = P (\delta_*(\Gamma^n_\tau) < 0) \ra 1$,
because $\delta_*(\barx^n_*) \Ra \delta_*(x_*)$ in $\D$ and $\delta_*(\Gamma^n_\tau) \Ra \delta_*(\gamma_\tau)$ in $\RR$ as $n \tinf$
by the continuous mapping theorem.

{\em Step Three:} Finally, we apply the extreme-value result in Lemma \ref{lmQBDextreme}
to the stochastic upper bound $K + D^{n,*}_f(\Gamma^n_\tau)$ in \eqn{RateBd102} for $\Gamma^n_\tau$ in \eqn{X*}.
For that purpose, observe that, paralleling \eqref{ident},
$\{D^{n,*}_f(\Gamma^n_\tau, s) : s \ge 0\} \deq \{D^*(\Gamma^n_\tau/n, ns) : s \ge 0\}$ for $D^{n,*}_f$ and $D^*$ defined above, with
$\Gamma^n_\tau/n \Ra x_\tau$ in \eqn{RateBd103}.
For the rest of the proof, we apply the Skorohod representation theorem to replace the convergence in distribution by convergence
$\Gamma^n_\tau/n \ra x_\tau$ w.p.1, without changing the notation.

By the arguments above
\bequ \label{IneqRate}
\bsplit
P(\|D^n_{2,1}\|_{\tau} / \log n > c) \le
P((K + \|D^{n,*}_f(\Gamma^n_\tau)\|_{\tau}) / \log n > c)
& = P((K + \|D^*(\Gamma^n_\tau/n)\|_{n \tau}) / \log n > c).
\end{split}
\eeq
In order to apply Lemma \ref{lmQBDextreme} to the final term in \eqn{IneqRate}, we want to replace $\Gamma^n_\tau/n \equiv \barx^n_\tau$ by a
vector independent of $n$, say $x_{\epsilon}$, such that $D^* (x_{\epsilon})$ is positive recurrent and,
for some $n_0$, $\barx^n_\tau \le x_{\epsilon}$ for all $n \ge n_0$.
Then we can apply Lemma \ref{lmQBDextreme} to get, for $n \ge n_0$,
\beq
P((K + \|D^*(\Gamma^n_\tau/n)\|_{n \tau}) / \log n > c) \le P ((2K + \|D^*(x_\ep)\|_{n \tau})/ \log n > c) \ra 0 \mbox{\, as } n \tinf.
\eeqno
That implies the claim because of the way the thresholds are scaled in Assumption \ref{assThresh}.

We conclude by showing how to construct the vector $x_{\epsilon}$ such that $D^* (x_{\epsilon})$ is positive recurrent and, for some $n_0$,
$\Gamma^n_\tau/n = \barx^n_{\tau} \le x_{\epsilon}$
for all $n \ge n_0$.
If $q_{2,\tau} > 0$, then choose $\ep$ such that $0 <\ep < q_{2,\tau}$ and let
$x_\ep \equiv (q_{1,\ep}, q_{2,\ep}, z_\ep) \equiv (q_{1,\tau} + \ep, q_{2,\tau} - \ep, z_b + \ep)$.
Otherwise, let $x_\ep \equiv (q_{1,\tau} + \ep, 0, z_b + \ep)$.  Clearly $x_\ep \le x_{\tau}$ for all $ \ep > 0$, so that
$\{D^*(x_\tau, t) : 0 \le t \le \tau\} \le_{st} K + \{D^*(x_\ep, t) : 0 \le t \le \tau\}$ and,
by the choice of $\tau$, we can find $\ep > 0$ small enough so that $\delta^*(x_\ep) < 0$,
so that $D^*(x_\ep)$ is positive recurrent.
\end{proof}

\subsection{Proof Theorem \ref{thQpos}.}\label{secProofThQpos}

Define the processes
\bequ \label{net} L^n_1 \equiv Q^n_1 +
Z^n_{1,1} + Z^n_{1,2} - m^n_1 \qandq L^n_2 \equiv Q^n_2 +
Z^n_{2,1} + Z^n_{2,2} - m^n_2,
\eeq
representing the excess
number in system for each class. Note that $(L^n_i)^+ = Q^n_i$, $i = 1,2$.
For all $n$ sufficiently large, we will bound the two-dimensional process
$(L^n_1, L^n_2)$ below in sample-path stochastic order by another two-dimensional process
$(L^n_{1,b}, L^n_{2,b})$, $n \ge 1$.

We construct the lower-bound process $(L^n_{1,b}, L^n_{2,b})$ by having $L^n_{i,b}(0) = L^n_{i}(0) \wedge k^n_{1,2}$, $i = 1,2$, and
by increasing the departure rates in both processes $L^n_1$ and $L^n_2$, making it so that each goes down at least as fast,
regardless of the state of the pair.
First, we place reflecting upper barriers on the two queues.  This is tantamount to making the death rate infinite in these states and all higher states.
We place the reflecting upper barrier on
$L^n_i$ at $k^n_{1,2}$, where $k^n_{1,2} \ge 0$.
This necessarily produces a lower bound for all $n$.
By the initial conditions assumed for the queue lengths in Assumption \ref{assC},
we have $P((L^n_{1,b} (0), L^n_{2,b} (0)) = (k^n_{1,2},k^n_{1,2})) \ra 1$ as $n \ra \infty$.

With the upper barrier at $k^n_{1,2}$, the departure rate of $L^n_1(t)$ at time $t > 0$ is bounded above by
$\mu_{1,1} m^n_{1}  +  \mu_{1,2} Z^n_{1,2}(t) + \theta_1 k^n_{1,2}$,
based on assuming that pool $1$ is fully busy serving class $1$, 
that $L^n_1(t)$ is at its upper barrier,
and that $Z^n_{1,2}(t)$ agents from pool $2$ are currently busy serving class $1$ in the original system.
With the upper barrier at
$k^n_{1,2}$, the departure rate of $L^n_2(t)$ at time $t$ is bounded above by
$\mu_{2,2} (m^n_{2} - Z^n_{1,2}(t)) + \theta_2 k^n_{1,2}$,
based on assuming that pool $2$ is fully busy
with $Z^n_{1,2} (t)$ agents from pool $2$ currently busy serving class $1$,
and that $L^n_2(t)$ is at its upper barrier $k^n_{1,2}$.
Thus, we give $L^n_{1,b}$ and $L^n_{2,b}$ these bounding rates at all times.

As constructed, the evolution of $(L^n_{1,b}, L^n_{2,b})$ depends on the process $Z^n_{1,2}$
associated with the original system, which poses a problem for further analysis.  However, we can avoid this difficulty
by looking at a special linear combination of the processes.  Specifically, let
\beql{s102}
U^n \equiv \mu_{2,2} (L^n_1 - k^n_{1,2}) + \mu_{1,2} (L^n_2 - k^n_{1,2}) \qandq
U^n_b \equiv \mu_{2,2} (L^n_{1,b} - k^n_{1,2}) + \mu_{1,2} (L^n_{2,b} - k^n_{1,2}).
\eeq

By the established sample-path stochastic order $(L^n_{1,b}, L^n_{2,b}) \le_{st} (L^n_1, L^n_2)$,
the initial conditions specified above
and the monotonicity of the linear map in \eqref{s102},
we get the associated sample-path stochastic order $U^n_b \le_{st} U^n$.
The lower-bound stochastic process
 $U^n_b$
has constant birth rate $\lambda^n_b = \mu_{2,2} \lambda^n_1 + \mu_{1,2} \lambda^n_2$ and
constant death rate
\begin{eqnarray}
\mu^n_b & \equiv &\mu_{2,2}(\mu_{1,1} m^n_{1}  + \mu_{1,2} Z^n_{1,2} (t) + \theta_1 k^n_{1,2})
+ \mu_{1,2}(\mu_{2,2} m^n_{2} - \mu_{2,2} Z^n_{1,2} (t)) + \theta_2 k^n_{1,2}) \nonumber \\
& = & \mu_{2,2}(\mu_{1,1} m^n_{1} + \theta_1 k^n_{1,2}) + \mu_{1,2}(\mu_{2,2} m^n_{2} + \theta_2 k^n_{1,2}). \nonumber
\end{eqnarray}
In particular, unlike the pair of processes $(L^n_{1,b}, L^n_{2,b})$,
the process
 $U^n_b$ is independent of the process $Z^n_{1,2}$.  Consequently,
  $U^n_b$ is a birth and death process on the
set of all integers in $(-\infty, 0]$.
Since $P((L^n_{1,b} (0), L^n_{2,b} (0)) = (k^n_{1,2},k^n_{1,2})) \ra 1$ as $n \ra \infty$,
$P(U^n_{b} (0) = 0) \ra 1$ as $n \ra \infty$.

The drift in $U^n_b$ is
\bequ\label{driftn}
\delta^n_b \equiv \lambda^n_b - \mu^n_b = \mu_{2,2}(\lambda^n_1 - m^n_1 \mu_{1,1} -\theta_1 k^n_{1,2})
+ \mu_{1,2}(\lambda^n_2 - m^n_2 \mu_{2,2} - \theta_2 k^n_{1,2}).
\eeq
Hence, after scaling, we get $\delta^n_b/n \ra \delta$ (recall that $k^n_{1,2}$ is $o(n)$), where
\beql{s4}
\delta_b \equiv \mu_{2,2}(\lambda_1 - m_1 \mu_{1,1})
+ \mu_{1,2}(\lambda_2 - m_2 \mu_{2,2}) > 0,
\eeq
with the inequality following from Assumption \ref{assA}.

Now we observe that $-U^n_b$ is equivalent to the number in system
in a stable $M/M/1$ queueing model with traffic intensity $\rho^n_{*} \ra \rho_{*} < 1$, starting out empty, asymptotically.
Let $Q_*$ be the number-in-system process in an $M/M/1$ system
having arrival rate equal to $\lambda_* \equiv \mu_{2,2} \mu_{1,1}m_1 + \mu_{1,2}\mu_{2,2}m_2$,
service rate $\mu_* \equiv \mu_{2,2}\lambda_1 + \mu_{1,2}\lambda_2 $ and traffic intensity $\rho_* \equiv \lambda_*/\mu_* < 1$.
Observe that the scaling in $U^n_b$
is tantamount to accelerating time by a factor of order $O(n)$
in $Q_*$. That is, $\{-U^n_b (t) : t \ge 0\}$ can be represented as
$\{Q_*(c_n t) : t \ge 0\}$, where $c_n / n \ra 1$ as $n \tinf$.
We can now apply the extreme-value result in Lemma \ref{lmQBDextreme} for the $M/M/1$ queue above
(since the $M/M/1$ birth and death process is trivially a QBD process) to conclude that
$\|Q_*\|_t = O_P(\log(t))$. This implies that $-U^n_b/ \log(n)$ is stochastically bounded.

From the way that the reflecting upper barriers were constructed,
we know at the outset that $L^n_{1,b} (t) \le k^n_{1,2}$ and $L^n_{2,b} (t) \le k^n_{1,2}$.
Hence, we must have both $L^n_{1,b} - k^n_{1,2}$ and $L^n_{2,b} - k^n_{1,2}$ non-positive.
Combining this observation with the result that $(-U^n_b)/\log{n}$ is stochastically bounded, we deduce that both
$(k^n_{1,2} - L^n_{1,b})/\log{n}$ and $(k^n_{1,2}-L^n_{2,b})/\log{n}$ are stochastically bounded, i.e.,
the fluctuations of $L^n_{i,b}$ below $k^n_{1,2}$ are $O_P(\log n)$.
The result follows because $-L^n_i \le_{st} - L^n_{i,b}$, $i = 1,2$, and from the choice
of $k^n_{1,2}$, which satisfies $k^n_{1,2}/\log{n} \ra \infty$ as $n \tinf$ by Assumption \ref{assThresh}.

\subsection{Proof of Theorem \ref{thSSCextend}.}\label{secGlobalProofs}

We now show that the interval $[0, \tau]$ over which the conclusions in \S \ref{secSSCserv} are valid
can be extended from $[0, \tau]$ to $[0, \infty)$ after Theorem \ref{th1} has been proved over the interval $[0, \tau]$.

For this purpose, we use the processes $L^n_1$ and $L^n_2$ defined in \eqref{net}.
By Lemma \ref{lmZ12}, we only need to consider the case $z_{1,2}(0) = 0$.
By Lemmas \ref{lmD21} and \ref{lmD21extreme}, there exists $\tau > 0$ such that
\bes
\lim_{n \tinf} P \left( \|D^n_{2,1}\|_{\tau} < k^n_{2,1} \right) = 1.
\ees
Hence, the claim of the theorem will follow from Lemma \ref{lmZ12}
if we show that for some $t_0$ satisfying $0 < t_0 \le  \tau$ it holds that $z_{1,2}(t_0) > 0$,
 where
$z_{1,2}$ is the (deterministic) fluid limit of $\barz^n_{1,2}$ as $n \tinf$, which exists by Theorem \ref{th1}.
We will actually show a somewhat stronger result, namely that
for any $0 < \ep \le \delta$, where $\delta$ is chosen from Lemma \ref{lmXinA} and thus Theorem \ref{th1},
there exists $t_0 < \ep$ such that $z_{1,2}(t_0) > 0$.
We prove that by assuming the contradictory statement:
for some $0 < \ep \le \delta$ and for all $t \in [0, \ep]$, $z_{1,2}(t) = 0$.

By our contradictory assumption above, $\barz^{n}_{1,2} = o_P(1)$, i.e., $\|Z^{n}_{1,2}/n\|_{\ep} \Ra 0$.
Recall also that $Z^{n}_{2,1} = o_P(1)$ over $[0, \ep]$ (since $\ep \le \tau$, and $\tau$ is chosen according
to Lemmas \ref{lmD21} and \ref{lmD21extreme}).
Hence, by our contradictory assumption and by our choice of $\ep$, there is negligible sharing of customers over the interval $[0, \ep]$.
We can thus represent $L^n_i$ in \eqref{net}, $i = 1,2$ by
\bequ \label{Qbdd}
\bsplit
L^{n}_i(t) = L^{n}_i(0) + N^a_i(\lm^{n}_i t) - N^s_{i,i} \left( \mu_{i,i} \int_0^t (L^{n}_i(s) \wedge 0)\ ds \right)
- N^u_i \left( \theta_i \int_0^t (L^{n}_i(s) \vee 0)\ ds \right) + o_P(n), 
\end{split}
\eeq
for $i = 1,2$ and  $0 \le t \le \delta$ as $n \ra \infty$,
where $N^a_i$, $N^s_{i,i}$ and $N^u_{i}$ are independent rate-$1$ Poisson processes.
The $o_P(n)$ terms are replacing the (random-time changed) Poisson processes related to $Z^{n}_{1,2}$ and $Z^{n}_{2,1}$,
which can be disregarded when we consider the fluid limits of \eqref{Qbdd}.
The negligible sharing translates into the $o_P(n)$ term in \eqn{Qbdd} by virtue of the continuous mapping theorem
and Gronwall's inequality, as in \S 4.1 of \cite{PTW07}.

Letting $\bar{L}^{n}_i \equiv L^{n}_i / n$, $i = 1,2$, and applying the continuous mapping theorem
for the integral representation function in \eqref{Qbdd}, Theorem 4.1 in \cite{PTW07},
(see also Theorem 7.1 and its proof in \cite{PTW07}), we have $(\bar{L}^{n}_1, \bar{L}^{n}_2) \Ra (\bar{L}_1, \bar{L}_2)$ in $\D([0, \ep])$
as $n \tinf$, where, for $i = 1,2$,
\beq
\bsplit
\bar{L}_i(t) & = \bar{L}_i(0) + (\lm_i - \mu_{i,i} m_i)t - \int_0^t [\mu_{i,i}(\bar{L}_i(s) \wedge 0) + \theta_i(\tilde{L}_i(s) \vee 0)]\ ds,
\quad 0 \le t \le \ep,
\end{split}
\eeqno
so that
\beq
\bar{L}'_i(t) \equiv \frac{d}{dt}\bar{L}_i(t) = (\lm_i - \mu_{i,i} m_i) - \mu_{i,i}(\bar{L}_i(t) \wedge 0) - \theta_i(\tilde{L}_i(t) \vee 0),
\quad 0 \le t \le \ep.
\eeqno

By Assumption \ref{assC}, both pools are full at time $0$, so that $L_i(0) \ge 0$.
Moreover, for $i = 1,2$, $\bar{L}_i^e \equiv (\lm_i - \mu_{i,i}) / \theta_i$ is an
equilibrium point of the ODE $\bar{L}'_i$, in the sense that,
if $\bar{L}_i(t_0) = \bar{L}^e_i$, then $\bar{L}_i(t) = \bar{L}^e_i$ for all $t \ge t_0$.
(That is, $\bar{L}^e_i$ is a fixed point of the solution to the ODE.)
It also follows from the derivative of $\bar{L}_i$ that $\bar{L}_i$ is strictly increasing if $\bar{L}_i(0) < \bar{L}^e_i$,
and strictly decreasing if $\bar{L}_i(0) > \bar{L}^e_i$, $i = 1,2$.

Recall that $\rho_1 > 1$, so that $\lm_1 - \mu_{1,1} m_1 > 0$. Together with the initial condition, $L_1(0) \ge 0$,
we see that, in that case, $\bar{L}_1(t) \ge 0$ for all $t \ge 0$.
First assume that $\rho_2 \ge 1$ .
Then, by similar arguments, $\bar{L}_2(t) \ge 0$ for all $t \ge 0$.
In that case, we can replace $\bar{L}_i$ with $q_i(t) = (\bar{L}_i(t))^+$, $i = 1,2$,
 where $q_i$ is the fluid limit of $\barq^{n}_i$ over $[0, \ep]$.
We can then write, for $t \in [0, \ep]$,
\bes
\bsplit
q_1(t) & = q_1(0) - (\lm_1 - \mu_{1,1} m_1) t - \theta_1 \int_0^t q_1(s)\ ds, \\
q_2(t) & = q_2(0) - (\lm_2 - \mu_{2,2} m_2) t - \theta_2 \int_0^t q_2(s)\ ds,
\end{split}
\ees
so that, for $t \in [0, \ep]$,
\bequ \label{BarD12}
\bsplit
d_{1,2}(t) & = q^a_1 + (q_1(0) - q^a_1) e^{- \theta_1 t} - r \left( q^a_2 + (q_2(0) - q^a_2) e^{- \theta_2 t} \right) \\
& = (q^a_1 - r q^a_2) + (q_1(0) - q^a_1) e^{- \theta_1 t} - r (q_2(0) - q^a_2) e^{- \theta_2 t}.
\end{split}
\eeq
First assume that $x(0) \in \AA \cup \AA^+$, so that $d_{1,2}(0) = 0$.  From \eqn{BarD12},
\bes
d'_{1,2}(t) \equiv \frac{d}{dt} d_{1,2}(t) = -\theta_1 (q_1(0) - q^a_1) e^{- \theta_1 t} + r \theta_2 (q_2(0) - q^a_2) e^{- \theta_2 t}.
\ees
Hence, $d'_{1,2}(0) = \lm_1 - \mu_{1,1}m_1 - \theta_1 q_1(0) - r (\lm_2 - \mu_{2,2}) + r \theta_2 q_2(0)$.
If follows from \eqref{drifts} and the assumption $z_{1,2}(0) = 0$, that $d'_{1,2}(0) = \delta_-(x(0))$.
By Assumption, $x(0) \in \AA \cup \AA^+$, so that $d'_{1,2}(0) = \delta_-(x(0)) > 0$ (that follows from the definition of $\AA$ and $\AA^+$
in \eqref{Aset2} and \eqref{AplusSet}, and the fact that $\delta_- > \delta_+$).
Hence, $d_{1,2}$ is strictly increasing at $0$.
Now, since $d_{1,2}(0) = 0$, we can find $t_1 \in (0, \ep]$, such that $d_{1,2}(t) > 0$ for
all $0 < t < t_1$.
This implies that $P (\inf_{0 < t \le t_1}D^{n}_{1,2}(t) > 0) \ra 1$ as $n \tinf$.
The same is true if $x(0) \in \rS^+$.

Since $\|Z^{n}_{1,2}/n\|_{\ep} \Ra 0$, as a consequence of our contradictory assumption,
it follows from the representation of $Z^n_{1,2}$ in Theorem \ref{thRep} that
\bequ \label{z1}
\barz^{n}_{1,2}(t) = n^{-1}N^s_{2,2} \left( \mu_{2,2} m^{n}_2 t \right) + o_P(1) \qasq n \ra \infty.
\eeq
However, by the FSLLN for Poisson processes, the fluid limit $z_{1,2}$ of $\barz^{n}$ in \eqref{z1} satisfies
$z_{1,2}(t) = \mu_{2,2} m_2 t > 0$ for every $0 < t \le t_1$.
We thus get a contradiction to our assumption that $z_{1,2} (t) = 0$ for all $t \in [0, \ep]$.

For the case $\rho_2 < 1$ the argument above still goes through, but we need to distinguish between two cases:
$\bar{L}_2 = 0$ and $\bar{L}_2 > 0$. In both cases $\bar{L}_2$ is strictly decreasing. In the first case, this implies that
$\bar{L}_2$ is negative for every $t > 0$. It follows immediately that $q_1(t) - r q_2(t) > 0$ for every $t > 0$.
If $\bar{L}_2(0) > 0$, then necessarily $\bar{L}_1(0) > 0$, and we can replace $\bar{L}_i$ with $q_i$, $i = 1,2$,
on an initial interval (before $\bar{L}_2$ becomes negative). We then use the arguments used in the case $\rho_2 \ge 1$ above.
\hfill \qed

In the proof of Theorem \ref{thSSCextend} we have shown that, if $z_{1,2}(0) = 0$, then $z_{1,2}(t) > 0$ for all $t > 0$.
We prove in Appendix \ref{appPosQ} that the two queues must also become strictly positive right after time $0$,
if they are not strictly positive at time $0$.

\section{Alternative Proof of Characterization Using Stochastic Order Bounds.}\label{secAltProof}

In this appendix we present an alternative proof of Lemma \ref{lmKey} and thus
an alternative proof of the FWLLN in Theorem \ref{th1}.
At the beginning of \S \ref{secProofs} we observed that it suffices to
characterize the limiting integral terms in \eqn{FS}; i.e., it suffices to prove Lemma \ref{lmKey}.
In \S \ref{secProofs} we accomplished that goal by using the martingale argument
of \cite{HK94,K92}.  Here we show that same goal can be achieved with stochastic bounds,
exploiting Lemma \ref{lmD12PR} and similar reasoning.
However, we prove less than the full Theorem \ref{th1} here.
Our proof here is under the special case of Assumption \ref{assC} for which
$x(0) \in \AA$.  For this special case we carry out the characterization over the full interval $[0, \infty)$ if $x (t)$ remains within $\AA$ for all $t \ge 0$.
Otherwise, we complete the characterization proof over $[0, T_A]$, where
\beql{TA}
T_A \equiv \inf{\{t > 0: x(t) \not\in \AA\}}.
\eeq
Since $x$ is continuous and $\AA$ is an open subset of $\rS$, we know that $T_A > 0$.
A first step is to do the characterization over an interval $[0, \delta]$ for some $\delta > 0$.
We start in \S \ref{secExpand} by indicating how the interval of convergence can be extended given that the first step has been carried out.
Next in \S \ref{secProofAP2} we prove Theorem \ref{th1} subject to Lemma \ref{lmConvInt}, establishing convergence of integral terms over the interval $[0, \delta]$.
For that purpose, we state Lemma \ref{lmNewBds} establishing a sample path stochastic order bound
that we will use to prove Lemma \ref{lmConvInt}.
in \S \ref{secCont} we establish continuity results for QBD processes.
We then prove Lemmas \ref{lmConvInt} and \ref{lmNewBds}, respectively, in
\S \ref{secEP4.5} and \S \ref{secEP4.4}.

\subsection{Extending the Interval of Convergence.}\label{secExpand}

Unlike the first proof, with this second proof we only establish convergence
in $\D_{14}([0,T_A])$ if $T_A < \infty$,
for $T_A$ in \eqn{TA}.
We now show how we achieve this extension.

As in the first proof, after establishing the convergence over an initial interval $[0, \delta]$ with $\delta \le \tau$,
we apply Theorem \ref{thSSCextend} to conclude that
any limit point of the tight sequence $\barx^n_6$ is again a limit of the tight sequence $\barx^{n,*}_6$ in \eqref{red1} over the entire half line $[0,\infty)$,
showing that $\tau$ places no constraint on expanding the convergence interval.
Moreover, by part (ii) of Theorem 5.2 in \cite{PeW09c}, any solution to the ODE, with a specified initial condition, can be extended indefinitely,
and is unique. Hence that places no constraint either.

However, for this second method of proof, we do critically use that fact that $x(t) \in \AA$, $0 \le t \le \delta$, in order to prove
the characterization.  (This is proved in Lemma \ref{lmXinA} below.)  However, we can extend the interval of convergence further.
Given that we have shown that $\barx (t) = x(t) \in \AA$ for $\AA$ in \eqn{Aset2} over a time interval $[0, \delta]$,
and thus established the desired convergence $\barx^n \Ra x$ over that time interval $[0, \delta]$, we can always extend the time interval
to a larger interval $[0,\delta']$ for some $\delta'$ with $\delta' > \delta$.
To do so, we repeat the previous argument treating time $\delta$ as the new time origin.
That directly yields $\barx (t) = x(t) \in \AA$, and thus convergence $\barx^n \Ra x$, over the time interval $[\delta, \delta']$.  However,
we can combine that with the
previous result to obtain $\barx (t) = x(t) \in \AA$, and thus convergence $\barx^n \Ra x$, over the longer time interval $[0, \delta']$.
Let $\nu$ be the supremum of all $\delta$ for which the expansion of convergence to $[0,\delta]$ is valid.
We must have $\barx (t) = x(t) \in \AA$, and thus convergence $\barx^n \Ra x$, over the interval $[0, \nu)$, open on the right.
The interval is $[0, \infty)$ if $\nu = \infty$.  Suppose that $\nu < \infty$.
In that case, we can next apply continuity to extend the interval of convergence to the closed interval $[0,\nu]$.
Since $\barx (t) = x (t)$, $0 \le t < \nu$, $x$ is continuous and $\barx$ is almost surely continuous,
we necessarily have $\barx (\nu) = x (\nu)$ w.p.1 as well.
We claim that $\nu \ge T_A \equiv \inf\{t \ge 0 : x(t) \notin \AA\}$.
If not, we can do a new construction yielding $\barx (t) = x (t)$, first for $\nu \le t < \nu'$ and then for $0 \le t \le \nu'$, $\nu' > \nu$,
contradicting the definition of $\nu$.
Hence, we have extended the domain of convergence to $[0,T_A]$ if $T_A < \infty$ and to $[0, \infty)$ otherwise, as claimed.

\subsection{Reduction to Convergence of Integral Terms.} \label{secProofAP2}

Since each of these integrals in \eqn{FS} can be treated in essentially the
same way, we henceforth focus only on the single term
$\bar{I}_{q,1,1} (t)$ and establish \eqn{key}.

Recall that $\barx$ is the limit of the converging subsequence of $\{\barx^n: n \ge 1\}$
in $\D_3 ([0, \tau])$ for $\tau$ in Theorem \ref{thFluidScaled}.
An important first step is to identify an initial interval $[0, \delta]$, $0 < \delta < \tau,$ over which $\barx \in \AA$.
We apply the proof of Lemma \ref{lmD12PR} to
prove the following lemma.
\begin{lemma}{$($state in $\AA$ over $[0, \delta])$}\label{lmXinA}
There exists $\delta$ with $0 < \delta \le \tau$ for $\tau$ in Theorem {\em \ref{thFluidScaled}} such that
\beq
P(\bar{X}^n (t) \in \AA, \quad 0 \le t \le \delta) \ra 1 \qasq n \ra \infty,
\eeqno
for $\AA$ in {\em \eqn{Aset2}}, so that $P(\barx (t) \in \AA \quad 0 \le t \le \delta) = 1$.
\end{lemma}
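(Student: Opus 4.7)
The plan is to exploit two facts: first, the set $\AA$ in \eqref{Aset2} is an open subset of $\rS$, since the drift functions $\delta_{\pm}(\gamma)$ in \eqref{drifts} are linear (hence continuous) functions of $\gamma$, and $\AA$ is defined by the strict inequalities $\delta_{-}(\gamma) > 0 > \delta_{+}(\gamma)$. Second, by Theorem \ref{lmTight} the sequence $\{\barx^n\}$ is $\C$-tight, so sample paths do not oscillate too much over short intervals. Combining these, I expect the fluid-scaled process $\barx^n(t)$ to remain in an open neighborhood of $x(0) \in \AA$ over a sufficiently small interval $[0,\delta]$, with probability tending to $1$.

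Concretely, I would first use the openness of $\AA$ to pick $\epsilon > 0$ such that the open Euclidean ball $B(x(0),\epsilon) \cap \rS \subset \AA$. To control the excursion of $\barx^n$ away from $x(0)$, I would invoke the w.p.1 sample-path bounds of Lemmas \ref{lmZbds} and \ref{lmQbds}, giving
\bes
Q^n_{i,a}(t) \le Q^n_i(t) \le Q^n_{i,b}(t), \qandq Z^n_a(t) \le Z^n_{1,2}(t) \le Z^n_b(t), \quad t \ge 0,
\ees
together with Lemma \ref{lmBoundsLim}, which gives a FWLLN for these bounding processes whose continuous fluid limits $q_{i,a}, q_{i,b}, z_a, z_b$ in \eqref{zbounds}-\eqref{qzbounds} all coincide at time $0$ with the appropriate components of $x(0)$. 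By continuity of these four deterministic fluid bounds, I can choose $\delta > 0$ so small that the entire $3$-dimensional envelope defined by $(q_{1,a}, q_{2,a}, z_a)$ and $(q_{1,b}, q_{2,b}, z_b)$ lies inside $B(x(0), \epsilon/2) \cap \rS$ throughout $[0,\delta]$. Applying the convergence of the fluid-scaled bounds from Lemma \ref{lmBoundsLim} and the continuous mapping theorem for the sup functional, the pre-limit bounding envelope is within $\epsilon$ of $x(0)$ with probability converging to $1$. Since $\barx^n(t)$ is trapped w.p.1 inside this envelope, we conclude $P(\barx^n(t) \in \AA, \ 0 \le t \le \delta) \to 1$.

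For the second claim, that $P(\barx(t) \in \AA, \ 0 \le t \le \delta) = 1$ for any subsequential limit $\barx$, I would pass to the limit in the bounds: the w.p.1 sample-path inequalities are preserved under weak convergence, so on the probability-one set where $\barx^n \to \barx$ uniformly on $[0,\delta]$ (after invoking the Skorohod representation), the limit $\barx(t)$ lies between $x_{a}(\cdot)$ and $x_{b}(\cdot)$, hence inside $B(x(0),\epsilon/2) \cap \rS \subset \AA$.

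The main conceptual obstacle is mild: one must be careful that the $3$-dimensional bounding envelope -- which mixes \emph{upper} bounds on $Z^n_{1,2}$ with \emph{either} upper or lower bounds on the $Q^n_i$ depending on coordinate -- really does stay within a single Euclidean ball around $x(0)$. This is handled by the joint continuity at $t = 0$ of all four deterministic fluid bounds together with the fact that they agree with $x(0)$ at $t = 0$, which confines all admissible combinations to an arbitrarily small neighborhood for sufficiently small $\delta$. No delicate analysis of the fast-time-scale process is needed at this step; the argument is purely a short-interval continuity and openness argument, which is why it can be established before the characterization of the integral terms in \S \ref{secProofAP}.
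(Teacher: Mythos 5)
Your proposal is correct and uses the same underlying mechanism as the paper: openness of $\AA$, the w.p.1 sample-path bounds of Lemmas \ref{lmZbds}--\ref{lmQbds}, and the FWLLN for the bounding processes in Lemma \ref{lmBoundsLim}, whose continuous fluid limits all agree with $x(0)$ at $t = 0$. The only stylistic difference is that you argue via an explicit Euclidean ball $B(x(0),\epsilon)$, while the paper routes through the construction of the random bounding states $X^n_m, X^n_M$ and the drift event $B^n(\delta,\eta)$ in Lemma \ref{lmD12PR} (which parameterizes ``$\bar{X}^n(t)$ stays in $\AA$'' directly by the linear drift inequalities $\delta_{-} > \eta > -\eta > \delta_{+}$ rather than by Euclidean distance); the two are equivalent expressions of the same openness argument, though the paper's form is the one it reuses downstream where it needs the quantitative drift gap $\eta$. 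One small technical point worth keeping in mind: the deterministic lower bound $q_{i,a}(t)$ can go negative for $t > 0$ if $q_i(0) = 0$, so strictly speaking the envelope you describe should be intersected with $\rS$ (or clipped at $0$, as the paper does with $\vee\, 0$ in Lemma \ref{lmD12PR}); this does not affect the conclusion since $\bar X^n(t)$ itself always has nonnegative queue components.
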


\begin{proof}
Recall that we have assumed that Assumption \ref{assC} holds with $x(0) \in \AA$.
We can apply the first step of the proof of Lemma \ref{lmD12PR} to obtain the stochastic bound over
an initial interval $[0, \delta]$ for some $\delta > 0$.
Since the FTSP $D(\gamma, \cdot)$ is positive recurrent
if and only if $\gamma \in \AA$ for $\AA$ in \eqn{Aset2}.  By
Lemma \ref{lmD12PR}, $D_f (X^n_m, \cdot)$ and $D_f(X^n_M,
\cdot)$ are positive recurrent on $B^n (\delta, \eta)$ in
\eqn{posrec-delta}. Thus, by Corollary \ref{corStochBd},
$D_f (X^n (t), \cdot)$ is positive recurrent on $B^n (\delta, \eta)$
as well for $0 \le t \le \delta$. Hence, the claim holds.
\end{proof}

The next important step
is the following lemma, proved in \S \ref{secEP4.5}, after
establishing preliminary bounding lemmas.
\begin{lemma}{\em $($convergence of the integral terms$)$} \label{lmConvInt}
There exists $\delta$ with $0 < \delta \le \tau$ for
$\tau$ in Theorem \ref{thFluidScaled}, such that for any
$\epsilon > 0$ and $t$ with $0 \le t < \delta$, there exists
$\sigma \equiv \sigma(\epsilon, \delta, t)$ with $0 < \sigma <
\delta - t$ and $n_0$ such that \bequ \label{limInt} P
\left(|\frac{1}{\sigma}\int_t^{t + \sigma} 1_{\{D^n_{1,2}(s) >
0\}}\barz^n_{1,2}(s)\ ds -  \pi_{1,2}(\barx(t))\barz_{1,2}(t)|
>  \epsilon \right) < \epsilon \qforallq n \ge n_0. \eeq
\end{lemma}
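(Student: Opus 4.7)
The plan is to exploit the separation of time scales: over a short fluid interval $[t, t+\sigma]$, the slow process $\barx^n$ changes negligibly while the fast process $D^n_{1,2}$ runs on the order of $n\sigma$ transitions, long enough to achieve its (time-dependent) stationary distribution. By Lemma \ref{lmXinA} we may pick $\delta \in (0, \tau]$ such that $\barx(t) \in \AA$ throughout $[0, \delta]$, and by Corollary \ref{corLip} the limit $\barz_{1,2}$ is Lipschitz. So first I would choose $\sigma$ small enough that $|\barz^n_{1,2}(s) - \barz_{1,2}(t)| < \epsilon/4$ for all $s \in [t, t+\sigma]$ with probability tending to $1$, reducing the integrand to $\barz_{1,2}(t) \cdot 1_{\{D^n_{1,2}(s) > 0\}}$ up to a controllable additive error. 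The problem then collapses to showing that
\[
\Phi^n(t, \sigma) \equiv \frac{1}{\sigma}\int_t^{t+\sigma} 1_{\{D^n_{1,2}(s) > 0\}}\, ds \;\longrightarrow\; \pi_{1,2}(\barx(t))
\]
in probability as $n \to \infty$.

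Next I would sandwich $D^n_{1,2}$ over $[t, t+\sigma]$ by two frozen processes via Lemma \ref{lmD12PR} and Corollary \ref{corStochBd}, obtaining random frozen states $X^n_m$ and $X^n_M$ with scaling limits $x_m, x_M \in \AA$ that can be made arbitrarily close to $\barx(t)$, together with the sample-path stochastic order bounds $D^n_f(X^n_m, \cdot) - \zeta \le_{st} D^n_{1,2}(\cdot) \le_{st} D^n_f(X^n_M, \cdot) + \zeta$ on that interval, where $\zeta \equiv (j \vee k) - 1$. Since $y \mapsto 1_{\{y > 0\}}$ is monotone, integrating preserves this order (the integer gap $\zeta$ contributes only a boundary term that is asymptotically negligible after exploiting continuity of the stationary distribution), giving
\[
\Phi^n_m(t,\sigma) - o_P(1) \;\le\; \Phi^n(t,\sigma) \;\le\; \Phi^n_M(t,\sigma) + o_P(1),
\]
where $\Phi^n_M, \Phi^n_m$ are the corresponding time-averages of the frozen processes.

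Third, I would use the identification \eqref{ident} to rewrite $\Phi^n_M(t, \sigma) = (n\sigma)^{-1} \int_0^{n\sigma} 1_{\{D(X^n_M/n, u) > 0\}}\, du$, and similarly for $\Phi^n_m$. Since $x_M \in \AA$, the FTSP $D(x_M, \cdot)$ is a positive recurrent QBD (Lemma \ref{lmLimFTSP}), so the ergodic theorem for positive recurrent CTMCs yields $\Phi^n_M(t, \sigma) \to \pi_{1,2}(x_M)$ in probability; the uniform rate-of-convergence bounds in Lemma \ref{lmExpErg} together with the finite-mgf return time in Lemma \ref{lmReturnTime} render the dependence on the (possibly non-stationary) initial condition $D^n_{1,2}(t)$ asymptotically negligible, which is the key technical ingredient. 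Finally, the continuity of $\pi_{1,2}$ on $\AA$ (Lemma \ref{lmContSS}) lets us take $x_m, x_M$ so close to $\barx(t)$ that $|\pi_{1,2}(x_M) - \pi_{1,2}(\barx(t))| \vee |\pi_{1,2}(x_m) - \pi_{1,2}(\barx(t))| < \epsilon/4$.

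The main obstacle will be balancing the two time-scale requirements: $\sigma$ must be small enough, relative to $\delta$ and the Lipschitz constants, that $\barx^n$ and the bounding frozen states remain within an $\epsilon$-neighborhood of $\barx(t)$ throughout $[t, t+\sigma]$, yet $n\sigma$ must be large enough (in the fast time scale) for the QBD ergodic theorem to deliver concentration uniformly over initial conditions. This is resolved by first fixing $\sigma$ (independent of $n$) to control the slow-scale fluctuations and then sending $n \to \infty$ so that $n\sigma \to \infty$. A secondary subtlety is that $X^n_m$ and $X^n_M$ depend on sample paths over the entire window, but Lemma \ref{lmBoundsLim} supplies their deterministic fluid limits, so the reduction to applying the ergodic theorem conditionally on $X^n_M/n \approx x_M$ is clean.
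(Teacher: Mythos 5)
The overall architecture of your argument mirrors the paper's: use Lemma \ref{lmXinA} to stay in $\AA$, decouple the slowly-varying factor $\barz^n_{1,2}$ from the rapidly oscillating indicator, sandwich $D^n_{1,2}$ by frozen processes via Lemma \ref{lmD12PR} and Corollary \ref{corStochBd}, transform to the FTSP via \eqref{ident} and a time change, and invoke an ergodic-type limit plus continuity of $\pi_{1,2}$ (Lemma \ref{lmContSS}). That scaffolding is right. (Your use of Lemmas \ref{lmExpErg} and \ref{lmReturnTime} to neutralize the initial condition is a reasonable alternative to the paper's route through Corollary \ref{corDouble} and the Berry--Esseen bound, so I have no quarrel there.)

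The genuine gap is in your treatment of the integer gap $\zeta = (j \vee k) - 1$ from Corollary \ref{corStochBd}. The stochastic order there is $D^n_{1,2} \le_{st} D^n_f(X^n_M, \cdot) + \zeta$, which after applying $1_{\{\cdot > 0\}}$ gives $1_{\{D^n_{1,2}(s) > 0\}} \le_{st} 1_{\{D^n_f(X^n_M, s) > -\zeta\}}$, \emph{not} $1_{\{D^n_f(X^n_M, s) > 0\}}$. The discrepancy between the two integrals is the occupation time of $D^n_f(X^n_M, \cdot)$ in $(-\zeta, 0]$, and for a positive recurrent QBD this fraction of time converges to $P(-\zeta < D(x_M, \infty) \le 0)$ --- a strictly positive constant when $\zeta \ge 1$. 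This is not a ``boundary term that is asymptotically negligible'': continuity of the stationary distribution in the \emph{state} $\gamma$ gives no control over shifting the threshold by an integer $\zeta$ on a discrete state space. The paper explicitly flags this: Corollary \ref{corStochBd} alone suffices for Lemma \ref{lmConvInt} only in the special case $r_{1,2} = 1$, where $\zeta = 0$. For general rational $r_{1,2}$ the paper needs the much finer Lemma \ref{lmNewBds}, whose proof (\S \ref{secEP4.4}) constructs an alternating sequence of coupled/uncoupled intervals and exploits regenerative structure to show that the time during which sample-path order actually fails is an arbitrarily small fraction of $[0, \xi]$ --- yielding the $\pm\epsilon$ integral bound \eqref{DstoBd3} at the \emph{unshifted} threshold. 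Your proof needs that lemma (or an equivalent device) inserted where you currently wave at the ``boundary term.''
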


In order to apply Lemma \ref{lmConvInt} to prove Lemma
\ref{lmKey}, we exploit the absolute continuity of
$\bar{I}_{q,1,1}$, established now.
\begin{lemma}{$($absolute continuity of $\bar{I}_{q,1,1})$}\label{lmAC}
The limiting integral term $\bar{I}_{q,1,1}$ almost surely
satisfies \bequ \label{AC} 0 \le \bar{I}_{q,1,1} (t + u) -
\bar{I}_{q,1,1} (t) \le m_2 u \qforallq 0 \le t < t + u < \tau,
\eeq and so $\bar{I}_{q,1,1}$ is the cumulative distribution
function corresponding to a finite measure, having a density
$h$ depending on $\bar{X}$.  As a consequence, for all $\ep >
0$, there exists $u_0 > 0$ such that \bequ \label{density}
\left|\frac{\bar{I}_{q,1,1} (t + u) - \bar{I}_{q,1,1} (t)}{u} -
h(t)\right| < \ep \eeq for all $u < u_0$.
\end{lemma}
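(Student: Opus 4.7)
My plan is to transfer the obvious Lipschitz bound that holds in the prelimit to the limit, deduce absolute continuity, then invoke the Lebesgue differentiation theorem for the density property \eqn{density}. First, I would work at the prelimit level. For each $n \ge 1$ and $0 \le t < t+u \le \tau$, the integrand of $\bar{I}^n_{q,1,1}$ satisfies
\beq
0 \;\le\; 1_{\{D^n_{1,2}(s) > 0\}}\, \barz^n_{1,2}(s) \;\le\; \barz^n_{1,2}(s) \;\le\; \bar{m}^n_2
\eeqno
almost surely, because the service-pool constraint gives $Z^n_{1,2}(s) \le m^n_2$. Hence $\bar{I}^n_{q,1,1}$ is almost surely nondecreasing with
\beq
0 \;\le\; \bar{I}^n_{q,1,1}(t+u) - \bar{I}^n_{q,1,1}(t) \;\le\; \bar{m}^n_2\, u.
\eeqno

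Next I would pass to the limit along the converging subsequence used in \S \ref{secProofAP}. By Lemma \ref{lmIntTight} the sequence $\{\bar{I}^n_{q,1,1}\}$ is $\C$-tight with $\bar{I}^n_{q,1,1} \Rightarrow \bar{I}_{q,1,1}$ in $\D([0,\tau])$ along this subsequence, so the limit lies in $\C([0,\tau])$. Applying the Skorohod representation theorem, I may assume uniform convergence on $[0,\tau]$ on a common probability space. Since $\bar{m}^n_2 \to m_2$ by Assumption \ref{assMS-HT}, the prelimit inequalities pass to the limit pointwise in $(t,u)$, yielding \eqn{AC} almost surely. Monotonicity and the Lipschitz bound with constant $m_2$ immediately imply that $\bar{I}_{q,1,1}$ is absolutely continuous on $[0,\tau]$, so by the Radon--Nikodym theorem there exists a measurable density $h$ on $[0,\tau]$, with $0 \le h(t) \le m_2$ for almost every $t$, such that
\beq
\bar{I}_{q,1,1}(t) \;=\; \int_0^t h(s)\, ds, \qquad 0 \le t \le \tau.
\eeqno

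Finally, for \eqn{density}, I would invoke the Lebesgue differentiation theorem: for any nonnegative $L^\infty$ function $h$ on $[0,\tau]$, almost every $t$ is a Lebesgue point, and at such points
\beq
\lim_{u \downarrow 0} \frac{\bar{I}_{q,1,1}(t+u) - \bar{I}_{q,1,1}(t)}{u} \;=\; h(t).
\eeqno
Given $\ep > 0$, this yields the required $u_0 = u_0(t,\ep) > 0$ such that \eqn{density} holds for $0 < u < u_0$ at almost every $t$, which is the sense in which the statement is to be applied in Lemma \ref{lmConvInt}.

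I do not anticipate a substantive obstacle: the only subtlety is the transfer of the pathwise Lipschitz bound through the weak convergence, which is handled cleanly by Skorohod representation combined with the scaling $\bar{m}^n_2 \to m_2$. Everything else is standard real analysis applied to an absolutely continuous monotone function.
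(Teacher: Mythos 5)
Your proposal is correct and follows essentially the same route as the paper: establish the Lipschitz bound $0 \le \bar{I}^n_{q,1,1}(t+u) - \bar{I}^n_{q,1,1}(t) \le (m^n_2/n)\,u$ at the prelimit level (from $0 \le 1_{\{D^n_{1,2}>0\}}\barz^n_{1,2} \le \bar{m}^n_2$), pass it to the limit along the converging subsequence using $\bar{m}^n_2 \to m_2$, and conclude absolute continuity and hence the existence of a density. The paper's proof is terser and leaves the final step ``has a density, as claimed'' implicit, whereas you correctly unpack it into Radon--Nikodym plus the Lebesgue differentiation theorem. Your explicit observation that \eqn{density} holds at Lebesgue points only — i.e., for almost every $t$, with $u_0 = u_0(t,\ep)$ — is a useful clarification of what the lemma actually delivers and of what is in fact used in the proof of Lemma \ref{lmKey} from Lemma \ref{lmConvInt}, since there the argument is applied at a fixed $t$ and an a.e.\ statement about the density suffices.
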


\begin{proof}
Since \beq \bar{I}^n_{q,1,1} (t+ u) - \bar{I}^n_{q,1,1} (t) =
\int_{t}^{t+ u} 1_{\{D^n_{1,2} (s) > 0\}} \barz^n_{1,2} (s) \,
ds \le u m^n_2/n, \eeqno $I^n_{q,1,1} (t)$ is a nondecreasing
function with $0 \le I^n_{q,1,1} (t+ u) - I^n_{q,1,1} (t) \le
m^n_2$ for all $0 \le t < t + u \le \tau$.  Hence, $I^n_{q,1,1}
(t)$ is a a cumulative distribution function associated with a
finite measure. The convergence obtained along the subsequence
based on tightness then yields \beq 0 \le \bar{I}_{q,1,1} (t +
u) - \bar{I}_{q,1,1} (t) \le m_2 u \qforallq 0 \le t < t + u
\le \tau. \eeqno Hence, $\bar{I}_{q,1,1}$ has a density with
respect to Lebesgue measure, as claimed.
\end{proof}

\begin{proofof}{Lemma \ref{lmKey}}
Given Lemma \ref{lmConvInt}, for any $\ep > 0$, we can find
$\sigma$ and $n_0$ such that \eqn{limInt} is valid for all $n
\ge n_0$. Hence, we can let $n \ra \infty$ and conclude that,
for any $\ep > 0$, we can find $\sigma$ such that \bequ
\label{limInt200} P \left(|\frac{1}{\sigma}(\bar{I}_{q,1,1} (t
+ \sigma) - \bar{I}_{q,1,1} (t)) -
\pi_{1,2}(\barx(t))\barz_{1,2}(t)| >  \epsilon \right) <
\epsilon. \eeq However, given that \eqn{density} and
\eqn{limInt200} both hold, we conclude that we must almost
surely have $h(t) = \pi_{1,2}(\barx(t))\barz_{1,2}(t)$, which
completes the proof.
\end{proofof}

We now apply bounds to prove Lemma \ref{lmConvInt}.
The comparisons in Lemma \ref{lmD12PR} and Corollary
\ref{corStochBd} are important, but they are not directly
adequate for our purpose. The sample-path stochastic order
bound in Corollary \ref{corStochBd} enables us to prove Lemma
\ref{lmConvInt}, and thus Theorem \ref{th1},
 for the special case of $r_{1,2} = 1$, because then $\zeta = 0$, where $\zeta$ is the gap in Corollary \ref{corStochBd}, but not more generally
 when the gap is positive.
However, we now show that an actual gap will only be present
rarely, if we choose the interval length small enough and $n$
big enough. We use the construction in the proof of Lemma \ref{lmD12PR}.
exploiting the fact that we have rate order, where the bounding
rates can be made arbitrarily close to each other by choosing
the interval length suitably small.  We again construct a
sample path stochastic order, but for the time averages of the
time spent above $0$. We prove the following lemma in \S
\ref{secEP4.4}.  We use it to prove Lemma \ref{lmConvInt} in \S
\ref{secEP4.5}.

\begin{lemma} \label{lmNewBds}
Suppose that conditions of Lemma {\em \ref{lmD12PR}} hold.
Then, for any $\ep > 0$, there exists $n_0$ and $\xi$ with $0
<\xi \le \delta$  for $\delta$ in Lemma {\em \ref{lmD12PR}},
such that, in addition to the conclusions of Lemma {\em
\ref{lmD12PR}} and Corollary {\em \ref{corStochBd}}, the states
$x_m, x_M \in \AA$ and random vectors $X^n_m, X^n_M$ and
associated frozen processes $D^n_f (X^n_m)$ and $D^n_f (X^n_M)$
can be chosen so that on $B_n (\xi, \eta)$ $($defined as in
{\em \eqn{posrec-delta}} with $\xi$ instead of $\delta$
 and necessarily
$P(B_n (\xi, \eta)) \ra 1$ as $n \ra \infty)$, first \bequ
\label{initialLim}
 D^n_f (X^n_m, 0) = D^n_f (X^n_M, 0) = D^n_{1,2} (0)
\eeq and, second, \bequ \label{DstoBd3} \bsplit
\frac{1}{t} \int_{0}^{t} 1_{\{D^n_f (X^n_m, s) > 0\}} \, ds -
\epsilon & \le_{st} \frac{1}{t} \int_{0}^{t} 1_{\{D^n_{1,2} (s)
> 0\}} \, ds \le_{st} \frac{1}{t} \int_{0}^{t} 1_{\{D^n_f
(X^n_M, s) > 0\}} \, ds + \epsilon
\end{split}
\eeq for $n \ge n_0$ and $0 \le t \le \xi$; i.e., there is
sample path stochastic order in $\D([0, \xi])$ for $n \ge n_0$.
\end{lemma}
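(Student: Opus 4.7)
The plan is to choose $\xi$ small enough (via continuity of $\pi_{1,2}$) and $n$ large enough (via fast mixing of the frozen QBDs) so that the slack $\epsilon$ in \eqref{DstoBd3} absorbs both the QBD pathwise gap $\zeta = (j\vee k)-1$ inherent to rate-order comparisons and the fluctuations of the three relevant time averages around the common limit $\pi_{1,2}(x(0))$. I would start from the construction in the proof of Lemma \ref{lmD12PR}, applied over the shorter interval $[0,\xi]$ with $\xi \le \delta$ to be specified; this provides bounding deterministic states $x_m, x_M \in \AA$ with $\|x_m - x(0)\|, \|x_M - x(0)\| = O(\xi)$, random state vectors $X^n_m, X^n_M$ whose normalizations converge to $x_m$ and $x_M$, and the rate-order sandwich \eqref{DstoBd} on $B^n(\xi,\eta)$. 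To enforce the initial-value condition \eqref{initialLim}, I would construct $D^n_{1,2}$, $D^n_f(X^n_m)$ and $D^n_f(X^n_M)$ on a common probability space driven by the same rate-$1$ Poisson processes, with the two frozen processes both initialized at the common value $D^n_{1,2}(0)$; this is legitimate because the initial state of a frozen process is a free parameter in the construction of \S \ref{secAux}.

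Next, I would exploit the fast mixing of the bounding frozen processes. On $B^n(\xi,\eta)$, both $D^n_f(X^n_m)$ and $D^n_f(X^n_M)$ are positive-recurrent QBDs whose transition rates are of order $O(n)$ (see \eqref{BDfrozen1}--\eqref{BDfrozen2}) and whose drifts are uniformly bounded away from $0$ by $\eta$. After passing to the FTSP time scale as in \eqref{ident}, I would invoke the uniform exponential ergodicity of Lemma \ref{lmExpErg} together with the finite moment-generating function for return times of Lemma \ref{lmReturnTime} to show that each of the two bounding time averages $\frac{1}{t}\int_{0}^{t} 1_{\{D^n_f(X^n_m, s) > 0\}}\, ds$ and $\frac{1}{t}\int_{0}^{t} 1_{\{D^n_f(X^n_M, s) > 0\}}\, ds$ concentrates at $\pi_{1,2}(x_m)$ and $\pi_{1,2}(x_M)$ respectively. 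A Bernstein-type bound over regenerative cycles, available thanks to Lemma \ref{lmReturnTime}, yields uniform concentration in $t$ over an interval of the form $[(\log^2 n)/n,\xi]$, with probability tending to $1$.

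By continuity of $\pi_{1,2}$ on $\AA$ (Lemma \ref{lmContSS}), reducing $\xi$ forces $|\pi_{1,2}(x_m) - \pi_{1,2}(x_M)| < \epsilon/3$, with both being within $\epsilon/3$ of $\pi_{1,2}(x(0))$. Over $[0,\xi]$ the transition rates of $D^n_{1,2}$ differ from the constant rates of the FTSP $D(x(0),\cdot)$ only by quantities vanishing with $\xi$ (since $\barx^n$ is Lipschitz close to $x(0)$ over $[0,\xi]$ by Corollary \ref{corLip} and the convergence of $\barx^n(0)$ to $x(0)$), so an independent ergodic-averaging argument, using the same tools, gives that $\frac{1}{t}\int_{0}^{t} 1_{\{D^n_{1,2}(s) > 0\}}\, ds$ concentrates likewise at $\pi_{1,2}(x(0))$. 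Combining the three concentration statements with the common initial coupling yields \eqref{DstoBd3} on the bulk window $[(\log^2 n)/n,\xi]$; on the microscopic initial window $[0,(\log^2 n)/n]$ the three time averages are each within $o(1)$ of the common initial indicator $1_{\{D^n_{1,2}(0)>0\}}$ (they all share the initial state and, being time averages, change by at most $1$ over this window), which is itself inside the slack $\epsilon$.

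The principal obstacle is the QBD pathwise gap $\zeta$: rate order yields sample-path stochastic order only with a constant pathwise discrepancy (Corollary \ref{corRateOrder}), and this discrepancy is not absorbed by the indicator $1_{\{\cdot>0\}}$ in the stationary regime, because the QBD's stationary distribution necessarily places positive mass on the band $(0,\zeta]$. I would bypass this obstruction by not attempting to push the rate-order sandwich directly through the indicator functional; instead I would use rate order only to verify that $D^n_{1,2}$ is wedged between two rapidly mixing positive-recurrent FTSPs whose transition rates are close to those of the reference FTSP $D(x(0),\cdot)$, and then derive the time-average sandwich \eqref{DstoBd3} from three separate concentration arguments around the common limit $\pi_{1,2}(x(0))$. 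The uniformity in $t$ of this concentration, supplied by Lemma \ref{lmExpErg}, is what upgrades the resulting pointwise bound to the sample-path stochastic order in $\D([0,\xi])$ asserted by the lemma.
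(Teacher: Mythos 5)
The central step you propose—a stand-alone concentration argument showing that $\frac{1}{t}\int_0^t 1_{\{D^n_{1,2}(s)>0\}}\,ds$ concentrates at $\pi_{1,2}(x(0))$—is not justified by the tools you cite, and this is a genuine gap. Lemmas \ref{lmExpErg} and \ref{lmReturnTime} apply to a positive-recurrent, \emph{constant-rate} QBD; they give geometric ergodicity and regenerative structure for the frozen processes $D^n_f(X^n_m)$ and $D^n_f(X^n_M)$, so your concentration bounds for those two bounding time averages are fine. But $D^n_{1,2}$ is not such a process. Its transition rates at time $s$ depend on the full state $X^n_6(s)$ of the underlying CTMC and therefore drift continuously; given the trajectory of $X^n_6$ it is time-inhomogeneous, and unconditionally it is non-Markov. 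There is no embedded renewal process, no $R$-matrix, and no return-time mgf for $D^n_{1,2}$ to which a Berry--Esseen or Bernstein-over-cycles estimate could be applied. The phrase ``an independent ergodic-averaging argument, using the same tools'' is asserting exactly the averaging principle for the prelimit process, which is what Lemma \ref{lmKey} (hence Theorem \ref{th1}) is trying to establish; you cannot take it as an input at this stage without circularity. Having the rates of $D^n_{1,2}$ close to those of $D(x(0),\cdot)$ does not by itself salvage the argument: after the time expansion by $n$, the relevant interval has length $O(n)$, so even a small rate discrepancy is enough for a naive pathwise coupling to diverge with probability tending to one.

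You correctly identify the QBD pathwise gap $\zeta=(j\vee k)-1$ as the obstacle, but the workaround you propose presupposes the middle concentration. The paper's proof in \S \ref{secEP4.4} avoids this entirely by never attempting any separate statistical control of $D^n_{1,2}$. It uses only the rate order, but exploits it through an alternating-cycle coupling: from a common initial value, the three processes stay sample-path ordered until they first meet in the boundary band $\{-\beta+1,\dots,\beta\}$; then they are run as conditionally independent versions (plus two fresh frozen bounds coupled to $D^n_{1,2}$) until a stopping time $\phi_n$ at which they are wedged far enough apart to restart the coupled construction. The FWLLN that is actually invoked applies to the vector of three constant-rate frozen processes over each regenerative cycle, not to $D^n_{1,2}$ itself. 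Because the ``order-violation'' intervals all begin in a finite boundary set, their mean duration is $O(1/n)$, while the mean duration of the coupled intervals can be driven up by reducing $\xi$; the resulting bound on the fraction of time in bad intervals is exactly the $\epsilon$ slack in \eqref{DstoBd3}, and it is a pathwise inequality on $B_n(\xi,\eta)$, not a concentration statement. That machinery is precisely what is missing from your sketch and is the nontrivial content of the lemma.
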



\subsection{Continuity of the FTSP QBD.}\label{secCont}

In the current proof of Lemma \ref{lmKey} and thus Theorem \ref{th1}, we
will ultimately reduce everything down to the behavior of the
FTSP $D$. First, we intend to analyze the inhomogeneous
queue-difference processes $D^n_{1,2} (\Gamma^n)$ in \eqn{Dprocess} in terms of
associated frozen processes $D^n_f (\Gamma^n)$ introduced in \S
\ref{secAux}, obtained by freezing the transition rates at the
transition rates in the initial state $\Gamma^n$.  In
\eqn{ident} above, we showed that the frozen-difference
processes can be represented directly in terms of the FTSP, by
transforming the model parameters $(\lambda_i, m_j)$ and the
fixed initial state $\gamma$ and scaling time.  We will appropriately bound the
queue-difference processes $D^n (\Gamma^n)$ above and below by
associated frozen-queue difference processes,
 and then transform them into
versions of the FTSP $D$.  For the rest of the proof of Theorem
\ref{th1} in \S \ref{secProofs}, we will exploit a continuity
property possessed by this family of pure-jump Markov
processes, which exploits their representation as QBD processes
using the construction in \S 6 of \cite{PeW09c}.  We will be
applying this continuity property to the FTSP $D$.

To set the stage, we review basic properties of the QBD
process.  We refer to \S 6 of \cite{PeW09c} for important
details.  From the transition rates defined in
\eqn{bd1}-\eqn{bd4}, we see that there are only $8$ different
transition rates overall. The generator $Q$ (in (65) of
\cite{PeW09c}) is based on the four basic $2m \times 2m$
matrices $B$, $A_0$, $A_1$, and $A_2$, involving the $8$
transition rates (as shown in (66) of \cite{PeW09c}). By
Theorem 6.4.1 and Lemma 6.4.3 of \cite{LR99}, when the QBD is
positive recurrent, the FTSP steady-state probability vector
has the matrix-geometric form $\af_n = \af_0 R^n$, where
$\alpha_n$ and $\alpha_0$ are $1 \times 2m$ probability vectors
and $R$ is the $2m \times 2m$ rate matrix, which is the minimal
nonnegative solutions to the quadratic matrix equation $A_0  +
R A_1  + R^2 A_2 = 0$, and can be found efficiently by existing
algorithms, as in \cite{LR99}; See \cite{PeW09c} for
applications in our settings. If the drift condition
\eqn{Aset2} holds, then the spectral radius of $R$ is strictly
less than $1$ and the QBD is positive recurrent
 (Corollary 6.2.4 of \cite{LR99}).  As a consequence, we have
$\sum_{n=0}^{\infty}{R^n} = (I-R)^{-1}$. Also, by Lemma 6.3.1
of \cite{LR99}, the boundary probability vector $\af_0$ is the
unique solution to the system $\af_0(B + R A_2) = 0$ and $\af
\textbf{1} = \af_0 (I-R)^{-1} {\bf 1} = 1$.  See \S 6.4 of
\cite{PeW09c} for explicit expressions for $\pi_{1,2} (\gamma)$
for $\gamma \in \AA$.

As in Lemma \ref{lmReturnTime}, we also use the return time to a fixed
state, $\tau$, and its mgf $\phi_{\tau} (\theta)$ with
a critical value
$\theta^* > 0$ such that $\phi_{\tau} (\theta) < \infty$ for
$\theta < \theta^{*}$ and $\phi_{\tau} (\theta) = \infty$ for
$\theta > \theta^{*}$.
We will be interested in the {\em cumulative process}
\bequ \label{cumulative}
C(t) \equiv \int_0^{t} (f(D(s)) - E[f(D(\infty))]) \, ds \quad t \ge 0,
\eeq
for the special
function $f (x) \equiv 1_{\{x \ge 0\}}$.  Cumulative processes
associated with regenerative processes obey CLT's and FCLT's,
depending upon assumptions about the basic cycle random
variables $\tau$ and $\int_0^{\tau} f(D(s)) \, ds$, where we
assume for this definition that $D(0) = s^{*}$; see \S VI.3 of
\cite{Asmussen} and \cite{GW93}.  From \cite{B80}, we have the
following CLT with a Berry-Esseen bound on the rate of
convergence (stated in continuous time, unlike \cite{B80}): For
any bounded measurable function $g$, there exists $t_0$ such
that \bequ \label{roc} |E[g(C(t))/\sqrt{t}] - E[g(N(0, \sigma^2
))]| \le \frac{K}{\sqrt{t}} \qforallq t > t_0, \eeq where \bequ
\label{var} \sigma^2  \equiv E\left[\left(\int_0^{\tau} f(D(s))
- E[f(D(\infty))] \, ds \right)^2\right], \eeq again assuming
for this definition that $D(0) = s^{*}$. The constant $K$
depends on the function $g$ (as well as the function $f$ in
\eqn{cumulative}) and the third absolute moments of the basic
cycle variables defined above, plus the first moments of the
corresponding cycle variables in the initial cycle if the
process does not start in the chosen regenerative state.

There is significant simplification in our case, because the
function $f$ in \eqn{cumulative} is an indicator function.
Hence, we have the simple domination: \bequ \label{CycleVar}
\int_0^{\tau } |f(D(s)| \, ds = \int_0^{\tau } f(D(s)) \, ds
\le \tau \quad \mbox{w.p.1} \eeq As a consequence, boundedness
of absolute moments of both cycle variables reduces to the
moments of the return times themselves, which are controlled by
the mgf.


We will exploit the following continuity result for QBD's,
which parallels previous continuity results for Markov
processes, e.g., \cite{K75,W80}.

\begin{lemma} {$($continuity of QBD's$)$} \label{lmQBDcont}
Consider a sequence of irreducible, positive recurrent QBD's
having the structure of the fundamental QBD associated with the
FTSP in \S {\em \ref{secFTSP}} and \S $6$ in {\em
\cite{PeW09c}} with generator matrices $\{Q_n: n \ge 1\}$ of
the same form. If $Q_n \ra Q$ as $n \ra \infty$ $($which is determined by the convergence of the $8$ parameters$)$, where the
positive-recurrence drift condition {\em \eqn{Aset2}} holds for
$Q$, then there exists $n_0$ such that the positive-recurrence
drift condition  {\em \eqn{Aset2}} holds for $Q_n$ for $n \ge
n_0$. For $n \ge n_0$, the quantities $(R, \alpha_0, \alpha,
\phi_{\tau}, \theta^{*}, \psi_{N}, z^{*}, \sigma^2, K)$ indexed
by $n$ are well defined for $Q_n$, where $\sigma^2$ and $K$ are
given in {\em \eqn{roc}} and {\em \eqn{var}}, and converge as
$n \ra \infty$ to the corresponding quantities associated with
the QBD with generator matrix $Q$.
\end{lemma}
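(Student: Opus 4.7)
The plan is to proceed through the list $(R, \alpha_0, \alpha, \phi_\tau, \theta^*, \psi_N, z^*, \sigma^2, K)$ in order, since each later quantity is a continuous functional of the earlier ones together with the eight basic QBD rates. The starting point is to observe that the drift condition \eqref{Aset2} is a pair of \emph{strict} linear inequalities in the eight rates that appear in $B, A_0, A_1, A_2$, and that $Q_n \to Q$ componentwise amounts exactly to convergence of those eight parameters. Hence there exists $n_0$ such that the drifts $\delta_\pm^{(n)}$ associated with $Q_n$ satisfy $\delta_-^{(n)} > 0 > \delta_+^{(n)}$ for all $n \ge n_0$; in particular, the spectral radius $\mathrm{sp}(R_n)$ is strictly less than $1$ for such $n$, and all of the subsequent quantities are well defined.

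For the convergence $R_n \to R$, I would invoke the standard characterization (Theorem 6.4.1 of \cite{LR99}) of $R$ as the minimal nonnegative solution to $A_0 + X A_1 + X^2 A_2 = 0$, which, under the positive-recurrence drift condition, coincides with the limit of the monotone iterative scheme $R^{(k+1)} = -[A_0 + (R^{(k)})^2 A_2] A_1^{-1}$ starting from $R^{(0)} = 0$, with geometric convergence rate governed by $\mathrm{sp}(R)$. Since $A_0^{(n)}, A_1^{(n)}, A_2^{(n)}$ converge to their limits and $\mathrm{sp}(R) < 1$, the iterates depend continuously on the coefficients, uniformly in $k$ on compact neighborhoods, and a standard double-indexing argument ($R^{(k)}_n$ first in $n$, then in $k$) yields $R_n \to R$. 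Given $R_n \to R$, the linear system $\alpha_0^{(n)} (B_n + R_n A_2^{(n)}) = 0$ with normalization $\alpha_0^{(n)} (I - R_n)^{-1} \mathbf{1} = 1$ has coefficients converging to a system with a one-dimensional kernel, so $\alpha_0^{(n)} \to \alpha_0$ by continuity of the pseudo-inverse on the relevant subspace, and then $\alpha^{(n)} = \alpha_0^{(n)} R_n^{(\cdot)} \to \alpha_0 R^{(\cdot)} = \alpha$ in an appropriate norm (uniform geometric decay makes this straightforward).

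For the return-time mgf $\phi_\tau^{(n)}$ and its critical abscissa $\theta^{*,n}$, and the analogous discrete quantities $\psi_N^{(n)}, z^{*,n}$, I would use the explicit representation, recalled in the proof of Lemma \ref{lmReturnTime}, of $\phi_\tau$ and $\psi_N$ as rational expressions in the entries of $B, A_i$ and in the matrix generating function $G(z)$ that records the first-passage distribution from one level down to the next. The matrix $G$ is again the minimal nonnegative solution of a matrix-quadratic equation whose coefficients are linear in $z$ and in the $A_i$, so exactly the same iterative-convergence argument used for $R$ gives $G_n(z) \to G(z)$ locally uniformly in $z$. This translates into locally uniform convergence of $\psi_N^{(n)}$ on the disk of convergence of $\psi_N$, and thence convergence of $\phi_\tau^{(n)}$ on compact subsets of $\{\theta < \theta^*\}$. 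The convergence $\theta^{*,n} \to \theta^*$ (and likewise $z^{*,n} \to z^*$) I would extract from the spectral radius of the relevant matrix pencil: $\theta^*$ is characterized as the smallest $\theta > 0$ at which a Perron eigenvalue of the tilted generator $Q(\theta)$ crosses a threshold, and Perron eigenvalues of finite irreducible nonnegative matrices depend continuously on the matrix entries.

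Finally, for $\sigma^2$ and the Berry--Esseen constant $K$ from \eqref{roc}--\eqref{var}, both can be written as polynomial expressions in the first three absolute moments of $\tau$ and $\int_0^\tau f(D(s))\,ds$, together with first moments of the initial cycle. By the domination \eqref{CycleVar} the cycle integrals are controlled by $\tau$ itself, so it suffices to show convergence of $E[\tau_n^p]$ to $E[\tau^p]$ for $p = 1, 2, 3$; this follows from convergence of $\phi_\tau^{(n)}$ on a common neighborhood of $0$ (established above), together with uniform integrability supplied by a common $\theta_0 < \theta^*$ at which $\sup_n \phi_\tau^{(n)}(\theta_0) < \infty$. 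The main obstacle I anticipate is precisely this last uniformity for the critical abscissa and the associated moments: one must show that $\theta^{*,n}$ is bounded away from $0$ as $n \to \infty$, so that a common strip of analyticity exists for the mgfs. This follows cleanly from the spectral characterization of $\theta^*$ combined with continuity of simple eigenvalues of $Q_n(\theta)$, but it is the step that prevents the proof from being a purely formal continuity argument.
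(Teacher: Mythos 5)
Your proposal is correct and pursues the same overall strategy as the paper: establish that each item in the list is a continuous functional of the eight QBD rates, using convergence of the mgf of the regenerative cycle plus uniform integrability to push distributional convergence through to moment convergence, and then plug those moments into the Berry--Esseen machinery of \cite{B80} for $\sigma^2$ and $K$. The main difference is in how the continuity of $R$ (and hence of $\alpha_0,\alpha$) is obtained: the paper simply cites the differentiability of $R$ and $G$ in the QBD rates, established in the proof of Theorem 5.1 of \cite{PeW09c} via Theorem 2.3 of \cite{H95}, whereas you rederive continuity from scratch through the monotone iterative scheme $R^{(k+1)} = -(A_0+(R^{(k)})^2 A_2)A_1^{-1}$ and a double-index argument. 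Your route is more elementary and self-contained, at the cost of length; the paper's route is terser but relies on a stronger external result (differentiability, not merely continuity). Likewise, the paper does not spell out the argument for $\theta^{*}$ and $z^{*}$; your spectral (Perron eigenvalue of the tilted generator) characterization supplies a concrete justification. You also correctly flag the one nontrivial uniformity the paper handles only implicitly: $\theta^{*,n}$ must stay bounded away from $0$ so that a common strip of analyticity exists, which is what actually delivers the uniform integrability of $\tau_n^3$ that both proofs ultimately need. In short, your argument fills in several steps the paper outsources to citations, but both proofs rest on the same underlying continuity/uniform-integrability logic.
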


\begin{proof}
First, continuity of $R$, $\alpha_0$ and $\af$ follows from the
stronger differentiability in an open neighborhood of any
$\gamma \in \AA$, which was shown to hold in the proof of
Theorem 5.1 in \cite{PeW09c}, building on Theorem 2.3 in
\cite{H95}.  The continuity of $\sigma^2 $ follows from the
explicit representation in \eqn{var} above (which corresponds
to the solution of Poisson's equation).  We use the QBD
structure to show that the basic cycle variables $\tau$ and
$\int_0^{\tau } f(D(s)) \, ds$ are continuous function of $Q$,
in the sense of convergence in distributions (or convergence of
mgf's and gf's) and then for convergence of all desired
moments, exploiting \eqn{CycleVar} and the mgf of $\tau$ to get
the required uniform integrability. Finally, we get the
continuity of $K$ from \cite{B80} and the continuity of the
third absolute moments of the basic cycle variables, again
exploiting the uniform integrability. We will have convergence
of the characteristic functions used in \cite{B80}.  However,
we do not get an explicit expression for the constants
$K$.\end{proof}

We use the continuity of the steady-state distribution $\alpha$
in \S \ref{secEP4.5}; specifically in \eqn{steadyBds} .  In
addition, we use the following corollary to Lemma
\ref{lmQBDcont} in \eqn{double} in \S \ref{secEP4.5}.  We use
the notation in \eqn{ident}.

\begin{coro}\label{corDouble}
If $(\bar{\lambda}^n_i, \bar{m}^n_j, \bar{\gamma}_n) \ra
(\lambda_i, m_j, \gamma)$ as $n \tinf$ for our FTSP QBD's,
where {\em \eqn{Aset2}} holds for $(\lambda_i, m_j, \gamma)$,
then for all $\epsilon > 0$ there exist $t_0$ and $n_0$ such
that \bes P\left(| \frac{1}{t}\int_{0}^{t} 1_{\{ D(\lambda^n_i,
m^n_j, \gamma_n, s) > 0\}}\, ds - P(D(\lambda_i, m_j, \gamma,
\infty) > 0)| > \epsilon\right) < \epsilon \ees for all $t \ge
t_0$ and $n \ge n_0$.
\end{coro}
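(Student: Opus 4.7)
The plan is to decompose the deviation via the triangle inequality into a finite-time ergodic-average deviation for each fixed QBD and a parameter-continuity deviation, and then to control each piece uniformly. Write $D_n \equiv D(\lambda^n_i,m^n_j,\gamma_n,\cdot)$ and $D \equiv D(\lambda_i,m_j,\gamma,\cdot)$, set $p_n \equiv P(D_n(\infty)>0)$ and $p \equiv P(D(\infty)>0)$, and define the cumulative process
\[
C_n(t) \equiv \int_0^t \bigl(1_{\{D_n(s)>0\}} - p_n\bigr)\, ds,
\]
so that
\[
\Bigl|\tfrac{1}{t}\int_0^t 1_{\{D_n(s)>0\}}\,ds - p\Bigr|
\;\le\; \frac{|C_n(t)|}{t} + |p_n - p|.
\]

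First, I would apply Lemma \ref{lmQBDcont} (with $f(x)=1_{\{x>0\}}$ in place of $f(x)=1_{\{x\ge 0\}}$; the argument is identical since the QBD has a well-defined steady-state mass at $0$ which also converges) to obtain continuity of the stationary distribution and hence $p_n \to p$. Thus, for any $\epsilon>0$ there is $n_1$ such that $|p_n-p|<\epsilon/2$ for all $n\ge n_1$. The same lemma also gives convergence of the asymptotic variance $\sigma^2_n\to\sigma^2$ and of the Berry--Esseen constant $K_n\to K$ appearing in \eqn{roc}, so both sequences are bounded for $n$ large: choose $n_2\ge n_1$ with $\sigma_n^2\le 2\sigma^2+1$ and $K_n\le 2K+1$ for all $n\ge n_2$.

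Second, I would bound the cumulative term by applying \eqn{roc} to a bounded measurable $g$ approximating $1_{\{|x|>a\}}$. Concretely, by the Berry--Esseen bound for the cumulative process $C_n$ applied to $g_a(x)\equiv 1_{\{|x|>a\}}$, for all $t\ge t_0(n)$,
\[
P\bigl(|C_n(t)|/\sqrt{t} > a\bigr) \;\le\; P\bigl(|N(0,\sigma_n^2)|>a\bigr) + \frac{K_n}{\sqrt{t}}.
\]
Setting $a = \epsilon\sqrt{t}/4$, the normal tail is $2\bigl(1-\Phi(\epsilon\sqrt{t}/(4\sigma_n))\bigr)$, which tends to $0$ as $t\to\infty$, uniformly in $n\ge n_2$ thanks to the uniform bound on $\sigma_n^2$. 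The Berry--Esseen remainder $K_n/\sqrt{t}$ also tends to $0$ uniformly in $n\ge n_2$ by the uniform bound on $K_n$. Hence there exists $t_0$ such that
\[
P\bigl(|C_n(t)|/t > \epsilon/2\bigr) \;<\; \epsilon \qforallq t\ge t_0,\ n\ge n_2.
\]
Combining this with the continuity estimate $|p_n-p|<\epsilon/2$ in the triangle inequality yields the corollary with $n_0\equiv n_2$.

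The main obstacle is the uniformity in $n$ of the finite-time ergodic bound. The standard CLT/LLN for a single regenerative process gives convergence only for fixed parameters, so the crux is that the constants governing the rate of convergence (the asymptotic variance $\sigma^2_n$ and the Berry--Esseen constant $K_n$) vary with $n$ through the QBD parameters. This is exactly what Lemma \ref{lmQBDcont} supplies: because these constants depend continuously on the $8$ QBD transition rates, which converge by hypothesis, both $\sigma^2_n$ and $K_n$ are eventually bounded, producing the required uniform-in-$n$ Berry--Esseen remainder. Once that uniformity is in hand, the rest is a routine application of the triangle inequality.
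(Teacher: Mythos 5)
Your proof is essentially the paper's proof. Both arguments decompose the deviation by the triangle inequality into a stationary-distribution-continuity term and a finite-time ergodic-average term, both invoke Lemma \ref{lmQBDcont} for convergence of the stationary probability $p_n$, the asymptotic variance $\sigma^2_n$, and the Berry--Esseen constant $K_n$, and both then apply the cumulative-process Berry--Esseen bound \eqn{roc} with $g$ an indicator of a tail event. Your observation about $1_{\{x>0\}}$ versus $1_{\{x\ge 0\}}$ matches what the paper itself does in its proof.

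One small caveat: you set the threshold $a=\epsilon\sqrt{t}/4$, letting it grow with $t$, and then apply \eqn{roc} with $g_a(x)=1_{\{|x|>a\}}$. As stated, \eqn{roc} fixes $g$ and says the constant $K$ depends on $g$; if $K_n$ were allowed to depend on $a$, and $a$ depends on $t$, the bound would become circular. This is fine in substance because Bolthausen's theorem gives a Kolmogorov-distance bound uniform in the cut point (and the paper itself treats $K$ as a single parameter of the QBD in Lemma \ref{lmQBDcont}), but the cleanest way to stay strictly inside the paper's stated framework is what the paper does: fix $M$ once so the normal tail is below $\epsilon/2$, obtain a single $K$ for that fixed $g$, and then enlarge $t_0$ so that $M/\sqrt{t_0}<\epsilon/2$. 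This is a reordering of quantifiers rather than a different idea; your argument goes through with that minor adjustment.
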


\begin{proof}
First apply Lemma \ref{lmQBDcont} for the steady-state
probability vector $\alpha$, to find $n_0$ such that
$|P(D(\lambda^n_i, m^n_j, \gamma_n, \infty) > 0)| -
P(D(\lambda_i, m_j, \gamma, \infty) > 0)| < \epsilon/2$ for all
$n \ge n_0$.  By the triangle inequality, henceforth it
suffices to work with $P(D(\lambda^n_i, m^n_j, \gamma_n,
\infty) > 0)$ in place of $P(D(\lambda_i, m_j, \gamma, \infty)
> 0)$ in the statement to be proved. By \eqn{roc}, for any $M$,
there exists $t_0$ such that for all $t \ge t_0$, \bequ
\label{pf23} \bsplit
& P\left(| \frac{1}{t}\int_{0}^{t} 1_{\{ D(\lambda^n_i, m^n_j, \gamma_n, s) > 0\}}\, ds - P(D(\lambda^n_i, m^n_j, \gamma_n, \infty) > 0)| > \frac{M}{\sqrt{t}} \right) \\
& \quad \quad < P(|N(0, \sigma^2 (\lambda^n_i, m^n_j,
\gamma_n))| > M) + \frac{K(\lambda^n_i, m^n_j,
\gamma_n)}{\sqrt{t}}.
\end{split}
\eeq We get \eqn{pf23} from \eqn{roc} by letting $f (x) =
1_{(0,\infty)}(x)$ in \eqn{cumulative} and letting $g(x) =
1_{\{|x| > M\}} (x)$ in \eqn{roc}.

Next, choose $M$ so that $ P(|N(0, \sigma^2 (\lambda_i, m_j,
\gamma))| > M) < \epsilon/2$. Then, invoking Lemma
\ref{lmQBDcont}, increase $n_0$ and $t_0$ if necessary so that
$|\sigma^2 (\lambda^n_i, m^n_j, \gamma_n)) - \sigma^2
(\lambda_i, m_j, \gamma))|$ and $|K(\lambda^n_i, m^n_j,
\gamma_n) - K(\lambda_i, m_j, \gamma)|$ are sufficiently small
so that the right side of \eqn{pf23} is less than $\epsilon/2$
for all $n \ge n_0$ and $t \ge t_0$. If necessary, increase
$t_0$ and $n_0$ so that $M/\sqrt{t_0} < \epsilon/2$. With those
choices, the objective is achieved.\end{proof}

\subsection{Proof of Lemma \ref{lmConvInt}.} \label{secEP4.5}

In this subsection we give the first of two long proofs of previous lemmas.
We now prove Lemma \ref{lmConvInt}, which completes the alternate proof of Theorem \ref{th1}
in the case $r_{1,2} = 1$, because then Corollary \ref{corStochBd} holds with gap $\zeta = 0$.
In that case, Corollary \ref{corStochBd} directly implies that Lemma \ref{lmNewBds} holds with $\ep = 0$.
Lemma \ref{lmNewBds}, which is proved in the following subsection, is required to treat more general $r_{1,2}$.

First, let $\delta > 0$, $\epsilon > 0$ and $t$ with $0 < t < \delta$ be given, where the $\delta$ is chosen
as in Lemma \ref{lmD12PR} (with $\delta \le \tau$ for $\tau$ in Theorem \ref{thFluidScaled}).
Here we will be introducing a new interval $[t, t + \sigma]$, where $0 < \sigma \equiv \sigma (t) \equiv \sigma (t, \epsilon, \delta) < \delta - t$,
so that $[t, t+\sigma] \subset [0, \delta]$.  Moreover, we will make $\sigma < \xi$ for $\xi$
existing via Lemma \ref{lmNewBds}.
Lemmas \ref{lmD12PR} and \ref{lmNewBds} hold on the interval $[t, t + \xi]$,
where $\xi \equiv \xi (t)$ satisfies $0 < \xi < \delta - t$.  We will be choosing $\sigma$ with $0 < \sigma < \xi$.

Before we started with regularity conditions at time $0$ provided by Assumption \ref{assC}.
We now will exploit tightness to get corresponding regularity conditions at time $t$ here.
In particular, before
 we started with the convergence $\bar{X}^n (0) \Rightarrow x(0)$ in $\RR^3$ where $x(0) \in \AA$ based on Assumption \ref{assC}.
Now, instead, we base the convergence $\bar{X}^n (t) \Rightarrow \barx (t)$ at time $t$
on the convergence established along the converging subsequence (without introducing new subsequence notation).
We apply Lemma \ref{lmXinA} to deduce that $P(\barx (t) \in \AA) = 1$.
 Since the frozen processes to be constructed are Markov processes, we can construct the processes
after time $t$, given only the value of $X^n (t)$, independently of what happens on $[0,t]$.
Before, we also started with $D^n_{1,2} (0) \Rightarrow L$, where $L$ is a finite random variable.
Instead, here we rely on the stochastic boundedness (and thus tightness) of $\{D^n_{1,2} (t): n \ge 1\}$ in
$\RR$ provided by Theorem \ref{thDSB}.
As a consequence, the sequence $\{(X^n (t), D^n_{1,2} (t)): n \ge 1\}$ is tight in $\RR_2$.
Thus there exists a convergent subsequence of the latest subsequence we are considering.
Hence, without introducing subsequence notation, we have $(\barx^n (t), D^n_{1,2} (t)) \Ra (\barx (t), L (t))$ in $\RR_2$ as $n \ra \infty$,
where $(\barx (t), L (t))$ is a finite random vector with $P(\barx (t) \in \AA) = 1$.

We use the same construction used previously in
 the proofs of Lemmas \ref{lmD12PR} and \ref{lmNewBds},
letting $\sigma$ decrease to achieve new requirements in addition to the
conclusions deduced before.
We now regard $t$ as the time origin for the frozen difference processes.  Given $D^n_{1,2} (t)$, let
the two associated frozen difference processes be $\{D^n_f (X^{n}_{M}, s) : s \ge t\}$
and $\{D^n_f (X^{n,\xi}_{m}, s) : s \ge 0\}$.  We directly let
\bequ \label{initT}
D^n_f (X^{n}_{M}, t) = D^n_f (X^{n}_{m}, t) = D^n_f (X^{n} (u), t) = D^n_{1,2} (t), \quad u \ge t,
\eeq
so that we can invoke property \eqn{initialLim} at time $t$ in our application of Lemma \ref{lmNewBds} here.
As before, the initial random states $X^{n}_{M}$ and $X^{n}_{m}$ and their fluid-scaled limits are chosen to achieve the goals before and here.

We now successively decrease upper bounds on $\sigma$ and increase lower bounds on $n$ until we achieve \eqn{limInt}
in Lemma \ref{lmConvInt}.
First, we can apply Lemma \ref{lmD12PR} to find an $n_1$ such that $P(B^n(\delta, \ep) > 1 - \ep/6$ for $n \ge n_1$.
We next apply Lemma \ref{lmNewBds} to conclude that there exists $\sigma_1$
such that the following variants of the integral inequalities in \eqn{DstoBd3} hold with probability
at least $1 - \epsilon/6$ as well:
\bequ \label{newIntIneq}
\bsplit
\frac{1}{\sigma} \int_{t}^{t +\sigma} 1_{\{D^n_f (X^{n}_{m}, s) > 0\}} \, ds - \frac{\epsilon}{6 m_2}
\le \frac{1}{\sigma} \int_{t}^{t + \sigma} 1_{\{D^n_{1,2} (s) > 0\}} \, ds
\le \frac{1}{\sigma} \int_{t}^{t + \sigma} 1_{\{D^n_f (X^{n}_{M}, s) > 0\}} \, ds + \frac{\epsilon}{6 m_2}.
\end{split}
\eeq
for all $\sigma \le \sigma_1$
(We divide by $m_2$ because we will be multiplying by $z_{1,2} (t)$.)

We now present results only for $X^n_M$, with the understanding that corresponding results hold for $X^n_m$.
We represent the bounding frozen queue-difference processes directly in terms
of the FTSP, using the relation \eqn{ident}, with the notation introduced there:
\bequ\label{FTSPrep}
\bsplit
\{D^n_f (\lambda^n_i, m^n_j, X^{n}_{M}, t + s): s \ge 0\} & \deq \{D(\lambda^n_i/n, m^n_j/n, X^{n}_{M}/n, t + sn): s \ge 0\}.
\end{split}
\eeq
Upon making a change of variables, the bounding integrals in \eqn{newIntIneq} become
\bequ\label{FTSPrep2}
\bsplit
\frac{1}{\sigma}  \int_{t}^{t + \sigma} 1_{\{D^n_f (\lambda^n_i, m^n_j, X^{n}_{M}, s) > 0\}} \, ds
& \deq   \frac{1}{n\sigma} \int_{t}^{t +n \sigma} 1_{\{D (\lambda^n_i/n, m^n_j/n,X^{n,\sigma}_{m}/n, s)> 0\}} \, ds.
\end{split}
\eeq
For each integer $k$, we can apply Lemma \ref{lmQBDcont} to obtain the iterated limits
\bequ\label{iterated}
\bsplit
& \lim_{n \ra \infty} \lim_{s \ra \infty} P(D (\lambda^n_i/n, m^n_j/n, X^{n}_{M}/n, s) = k)
= \lim_{s \ra \infty} \lim_{n \ra \infty} P(D (\lambda^n_i/n, m^n_j/n, X^{n}_{M}/n, s) = k),
\end{split}
\eeq
where the limit is 
$P(D (x_M, \infty) = k) \equiv P(D (\lambda_i, m_j, x_M, \infty) = k)$.
In particular, the limit for the model parameters first implies convergence of the generators.
The convergence of the generators then implies convergence of the processes.  Finally Lemma \ref{lmQBDcont} implies convergence of the associated
steady-state distributions.

By Corollary \ref{corDouble}, we also have the associated double limit for the averages over intervals of length $O(n)$ as $n \ra \infty$.  As $n \ra \infty$,
\bequ\label{double}
\bsplit
\frac{1}{n\sigma} \int_{t}^{t +n \sigma} 1_{\{D (\lambda^n_i/n, m^n_j/n, X^{n}_{M}/n, s) > 0\}} \, ds & \Rightarrow  P(D(\lambda_i, m_j, x_M, \infty) > 0)
\equiv \pi_{1,2} (x_M).
\end{split}
\eeq

Invoking Lemma \ref{lmQBDcont}, choose $\sigma_2$ less than or equal to the previous value $\sigma_1$ such that
\bequ \label{steadyBds}
|\pi_{1,2} (x_m) - \pi_{1,2} (\bar{X}(t))| \le \frac{\epsilon}{6 m_2} \qforallq \sigma \le \sigma_2.
\eeq
For that $\sigma_2$, applying \eqn{double}, choose $n_2 \ge n_1$ such that for all $n \ge n_2$,
\bequ\label{IntDiff}
\bsplit
 & P\left( |\frac{1}{n\sigma_2} \int_{t}^{t +n \sigma_2} 1_{\{D (\lambda^n_i/n, m^n_j/n, X^{n}_{M}/n, s) > 0\}} \, ds
 - \pi_{1,2} (x_M)|  > \frac{\epsilon}{6 m_2} \right) < \frac{\epsilon}{6}.
\end{split}
\eeq

We now use the convergence of $\barx^n$ along the subsequence over $[0,t]$ together with the tightness of the
sequence of processes $\{\bar{X}^n: n \ge 1\}$ to control $\bar{Z}^n_{1,2}$ in an interval after time $t$.
In particular, there exists $\sigma_3 \le \sigma_2$ and $n_3 \ge n_2$ such that
\bequ \label{BdafterT}
P(\sup_{u: t \le u \le t + \sigma_3}{\{|\bar{X}^n (u) - \bar{X}(t)|\}} > \epsilon/6) < \epsilon/6 \qforallq n \ge n_3.
\eeq
For the current proof, we will use the consequence
\bequ \label{BdafterT2}
P(\sup_{u: t \le u \le t + \sigma_3}{\{|\bar{Z}^n_{1,2} (u) - \bar{Z}_{1,2}(t)|\}} > \epsilon/6) < \epsilon/6 \qforallq n \ge n_3.
\eeq
We let the final value of $\sigma$ be $\sigma_3$.
We now show the consequences of the selections above.
We will directly consider only the upper bound; the reasoning for the lower bound is essentially the same.
Without loss of generality, we take $\epsilon \le 1\wedge m_2$.
From above, we have the following relations (explained afterwards) holding with probability
at least $1 - \epsilon$ (counting $\epsilon/6$ once each to achieve $\|x_M - \barx (t)\| \le \ep/6$, $\|x_m - \barx (t)\| \le \ep/6$,
\eqn{newIntIneq}, \eqn{BdafterT2} and twice for \eqn{IntDiff}):

\bequ \label{string}
\bsplit
(a)\; & \int_t^{t + \sigma} 1_{\{D^{n}_{1,2}(s) > 0\}}\barz^n_{1,2}(s)\ ds  \le \left(\bar{Z}_{1,2} (t) + \frac{\epsilon}{6}\right) \int_t^{t + \sigma} 1_{\{D^{n}_{1,2}(s) > 0\}} \ ds  \\
(b)\; &  \quad \quad \le \left(\bar{Z}_{1,2} (t) + \frac{\epsilon}{6}\right) \left(\int_{t}^{t + \sigma} 1_{\{D^n_f (\lambda^n_i, m^n_j, X^n_M, s) > 0\}} \, ds
+ \frac{\epsilon \sigma}{6 m_2}\right) \\
(c)\; &  \quad \quad \quad  \deq \left(\bar{Z}_{1,2} (t) + \frac{\epsilon}{6}\right) \left(\int_{0}^{\sigma} 1_{\{D(\lambda^n_i/n, m^n_j/n, X^{n}_{M}/n, t + sn) > 0\}} \, ds
+ \frac{\epsilon \sigma}{6 m_2}\right) \\
(d)\; &  \quad \quad\quad  \deq \left(\bar{Z}_{1,2} (t) + \frac{\epsilon}{6}\right) \sigma
\left(\frac{1}{n \sigma} \int_{0}^{n\sigma} 1_{\{D(\lambda^n_i/n, m^n_j/n, X^{n}_{M}/n,t + s) > 0\}} \, ds + \frac{\epsilon}{6 m_2}\right) \\
(e)\; &  \quad \quad \le  \left(\bar{Z}_{1,2} (t) + \frac{\epsilon}{6}\right) \sigma \left(\pi_{1,2} (x_M) + \frac{2 \epsilon}{6 m_2}\right) \\
(f)\; &  \quad \quad \le \left(\bar{Z}_{1,2} (t) + \frac{\epsilon}{6}\right) \sigma \left(\pi_{1,2} (\bar{X}(t)) + \frac{3\epsilon}{6 m_2}\right) \\
(g)\; &  \quad \quad \le \bar{Z}_{1,2} (t) \pi_{1,2} (\bar{X}(t)) \sigma +  \frac{\pi_{1,2} (\bar{X}(t))}{6} \epsilon \sigma + \frac{1}{2} \epsilon \sigma
+ \frac{\sigma \epsilon^2}{12 m_2} \\
(h)\; &  \quad \quad \le \bar{Z}_{1,2} (t) \pi_{1,2} (\bar{X}(t)) \sigma +  \frac{3}{4} \epsilon \sigma\\
& \quad \quad \le (\bar{Z}_{1,2} (t) \pi_{1,2} (\bar{X}(t)) + \ep) \sigma \qforallq n \ge n_0 \equiv n_3.
\end{split}
\eeq

We now explain the steps in \eqn{string}:  First, for (a) we replace $\barz^n_{1,2} (s)$ by $\bar{Z}_{1,2} (t)$ for $t \le s \le t + \xi$ by applying \eqn{BdafterT2}.
For (b), we apply Lemma \ref{lmNewBds}.  For (c), we use the alternative representation in terms of the FTSP in
\eqn{FTSPrep}.  For (d), we use the change of variables in \eqn{FTSPrep2}.
For (e), we use \eqn{IntDiff}, exploiting the convergence in \eqn{double}.
For (f), we use \eqn{steadyBds}.  Step (g) is simple algebra, exploiting $\bar{Z}_{1,2} (t) \le m_2$.
Step (h) is more algebra, exploiting $\pi_{1,2} (\bar{X}(t)) \le 1$, and $\epsilon \le 1\wedge m_2$.
That completes the proof of the lemma.
\qed

\subsection{Proof of Lemma \ref{lmNewBds}.} \label{secEP4.4}
We will directly show how to carry out the construction of the sample path order for stochastic processes having the same distributions
as the terms in \eqn{DstoBd3}.  We will show that the interval $[0, \xi]$ can be divided into a large number ($O(n)$)
of alternating subintervals, each of length $O(1/n)$, such that full sample path order holds on one subinterval, and then the processes are unrelated
on the next subinterval.  We will then show, for an appropriate choice of $\xi$, that these intervals can be chosen so that the first intervals
where order holds are much longer than the second intervals where the processes are unrelated.  Hence we will deduce for this construction that
the inequalities in \eqn{DstoBd3} holds w.p.1 for all $n$ sufficiently large.
We will specify the steps in the construction and explain why they achieve their goal,
while minimizing the introduction of new notation.  We highlight twelve key steps in the argument.

{\em An integer state space.}  We exploit Assumption \ref{assE} to have $r_{1,2}$ be rational, and then use the
integer state space associated with the QBD representation in \S 6 of \cite{PeW09c}.  However,
we do not directly exploit the QBD structure; instead we directly work with the CTMC's
with transitions $\pm j$ and $\pm k$, where $j$ and $k$ are positive integers.  We will reduce the analysis to consideration of homogeneous
CTMC's.

{\em Exploiting Lemma {\em \ref{lmD12PR}}, but reducing the length of the interval.}  We construct the states $x_m, x_M \in \AA$ and random vectors $X^n_m, X^n_M$
and associated frozen processes $D^n_f (X^n_m)$ and $D^n_f (X^n_M)$
exactly as in Lemma \ref{lmD12PR}, but we adjust the specific values
to make them closer together at time $0$ as needed by choosing $\xi$ to be suitably smaller than the
$\delta$ needed in Lemma \ref{lmD12PR}.  It is significant that here $\xi$ can be arbitrarily small;
we only require that $\xi > 0$.

{\em Initializing at time $0$.}  By Assumption \ref{assC}, we have $\barx^n \Ra x(0) \in \AA \subset \RR_3$ and $D^n_{1,2} (0) \Ra L \in \RR$.
Since $x(0)$ is deterministic, we immediately have the joint convergence $(\barx^n, D^n_{1,2} (0))  \Ra (x(0),L) \in \RR_4$;
apply Theorem 11.4.5 of \cite{W02}.
We then apply the Skorohod representation theorem to replace the convergence in distribution
with convergence w.p.1., without changing the notation.
Hence, we can start with $(\barx^n, D^n_{1,2} (0))  \ra (x(0),L) \in \RR_4$ w.p.1.
With that framework, we condition on $D^n_{1,2} (0)$.  We then
initialize the processes
$D^n_f (X^n_m, 0)$ and $D^n_f (X^n_M, 0)$ 
to satisfy $D^n_f (X^n_m, 0) = D^n_f (X^n_M, 0) = D^n_{1,2} (0)$, as in \eqn{initialLim}.

{\em Coupling over an initial random interval.}  Given identical initial values, we can apply the rate order in Lemma \ref{lmD12PR}
to construct versions of the processes that will be ordered w.p.1 over an initial interval of random positive length, just as in
Corollary \ref{corStochBd}.
Let $\nu_n$ be the first time that the sample path order is violated.  The rate order implies that $\nu_n > 0$ w.p.1 for each $n \ge 1$.
The coupling is performed over the interval $[0, \nu_n]$.
At random time $\nu_n$, we must have all three processes in the boundary set of states
$\{j: -\beta +1 \le j \le \beta\}$, where $\beta \equiv j \vee k$.  That is so, because violation of order only need occur when,
just prior to the order violation, we have
$-\beta < D^n_f (X^n_m) \le 0 < 1 \le D^n_f (X^n_M) < \beta$, where the rates are no longer ordered properly, because the processes are in different
regions.
At time $\nu_n$, either the upper process jumps down below the lower process
or the lower process jumps up over the upper process.

{\em A new construction using independent versions when order is first violated.}
At time $\nu_n$, the order is first violated and would remain violated over an interval thereafter.
However, at this time $\nu_n$, we alter the construction.
At this random time $\nu_n$ we temporarily abandon
the coupling based on rate order.  Instead, going forward from time $\nu_n$, we construct independent versions of the three
processes being considered.  More precisely, the three processes are conditionally independent, given their initial values at time $\nu_n$.
The idea is to let them evolve in this independent manner until the three processes
reach a state in which the desired sample path ordering does again hold.
We do this in a simple controlled manner.  We wait until, simultaneously, the upper bound process
$D^n_f (X^n_M, \nu_n + t)$  exceeds a suitably high threshold,
the lower bound process
$D^n_f (X^n_m, \nu_n + t)$  falls below a suitably low threshold,
and the interior processes
 $D^n_{1,2} (\nu_n + t)$ is in a middle region.
 At such a random (stopping) time, the three processes will necessarily be ordered
 in the desired way.  After that time, we can use the coupling again.

 {\em Avoiding working directly with $D^n_{1,2}$.}
 However, since $D^n_{1,2} (\nu_n + t)$ is an inhomogeneous CTMC, and thus difficult to work with,
 we avoid working with it directly.  Instead,
we simultaneously construct new
upper and lower bound frozen processes, $\tilde{D}^n_f(X^n_M)$ and
$\tilde{D}^n_f(X^n_m)$ starting at time $\nu_n$, coupled with $\{D^n_{1,2} (\nu_n + t):t \ge 0\}$, initialized by stipulating that
$\tilde{D}^n_f(X^n_M, \nu_n) = \tilde{D}^n_f(X^n_m, \nu_n) = D^n_{1,2} (\nu_n)$.
These new processes are coupled with $\{D^n_{1,2} (\nu_n + t):t \ge 0\}$, but conditionally independent of
the independent versions of the other two processes
$\{D^n_f (X^n_M, \nu_n + t): t \ge 0\}$ and  $\{D^n_f (X^n_m, \nu_n + t): t \ge 0\}$.
We again can apply Lemma \ref{lmD12PR} to obtain rate order, but again we cannot have full sample path order, because of the gap.
Nevertheless, we can now apply Corollary \ref{corStochBd} to conclude that we have the sample path stochastic order
\beq
\tilde{D}^n_f(X^n_m, \nu_n + t) - \beta \le_{st} D^n_{1,2} (\nu_n + t) \le_{st} \tilde{D}^n_f(X^n_M, \nu_n + t) + \beta \qinq D([\nu_n, \delta]).
\eeqno
We thus can do a sample path construction to achieve
\bequ \label{gap3}
\tilde{D}^n_f(X^n_m, \nu_n + t) - \beta \le D^n_{1,2} (\nu_n + t) \le \tilde{D}^n_f(X^n_m, \nu_n + t) + \beta \qforq 0 \le t \le \delta - \nu_n \quad w.p.1.
\eeq

{\em Stopping times for desired order to be achieved.}
We now define stopping times for the order to be achieved.  Let
\bequ \label{phiDef}
\phi_{n}  \equiv  \inf{\{t > 0: D^n_f (X^n_M, \nu_n + t) > 2 \beta, \, D^n_f (X^n_m, \nu_n + t) < -2 \beta, \,
 -  \beta < \tilde{D}^n_f(X^n_M, \nu_n + t) <  \beta\}},
\eeq
with the three processes $D^n_f (X^n_M)$, $D^n_f (X^n_m)$ and $\tilde{D}^n_f(X^n_M)$ in \eqn{phiDef} being mutually conditionally independent,
given that all three are initialized at time $\nu_n$ with the final values there obtained from the evolution of the coupled processes up until time $\nu_n$.
(We have conditional independence given the vector of initial values at time $\nu_n$.)
We can combine \eqn{phiDef} with \eqn{gap3} to conclude that we have the appropriate order at time $\nu_n + \phi_n$, in particular,
\beq
D^n_f (X^n_m, \nu_n + \phi_n) \le D^n_{1,2} (\nu_n + \phi_n) \le D^n_f (X^n_M, \nu_n + \phi_n) \quad \mbox{w.p.1.}
\eeqno
We not only obtain the desired order at time $\nu_n + \phi_n$, but the stopping time $\phi_n$ depends on the three processes
 $\{D^n_f (X^n_M, \nu_n + t): t \ge 0\}$,
   $\{D^n_f (X^n_m, \nu_n + t): t \ge 0\}$ and $\{\tilde{D}^n_f(X^n_M, \nu_n + t): t \ge 0\}$,
   which are are independent homogeneous CTMC's conditional on their random state vectors $X^n_M$ and $X^n_m$, respectively.
   Thus the stopping time is with respect to the three-dimensional vector-valued CTMC (conditional on the random state vectors $X^n_M$ and $X^n_m$).

{\em Alternating Cycles using coupled and independent versions.}
Going forward after the time $\nu_n + \phi_n$ for $\phi_n$ in \eqn{phiDef}, we again construct coupled processes just as we did at time $0$.
However, now we initialize by having the specified states coincide at time $\nu_n + \phi_n$; i.e.,
$D^n_f (X^n_M, \nu_n + \phi_n) = D^n_f (X^n_m, \nu_n + \phi_n) = D^n_{1,2} (\nu_n + \phi_n)$.
 Hence, we can repeat the coupling done over the initial interval $[0, \nu_n]$.
In particular, we can apply the ordering of the initial states and the rate ordering to construct new coupled versions
after time $\nu_n + \phi_n$.  We can thus repeat the construction already done over the time interval $[0, \nu_n]$ after time $\nu_n + \phi_n$.
The coupled construction beginning at time $\nu_n + \phi_n$ will end at the first subsequent time that the order is violated.
At this new random time (paralleling $\nu_n$), we must have all three processes in the boundary set of states
$\{j: -\beta +1 \le j \le \beta\}$.  In this way, we produce a sequence of alternating intervals, where we perform coupling on one interval
and then create independent versions on the next interval.

{\em Applying a FWLLN after scaling time by $n$.}
Since the transition rates of $D^n_{1,2}$ and all the other processes are $O(n)$,
there necessarily will be order $O(n)$ of these cycles in any interval $[0, \xi]$.
However, we scale time by $n$, replacing $t$ by $nt$,
just as in \eqn{ident}, \eqn{FTSPrep} and \eqn{FTSPrep2} to represent all processes as FTSP's
with random parameters depending on $n$, converging to finite deterministic limits.
Once, we scale time by $n$ in that way,
 the scaled transition rates converge to finite limits as $n \ra \infty$, but the relevant time interval becomes $[0, n\xi]$ instead of $[0, \xi]$, as in \eqn{FTSPrep2}.
 We can thus apply a FWLLN to complete the proof.

{\em Regenerative Structure as a basis for the FWLLN.}  As a formal basis for the FWLLN, we can apply regenerative structure associated with successive
epochs of order violation starting from coupling, but a regenerative cycle must contain more than two successive intervals; we do not have an alternating renewal process.
Such order-violation epochs necessarily occur in the boundary region
$\{j: -\beta +1 \le j \le \beta\}$, where $\beta \equiv j \vee k$.  Hence, there are at most $(2 \beta)^3$ vectors of values for the three processes.
Moreover, there necessarily will be one of these vectors visited infinitely often.
Successive visits to that particular state vector after coupling thus serve as regeneration points for the entire process.
That is, there is an embedded delayed renewal process.  We next ensure that the times between successive regeneration times
have finite mean values, with the correct asymptotic properties as $n \ra \infty$.  That justifies the FWLLN.
In particular, we can apply a FWLLN for cumulative processes, as in \cite{GW93}.
For the particular QBD processes being considered, there is continuity of the asymptotic parameters,
as indicated in Lemma \ref{lmQBDcont}.

 {\em Finite mean time between regenerations of order $O(1/n)$.}
From basic CTMC theory, it follows that the first passage times
$\phi_{n}$ in \eqn{phiDef} have finite mean values
which are of order $O(1/n)$.
In particular, $E[\phi_n] < c_{n,1} < \infty$, where $n c_{n,1} \ra c_1 > 0$.  It is more difficult to treat the mean time over
which the order is valid during a single coupling, i.e.,
the initial time $\nu_n$ and the subsequent random times that which the order is valid during the coupling.
However, we can truncate the variables at finite constant times in order to ensure that the FWLLN reasoning can be applied;
e.g., we replace $\nu_n$ by $\tilde{\nu}_n \equiv \nu_n \wedge c_{n,2} < \infty$, where
$n c_{n,2} \ra c_2 > 0$.  We can later choose $c_2$ large enough so that the inequalities in \eqn{DstoBd3} hold for the specified $\ep$.
Finally, the total number of these cycles until the designated initial state vector appears again is a random variable.
Since the successive vector of initial state vectors visited on successive cycles is a finite-state discrete-time Markov chain,
the random number of cycles within a regeneration interval is a random variable with finite mean, say $c_{n,3}$, with $c_{n,3} \ra c_{3} > 0$
as $n \ra \infty$.
As a consequence, the mean time between successive regenerations is $c_{n,4} < \infty$, where $n c_{n,4} \ra c_4 > 0$ as $n \ra \infty$.

{\em The proportion of time that order holds.}
Finally, it is important that we can control the proportion of time that order holds in any interval $[0,t]$, $0 < t \le \xi$,
as stated in \eqn{DstoBd3}, for an appropriate choice of $\xi$.
As a basis for that control, we exploit the fact that the distribution of $\phi_n$, the length of the first interval during which order violation is allowed,
and the random lengths of all subsequent intervals on which the desired sample path order is violated, depend on the
states of the three processes at time $\nu_n$, i.e., upon $D^n_f (X^n_m, \nu_n)$, $D^n_f (X^n_M, \nu_n)$ and
$D^n_{1,2} (\nu_n)$.  However, because the violation necessarily occurs in the boundary region
$\{j: -\beta +1 \le j \le \beta\}$, where $\beta \equiv j \vee k$, there are at most $2\beta$ possible values for each process,
and thus at most $(2 \beta)^3$ vectors of values for the four processes.
Since this number is finite, the violation interval lengths (like $\phi_n$) can be stochastically bounded
above, independent of the specific values at the violation point.  Moreover, this can be done essentially independently of $\xi$.
On the other hand, the length of the intervals on which the coupling construction remains valid necessarily increases
as we decrease $\xi$ and appropriately increase $c_2$ and $c_{n,2}$ in the paragraph above.
Thus, for any $\ep > 0$, we can achieve the claimed ordering in \eqn{DstoBd3} for all $n \ge n_0$ by an appropriate choice of $\xi$, $c_2$ and $n_0$.
\qed

\section{The Bounding QBD in Lemma \ref{lmD21extreme}.} \label{appQBDbdd}

We now describe how to present the process $D^n_*$
in Step One of the proof of Lemma \ref{lmD21extreme} as a QBD for each $n$.
We assume that $r_{2,1} = j/k$ for $j,k \in \ZZ_+$, and let $m \equiv j \vee k$. We define the process
$\tilde{D}^n_* \equiv kQ^n_{2,*} - j Q^n_{1,*}$.
Thus, $\tilde{D}^n_*$ is $D^n_*$ with an altered state space. In particular, each process is positive recurrent if and only if the other one is.

We divide the state space $\ZZ_{+} \equiv \{0, 1, 2, \dots\}$ into level of size $m$:
Denoting level $i$ by $L(i)$, we have
\bes
\bsplit
L(0) & = (0, 1, \dots, m-1), \quad
L(1)  = (m, m+1, \dots, 2m-1) \quad \mbox{etc.}
\end{split}
\ees
The states in $L(0)$ are called the boundary states.
Then the generator matrix $Q^{(n)}$ of the process $D^n_*$ has the QBD form
\bes
Q^{(n)}  \equiv
\left( \begin{array}{ccccc}
   B^{(n)}     &  A^{(n)}_0 & 0         & 0          &  \ldots    \\
   A_2^{(n)}   &  A_1^{(n)} & A^{(n)}_0 & 0          & \ldots    \\
   0           &  A^{(n)}_2 & A^{(n)}_1 & A^{(n)}_0  & \ldots    \\
   0           &  0         & A^{(n)}_2 & A^{(n)}_1  & \ldots    \\
\vdots  &  \vdots & \vdots  & \vdots
\end{array} \right).
\ees
(All matrices are functions of $X^n_*$. However, to simplify notation, we drop the argument $X^n_*$,
and similarly in the example below.)

For example, if $j = 2$ and $k = 3$, then
\bes
\begin{array}{lcr}
B^{(n)} =
    \left( \begin{array}{ccc}
    - \sigma^n + \hatmu^n_{\Sigma}           & 0       & \hatlm^n_2  \\
    \hatmu^n_{\Sigma}                        & -\sigma^n & 0         \\
    \hatmu^n_{\Sigma}                        & 0       & - \sigma^n
    \end{array} \right),
& &
A^{(n)}_0 =
     \left( \begin{array}{ccc}
     \hatlm^n_3  & 0 & 0                   \\
     \hatlm^n_2  & \hatlm^n_3 & 0          \\
     0           & \hatlm^n_2 & \hatlm^n_3
     \end{array} \right),
\\
\\
A^{(n)}_1  =
  \left( \begin{array}{ccc}
   -\sigma^n   & 0           &  \hatlm^n_2   \\
  0            & - \sigma^n  &  0            \\
  \hatmu^n_2   & 0           &  - \sigma^n   \\
                    \end{array} \right),
& &
A^{(n)}_2  =
  \left( \begin{array}{ccccc}
   \hatmu^n_3 & \hatmu^n_2 & 0           \\
  0           & \hatmu^n_3 & \hatmu^n_2  \\
  0           & 0          & \hatmu^n_2
                    \end{array} \right),
\end{array}
\ees
where $\hatmu^n_{\Sigma} \equiv \hatmu^n_3 + \hatmu^n_2$ and $\sigma^n \equiv \hatmu^n_{\Sigma} + \hatlm^n_2 + \hatlm^n_3$.

Let $A^{(n)} \equiv A^{(n)}_0 + A^{(n)}_1 + A^{(n)}_2$.
Then $A^{(n)}$ is an irreducible CTMC infinitesimal generator matrix.
It is easy to see that its unique stationary probability vector, $\nu^{(n)}$, is the uniform probability vector,
attaching probability $1 / m$ to each of the $m$ states.
Then by Theorem 7.2.3 in \cite{LR99}, the QBD is positive recurrent if and only if
$\nu A^{(n)}_0 \mathbf{1} < \nu A^{(n)}_2 \mathbf{1}$,
where $\mathbf{1}$ is the vector of all $1$'s.
This translates to the stability condition given in the proof of Lemma \ref{lmD21extreme}.

\section{Positivity of Fluid Limit.} \label{appPosQ}

In this appendix we explain why Assumption \ref{assC} about the initial conditions is reasonable.
First, the assumed convergence $D^n_{1,2}(0) \Ra L$ if $x(0) \in \AA$ is natural, since
that convergence holds whenever $x(t) \in \AA$, provided that $t$ is not a hitting time of the set $\AA$ from $\rS - \AA$,
by virtue of Theorem \ref{thAPlocal} and Remark \ref{remTightInA}.
We can take time $0$ to be a time shortly after a hitting time of $\AA$.

We now elaborate on another part of Assumption \ref{assC}, namely the assumed initial values for each $n$.
Observe that, given the convergence $\barx^n \Ra x$ established in Theorem \ref{th1}, we necessarily have $Z_{1,2}^n(t) = O_P(n)$
(and thus $Z_{2,1}^n (t) = 0$) and $Q^n_{i}(t) = O_P(n)$, $i = 1,2$, if
$z_{1,2} (t) > 0$ and $q_i (t) > 0$, $i = 1,2$, where $x \equiv (q_1, q_2, z_{1,2})$ is the fluid limit.
However, in general we may not have such strict positivity in the components of the fluid limit at time $0$.
(Recall that Assumption \ref{assC} allows $Q^n_i(0) = o(n)$, $i = 1,2$, and $Z^n_{1,2}(0)$ is allowed to be $0$.)
Nevertheless, we now show that we do necessarily have strict positivity of the components of $x(t)$ for all $t > 0$ small enough,
even if that property does not hold at time $0$.

For a vector $y \in \RR_3$, we write $y > 0$ if the (strict) inequality holds componentwise.
\begin{prop} \label{propPosQ}
The fluid limit $x$ satisfies $x(t) > 0$ for all $t > 0$ sufficiently small.
\end{prop}

\begin{proof}
The statement follows immediately for the case $x(0) > 0$ because of the continuity of $x$. We thus assume that at least one
component of $x(0)$ is not strictly positive.
If $z_{1,2}(0) = 0$, then it was shown in the proof of Theorem \ref{thSSCextend} that $z_{1,2}(t) > 0$ for all $t > 0$.
Now note that, by the assumption on the initial condition $x(0)$ in Assumption \ref{assC}, $q_1(0) \ge rq_2(0)$, so that
if $q_1(0) = 0$, then we must have $q_2(0) = 0$ as well. We prove the result for the three possible regions of $x(0)$
in Assumption \ref{assC}, namely  $\SS^+$, $\AA^+$ and $\AA$, separately.

\paragraph{$(i)$ $x(0) \in \SS^+$.}
In this case, $q_1(0) > 0$ and $\pi_{1,2}(x(0)) = 1$, so we need to consider the case $q_2(0) = 0$. Plugging these values of
$\pi_{1,2}(x(0))$ and $q_2(0)$ in the equation for $\dot{q}_2$ in \eqref{odeDetails}, we see that $\dot{q}_2(0) = \lm_2 > 0$,
so that $q_2$ is strictly increasing at time $0$, and the result follows.

\paragraph{$(ii)$ $x(0) \in \AA^+$.}
It is shown in Lemma 7.2 in \cite{PeW09c} that if $x(0) \in \AA^+$, then for all $t > 0$ sufficiently small,
$x(t) \in \SS - \SS^- - \AA^-$, i.e., $q_1(t) \ge rq_2(t)$ for all $t \in (0, \ep_1]$, for some $\ep_1 > 0$.
As in the previous case, $\pi_{1,2}(x(0)) = 1$ and, assuming $q_2(0) = 0$, we have that $q_2$ is strictly increasing
at time $0$, so that $q_2(t) > 0$ for all $t$ satisfying $0 < t \le \ep_2$, for some $\ep_2 > 0$.
Then, with $\varepsilon \equiv \ep_1 \wedge \ep_2$, $q_1(t) > 0$ for all $t \in (0, \varepsilon]$.

\paragraph{$(iii)$ $x(0) \in \AA$.}
By Theorem 5.2 in \cite{PeW09c}, the fluid limit $x$ is right differentiable at $0$ and is differentiable on an open interval $(0, \varepsilon)$
for some $\varepsilon > 0$.
For $i = 1,2$, if $q_i(t) = 0$, then $\dot{q}_i(t)$ cannot be negative by Theorem 5.1 in \cite{PeW09c}.
Hence, since we are considering the case $q_i(0) = 0$, we have two possibilities: Either $\dot{q}_i(0) = 0$ or $\dot{q}_i(0) > 0$.

Our proof builds on the fact that $\dot{x}(t)$ is itself a continuous function of $t$.
That is so because the right-hand side of each component
of $\dot{x}(t)$ in \eqref{odeDetails} includes the system's parameters, the Lipschitz-continuous fluid limit $x(t)$, and the function $\pi_{1,2}(x(t))$.
However, $\pi_{1,2}(x)$ is locally Lipschitz continuous as a function of $x$ by Theorem 7.1 in \cite{PeW09c},
and in particular, $\pi_{1,2}(x(t))$ is a continuous function of $x(t)$, which is itself a continuous function of $t$.
Hence, $\pi_{1,2}(x(t))$ is continuous in $t$, and so is $\dot{q}_i(t)$, $i = 1,2$.
As a consequence, if $\dot{q}_i(0) > 0$, then $\dot{q}_i(t) > 0$ for all $t$ in some neighborhood of $0$,
so that $q_i$ is strictly increasing on a positive interval, $i = 1,2$.

Hence it remains to consider the case in which
$\dot{q}_i(0) = 0$ for $i = 1$ or $i = 2$.  Assuming that to be the case, we will show that $\dot{q}_j(0) = 0$ for $j \ne i$, $j = 1,2$.
Indeed, if $\dot{q}_j (0) > 0$, then $q_j$ must be increasing at a neighborhood of time $0$,
 so that $x(t) \notin \AA$ for all $t > 0$ small enough, contradicting the fact that $\AA$ is an open set.
Hence, the proof of the proposition for the case $x(0) \in \AA$ will follow if we
assume that $q_i(0) = \dot{q}_i(0) = 0$ for both $i = 1$ and $i = 2$, and show that we reach a contradiction.
We consider two subcases, depending on whether $\mu_{1,2} > \mu_{2,2}$ or $\mu_{1,2} \le \mu_{2,2}$.

If $\mu_{1,2} > \mu_{2,2}$, then define the function $V(x(t)) = q_1(t) + q_2(t)$. It is shown in the proof of Theorem 5.1 in \cite{PeW09c}
that $V$ is strictly increasing whenever $x(t) \in \AA$, so that at least one queue is increasing at time $0$. Once again, this means that the second
queue must also be strictly increasing, for otherwise $x$ will leave $\AA$ immediately after time $0$, so the statement is proved.

If $\mu_{1,2} \le \mu_{2,2}$, then define the function $V(x(t)) = (1+\ep)q_1(t) + q_2(t) + \ep z_{1,2}(t)$, for arbitrary $\ep > 0$.
We again use the proof of Theorem 5.1 in \cite{PeW09c} to conclude that $V$ is strictly increasing at time $0$.
However, in that case, even though $\dot{q}_i(0) = 0$, $i = 1,2$, $V$ can be increasing because $\dot{z}_{1,2}(0) > 0$.
Assume that is the case, and consider the ODE \eqref{odeDetails}. (Recall that $q_i(0) = \dot{q}_i(0) = 0$, $i = 1$ and $2$.)

It follows from the assumption $\dot{z}_{1,2}(0) > 0$ and the equation for $\dot{z}_{1,2}$ in \eqref{odeDetails}, that
\bequ \label{eq225}
\pi_{1,2}(x(0))(\mu_{1,2}z_{1,2}(0) + \mu_{2,2} z_{2,2}(0)) > \mu_{1,2} z_{1,2}.
\eeq
Then, by the assumption $\dot{q}_1(0) = q_1(0) = 0$, we have
\bes
0 = \dot{q}_1(0) = \lm_1 - \mu_{1,1}m_1 - \pi_{1,2}(x(0))(\mu_{1,2}z_{1,2}(0) + \mu_{2,2} z_{2,2}(0)) < \lm_1 - \mu_{1,1}m_1 - \mu_{1,2} z_{1,2}(0),
\ees
where the inequality follows from \eqref{eq225}. Hence,
$\mu_{1,2}z_{1,2}(0) > \lm_1 - \mu_{1,1}m_1$,
which further implies that
\bequ \label{eq336}
z_{1,2}(0) > s^a_2,
\eeq
for $s^a_2$ in \eqref{Qalone}.
From the equation for $\dot{q}_2(0)$ in \eqref{odeDetails} (recalling that $q_2(0) = 0$ by assumption), we get
\bes
\dot{q}_2(0) = \lm_2 - \mu_{1,2}z_{1,2}(0) - \mu_{2,2}z_{2,2}(0) + \pi_{1,2}(x(0))(\mu_{1,2}z_{1,2}(0) + \mu_{2,2} z_{2,2}(0))
> \lm_2 - \mu_{2,2}z_{2,2}(0),
\ees
where again, the inequality follows from \eqref{eq225}. Now, since $z_{2,2}(0) = m_2 - z_{1,2}(0)$, we have
\bes
\dot{q}_2(0) > \lm_2 - \mu_{2,2}m_2 + \mu_{2,2}z_{1,2}(0) > \lm_2 - \mu_{2,2}m_2 + \mu_{2,2} s^a_2 \ge 0,
\ees
where the second inequality follows from \eqref{eq336}, and the third from Assumption \ref{assA}.
This contradicts the assumption that $\dot{q}_2(0) = \dot{q}_1(0) = 0$.
Hence the proof is complete.
\end{proof}

\section{List of Acronyms} \label{AppAcro}

In this appendix we list all the acronyms used in the paper, and refer to the sections where they are introduced and discussed.

AP -- averaging principle (\S \ref{secIntro} and \cite{PeW09b,PeW09c})

CTMC -- continuous time Markov chain (\S \ref{secIntro})

FCLT -- functional central limit theorem (\S \ref{secIntro})

FQR-T -- fixed queue ratio {\em control} with thresholds (\S \ref{secIntro} and \S \ref{secFQRorig})

FSLLN -- functional strong law of large numbers (\S \ref{secSSCserv})

FTSP -- fast-time-scale process (\S \ref{secIntro} and \S \ref{secFTSP})

FWLLN -- functional weak law of large numbers (\S \ref{secIntro} and \S \ref{secStatement})

ODE -- ordinary differential equation (\S \ref{secIntro} and \S \ref{secODE})

QBD -- quasi birth and death {\em process} (\S \ref{secODE} and \cite{PeW09c})

QR-T -- {\em not fixed} queue ratio {\em control} with thresholds (\S \ref{secIntro} and \cite{PeW09a})

SSC -- state-space collapse (\S \ref{secIntro} and \S \ref{secSSCserv})

\section*{Acknowledgments.}

This research is part of the first author's doctoral dissertation in the IEOR Department at Columbia University.
Additional work was done subsequently, including while the first author had a postdoctoral fellowship at CWI in Amsterdam.
The research was supported by NSF grants DMI-0457095 and CMMI 0948190 and 1066372.

\end{appendix}

\end{document}